\numberwithin{equation}{section}
\newtheorem{Theorem}{Theorem}[section]
\newtheorem{Lemma}[Theorem]{Lemma}
\theoremstyle{definition}
\newtheorem{remark}[Theorem]{Remark}
\newtheorem{Proposition}[Theorem]{Proposition}
\newcounter{RomanNumber}
\newcommand{\dis}{\displaystyle}
\def\f{\frac}
\def\fc{\mathfrak{c}}
\def\dis{\displaystyle}
\author[Y. Wang]{Yong Wang}
\address[Y. Wang]{Academy of Mathematics and Systems Science, Chinese Academy of Sciences, Beijing 100190, China; School of Mathematical Sciences, University of Chinese Academy of Sciences, Beijing 100049, China}
\email{yongwang@amss.ac.cn}
\author[C.G Xiao]{Changguo Xiao}
\address[C.G Xiao]{School of Mathematics and Statistics, Guangxi Normal University, Guilin, Guangxi 541004, China}
\email{changguoxiao@mailbox.gxnu.edu.cn}
\subjclass[2010]{82C40;\, 35Q20;\, 35Q75;\, 76P05;\, 76Y05.}
\thanks{* Corresponding author: changguoxiao@mailbox.gxnu.edu.cn}
\keywords{relativistic Boltzmann equation;\, relativistic Euler equations;\, hydrodynamic limit;\, Newtonian limit;\, Hilbert expansion.}\bigbreak
\date{}
\begin{document}
	\title[Hydrodynamic limit and Newtonian limit from the rBE to the cE equations]{Hydrodynamic limit and Newtonian limit from the relativistic Boltzmann equation to the classical Euler equations}

  \begin{abstract}
  	The hydrodynamic limit and Newtonian limit are important in the relativistic kinetic theory. We justify rigorously the validity of the two independent limits from the special relativistic Boltzmann equation to the classical Euler equations without assuming any dependence between the Knudsen number $\varepsilon$ and the light speed $\mathfrak{c}$. The convergence rates are also obtained. This is achieved by Hilbert expansion of relativistic Boltzmann equation. New difficulties arise when tacking the uniform in $\mathfrak{c}$ and $\varepsilon$ estimates for the Hilbert expansion, which have been overcome by establishing some uniform-in-$\mathfrak{c}$ estimate for relativistic Boltzmann operators. 
%  	We investigate the hydrodynamic and Newtonian limits from the special relativistic Boltzmann equation to the classical Euler equations. By performing a Hilbert expansion on the relativistic Boltzmann equation which formally corresponds to the relativistic Euler equations, we establish the convergence from the relativistic Euler solutions to the classical Euler solutions as $\mathfrak{c}$ tends to infinity. Applying this convergence estimate, we develop a series of uniform-in-$\mathfrak{c}$ estimates on the relativistic collision operators, which allow us to derive the uniform-in-$\mathfrak{c}$ bounds of the Hilbert expansions $F^{\mathfrak{c}}_n(t,x,p)\ (n\ge 1)$ and their $(t,x)$-derivatives. The local existence for the relativistic Boltzmann solutions are then obtained from the uniform-in-$\mathfrak{c}$ and $\varepsilon$ estimates on the remainder $F^{\varepsilon,\mathfrak{c}}_R$ via $L^2-L^{\infty}$ interplay method. By taking the hydrodynamic limit $\varepsilon\to 0$ and the Newtonian limit $\mathfrak{c}\to \infty$ independently, we show that the relativistic Boltzmann solution $F^{\varepsilon,\mathfrak{c}}$ converges to the local non-relativistic Maxwellian $\mu$ constructed from the classical Euler solution. 
  \end{abstract}

    \maketitle
    \setcounter{tocdepth}{1}
    \tableofcontents
%%%%%%%%%%%%%%%%%%%%%%%%%%%%%%%%%%%%%%%%%%%%%%%%%%%%%%%%%%%%%%%
  \section{Introduction}
%%%%%%%%%%%%%%%%%%%%%%%%%%%%%%%%%%%%%%%%%%%%%%%%%%%%%%%%%%%%%%%

%%%%%%%%%%%%%%%%%%%%%%%%%%%%%%%%%%%%%%%%%%%%%%%%%%%%%%%%%%%%%%%
\subsection{The relativistic Boltzmann equation}
%%%%%%%%%%%%%%%%%%%%%%%%%%%%%%%%%%%%%%%%%%%%%%%%%%%%%%%%%%%%%%%
  We consider the special relativistic Boltzmann equation
  \begin{equation}\label{1.1-0}
  	p^{\mu}\partial_{\mu}F=\frac{1}{\varepsilon}\mathcal{C}(F, F),
  \end{equation}
  which describes the dynamics of single-species relativistic particles. The dimensionless parameter $\varepsilon$ is the Knudsen number, which is proportional to the mean free path. The unknown $F(t,x,p)\ge 0$ is a distribution function for relativistic particles with position $x=(x_1,x_2,x_3)\in \Omega$ and particle momentum $p=(p^1,p^2,p^3)\in \mathbb R^3$ at time $t>0$. The collision term $\mathcal{C}(h_1, h_2)$ is defined by
  \begin{equation*}%\label{1.2}
  	\mathcal{C}(h_1, h_2)=\frac{1}{2} \int_{\mathbb{R}^{3}} \frac{d q}{q^{0}} \int_{\mathbb{R}^{3}} \frac{d p^{\prime}}{p^{\prime 0}} \int_{\mathbb{R}^{3}} \frac{d q^{\prime}}{q'^{0}} W\left(p, q \mid p^{\prime}, q^{\prime}\right)\left[h_1\left(p^{\prime}\right) h_2\left(q^{\prime}\right)-h_1(p) h_2(q)\right],
  \end{equation*}
  where the transition rate $W\left(p, q \mid p^{\prime}, q^{\prime}\right)$ has the form
  \begin{align}\label{1.3-0}
  	W\left(p, q \mid p^{\prime}, q^{\prime}\right)=s \varsigma(g, \vartheta)\delta(p^0+q^0-p'^{0}-q'^{0}) \delta^{(3)}(p+q-p'-q').
  \end{align}
  The streaming term of the relativistic Boltzmann equation \eqref{1.1-0} is given by
  \begin{align*}
  	p^\mu\partial_\mu=\frac{p^0}{\mathfrak{c}}\partial_t+p\cdot \nabla_x,
  \end{align*}
  where $\mathfrak{c}$ denotes the speed of light and $p^0$ denotes the energy of a relativistic particle with
  $$p^{0}=\sqrt{m_0^2\mathfrak{c}^2+|p|^{2}}.$$
  Here $m_0$ denotes the rest mass of particle. Now we can rewrite \eqref{1.1-0} as
  \begin{align}\label{rb}
  	\partial_t F+\hat{p}\cdot\nabla_x F=\frac{1}{\varepsilon}Q(F,F),
  \end{align}
  where $\hat{p}$ denotes the normalized particle velocity
  \begin{align*}
  	\hat{p}:=\mathfrak{c}\frac{p}{p^0}=\frac{\mathfrak{c}p}{\sqrt{m_0^2\mathfrak{c}^2+|p|^2}}.
  \end{align*}
  The collision term $Q(h_1,h_2)$ in \eqref{rb} has the form
  \begin{equation*}%\label{1.5}
  	Q(h_1, h_2)=\frac{\mathfrak{c}}{2} \frac{1}{p^{0}}\int_{\mathbb{R}^{3}} \frac{d q}{q^{0}} \int_{\mathbb{R}^{3}} \frac{d p^{\prime}}{p^{\prime 0}} \int_{\mathbb{R}^{3}} \frac{d q^{\prime}}{q'^{0}} W\left(p, q \mid p^{\prime}, q^{\prime}\right)\left[h_1\left(p^{\prime}\right) h_2\left(q^{\prime}\right)-h_1(p) h_2(q)\right].
  \end{equation*}
  
  We denote the energy-momentum 4-vector as $p^{\mu}=(p^0,p^1,p^2,p^3)$. The energy-momentum 4-vector with the lower index is written as a product in the Minkowski metric $p_{\mu}=g_{\mu \nu} p^{\nu}$, where the Minkowski metric is given by $g_{\mu \nu}=\operatorname{diag}(-1,1,1,1)$. The inner product of energy-momentum 4-vectors $p^{\mu}$ and $q_{\mu}$ is defined via the Minkowski metric
  \begin{align*}
  	p^{\mu} q_{\mu}=p^{\mu} g_{\mu \nu} q^{\nu}=-p^{0} q^{0}+\sum_{i=1}^{3} p^{i} q^{i}.
  \end{align*}
  Then it is clear that
  $$p^{\mu} p_{\mu}=-m_0^2\mathfrak{c}^2.$$
  We note that the inner product of energy-momentum 4-vectors is Lorentz invariant. 
%  $p^{\mu} q_{\mu}=\Lambda p^{\mu} \Lambda q_{\mu}$, where $\Lambda$ is any Lorentz transformation.
  
  The quantity $s$ is the square of the energy in the \textit{center of momentum system}, $p+q=0$, and is given as
  \begin{align*}%\label{1.6}
  	s=s(p, q)=-\left(p^{\mu}+q^{\mu}\right)\left(p_{\mu}+q_{\mu}\right)=2\left(p^{0} q^{0}-p \cdot q+m_0^{2} \mathfrak{c}^{2}\right) \geq 4 m_0^{2} \mathfrak{c}^{2}.
  \end{align*}
  And the relative momentum $g$ in \eqref{1.3} is denoted as
  \begin{align*}%\label{1.7}
  	g=g(p, q)=\sqrt{\left(p^{\mu}-q^{\mu}\right)\left(p_{\mu}-q_{\mu}\right)}=\sqrt{2\left(p^{0} q^{0}-p \cdot q-m_0^{2} \mathfrak{c}^{2}\right)} \geq 0.
  \end{align*}
  It is direct to know that
  \begin{align*}
  	s=g^2+4m_0^2\mathfrak{c}^2.
  \end{align*}
  The post-collision momentum pair $(p'^{\mu},q'^{\mu})$ and the pre-collision momentum pair $(p^{\mu},q^{\mu})$ satisfy the relation
  \begin{align}\label{1.8}
  	p^{\mu}+q^{\mu}=p^{\prime \mu}+q^{\prime \mu}.
  \end{align}
  One may also write \eqref{1.8} as
  \begin{align}
  	p^0+q^0&=p^{\prime 0}+q^{\prime 0},\label{1.9}\\
  	p+q&=p^{\prime}+q^{\prime }\label{1.10},
  \end{align}
  where \eqref{1.9} represents the principle of conservation of energy and \eqref{1.10} represents the conservation of momentum after a binary collision.
  
  Using Lorentz transformations in \cite{Groot, Strain}, in the \textit{center of momentum system}, $Q(F,F)$ can be written as
  \begin{align}\label{1.11}
  	Q(F,F)=&\int_{\mathbb R^3}\int_{\mathbb S^2}v_\phi \varsigma(g,\vartheta)\Big[F(p')F(q')-F(p)F(q)\Big]d\omega dq\nonumber\\
  	:=&Q^{+}(F,F)-Q^{-}(F,F),
  \end{align}
  where $v_\phi=v_\phi(p,q)$ is the M{\o}ller velocity
  \begin{align*}%\label{1.12}
  	v_\phi(p,q):=\frac{\mathfrak{c}}{2}\sqrt{\left|\frac{p}{p^0}-\frac{q}{q^0} \right|^2-\left|\frac{p}{p^0}\times \frac{q}{q^0}\right|^2}=\frac{\mathfrak{c}}{4}\frac{g\sqrt{s}}{p^0q^0}.
  \end{align*}
  The pre-post collisional momentum in \eqref{1.11} satisfies
  \begin{equation*}%\label{1.13}
  	\left\{
  	\begin{aligned}
  		&p'=\frac12(p+q)+\frac12 g\Big(\omega+(\gamma_0-1)(p+q)\frac{(p+q)\cdot \omega}{|p+q|^2}\Big),\\
  		&q'=\frac12(p+q)-\frac12 g\Big(\omega+(\gamma_0-1)(p+q)\frac{(p+q)\cdot \omega}{|p+q|^2}\Big),
  	\end{aligned}
  	\right.
  \end{equation*}
  where $\gamma_0:=(p^0+q^0)/\sqrt{s}$. The pre-post collisional energy is given by
  \begin{equation*}%\label{1.14}
  	\left\{
  	\begin{aligned}
  		&p'^0=\frac12(p^0+q^0)+\frac{1}{2}\frac{g}{\sqrt{s}}(p+q)\cdot \omega,\\
  		&q'^0=\frac12(p^0+q^0)-\frac{1}{2}\frac{g}{\sqrt{s}}(p+q)\cdot \omega.
  	\end{aligned}
  	\right.
  \end{equation*}
  The scattering angle $\vartheta$ is defined by
  \begin{align*}%\label{1.15}
  	\cos \vartheta:=\frac{(p^\mu-q^\mu)(p'_\mu-q'_\mu)}{g^2}.
  \end{align*}
  The angle is well defined under \eqref{1.8} and we refer to \cite[Lemma 3.15.3]{Glassey}.
  
  The function $\varsigma(g,\vartheta)$ in \eqref{1.3-0} is called the differential cross-section or scattering kernel. The relativistic differential cross section $\varsigma(g, \vartheta)$ measures the interactions between relativistic particles. Throughout the present paper, we consider the ``hard ball" particles 
  \begin{align*}
  	\varsigma(g, \vartheta) = \text{constant}.
  \end{align*}
  Without loss of generality, we take $\varsigma(g, \vartheta) = 1$ for simplicity. The Newtonian limit in this situation, as $\mathfrak{c}\rightarrow \infty$,  is the Newtonian hard-sphere Boltzmann collision operator \cite{Strain2}.

%%%%%%%%%%%%%%%%%%%%%%%%%%%%%%%%%%%%%%%%%%%%%%%%%%%%%%%%%%%%%%%
 \subsection{Hilbert expansion}
%%%%%%%%%%%%%%%%%%%%%%%%%%%%%%%%%%%%%%%%%%%%%%%%%%%%%%%%%%%%%%%
 In the present paper, we are concerned with both the hydrodynamic limit and Newtonian limit from the relativistic Boltzmann equation to the classical Euler equations.
 To achieve this, we perform a Hilbert expansion for the relativistic Boltzmann equation \eqref{rb} with small Knudsen number $\varepsilon$. To emphasize the dependence on  $\varepsilon$ and $\mathfrak{c}$ for relativistic Boltzmann solutions, we denote the solutions of \eqref{rb} as $F^{\varepsilon,\mathfrak{c}}$ and decompose $F^{\varepsilon,\mathfrak{c}}$ as the sum 
 \begin{align}\label{1.16-0}
 	F^{\varepsilon,\mathfrak{c}}=\sum_{n=0}^{2k-1}\varepsilon^n  F^{\mathfrak{c}}_n+\varepsilon^k F^{\varepsilon,\mathfrak{c}}_R, \quad k\ge 3, 
 \end{align}
 where $F^{\mathfrak{c}}_0, F^{\mathfrak{c}}_1, \ldots, F^{\mathfrak{c}}_{2k-1}$ in \eqref{1.16-0} will depend upon $\mathfrak{c}$ but be independent of $\varepsilon$. Also, $F^{\varepsilon,\mathfrak{c}}_R$ is called the remainder term which will depend upon $\varepsilon$ and $\mathfrak{c}$. For $\mathfrak{c}=1$, Speck-Strain\cite{Speck} have already established the Hilbert expansion for the relativistic Boltzmann equation. Since we shall consider both the hydrodynamic limit $\varepsilon\to 0$ and Newtonian limit $\mathfrak{c}\to \infty$ of the relativistic Boltzmann equation, it is crucial to derive the uniform-in-$\mathfrak{c}$ estimates on $F^{\mathfrak{c}}_n\ (n=0,1,\cdots,2k-1)$ and uniform in $\mathfrak{c}$ and $\varepsilon$ estimates on $F^{\varepsilon,\mathfrak{c}}_R$.
 
 To determine the coefficients $F^{\mathfrak{c}}_0(t,x,p), \cdots$, $F^{\mathfrak{c}}_{2k-1}(t,x,p)$, we begin by plugging the expansion \eqref{1.16-0} into \eqref{rb} to obtain
 \begin{align}\label{1.17-0}
 	&\partial_t\Big(\sum_{n=0}^{2k-1}\varepsilon^n F^{\mathfrak{c}}_n+\varepsilon^k F^{\varepsilon,\mathfrak{c}}_R\Big)+\hat{p}\cdot \nabla_x\Big(\sum_{n=0}^{2k-1}\varepsilon^n F^{\mathfrak{c}}_n+\varepsilon^k F^{\varepsilon,\mathfrak{c}}_R\Big)\nonumber\\
 	&=\frac{1}{\varepsilon}Q_{\mathfrak{c}}\Big(\sum_{n=0}^{2k-1}\varepsilon^n F^{\mathfrak{c}}_n+\varepsilon^k F^{\varepsilon,\mathfrak{c}}_R,\sum_{n=0}^{2k-1}\varepsilon^n F^{\mathfrak{c}}_n+\varepsilon^k F^{\varepsilon,\mathfrak{c}}_R\Big).
 \end{align}
 Comparing the order of $\varepsilon$ in \eqref{1.17-0}, one has
 \begin{equation}\label{1.18-0}
 	\begin{split}
 		0 & =Q_{\mathfrak{c}}\left(F^{\mathfrak{c}}_0, F^{\mathfrak{c}}_0\right), \\
 		\partial_t F^{\mathfrak{c}}_0+\hat{p} \cdot \nabla_x F^{\mathfrak{c}}_0 & =Q_{\mathfrak{c}}\left(F^{\mathfrak{c}}_0, F^{\mathfrak{c}}_1\right)+Q_{\mathfrak{c}}\left(F^{\mathfrak{c}}_1, F^{\mathfrak{c}}_0\right), \\
 		\partial_t F^{\mathfrak{c}}_1+\hat{p} \cdot \nabla_x F^{\mathfrak{c}}_1 & =Q_{\mathfrak{c}}\left(F^{\mathfrak{c}}_0, F^{\mathfrak{c}}_2\right)+Q_{\mathfrak{c}}\left(F^{\mathfrak{c}}_2, F^{\mathfrak{c}}_0\right)+Q_{\mathfrak{c}}\left(F^{\mathfrak{c}}_1, F^{\mathfrak{c}}_1\right), \\
 		\cdots &  \cdots \cdots \\
 		\partial_t F^{\mathfrak{c}}_n+\hat{p} \cdot \nabla_x F^{\mathfrak{c}}_n & =\sum_{\substack{i+j=n+1 \\
 				i,j\ge 0}} Q_{\mathfrak{c}}\left(F^{\mathfrak{c}}_i, F^{\mathfrak{c}}_j\right),\\
 		\cdots &  \cdots \cdots \\
 		\partial_t F^{\mathfrak{c}}_{2k-1}+\hat{p} \cdot \nabla_x F^{\mathfrak{c}}_{2k-1} & =\sum_{\substack{i+j=2k \\
 				i,j\ge 1}} Q_{\mathfrak{c}}\left(F^{\mathfrak{c}}_i, F^{\mathfrak{c}}_j\right).
 	\end{split}
 \end{equation}
 The remainder $F^{\varepsilon,\mathfrak{c}}_R$ satisfies the equation
 \begin{align}\label{1.19-0}
 	& \partial_t F^{\varepsilon,\mathfrak{c}}_{R}+\hat{p} \cdot \nabla_x F^{\varepsilon,\mathfrak{c}}_{R}-\frac{1}{\varepsilon}\left\{Q_{\mathfrak{c}}\left(F^{\mathfrak{c}}_0, F^{\varepsilon,\mathfrak{c}}_{R}\right)+Q_{\mathfrak{c}}\left(F^{\varepsilon,\mathfrak{c}}_{R}, F^{\mathfrak{c}}_0\right)\right\}\nonumber\\
 	&=\varepsilon^{k-1} Q_{\mathfrak{c}}\left(F^{\varepsilon,\mathfrak{c}}_{R}, F^{\varepsilon,\mathfrak{c}}_{R}\right)+\sum_{i=1}^{2k-1} \varepsilon^{i-1}\left\{Q_{\mathfrak{c}}\left(F^{\mathfrak{c}}_i, F^{\varepsilon,\mathfrak{c}}_{R}\right)+Q_{\mathfrak{c}}\left(F^{\varepsilon,\mathfrak{c}}_{R}, F^{\mathfrak{c}}_i\right)\right\}+\varepsilon^k A,
 \end{align}
 where
 \begin{align*}
 	A:=\sum_{\substack{i+j\ge 2k+1 \\ 2 \leq i, j \leq 2k-1}} \varepsilon^{i+j-1-2k} Q_{\mathfrak{c}}\left(F^{\mathfrak{c}}_i, F^{\mathfrak{c}}_j\right).
 \end{align*}
 From \cite[Chap.2]{Groot}, the first equation of $\eqref{1.18-0}$ implies that $F^{\mathfrak{c}}_0$ is a local Maxwellian of the form $\mathbf{M}_{\mathfrak{c}}(n_0,u,T_0;p)$, i.e., 
 \begin{align}\label{1.13-00}
 	F^{\mathfrak{c}}_0(t,x,p)=\mathbf{M}_{\mathfrak{c}}(n_0,u,T_0;p)=\frac{n_0\gamma}{4\pi \mathfrak{c}^3K_2(\gamma)}\exp\Big\{\frac{u^{\mu}p_{\mu}}{T_0}\Big\},
 \end{align} 
  where $\gamma$ a dimensionless variable defined as
  \begin{align*}
  	\gamma=\frac{m_0\mathfrak{c}^2}{k_B T_0}
  \end{align*}
 and $T_0(t,x)>0$ represents the temperature, $n_0(t,x)> 0$ is the proper number density, $(u^0,u)$ is the four-velocity. $K_{j}(\gamma)\ (j=0,1,2,\cdots)$ are the modified second order Bessel functions defined in \eqref{2.1-01}.
  
%%%%%%%%%%%%%%%%%%%%%%%%%%%%%%%%%%%%%%%%%%%%%%%%%%%%%%%%%%%%%%%
\subsection{The relativistic Euler equations and classical Euler equations}
%%%%%%%%%%%%%%%%%%%%%%%%%%%%%%%%%%%%%%%%%%%%%%%%%%%%%%%%%%%%%%%
Similar to \cite{Cercignani1,Guo}, for $\alpha$, $\beta\in \{0,1,2,3\}$, we define the first momentum as 
\begin{align*}
	I^{\alpha}[\mathbf{M}_{\mathfrak{c}}]:=\int_{\mathbb{R}^3}\frac{p^{\alpha}}{p^0}\mathbf{M}_{\mathfrak{c}}dp
\end{align*}
and the second momentum as
\begin{align*}
	T^{\alpha \beta}[\mathbf{M}_{\mathfrak{c}}]:=\int_{\mathbb{R}^3}\frac{p^{\alpha}p^{\beta}}{p^0}\mathbf{M}_{\mathfrak{c}}dp.
\end{align*}
It has been shown in \cite[Proposition 3.3]{Speck} that 
\begin{align}
	I^{\alpha}[\mathbf{M}_{\mathfrak{c}}]&=\frac{n_0u^{\alpha}}{\mathfrak{c}},\label{1.16-01}\\
	T^{\alpha \beta}[\mathbf{M}_{\mathfrak{c}}]&=\frac{e_0+P_0}{\mathfrak{c}^3}u^{\alpha}u^{\beta}+\frac{P_0g^{\alpha\beta}}{\mathfrak{c}},\label{1.17-01}
\end{align}
where $e_0(t, x)> 0$ is the proper energy density and $P_0(t, x)> 0$ is the pressure. 

Projecting the second equation in \eqref{1.18-0} onto $1$, $p$, $p^0$, which are five collision invariants for the relativistic Boltzmann collision operator $Q_{\mathfrak{c}}(\cdot,\cdot)$, and using \eqref{1.16-01}-\eqref{1.17-01}, one obtains that $(n_0,u,T_0)$ satisfies the relativistic Euler equations.
\begin{equation}\label{re-eu}
	\left\{   \begin{aligned}
		&\frac{1}{\mathfrak{c}} \partial_t\left(n_0 u^0\right)+\nabla_x \cdot\left(n_0 u\right)=0, \\
		&\frac{1}{\mathfrak{c}} \partial_t\left[\left(e_0+P_0\right) u^0 u\right]+\nabla_x \cdot\left[\left(e_0+P_0\right) u \otimes u\right] 
		+\mathfrak{c}^2 \nabla_x P_0=0, \\
		&\frac{1}{\mathfrak{c}} \partial_t\left[\left(e_0+P_0\right)\left(u^0\right)^2-\mathfrak{c}^2 P_0\right]+\nabla_x \cdot\left[\left(e_0+P_0\right) u^0 u\right]=0.
	\end{aligned}   \right.
\end{equation}
The fluid variables $n_0$, $T_0$, $S$, $P_0$, $e_0$ in \eqref{re-eu} satisfy the following relations
\begin{align}
	 P_0&=k_B n_0 T_0=m_0 \mathfrak{c}^2 \frac{n_0}{\gamma}, \label{3.32-0}\\
	 e_0&=m_0 \mathfrak{c}^2 n_0 \frac{K_1(\gamma)}{K_2(\gamma)}+3 P_0=m_0 \mathfrak{c}^2 n_0 \frac{K_3(\gamma)}{K_2(\gamma)}-P_0, \label{3.33-0}\\
	 n_0&=4 \pi e^4 m_0^3 \mathfrak{c}^3 \exp \left(\frac{-S}{k_B}\right) \frac{K_2(\gamma)}{\gamma} \exp \left(\gamma \frac{K_1(\gamma)}{K_2(\gamma)}\right)\label{3.34-0},
%	 &=4 \pi m_0^3 \mathfrak{c}^3  \exp \left(\frac{-S}{k_B}\right) \frac{K_2(\gamma)}{\gamma} \exp \left(\gamma \frac{K_3(\gamma)}{K_2(\gamma)}\right) 
\end{align}
where $k_B>0$ is Boltzmann's constant, $S(t,x)>0$ is the entropy per particle which is defined by \eqref{3.34-0} and $K_i(\gamma)$ is the modified Bessel function of the second kind defined later.

Denote
\begin{equation*} 
	V:=\begin{pmatrix}
		P_0 \\
		u \\
		S
	\end{pmatrix}.
\end{equation*}
We assume that \eqref{re-eu} is supplemented with initial data \begin{align}\label{1.19-20}
	V|_{t=0}=V_0.
\end{align}
The existence of local smooth solutions of the relativistic Euler equations \eqref{re-eu} with initial condition \eqref{1.19-20} can be established by standard hyperbolic symmetrized method and it holds that
\begin{align}\label{1.23-00}
	\|V-\overline{V}\|_{H^{N_0}}\lesssim 1,
\end{align}
where $\overline{V}:=(\overline{P},0,\overline{S})$ is a constant background with $\overline{P}>0$ and $\overline{S}>0$. We point out that the estimate in \eqref{1.23-00} is uniform-in-$\mathfrak{c}$, which is important for us, see Lemma \ref{thm-re} for details.

From \cite{Calvo,Speck}, we have the following two properties :\\
\noindent \textit{Property 1}: The map $\mathbf{\Phi}:(n_0, T_0) \mapsto (P_0, S)$  is an auto-diffeomorphism of the region $(0, \infty) \times(0, \infty)$, where the map is defined by \eqref{3.32-0}--\eqref{3.34-0}.\\
\textit{Property 2}: Under the equations of state \eqref{3.32-0}--\eqref{3.34-0}, there hold
\begin{enumerate}
	\item There exists a smooth function $\mathcal{H}$ such that $P_0$ can be expressed in terms of $e_0$ and $S$ as $P_0=\mathcal{H}(e_0, S)$.
	\item The relativistic Euler equations is hyperbolic.
	\item The relativistic Euler equations is causal (the speed of sound $a:=\mathfrak{c} \sqrt{\left.\frac{\partial P_0}{\partial e_0}\right|_S}$ is real and less than the speed of light). Actually, it holds that $0<a<\frac{\mathfrak{c}}{\sqrt{3}}$.
\end{enumerate}
From \cite[Proposition 3.4]{Speck}, the following Gibbs relation (see \cite{Choquet}) holds
\begin{align*}
	T_0dS=d\Big(\frac{e_0}{n_0}\Big)+P_0d\Big(\frac{1}{n_0}\Big),
\end{align*}
which is equivalent to 
\begin{align*}\label{1.14-00}
	\frac{\partial e_0}{\partial n_0}\Big|_{S}=\frac{e_0+P_0}{n_0},\quad \frac{\partial e_0}{\partial S}\Big|_{n_0}=n_0T_0.
\end{align*}
For simplicity of presentation, in the rest of this paper, we always assume that 
\begin{align*}
	k_B=1,\quad m_0=1.
\end{align*}

%Suppose that $\left(n_0, u, T_0\right)(t, x)$ is a smooth solution to the relativistic Euler equations \eqref{re-eu}. We introduce the local relativistic Maxwellian as
%\begin{align}\label{1.13-00}
%	\mathbf{M}_{\mathfrak{c}}=\mathbf{M}_{\mathfrak{c}}(n_0,u,T_0;p)=\frac{n_0\gamma}{4\pi \mathfrak{c}^3K_2(\gamma)}\exp\Big\{\frac{u^{\mu}p_{\mu}}{T_0}\Big\}.
%\end{align}

Formally, when $\mathfrak{c}$ tends to infinity, the relativistic Euler equations \eqref{re-eu} reduces to
	\begin{equation}\label{ce}
	\left\{   \begin{aligned}
		&\partial_t \rho+\nabla_x \cdot(\rho \mathfrak{u})=0, \\
		&\partial_t(\rho \mathfrak{u})+\nabla_x \cdot(\rho \mathfrak{u} \otimes \mathfrak{u})+\nabla_x \mathcal{P}=0, \\
		&\partial_t\Big(\rho\Big(\frac{1}{2}|\mathfrak{u}|^2+\mathcal{E}\Big)\Big)+\nabla_x \cdot\Big(\Big(\rho\Big(\frac{1}{2}|\mathfrak{u}|^2+\mathcal{E}\Big)+\mathcal{P}\Big) \mathfrak{u}\Big)=0,
	\end{aligned}   \right.
\end{equation}
which is the classical compressible Euler equations. Here $\rho(t,x)>0$ denotes the density of the fluid, $\mathfrak{u}(t,x)$ is velocity, $\mathcal{P}(t,x)$ is pressure, $\mathcal{E}(t,x)>0$ is internal energy per unit mass. The fluid variables $\rho$, $\theta$, $\eta$, $\mathcal{P}$ and $\mathcal{E}$ satisfy the following relations
\begin{align}\label{1.16-00}
	\mathcal{P}=\rho \theta,\quad \eta=-\ln(A_0\rho \theta^{-\frac{3}{2}}), \quad \mathcal{E}=\frac{3}{2}\theta,
\end{align}
where $A_0=(2\pi)^{-\frac{3}{2}}e^{-\frac{5}{2}}$ and $\theta(t,x)$ is the temperature of the fluid, $\eta(t,x)$ is the physical entropy. It is clear that
\begin{align*}
	\theta d\eta=d \mathcal{E}-\frac{\mathcal{P}}{\rho^2}d\rho.
\end{align*}

Denote
\begin{equation*} 
	W:=\begin{pmatrix}
		\mathcal{P} \\
		\mathfrak{u} \\
		\eta
	\end{pmatrix},
\end{equation*}
then the classical Euler equations \eqref{ce} can be written as a symmetric hyperbolic system 
\begin{align}\label{3.24-00}
	\mathbf{D}_0 \partial_t W+\sum_{j=1}^3 \mathbf{D}_j \partial_j W=0,
\end{align}
where 
\begin{align*}
	\mathbf{D}_0=\begin{pmatrix}
		1 & 0 & 0\\
		0& \sigma^2\rho^2\mathbf{I} & 0\\
		0 & 0 & 1 
	\end{pmatrix},\quad 
    \mathbf{D}_j=\begin{pmatrix}
    	\mathfrak{u}_j & \sigma^2\rho \mathbf{e}^t_j & 0\\
    	\sigma^2\rho \mathbf{e}_j & \sigma^2\rho^2 \mathfrak{u}_j\mathbf{I} & 0\\
    	0 & 0 & \mathfrak{u}_j
    \end{pmatrix}.
\end{align*}
The quantity $\sigma=\sqrt{\frac{\partial \mathcal{P}}{\partial \rho}\Big|_\eta}>0$ is the sound speed of the classical Euler equations. 

For simplicity, we supplement \eqref{3.24-00} with the same initial data and constant background as in the relativistic Euler case, that is, 
\begin{align*}
	W|_{t=0}=V_0,\quad \overline{W}=\overline{V},
\end{align*}
where $\overline{W}:=(\overline{\mathcal{P}},0,\overline{\eta})$. It is a classical result from the theory of symmetric hyperbolic systems that \eqref{3.24-00} admits a local smooth solution with smooth initial data, see Lemma \ref{thm-ce} for details.

%%%%%%%%%%%%%%%%%%%%%%%%%%%%%%%%%%%%%%%%%%%%%%%%%%%%%%%%%%%%%%%
\subsection{A brief history of the hydrodynamic and Newtonian limits for (relativistic) Boltzmann equation }
%%%%%%%%%%%%%%%%%%%%%%%%%%%%%%%%%%%%%%%%%%%%%%%%%%%%%%%%%%%%%%%
    For the hydrodynamic limit of the non-relativistic Boltzmann equation, there have been extensive studies on this subject and we only mention a few works. In the founding work of Maxwell \cite{Maxwell} and Boltzmann \cite{Boltzmann}, it is shown that the
    Boltzmann equation is closely related to the fluid dynamical systems for both compressible
    and incompressible flows.
    Hilbert and Enskog-Chapman independently developed a set of formal small-parameter expansion methods, called Hilbert expansion and Enskog-Chapman expansion respectively, and established the connection between Boltzmann equation and compressible (incompressible) Euler equations, compressible (incompressible) Navier-Stokes (Fourier) systems and the acoustic system, etc. It is a important and challenging problem to rigorously
    justify these formal approximations. In fact, the purpose of Hilbert's sixth problem
    \cite{Hilbert} is to establish the laws of motion of continua from more microscopic physical
    models, such as Boltzmann theory, from a rigorous mathematical standpoint. For the hydrodynamic limit from Boltzmann equation to compressible Euler equations, Caflisch \cite{Caflisch} strictly justified the validity of the limit by employing the truncated Hilbert expansion method, see also \cite{Lachowicz,Nishida,Ukai}, and \cite{Guo2,Guo4} for an application of $L^2-L^{\infty}$ approach. For the hydrodynamic limit to incompressible Navier-Stokes system, see \cite{Bardos1,Bardos2,Bardos4,Caprino,Esposito,Esposito1,Esposito2,Golse1,Guo3,Guo7,Jang,Jiang1,Masmoudi,Saint-Raymond,Wu} and references cited therein. For the compressible Euler limit and acoustic limit of Boltzmann equation with specular reflection boundary conditions, we refer the reader to the recent work of Guo-Huang-Wang \cite{Guo5}. 
    For other works which connect to hydrodynamic limit, we refer to \cite{Bardos3,Liu,Golse, Guo6} and  the review articles \cite{Golse2, Masmoudi1,Villani}.  
%references cited therein. For a more comprehensive list of references, see the review articles \cite{Golse2, Masmoudi1,Villani}. 
    
    Although there have been satisfactory results on the hydrodynamic limit of the non-relativistic Boltzmann equation, much less is known on the hydrodynamic limit or/and Newtonian limit of the relativistic Boltzmann equation despite of its importance. For the Newtonian limit of the relativistic particles, Calogero \cite{Calogero} established the existence of local-in-time relativistic Boltzmann solutions in periodic box, and then proved that such solutions converge, in a suitable norm, to the Newtonian Boltzmann solutions as $\mathfrak{c}\rightarrow \infty$. Later, for the case near vacuum,  Strain \cite{Strain2} proved the unique global-in-time mild solution and justified the Newtonian limit for arbitrary time intervals $[0,T]$. For the hydrodynamic limit of the relativistic Boltzmann equation, Speck-Strain \cite{Speck} demonstrated the hydrodynamic limit from the relativistic Boltzmann equation to the relativistic Euler equations for local-in-time smooth solutions. It is shown in \cite{Guo} that solutions of the relativistic Vlasov-Maxwell-Boltzmann system converge to solutions of the relativistic Euler-Maxwell system globally in time, as the Knudsen number $\varepsilon \rightarrow 0$. 
    
    In the present paper, we are concerned with both the hydrodynamic limit $\varepsilon\to 0$ and Newtonian limit $\mathfrak{c}\to \infty$ from the relativistic Boltzmann equation to the classical Euler equations. This is achieved by employing the Hilbert expansion method and uniform in $\mathfrak{c}$ and $\varepsilon$ estimates on the Hilbert expansion. 
    
%    careful developed in \cite{Caflisch} and $L^2-L^{\infty}$ interplay framework in \cite{Guo1,Guo2}, we strictly justify the hydrodynamic and Newtonian limits from the relativistic Boltzmann solutions to the local non-relativistic Maxwellian associated to the classical Euler equations. The proof relies heavily on a careful analysis of the uniform-in-$\mathfrak{c}$ boundedness for relativistic collision operators as well as the inverse operator, which may also apply to other kinds of hydrodynamic limit for relativistic Boltzmann equation.

%%%%%%%%%%%%%%%%%%%%%%%%%%%%%%%%%%%%%%%%%%%%%%%%%%%%%%%%%%%%%%%
\subsection{Main results}
%%%%%%%%%%%%%%%%%%%%%%%%%%%%%%%%%%%%%%%%%%%%%%%%%%%%%%%%%%%%%%%
 We consider the perturbation around the local Maxwellian $\mathbf{M}_{\mathfrak{c}}$:
 \begin{align}\label{1.3}
 	F(t,x,p)=\mathbf{M}_{\mathfrak{c}}(t,x,p)+\sqrt{\mathbf{M}_{\mathfrak{c}}(t,x,p)}f(t,x,p),
 \end{align}
	We define the linearized collision operator $\mathbf{L}_{\mathfrak{c}}f$ and nonlinear collision operator $\Gamma_{\mathfrak{c}}\left(f_1, f_2\right)$:
\begin{align*}
	&\mathbf{L}_{\mathfrak{c}} f  :=-\frac{1}{\sqrt{\mathbf{M}_{\mathfrak{c}}}}[Q_{\mathfrak{c}}(\sqrt{\mathbf{M}_{\mathfrak{c}}} f, \mathbf{M}_{\mathfrak{c}})+Q_{\mathfrak{c}}(\mathbf{M}_{\mathfrak{c}}, \sqrt{\mathbf{M}_{\mathfrak{c}}} f)]=\nu_{\mathfrak{c}}  f-\mathbf{K}_{\mathfrak{c}}f, \\
	&\Gamma_{\mathfrak{c}}\left(f_1, f_2\right):=\frac{1}{\sqrt{\mathbf{M}_{\mathfrak{c}}}} Q_{\mathfrak{c}}\left(\sqrt{\mathbf{M}_{\mathfrak{c}}} f_1, \sqrt{\mathbf{M}_{\mathfrak{c}}} f_2\right),
\end{align*}
where the collision frequency $\nu_{\mathfrak{c}}=\nu_{\mathfrak{c}}(t, x, p)$ is defined as
\begin{align}\label{1.34-00}
	\nu_{\mathfrak{c}}(p):=\int_{\mathbb{R}^3}\int_{\mathbb{S}^2} v_{\phi}(p,q) \mathbf{M}_{\mathfrak{c}}(q)d \omega d q=\frac{\mathfrak{c}}{2}\frac{1}{p^0}\int_{\mathbb{R}^3}\frac{dq}{q^0}\int_{\mathbb{R}^3}\frac{dq'}{q'^0}\int_{\mathbb{R}^3}\frac{dp'}{p'^0}W(p,q\mid p',q')\mathbf{M}_{\mathfrak{c}}(q)
\end{align}
and $\mathbf{K}_{\mathfrak{c}}f$ takes the following form:
\begin{align*}
	\mathbf{K}_{\mathfrak{c}}f&:= \int_{\mathbb{R}^3}\int_{\mathbb{S}^2} v_{\phi}(p,q) \sqrt{\mathbf{M}_{\mathfrak{c}}(q)}\left[\sqrt{\mathbf{M}_{\mathfrak{c}}(q')} f\left(p^{\prime}\right)+\sqrt{\mathbf{M}_{\mathfrak{c}}(p')} f\left(q^{\prime}\right)\right]d\omega dq\\
	& \qquad-\int_{\mathbb{R}^3}\int_{\mathbb{S}^2} v_{\phi}(p,q)  \sqrt{\mathbf{M}_{\mathfrak{c}}(p)\mathbf{M}_{\mathfrak{c}}(q)} f(q)d\omega dq \\
	&=\mathbf{K}_{\mathfrak{c}2}f-\mathbf{K}_{\mathfrak{c}1}f.
\end{align*}

We introduce the global Maxwellian $J_{\mathfrak{c}}(p)$ as
\begin{align}\label{1.50-0}
	J_{\mathfrak{c}}(p):=\frac{n_M\gamma_M}{4\pi \mathfrak{c}^3K_2(\gamma_M)}\exp{\Big(\frac{-\mathfrak{c} p^0}{T_M}\Big)},
\end{align}
where $n_M$, $T_M$ are positive constants and $\gamma_M=\frac{\mathfrak{c}^2}{T_M}$. For each $\ell \geq 0$, we also define the weight function $w_{\ell}$ as
\begin{align}\label{1.46-00}
    w_{\ell}:=w_{\ell}(p)=(1+|p|^2)^{\frac{\ell}{2}}.
\end{align}
We then define a corresponding weighted $L^{\infty}$ norm by
\begin{align*}
	\|h\|_{\infty,\ell}=\|w_{\ell}h\|_{L^{\infty}}.
\end{align*}

Our first result is the Hilbert expansion of relativistic Boltzmann equation with uniform-in-$\mathfrak{c}$ estimates. Notice that $k$ and $N_0$ are defined in \eqref{1.16-0}  and Lemma \ref{thm-re}. 
 
\begin{Theorem}\label{thm1.1-0}
	Assume $k=3$, $N_0\ge 10$. Let $(n_0(t,x), u(t,x), T_0(t,x))$ be a smooth solution to the relativistic Euler equations \eqref{re-eu} with initial data $\eqref{1.19-20}$ and constant background $\overline{V}$ for $(t, x) \in[0, T] \times \mathbb{R}^3$. Suppose that $\mathbf{M}_{\mathfrak{c}}(t,x,p)$ is the local relativistic Maxwellian in  \eqref{1.13-00} and there exist constants $C>0$, $n_M>0$, $T_M>0$, and $\alpha \in(\frac{1}{2},1)$ such that 
	\begin{align}\label{1.53-0}
		\frac{J_{\mathfrak{c}}(p)}{C} \leq  \mathbf{M}_{\mathfrak{c}}(t,x,p)\leq C J_{\mathfrak{c}}^\alpha(p).
	\end{align}
	Define initially
    \begin{align*}%\label{1.30-10}
	F^{\varepsilon,\mathfrak{c}}(0, x,  p)=\mathbf{M}_{\mathfrak{c}}(0, x, p)+\sum_{n=1}^{2k-1} \varepsilon^n F^{\mathfrak{c}}_n(0,x, p)+\varepsilon^k F^{\varepsilon,\mathfrak{c}}_{R}(0, x, p) \geq 0
    \end{align*}
    with
    \begin{align*}%\label{1.50-00}
    	\varepsilon^{\frac{3}{2}}\Big\|\frac{F^{\varepsilon,\mathfrak{c}}_{R}(0)}{\sqrt{J_{\mathfrak{c}}}} \Big\|_{\infty, \ell}+\Big\|
    	\frac{F^{\varepsilon,\mathfrak{c}}_{R}(0)}{\sqrt{\mathbf{M}_{\mathfrak{c}}(0)}} \Big\|_2\le C<\infty.
    \end{align*}
	Then there exist two independent positive constants $\varepsilon_0\in (0,1]$ and $\mathfrak{c}_0\gg 1$ such that, for each $0<\varepsilon\le \varepsilon_0$ and $\mathfrak{c}\ge \mathfrak{c}_0$, there admits a unique classical solution $F^{\varepsilon,\mathfrak{c}}$ of the relativistic Boltzmann equation \eqref{re-eu} for $(t, x, p) \in [0, T] \times \mathbb{R}^3 \times \mathbb{R}^3$ in the following form of expansion
	\begin{align*}
		F^{\varepsilon,\mathfrak{c}}(t,x,p)=\mathbf{M}_{\mathfrak{c}}(t,x,p)+\sum_{n=1}^{2k-1}\varepsilon^nF^{\varepsilon}_n(t,x,p)+\varepsilon^{k} F^{\varepsilon,\mathfrak{c}}_R(t,x,p)\ge 0,
	\end{align*}
	where the functions $F^{\varepsilon}_n\ (n=1,\cdots,2k-1)$ are constructed in Proposition \ref{prop6.3}.
	
	Furthermore, there exists a constant $C_T>0$ such that for all $\varepsilon \in\left(0, \varepsilon_0\right]$ and for any $\ell \geq 9$, the following estimate holds: 
	\begin{align*}
		& \varepsilon^{\frac{3}{2}} \sup _{0 \leq t \leq  T}\left\|\frac{F^{\varepsilon,\mathfrak{c}}_{R}(t)}{\sqrt{J_{\mathfrak{c}}}} \right\|_{\infty, \ell}+\sup _{0 \leq t \leq T}\left\| \frac{F^{\varepsilon,\mathfrak{c}}_{R}(t)}{\sqrt{\mathbf{M}_{\mathfrak{c}}(t)}} \right\|_2 \nonumber\\
		& \leq C_T\left\{\varepsilon^{\frac{3}{2}}\Big\|\frac{F^{\varepsilon,\mathfrak{c}}_{R}(0)}{\sqrt{J_{\mathfrak{c}}}} \Big\|_{\infty, \ell}+\Big\|
		\frac{F^{\varepsilon,\mathfrak{c}}_{R}(0)}{\sqrt{\mathbf{M}_{\mathfrak{c}}(0)}} \Big\|_2+1\right\}.
	\end{align*}
	Moreover, we have that
	\begin{align}\label{1.53-00}
	   \sup _{0 \leq t \leq T}\left\|\frac{F^{\varepsilon,\mathfrak{c}}(t)-\mathbf{M}_{\mathfrak{c}}(t)}{\sqrt{J_{\mathfrak{c}}}}\right\|_{\infty}+\sup _{0 \leq t \leq T}\left\|\frac{F^{\varepsilon,\mathfrak{c}}(t)-\mathbf{M}_{\mathfrak{c}}(t)}{\sqrt{\mathbf{M}_{\mathfrak{c}}(t)}}\right\|_2 \leq C_T \varepsilon,
    \end{align}
	where the constant $C$ and $C_T>0$ are independent of $\varepsilon$ and $\mathfrak{c}$.
\end{Theorem}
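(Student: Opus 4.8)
## Proof proposal

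The plan is to construct the Hilbert coefficients $F^{\mathfrak c}_n$ first, then close a uniform-in-$(\varepsilon,\mathfrak c)$ estimate for the remainder $F^{\varepsilon,\mathfrak c}_R$ via a combined $L^2$–$L^\infty$ argument, and finally assemble the convergence rate \eqref{1.53-00}. For the coefficients: $F^{\mathfrak c}_0=\mathbf M_{\mathfrak c}(n_0,u,T_0)$ is already fixed by the relativistic Euler solution of \eqref{re-eu}, whose existence and uniform-in-$\mathfrak c$ bound \eqref{1.23-00} I take from Lemma \ref{thm-re}. The higher coefficients are solved recursively from \eqref{1.18-0}: writing $F^{\mathfrak c}_n=\sqrt{\mathbf M_{\mathfrak c}}f_n$, the macroscopic (hydrodynamic) part of $f_n$ is propagated by a linear symmetric-hyperbolic system obtained by projecting the $n$-th equation onto the five collision invariants $1,p,p^0$, while the microscopic part is read off algebraically as $(\mathbf I-\mathbf P_{\mathfrak c})f_n=\mathbf L_{\mathfrak c}^{-1}\{\cdots\}$ using the coercivity of $\mathbf L_{\mathfrak c}$ on $(\ker\mathbf L_{\mathfrak c})^\perp$. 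The key technical input here, which I expect is established in the sections preceding the theorem, is that all the relativistic collision operator bounds — spectral gap of $\mathbf L_{\mathfrak c}$, boundedness of $\mathbf K_{\mathfrak c}$, the nonlinear estimate $\Gamma_{\mathfrak c}$, and $\nu_{\mathfrak c}(p)\sim \langle\cdot\rangle$ — hold with constants independent of $\mathfrak c$ for $\mathfrak c\ge\mathfrak c_0$; granting that, the construction of $F^{\mathfrak c}_1,\dots,F^{\mathfrak c}_{2k-1}$ together with $\mathfrak c$-independent Sobolev bounds (in enough $x$-derivatives, which is why $N_0\ge 10$ and $k=3$) is a finite recursion, essentially as in Speck–Strain but with all estimates tracked uniformly in $\mathfrak c$. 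The hypothesis \eqref{1.53-0} comparing $\mathbf M_{\mathfrak c}$ with the global Maxwellian $J_{\mathfrak c}$ is what lets every weighted norm in the construction be dominated by $J_{\mathfrak c}$-weighted norms with uniform constants.

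Next I turn to the remainder equation \eqref{1.19-0}. Set $h^{\varepsilon,\mathfrak c}:=w_\ell F^{\varepsilon,\mathfrak c}_R/\sqrt{J_{\mathfrak c}}$ and $g^{\varepsilon,\mathfrak c}:=F^{\varepsilon,\mathfrak c}_R/\sqrt{\mathbf M_{\mathfrak c}}$, and run the standard $L^2$–$L^\infty$ bootstrap of Guo adapted to the relativistic setting. The $L^2$ estimate: multiply the $g$-equation by $g$ and integrate; the linear term $-\tfrac1\varepsilon\langle \mathbf L_{\mathfrak c}g,g\rangle$ produces, by the uniform spectral gap, a good term $\tfrac{c_0}{\varepsilon}\|(\mathbf I-\mathbf P_{\mathfrak c})g\|_\nu^2$; the potentially dangerous contribution is the $\tfrac1\varepsilon$ times the commutator coming from $\partial_t\mathbf M_{\mathfrak c}$ and the fact that $\mathbf P_{\mathfrak c}$ does not exactly annihilate the streaming term, but this is exactly where the Euler equations \eqref{re-eu} are used — the macroscopic part of the source is orthogonal to $\ker\mathbf L_{\mathfrak c}$ up to terms that are $O(1)$ (not $O(1/\varepsilon)$), using that $(n_0,u,T_0)$ solves \eqref{re-eu}. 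The $\varepsilon^{i-1}\Gamma_{\mathfrak c}(F^{\mathfrak c}_i,\cdot)$ terms with $i\ge 1$ are harmless (no negative power of $\varepsilon$), the quadratic term $\varepsilon^{k-1}\Gamma_{\mathfrak c}(F^{\varepsilon,\mathfrak c}_R,F^{\varepsilon,\mathfrak c}_R)$ is absorbed since $k-1=2$, and the macroscopic part of $g$ is controlled by the microscopic part plus the source through the local conservation laws (a Korn-type / "macroscopic coercivity" argument). This yields a Grönwall inequality for $\|g^{\varepsilon,\mathfrak c}(t)\|_2$ in terms of $\|h^{\varepsilon,\mathfrak c}\|_{\infty,\ell}$ and the $O(1)$ source $\varepsilon^k A$.

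For the $L^\infty$ estimate I write the $h$-equation along characteristics $\dot x=\hat p$, using $\nu_{\mathfrak c}(p)\gtrsim\langle\cdot\rangle$ (uniform in $\mathfrak c$) for the damping and the Duhamel representation; the operator $w_\ell\mathbf K_{\mathfrak c}(w_\ell^{-1}\cdot)$ is split into a small-momentum-truncation piece (which, iterated twice along characteristics, gains a smalling factor via the averaging-in-$p$ / change-of-variables lemma for the relativistic kernel) plus a remainder controlled by the $L^2$ norm of $g^{\varepsilon,\mathfrak c}$ — this is the crux of the coupling, turning the $L^\infty$ bound into $C(\text{data})+C\sup_t\|g^{\varepsilon,\mathfrak c}\|_2$ plus a term $\lesssim \varepsilon^{1/2}\sup_t\|h^{\varepsilon,\mathfrak c}\|_{\infty,\ell}$ that is absorbed for $\varepsilon\le\varepsilon_0$; the $\varepsilon^{3/2}$ prefactor on $\|h^{\varepsilon,\mathfrak c}\|_{\infty,\ell}$ in the statement is precisely the bookkeeping weight that makes this absorption and the feedback into the $L^2$ estimate consistent. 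Combining the two estimates gives a closed system in $\big(\varepsilon^{3/2}\sup_t\|h^{\varepsilon,\mathfrak c}\|_{\infty,\ell},\ \sup_t\|g^{\varepsilon,\mathfrak c}\|_2\big)$ whose solution is bounded by the initial data plus $1$ — this is the displayed remainder estimate. The positivity $F^{\varepsilon,\mathfrak c}\ge 0$ is obtained as usual by a separate nonnegative-solution construction (e.g. working with $F^{\varepsilon,\mathfrak c}$ directly in an $L^\infty$ iteration that preserves sign) and matching with the uniqueness just proved. Finally \eqref{1.53-00} follows by the triangle inequality: $F^{\varepsilon,\mathfrak c}-\mathbf M_{\mathfrak c}=\sum_{n=1}^{2k-1}\varepsilon^n F^{\mathfrak c}_n+\varepsilon^k F^{\varepsilon,\mathfrak c}_R$, the first sum is $O(\varepsilon)$ in the $J_{\mathfrak c}^{1/2}$- and $\mathbf M_{\mathfrak c}^{1/2}$-weighted norms by the uniform coefficient bounds, and the last term is $O(\varepsilon^k)\cdot O(\varepsilon^{-3/2})=O(\varepsilon^{3/2})=O(\varepsilon)$ by the remainder estimate, all with $\mathfrak c$-independent constants.

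The main obstacle is the $L^\infty$ step: making the relativistic analogue of the compactness/averaging estimate for $\mathbf K_{\mathfrak c}$ work with constants uniform in $\mathfrak c$, since the Møller velocity $v_\phi=\tfrac{\mathfrak c}{4}g\sqrt s/(p^0q^0)$ and the relativistic kernel degenerate as $\mathfrak c\to\infty$ and a naive bound would blow up; controlling this is exactly the "uniform-in-$\mathfrak c$ estimate for relativistic Boltzmann operators" advertised in the abstract, and everything else is a careful but essentially standard tracking of $\mathfrak c$-dependence through the Caflisch/Guo scheme.
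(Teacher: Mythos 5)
Your proposal follows essentially the same route as the paper: construct the Hilbert coefficients $F^{\mathfrak c}_n$ recursively via the linear symmetric-hyperbolic macroscopic system and $\mathbf L_{\mathfrak c}^{-1}$ for the microscopic part (this is Proposition~\ref{prop6.3}), then close a uniform-in-$(\varepsilon,\mathfrak c)$ $L^2$--$L^\infty$ bootstrap for the remainder (Lemmas~\ref{lem7.1}--\ref{lem7.2}) using the uniform spectral gap (Proposition~\ref{prop4.13}), the uniform exponential decay of $\mathbf L_{\mathfrak c}^{-1}$ (Proposition~\ref{prop5.4}), and the $\mathfrak c$-uniform kernel and collision-frequency estimates of Section~4, and finally conclude \eqref{1.53-00} by the triangle inequality. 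One small inaccuracy: in the $L^2$ step you invoke a ``Korn-type / macroscopic coercivity'' to control $\mathbf P_{\mathfrak c}g$, but the paper does not use (or need) macroscopic coercivity here --- since the result is local-in-time, the macroscopic part of $f_R^{\varepsilon,\mathfrak c}$ is simply carried in $\|f_R^{\varepsilon,\mathfrak c}\|_2$ and closed by Gr\"onwall, with only the microscopic part taken from the dissipation $\tfrac{\zeta_0}{\varepsilon}\|(\mathbf I-\mathbf P_{\mathfrak c})f_R^{\varepsilon,\mathfrak c}\|^2_{\nu_{\mathfrak c}}$; likewise the commutator $\{\partial_t+\hat p\cdot\nabla_x\}\sqrt{\mathbf M_{\mathfrak c}}/\sqrt{\mathbf M_{\mathfrak c}}$ enters the remainder equation \eqref{7.25-0} already at order $O(1)$ rather than $O(1/\varepsilon)$, so no orthogonality-to-$\ker\mathbf L_{\mathfrak c}$ cancellation is required at that stage. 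These are superfluous elements rather than gaps; the remaining scheme, including the $\varepsilon^{3/2}$ bookkeeping weight, the double Duhamel iteration with the change of variables $q\mapsto y_2$, and the absorption of the self-consistent $L^\infty$ term via large $N$ and small $\kappa,\varepsilon$, matches the paper.
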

    \begin{remark}
    	It follows from \eqref{1.53-00} that we have established the uniform-in-$\mathfrak{c}$ hydrodynamic limit from the relativistic Boltzmann equation to the relativistic Euler equations.
    \end{remark}
     
     \begin{remark}
 	   When $\frac{|u|}{\mathfrak{c}}$ is suitably small, it has been shown in \cite[Lemma 1.1]{Speck} that there exist positive constants $C>0$, $n_M>0$, $T_M>0$, and $\alpha \in(\frac{1}{2},1)$, which are independent of $\mathfrak{c}$, such that \eqref{1.53-0} holds. 
     \end{remark}
     \begin{remark}
     	The uniform-in-$\mathfrak{c}$ estimates for the relativistic Boltzmann collision operators developed here can also be applied to the Newtonian limit from the relativistic Boltzmann equation to the Newtonian Boltzmann equation. This will be considered in a forthcoming paper.
     \end{remark}
     With the uniform-in-$\mathfrak{c}$ estimates in Theorem \ref{thm1.1-0}, one can further obtain both the hydrodynamic limit $\varepsilon \rightarrow 0$ and the Newtonian limit $\mathfrak{c} \rightarrow \infty$ at the same time.

\begin{Theorem}\label{thm1.3-0}
	Assume that all conditions in Theorem \ref{thm1.1-0} are satisfied. Suppose that 
	$(\rho(t,x)$, $\mathfrak{u}(t,x), \theta(t,x))$
	is a smooth solution to the classical Euler equations \eqref{ce} with the same initial data and constant background as the relativistic Euler case. Let $\mu$ be the local Maxwellian of classical Boltzmann equation, i.e.,
	\begin{align}\label{1.34-20}
		\mu(t,x,p)=\frac{\rho}{(2\pi \theta)^{\frac{3}{2}}}e^{-\frac{|p-\mathfrak{u}|^2}{2\theta}}.
	\end{align} 
    Then there exist independent positive constants $\varepsilon_0\in (0,1]$ and $\mathfrak{c}_0\gg 1$ such that for all $0<\varepsilon\le \varepsilon_0$ and $\mathfrak{c}\ge \mathfrak{c}_0$, the following estimate holds:
	\begin{align}\label{1.54-00}
		 \sup _{0 \leq t \leq T}\left\|\big(F^{\varepsilon,\mathfrak{c}}-\mu\big)(t)e^{\delta_0|p|}\right\|_{\infty}\le C_T\varepsilon+C_T\mathfrak{c}^{-\frac{3}{2}},
	\end{align} 
    where all the positive constants $\varepsilon_0$, $\mathfrak{c}_0$, $C_T$ and $\delta_0$ are independent of $\varepsilon$ and $\mathfrak{c}$.
\end{Theorem}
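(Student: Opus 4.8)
The plan is to bridge $F^{\varepsilon,\mathfrak{c}}$ and $\mu$ through the relativistic local Maxwellian $\mathbf{M}_{\mathfrak{c}}$ of \eqref{1.13-00}, splitting
\[
F^{\varepsilon,\mathfrak{c}}-\mu=\big(F^{\varepsilon,\mathfrak{c}}-\mathbf{M}_{\mathfrak{c}}\big)+\big(\mathbf{M}_{\mathfrak{c}}-\mu\big),
\]
so that the first bracket is governed by the Knudsen number alone and the second by the light speed alone. For the first bracket, \eqref{1.53-00} in Theorem~\ref{thm1.1-0} already yields $\sup_{0\le t\le T}\|(F^{\varepsilon,\mathfrak{c}}-\mathbf{M}_{\mathfrak{c}})(t)/\sqrt{J_{\mathfrak{c}}}\|_{\infty}\le C_T\varepsilon$ with $C_T$ independent of $\varepsilon$ and $\mathfrak{c}$, so it remains only to check that $e^{\delta_0|p|}\sqrt{J_{\mathfrak{c}}(p)}$ is bounded uniformly in $p\in\mathbb R^3$ and $\mathfrak{c}\ge\mathfrak{c}_0$ for $\delta_0$ small. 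Since $\mathfrak{c}p^0-\mathfrak{c}^2=\mathfrak{c}|p|^2/(\sqrt{\mathfrak{c}^2+|p|^2}+\mathfrak{c})$ is $\gtrsim|p|^2$ for $|p|\le\mathfrak{c}$ and $\gtrsim\mathfrak{c}|p|$ for $|p|>\mathfrak{c}$, it dominates $\delta_0|p|$ once $\mathfrak{c}_0$ is large and $\delta_0$ small, while by the large-$\gamma$ asymptotics of $K_2$ the normalizing constant of $J_{\mathfrak{c}}$ is comparable to $n_M(2\pi T_M)^{-3/2}e^{\mathfrak{c}^2/T_M}$, which is exactly balanced by the factor $e^{-\mathfrak{c}^2/T_M}$ hidden in $e^{-\mathfrak{c}p^0/T_M}$. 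Hence $\sup_{0\le t\le T}\|(F^{\varepsilon,\mathfrak{c}}-\mathbf{M}_{\mathfrak{c}})(t)e^{\delta_0|p|}\|_{\infty}\le C_T\varepsilon$.

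For the second bracket I would insert the intermediate \emph{classical} Maxwellian $\widetilde\mu:=\mu(n_0,u,T_0)$ built from the \emph{relativistic} Euler fluid variables, and write $\mathbf{M}_{\mathfrak{c}}-\mu=(\mathbf{M}_{\mathfrak{c}}-\widetilde\mu)+(\widetilde\mu-\mu)$. For $\mathbf{M}_{\mathfrak{c}}-\widetilde\mu$ I Taylor expand in $1/\mathfrak{c}^2$: using $p^0=\sqrt{\mathfrak{c}^2+|p|^2}$, $u^0=\sqrt{\mathfrak{c}^2+|u|^2}$ one finds $u^\mu p_\mu/T_0=-\mathfrak{c}^2/T_0-|p-u|^2/(2T_0)+O(\langle p\rangle^4/\mathfrak{c}^2)$, and the prefactor $n_0\gamma/(4\pi\mathfrak{c}^3K_2(\gamma))$ with $\gamma=\mathfrak{c}^2/T_0$ expands, via $K_2(\gamma)=\sqrt{\pi/(2\gamma)}\,e^{-\gamma}(1+\tfrac{15}{8\gamma}+O(\gamma^{-2}))$, as $n_0(2\pi T_0)^{-3/2}e^{\mathfrak{c}^2/T_0}(1+O(\mathfrak{c}^{-2}))$. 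The divergent factors $e^{\pm\mathfrak{c}^2/T_0}$ cancel, the leading terms assemble into $\widetilde\mu=n_0(2\pi T_0)^{-3/2}e^{-|p-u|^2/(2T_0)}$, and the $O(\mathfrak{c}^{-2})$ remainder appears multiplied by a polynomial in $p$; since \eqref{1.23-00} confines $(n_0,T_0)$ (equivalently $(P_0,S)$) to a fixed compact subset of $(0,\infty)^2$ on $[0,T]$, the Gaussian decay of $\widetilde\mu$ absorbs both that polynomial and the weight $e^{\delta_0|p|}$, giving $\sup_{0\le t\le T}\|(\mathbf{M}_{\mathfrak{c}}-\widetilde\mu)(t)e^{\delta_0|p|}\|_{\infty}\le C_T\mathfrak{c}^{-2}$.

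The remaining term $\widetilde\mu-\mu$ is the Newtonian limit of the Euler systems themselves, and is handled in three steps. First, using the Bessel asymptotics in the equations of state \eqref{3.32-0}--\eqref{3.34-0} (yielding $P_0=n_0T_0$, $e_0=n_0\mathfrak{c}^2+\tfrac32 n_0T_0+O(\mathfrak{c}^{-2})$) together with the expansion of $u^0$, I rewrite \eqref{re-eu} in the classical hydrodynamic unknowns; the apparently $O(\mathfrak{c}^2)$ terms in the energy equation are eliminated by substituting the continuity equation, after which \eqref{re-eu} becomes the classical symmetric hyperbolic system \eqref{3.24-00} plus a source term that is $O(\mathfrak{c}^{-2})$ and bounded uniformly in $\mathfrak{c}$ in a high Sobolev norm, thanks to \eqref{1.23-00}. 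Second, since the relativistic and classical solutions start from the same initial data (up to the $O(\mathfrak{c}^{-2})$ discrepancy produced by the two entropy conventions) and both enjoy uniform-in-$\mathfrak{c}$ bounds by Lemma~\ref{thm-re} and Lemma~\ref{thm-ce}, the standard energy estimate for symmetric hyperbolic systems together with Gr\"onwall's inequality gives $\sup_{0\le t\le T}\|(n_0,u,T_0)(t)-(\rho,\mathfrak{u},\theta)(t)\|_{H^{N_0-2}}\le C_T\mathfrak{c}^{-2}$, and $H^{N_0-2}\hookrightarrow L^\infty$ since $N_0\ge 10$. Third, as $\mu$ depends smoothly on its fluid parameters, which stay in a fixed compact set, its Lipschitz dependence yields $\sup_{0\le t\le T}\|(\widetilde\mu-\mu)(t)e^{\delta_0|p|}\|_{\infty}\le C_T\mathfrak{c}^{-2}$. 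Summing the three contributions gives $\sup_{0\le t\le T}\|(F^{\varepsilon,\mathfrak{c}}-\mu)(t)e^{\delta_0|p|}\|_{\infty}\le C_T\varepsilon+C_T\mathfrak{c}^{-2}\le C_T\varepsilon+C_T\mathfrak{c}^{-3/2}$, which is \eqref{1.54-00}. The main obstacle is the first step above: \eqref{re-eu} carries apparently singular factors $\mathfrak{c}^2$ and $1/\mathfrak{c}$, so one must choose the hydrodynamic variables and expand the Bessel-function equations of state carefully so that, after the cancellation using the continuity equation, the system genuinely organizes into a regular $O(\mathfrak{c}^{-2})$ perturbation of \eqref{3.24-00}, uniformly in $\mathfrak{c}$, before the hyperbolic energy estimate applies.
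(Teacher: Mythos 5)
Your proposal is correct and follows essentially the same route as the paper: split $F^{\varepsilon,\mathfrak{c}}-\mu$ through the relativistic Maxwellian $\mathbf{M}_{\mathfrak{c}}$, bound the first piece by $\varepsilon\sqrt{J_{\mathfrak{c}}}$ via \eqref{1.53-00}, and bound $\mathbf{M}_{\mathfrak{c}}-\mu$ by Bessel asymptotics together with the Newtonian limit of the Euler systems. The paper performs the second estimate directly, splitting $\mu-\mathbf{M}_{\mathfrak{c}}$ into three pieces ($\mathcal{A}_1$ the prefactor error from $K_2(\gamma)$, $\mathcal{A}_2$ the fluid-variable error $(\rho,\theta)\to(n_0,T_0)$, $\mathcal{A}_3$ the exponent error), and dealing with $\mathcal{A}_3$ by a further case split on $|p|\lessgtr\mathfrak{c}^{1/8}$; your introduction of the intermediate classical Maxwellian $\widetilde\mu=\mu(n_0,u,T_0)$ packages exactly the same three error sources in a slightly cleaner way, and your $|p|\lessgtr\mathfrak{c}^{1/2}$ splitting is what lets you reach $\mathfrak{c}^{-2}$ where the paper settles for $\mathfrak{c}^{-3/2}$ (consistent with the remark after the theorem that the optimal rate is essentially $\mathfrak{c}^{-2}$). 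The one thing you re-derive unnecessarily is the Newtonian limit of the Euler systems: your second and third paragraphs essentially re-prove Proposition~\ref{thm-retoce} (with Lemma~\ref{lem3.4}), which the paper simply cites at this point; that would be the more economical move, but nothing in your outline is incorrect.
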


\begin{remark}
	\eqref{1.54-00} indicates that we have established both the hydrodynamic limit and the Newtonian limit from the relativistic Boltzmann equation to the classical Euler equations. 
    We point out that the two limits $\varepsilon \rightarrow 0$ and $\mathfrak{c} \rightarrow \infty$ can be taken  independently at the same time without assuming any dependence between $\varepsilon$ and $\mathfrak{c}$.
\end{remark}
    \begin{remark}
    	It is worth noting that we make no efforts to obtain the best convergence rates, which is not our main focus here. Actually, for the Newtonian limit, one can obtain the convergence rate $\frac{1}{\mathfrak{c}^{2-\epsilon}}$ for any given small $\epsilon>0$.
    \end{remark}
    
    \begin{remark}
    	Due to the effect of special relativity, we can only obtain the particle velocity weight $e^{\delta_0|p|}$ in \eqref{1.54-00}.
    \end{remark}

%%%%%%%%%%%%%%%%%%%%%%%%%%%%%%%%%%%%%%%%%%%%%%%%%%%%%%%%
\subsection{Main difficulties and strategy of the proof}
%%%%%%%%%%%%%%%%%%%%%%%%%%%%%%%%%%%%%%%%%%%%%%%%%%%%%%%%
We make some comments on the main ideas of the proof and explain the main difficulties and techniques involved in the process.

%{\it  Uniform estimates on collision operators.} 
It is noted that, for the relativistic Boltzmann equation \eqref{rb}, one can not transform the solution $F(t,x,p)$ to $F(t,x,\mathfrak{p})$ with a change of variables $p=\mathfrak{c} \mathfrak{p}$. Now we take the global Maxwellian $\mathbf{M}_{\mathfrak{c}}(1,0,1;p)$ as an example. In fact, $\mathbf{M}_{\mathfrak{c}}(1,0,1;p)\cong e^{\mathfrak{c}^2-\mathfrak{c}p^0}$. It is clear that
\begin{align*}
	e^{\mathfrak{c}^2-\mathfrak{c}p^0}=e^{-\frac{\mathfrak{c}^2|\mathfrak{p}|^2}{1+\sqrt{1+|\mathfrak{p}|^2}}},
\end{align*}
which is actually still a function of $\mathfrak{p}$ and $\mathfrak{c}$. On the other hand, for the normalized particle velocity $\hat{p}$, it holds that
\begin{align*}
	\hat{p}=\mathfrak{c}\frac{p}{p^0}=\frac{\mathfrak{c}p}{\sqrt{|p|^2+\mathfrak{c}^2}}=\frac{\mathfrak{c}\mathfrak{p}}{\sqrt{1+|\mathfrak{p}|^2}},
\end{align*}
which is also a function of $\mathfrak{p}$ and $\mathfrak{c}$. Hence the collision term $Q_{\mathfrak{c}}(F,F)$ can not be transformed into a new form depending only on $\mathfrak{p}$. Thus the roles of the light speed $\mathfrak{c}$ and the Knudsen number $\varepsilon$ are totally different. Therefore it is important to establish the uniform-in-$\mathfrak{c}$ estimate for the relativistic Boltzmann collision in the present paper.
%In fact, these two variables are independent and tend towards infinity and zero, respectively.

To justify both the hydrodynamic limit and Newtonian limit from the relativistic Boltzmann equation to the classical Euler equations, we utilize the Hilbert expansion for the relativistic Boltzmann equation with respect to the small Knudsen number. The key point is to establish the uniform-in-$\mathfrak{c}$ estimates for the Hilbert expansion.

%To show that the relativistic Boltzmann solution $F^{\varepsilon,\mathfrak{c}}$ converges to the local classical Maxwellian $\mu$ associated to the classical Euler solution (see Theorem \ref{thm1.3-0}), we use the method of Hilbert expansion (see \eqref{1.16-0}) for $F^{\varepsilon,\mathfrak{c}}$ and split the whole proof into two main steps. 

Firstly, we prove the existence of smooth solutions to the relativistic Euler equations with uniform-in-$\mathfrak{c}$ estimates, see Lemma \ref{thm-re}. Then, by applying the energy method of the symmetric hyperbolic systems, we establish the Newtonian limit from the relativistic Euler equations \eqref{re-eu} to the classical Euler equations \eqref{ce} with convergence rate $\mathfrak{c}^{-2}$, see section \ref{sec3.3} for details of proof.

%The first step is to investigate the relativistic Euler equations which corresponds to the relativistic Boltzmann equation, and compare the relativistic Euler solutions with the classical Euler solutions. More specifically, we reformulate both systems as symmetric hyperbolic systems and show that the relativistic Euler solutions are bounded uniformly in $\mathfrak{c}$. Especially, both $n_0$ and $T_0$ have positive lower and upper bounds independent of $\mathfrak{c}$ (see Lemma \ref{thm-re}). Comparing to the local existence result in \cite[Theorem 1]{Speck}, we require the uniform-in-$\mathfrak{c}$ estimates for relativistic Euler solutions.  We then study the difference of these two Euler equationss and establish a series of estimates on the difference of the fluid variables as well as their derivatives (see Lemmas \ref{lem3.3}--\ref{lem3.7}). Based on these estimates, one can get the uniform-in-$\mathfrak{c}$ estimate for the difference of two solutions by standard energy method (see Proposition \ref{thm-retoce}). 

Secondly, we aim to establish the uniform-in-$\mathfrak{c}$ bounds for the Hilbert expansion $F^{\mathfrak{c}}_n\ (n\ge 1)$ as well as the remainder $F^{\varepsilon,\mathfrak{c}}_R$. As explained above, since the collision operators must dependent on the speed of light $\mathfrak{c}$, the main difficulty lies in the uniform-in-$\mathfrak{c}$ estimates on the collision operators $Q_{\mathfrak{c}}(\cdot,\cdot)$, $\mathbf{L}_{\mathfrak{c}}$ and $\mathbf{L}_{\mathfrak{c}}^{-1}$.

% For this purpose, we always keep $\mathfrak{c}$ as a large parameter and aim to establish some uniform-in-$\mathfrak{c}$ estimates on the collision frequency $\nu_{\mathfrak{c}}(p)$, collision kernels $k_{\mathfrak{c}}(p,q)$ and coercivity on linearized operator $\mathbf{L}_{\mathfrak{c}}$. It is noted that we also need the uniform-in-$\mathfrak{c}$ estimates for the inverse operator $\mathbf{L}_{\mathfrak{c}}^{-1}$. 

%For the kernel $K(p,q)$ of the classical Boltzmann equation of hard sphere, there holds that (\cite[Lemma 3.3.1]{Glassey})
%\begin{align*}%\label{1.32}
%	\int_{\mathbb{R}^3}|K(p,q)|dq\lesssim (1+|p|)^{-1}.
%\end{align*}
For the relativistic Boltzmann equation, due to the complexity of the local relativistic Maxwellian $\mathbf{M}_{\mathfrak{c}}$, the expression of the kernel $k_{\mathfrak{c}}(p,q)$ (see \eqref{2.9-20}) of $\mathbf{K}_{\mathfrak{c}}$ is very complicated and it is not an easy job to obtain the uniform-in-$\mathfrak{c}$ estimate for $k_{\mathfrak{c}}(p,q)$. By applying the Lorentz transformation and dividing the integration region into three parts: $\{|\bar{p}-\bar{q}|\ge \mathfrak{c}^{\frac{1}{8}}\}$, $\{|\bar{p}-\bar{q}|\le \mathfrak{c}^{\frac{1}{8}},\, \&\, |p|\le \mathfrak{c}\}$ and $\{|\bar{p}-\bar{p}|\le \mathfrak{c}^{\frac{1}{8}},\, \&\, |p|\ge \mathfrak{c}\}$,
%the uniform estimates are much more complicated due to the complex expression of $k_{\mathfrak{c}1}(p,q)$ and $k_{\mathfrak{c}2}(p,q)$, see \eqref{1.34-0} and \eqref{1.44-0} for details. Define $\bar{p}$ and $\bar{q}$ as the momentum after Lorentz transformation (see \eqref{3.3-0}). By dividing the proof into three cases, i.e., $\{|\bar{p}-\bar{q}|\ge \mathfrak{c}^{\frac{1}{8}}\}$, $\{|\bar{p}-\bar{q}|\le \mathfrak{c}^{\frac{1}{8}},\, \&\, |p|\le \mathfrak{c}\}$ and $\{|\bar{p}-\bar{p}|\le \mathfrak{c}^{\frac{1}{8}},\, \&\, |p|\ge \mathfrak{c}\}$, 
one can get
\begin{equation*} 
	\int_{\mathbb{R}^3}|k_{\mathfrak{c}}(p,q)|dq\lesssim
	\left\{
	\begin{aligned}
		&(1+|p|)^{-1}, \quad |p|\le \mathfrak{c},\\
		&\mathfrak{c}^{-1},\qquad\qquad\, |p|\ge \mathfrak{c},
	\end{aligned}
	\right.
\end{equation*}
see Lemmas \ref{lem4.3-00}--\ref{lem2.8} for details. Similarly, we can also prove
\begin{align*}
	\nu_{\mathfrak{c}}(p)\cong
	\left\{
	\begin{aligned}
		& 1+|p| , \quad |p|\le \mathfrak{c},\\
		&\mathfrak{c},\qquad\quad\,\,  |p|\ge \mathfrak{c},
	\end{aligned}
	\right.
\end{align*}
see Lemma \ref{lem2.9} for details.  
%It is from \eqref{1.33} that we need $\mathfrak{c}$ to be suitably large, see also \eqref{5.19-00} and \eqref{7.64-0}.

\smallskip

%Precisely speaking,  $\mathbf{L}_{\mathfrak{c}},\, \nu_{\mathfrak{c}}(p), \, K,\, \mathbf{P}_{\mathfrak{c}}$ and $\mathcal{N}_{\mathfrak{c}}$ (see \eqref{2.17-01}, \eqref{2.18-01}) must depend on $\mathfrak{c}$. So, in some places, we will denote them as  $\mathbf{L}_{\mathfrak{c}},\,\nu_{\mathfrak{c}}(p),\, K_{\mathfrak{c}}, \, \mathbf{P}_{\mathfrak{c}}_{\mathfrak{c}}$ and $\mathcal{N}_{\mathfrak{c}}_{\mathfrak{c}}$ to emphasize the dependence of $\mathfrak{c}$.
Let $k(p,q)$ be the kernel of the classical Boltzmann equation of hard sphere (see \eqref{4.93-20}). Observe that $k_{\mathfrak{c}}(p,q)$ and $k(p,q)$ depend on the relativistic Euler solutions and classical Euler solutions, respectively. By tedious calculations and the Newtonian limit of the relativistic Euler equations (see Proposition \ref{thm-retoce}), we can establish the following
\begin{equation}\label{1.33-3}
	\int_{\mathbb{R}^3}|k_{\mathfrak{c}}(p,q)-k(p,q)|dq\lesssim \mathfrak{c}^{-\frac38} \rightarrow 0\quad \mbox{as}\quad \mathfrak{c}\rightarrow \infty,
\end{equation}
see Lemmas \ref{lem2.13}--\ref{lem2.14} for details. Since the orthonormal basis $\{\chi^{\mathfrak{c}}_{\alpha}\}_{\alpha=0}^4$ of the null space $\mathcal{N}_{\mathfrak{c}}$ also depend on $\mathfrak{c}$, we also need to prove that 
\begin{equation}\label{1.33-4}
	\lim_{\mathfrak{c}\rightarrow \infty}\chi^{\mathfrak{c}}_{\alpha}=\chi_{\alpha},
\end{equation}
where $\{\chi_{\alpha}\}_{\alpha=0}^4$ is the corresponding orthonormal basis of the null space $\mathcal{N}$ for the classical Boltzmann equation, see Lemma \ref{lem2.10-0} for details. 
%For the proof of \eqref{1.33-3}, it is worth mentioning that we need to use the convergence result from the relativistic Euler solutions to the classical Euler solutions obtained in the first step.  %The proofs of Lemma \ref{lem2.13} and \ref{lem2.14} are very complex since it involves lots of comparisons about the order of $\mathfrak{c}$.
%Using the approximate expansion of the Bessel functions $K_j(z)$, one can also prove
With the help of \eqref{1.33-3}--\eqref{1.33-4} and a contradiction argument, one can finally obtain the following uniform-in-$\mathfrak{c}$ coercivity estimate for $\mathbf{L}_{\mathfrak{c}}$
\begin{align*}
	\langle \mathbf{L}_{\mathfrak{c}}g, g\rangle \geq \zeta_0\|(\mathbf{I}-\mathbf{P}_{\mathfrak{c}})g\|_{\nu_{\mathfrak{c}}}^{2},\quad  g\in L^2_{\nu}(\mathbb{R}^3).
\end{align*}
Here we emphasize that $\zeta_0>0$ is a positive constant independent of $\mathfrak{c}$.
%With the help of \eqref{1.33-3}-\eqref{1.33-4}, one can finally obtain the uniform-in-$\mathfrak{c}$ coercivity estimate \eqref{1.33-2} by contradiction arguments, see Proposition \ref{prop4.13} for details. 
With the uniform-in-$\mathfrak{c}$ coercivity estimate for $\mathbf{L}_{\mathfrak{c}}$, one can derive the uniform-in-$\mathfrak{c}$ exponential decay for $\mathbf{L}_{\mathfrak{c}}^{-1}$ by similar arguments as in \cite{Jiang}, see section \ref{sec4.2} for details.

Utilizing the above uniform-in-$\mathfrak{c}$ estimates, we can establish the uniform bounds on the Hilbert expansions $F^{\mathfrak{c}}_n(t, x, p)\ (n\ge 1)$, see Proposition \ref{prop6.3} for details. 
% In Proposition \ref{prop6.3}, we apply the uniform decay estimates of $\mathbf{L}_{\mathfrak{c}}^{-1}$ and induction arguments to obtain the decay bounds of the Hilbert expansions $F^{\mathfrak{c}}_n(t, x, p)\ (n\ge 1)$ and their $(t, x)$-derivatives.
Based on the estimates on $F^{\mathfrak{c}}_n(t, x, p)\ (n\ge 1)$, we use the $L^2-L^{\infty}$ framework in \cite{Guo1,Guo2,Speck} to control the remainder $F^{\varepsilon,\mathfrak{c}}_R$ uniformly in $\mathfrak{c}$ and $\varepsilon$, see Lemmas \ref{lem7.1}--\ref{lem7.2} for details. Hence, we established the Hilbert expansion of the relativistic Boltzmann equation with  uniform-in-$\mathfrak{c}$ estimates, see Theorem \ref{thm1.1-0}.
  
  Finally, by combining the Hilbert expansion in Theorem \ref{thm1.1-0} and the Newtonian limit of relativistic Euler equations in Proposition \ref{thm-retoce}, we can justify both the hydrodynamic limit and Newtonian limit of the relativistic Boltzmann equation to the classical Euler equations, see Theorem \ref{thm1.3-0} for details. 

%%%%%%%%%%%%%%%%%%%%%%%%%%%%%%%%%%%%%%%%%%%%%%%%%%%%%%%%
\subsection{Organization of the paper}
%%%%%%%%%%%%%%%%%%%%%%%%%%%%%%%%%%%%%%%%%%%%%%%%%%%%%%%%
In section 2, we present some results about Bessel functions and give explicit expressions for the kernel of the linearized relativistic collision operators.
Section 3 is dedicated to the existence of local-in-time solutions of the relativistic Euler equations and the Newtonian limit of the relativistic Euler equations.
In section 4, we develop a series of uniform-in-$\mathfrak{c}$ estimates to obtain the key coercivity estimate on the linearized operator $\mathbf{L}_{\mathfrak{c}}$ as well as  $\mathbf{L}_{\mathfrak{c}}^{-1}$, which allow us to establish the uniform-in-$\mathfrak{c}$ bounds on the Hilbert expansion $F^{\mathfrak{c}}_n$ in section 5.
In section 6, we use the $L^2-L^{\infty}$ method to derive  the uniform in $\mathfrak{c}$ and $\varepsilon$ estimates on the remainder $F^{\varepsilon,\mathfrak{c}}_R$ and prove the main theorems, Theorems \ref{thm1.1-0} and \ref{thm1.3-0}. 
In the appendix, we are devoted to present the orthonormal basis of the null space $\mathcal{N}_{\mathfrak{c}}$ of $\mathbf{L}_{\mathfrak{c}}$.

%%%%%%%%%%%%%%%%%%%%%%%%%%%%%%%%%%%%%%%%%%%%%
\subsection{Notations}
%%%%%%%%%%%%%%%%%%%%%%%%%%%%%%%%%%%%%%%%%%%%%
Throughout this paper, $C$ denotes a generic positive constant which is independent of $\mathfrak{c}$, $\varepsilon$ and $C_{a}, C_{b}, \ldots$ denote the generic positive constants depending on $a, b, \ldots$, respectively, which may vary from line to line. $A \lesssim B$ means that there exists a constant $C>0$, which is independent of $\mathfrak{c},\  \varepsilon$, such that $A \le C B$.  $A\cong B$ means that both $A \lesssim B$ and $B \lesssim A$ hold.  $\|\cdot\|_{2}$ denotes either the standard  $L^{2}\left( \mathbb{R}_{x}^{3}\right)$-norm or $L^{2}\left( \mathbb{R}_{p}^{3}\right)$-norm or $L^{2}\left(\mathbb{R}_x^3 \times \mathbb{R}_{p}^{3}\right)$-norm. Similarly, $\|\cdot\|_{\infty}$ denotes either the $L^{\infty}\left( \mathbb{R}_{x}^{3}\right)$-norm or $L^{\infty}\left( \mathbb{R}_{p}^{3}\right)$-norm or  $L^{\infty}\left(\mathbb{R}_x^3 \times \mathbb{R}_{p}^{3}\right)$-norm. We also introduce the weighted $L^{\infty}$ norm $\|\cdot\|_{\infty,\ell}= \|w_{\ell}\cdot\|_{\infty}$. We denote $\langle\cdot, \cdot\rangle$ as either the $L^{2}\left(\mathbb{R}_{x}^{3}\right)$ inner product or $L^{2}\left(\mathbb{R}_{p}^{3}\right)$ inner product or $L^{2}\left(\mathbb{R}_x^3 \times \mathbb{R}_{p}^{3}\right)$ inner product. Moreover, we denote $\|\cdot\|_{\nu_{\mathfrak{c}}}:=\|\sqrt{\nu_{\mathfrak{c}}} \cdot\|_{2}$.

%%%%%%%%%%%%%%%%%%%%%%%%%%%%%%%%%%%%%%%%%%%%%%%%%%%%%%%%%%%%%%%
\section{Preliminaries}
%%%%%%%%%%%%%%%%%%%%%%%%%%%%%%%%%%%%%%%%%%%%%%%%%%%%%%%%%%%%%%%

We define the modified Bessel function of the second kind   (see \cite[(3.19)]{Cercignani1})
\begin{align}\label{2.1-01}
	K_{j}(z)=\Big(\frac{z}{2}\Big)^j\frac{\Gamma(\frac{1}{2})}{\Gamma(j+\frac{1}{2})}\int_{1}^{\infty} e^{-z t}\left(t^{2}-1\right)^{j-\frac{1}{2}} dt,\quad j\ge 0,\ z>0.
\end{align}
We will frequently use the following properties for $K_j(z)$.
\begin{Lemma}\emph{(\cite{Olver, Watson})} \label{lem2.1-00}
	It holds that
	\begin{align*}
		K_{j+1}(z) & =\frac{2j}{z}K_j(z)+K_{j-1}(z), \quad j \geq 1,
	\end{align*}
	and 
	\begin{align*}
		\frac{d}{d z}\left(\frac{K_j(z)}{z^j}\right) & =-\left(\frac{K_{j+1}(z)}{z^j}\right), \quad j \geq 0.
	\end{align*}
	The asymptotic expansion for $K_j(z)$ takes the form
	\begin{align*}
		K_{j}(z) &=\sqrt{\frac{\pi}{2z}} \frac{1}{e^{z}}\left[\sum_{m=0}^{n-1}A_{j,m}z^{-m}+\gamma_{j,n}(z)z^{-n}\right],\quad j\ge 0,\ n\ge 1,
	\end{align*}
	where the following additional identities and inequalities also hold:
	\begin{align*}
		A_{j,0}&=1,\nonumber\\
		A_{j,m}&=\frac{1}{m!8^m}(4j^2-1)(4j^2-3^2)\cdots (4j^2-(2m-1)^2),\quad j\ge 0,\ m\ge 1,\nonumber\\
		|\gamma_{j,n}(z)|&\le 2|A_{j,n}|\exp{\Big(\big[j^2-\frac{1}{4}\big]z^{-1}\Big)},\quad j\ge 0,\ n\ge 1,\nonumber\\
		K_{j}(z)&<K_{j+1}(z),\quad j\ge 0.
	\end{align*} 
    Furthermore, for $j\le n+\frac{1}{2}$, one has a more exact estimate
    \begin{align*}
    	|\gamma_{j,n}(z)|&\le |A_{j,n}|.
    \end{align*}
\end{Lemma}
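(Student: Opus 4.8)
The plan is to derive everything directly from the integral representation \eqref{2.1-01}, invoking the classical asymptotic machinery of \cite{Olver, Watson} only where the \emph{sharp} error constants are concerned. \emph{The two recurrence identities.} For the derivative formula I would write $z^{-j}K_j(z)=2^{-j}\Gamma(\tfrac12)\Gamma(j+\tfrac12)^{-1}\int_1^\infty e^{-zt}(t^2-1)^{j-1/2}\,dt$ and differentiate under the integral sign, getting $\frac{d}{dz}\big(z^{-j}K_j(z)\big)=-2^{-j}\Gamma(\tfrac12)\Gamma(j+\tfrac12)^{-1}\int_1^\infty t\,e^{-zt}(t^2-1)^{j-1/2}\,dt$. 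One integration by parts in the defining integral for $K_{j+1}(z)$ (the boundary term at $t=1$ vanishes because $j+\tfrac12>0$), together with $\Gamma(j+\tfrac32)=(j+\tfrac12)\Gamma(j+\tfrac12)$, identifies this with $-z^{-j}K_{j+1}(z)$; expanding the derivative gives $K_j'(z)=\tfrac{j}{z}K_j(z)-K_{j+1}(z)$. Combining this with $K_j'(z)=-\tfrac12\big(K_{j-1}(z)+K_{j+1}(z)\big)$ — which follows from the classical representation $K_j(z)=\int_0^\infty e^{-z\cosh t}\cosh(jt)\,dt$ (equivalent to \eqref{2.1-01}; see \cite{Watson}) and $\cosh((j-1)t)+\cosh((j+1)t)=2\cosh(jt)\cosh t$ — yields $K_{j+1}(z)=\tfrac{2j}{z}K_j(z)+K_{j-1}(z)$.

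\emph{The asymptotic expansion.} Here I would substitute $t=1+s/z$ in \eqref{2.1-01}. Using $t^2-1=\frac{2s}{z}\big(1+\frac{s}{2z}\big)$ and $e^{-zt}=e^{-z}e^{-s}$, the integral collapses to
\begin{align*}
K_j(z)=\sqrt{\frac{\pi}{2z}}\,\frac{e^{-z}}{\Gamma(j+\tfrac12)}\int_0^\infty e^{-s}\,s^{\,j-1/2}\Big(1+\frac{s}{2z}\Big)^{j-1/2}\,ds .
\end{align*}
Taylor-expanding $\big(1+\frac{s}{2z}\big)^{j-1/2}=\sum_{m=0}^{n-1}\binom{j-1/2}{m}\big(\frac{s}{2z}\big)^m+R_n\big(\frac{s}{2z}\big)$ and integrating term by term with $\int_0^\infty e^{-s}s^{\,j-1/2+m}\,ds=\Gamma(j+\tfrac12+m)$ produces the coefficient $A_{j,m}=\binom{j-1/2}{m}\frac{\Gamma(j+1/2+m)}{2^m\,\Gamma(j+1/2)}$; pairing the factors $\big(j-\tfrac{2r-1}{2}\big)\big(j+\tfrac{2r-1}{2}\big)=\tfrac{4j^2-(2r-1)^2}{4}$ for $r=1,\dots,m$ converts this to the stated closed form, with $A_{j,0}=1$. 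The remainder is then defined by $\gamma_{j,n}(z)z^{-n}:=\Gamma(j+\tfrac12)^{-1}\int_0^\infty e^{-s}s^{\,j-1/2}R_n\big(\frac{s}{2z}\big)\,ds$, and it remains to bound it.

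\emph{Error bounds and monotonicity.} Using the integral form $R_n(u)=n\binom{j-1/2}{n}\int_0^u (u-v)^{n-1}(1+v)^{\,j-1/2-n}\,dv$, the refined bound is immediate precisely when $j\le n+\tfrac12$: then the exponent $j-\tfrac12-n\le0$, so $(1+v)^{\,j-1/2-n}\le1$ on $v\ge0$, whence $|R_n(u)|\le\big|\binom{j-1/2}{n}\big|u^{n}$ and therefore $|\gamma_{j,n}(z)|\le|A_{j,n}|$. For the general case one instead controls $R_n$ by the monotonicity of $(1+v)^{\,j-1/2-n}$ and the inequality $(1+x)^{\beta}\le e^{\beta x}$; extracting the optimal constant $2|A_{j,n}|\exp\big((j^2-\tfrac14)/z\big)$ is exactly Olver's variational error-bound estimate for the asymptotic expansion of $K_\nu$, which I would cite from \cite{Olver} — this is the one step I expect to be genuinely technical, everything else being elementary calculus. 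Finally, $K_j(z)<K_{j+1}(z)$ follows at once from $K_j(z)=\int_0^\infty e^{-z\cosh t}\cosh(jt)\,dt$ and $\cosh(jt)<\cosh((j+1)t)$ for all $t>0$ and $j\ge0$.
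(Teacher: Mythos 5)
The paper supplies no proof of this lemma at all: it simply cites \cite{Olver, Watson} as the source of a collection of classical facts about modified Bessel functions. Your derivation, by contrast, is essentially self-contained, and it is correct. Starting from the Schl\"afli integral \eqref{2.1-01}, you get the derivative identity by differentiation under the integral sign and integration by parts, and the three-term recurrence by combining it with $K_j'=-\tfrac12(K_{j-1}+K_{j+1})$; the substitution $t=1+s/z$ together with $\Gamma(1/2)=\sqrt{\pi}$ and the prefactor bookkeeping $(z/2)^j(2/z)^{j-1/2}z^{-1}=(2z)^{-1/2}$ produces exactly the claimed prefactor $\sqrt{\pi/(2z)}\,e^{-z}$, and your computation of $A_{j,m}=\binom{j-1/2}{m}\Gamma(j+1/2+m)/(2^m\Gamma(j+1/2))$, paired as $(j-\tfrac{2r-1}{2})(j+\tfrac{2r-1}{2})$, reproduces the stated closed form. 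The refined error bound $|\gamma_{j,n}|\le|A_{j,n}|$ for $j\le n+\tfrac12$ follows cleanly from the integral-form remainder once $(1+v)^{j-1/2-n}\le 1$; and the strict monotonicity $K_j<K_{j+1}$ is immediate from the $\int_0^\infty e^{-z\cosh t}\cosh(jt)\,dt$ representation (note this step requires appealing to the equivalence of that representation with \eqref{2.1-01}, which you correctly flag as a fact from \cite{Watson}, since \eqref{2.1-01} alone does not make the monotonicity transparent — the $j$-dependent prefactor and integrand compete). The one step you do not close is the general error constant $2|A_{j,n}|\exp\bigl((j^2-\tfrac14)z^{-1}\bigr)$; deferring that to Olver's variational error-bound theory is the honest and standard move. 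In short: where the paper simply black-boxes the entire lemma behind two references, you supply a working elementary proof of everything except the sharp constant, which is exactly the piece that genuinely requires Olver.
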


We next deduce the kernel of the linearized relativistic collision operator. Recall that 
\begin{align*}
	\mathbf{K}_{\mathfrak{c}}f&= \int_{\mathbb{R}^3}\int_{\mathbb{S}^2} v_{\phi}(p,q) \sqrt{\mathbf{M}_{\mathfrak{c}}(q)}\left[\sqrt{\mathbf{M}_{\mathfrak{c}}(q')} f\left(p^{\prime}\right)+\sqrt{\mathbf{M}_{\mathfrak{c}}(p')} f\left(q^{\prime}\right)\right]d\omega dq\\
	& \qquad-\int_{\mathbb{R}^3}\int_{\mathbb{S}^2} v_{\phi}(p,q)  \sqrt{\mathbf{M}_{\mathfrak{c}}(p)\mathbf{M}_{\mathfrak{c}}(q)} f(q)d\omega dq \\
	&=\frac{\mathfrak{c}}{2}\frac{1}{p^0}\int_{\mathbb{R}^3}\frac{dq}{q^0}\int_{\mathbb{R}^3}\frac{dq'}{q'^0}\int_{\mathbb{R}^3}\frac{dp'}{p'^0}W(p,q\mid p',q')\sqrt{\mathbf{M}_{\mathfrak{c}}(q)\mathbf{M}_{\mathfrak{c}}(q')}f(p')\\
	&\qquad+\frac{\mathfrak{c}}{2}\frac{1}{p^0}\int_{\mathbb{R}^3}\frac{dq}{q^0}\int_{\mathbb{R}^3}\frac{dq'}{q'^0}\int_{\mathbb{R}^3}\frac{dp'}{p'^0}W(p,q\mid p',q')\sqrt{\mathbf{M}_{\mathfrak{c}}(q)\mathbf{M}_{\mathfrak{c}}(p')}f(q')\\
	&\qquad -\frac{\mathfrak{c}}{2}\frac{1}{p^0}\int_{\mathbb{R}^3}\frac{dq}{q^0}\int_{\mathbb{R}^3}\frac{dq'}{q'^0}\int_{\mathbb{R}^3}\frac{dp'}{p'^0}W(p,q\mid p',q')\sqrt{\mathbf{M}_{\mathfrak{c}}(p)\mathbf{M}_{\mathfrak{c}}(q)}f(q)\\
	&:=\mathbf{K}_{\mathfrak{c}2}f-\mathbf{K}_{\mathfrak{c}1}f  .
\end{align*}
Then it is clear that the kernel of $\mathbf{K}_{\mathfrak{c}1}$ takes the form 
\begin{align}\label{1.34-0}
	k_{\mathfrak{c}1}(p,q)&=\int_{\mathbb{S}^2} v_{\phi}(p,q) \sqrt{\mathbf{M}_{\mathfrak{c}}(p)\mathbf{M}_{\mathfrak{c}}(q)} d\omega
	=\frac{\pi \mathfrak{c}g\sqrt{s}}{p^0q^0}\sqrt{\mathbf{M}_{\mathfrak{c}}(p)\mathbf{M}_{\mathfrak{c}}(q)}.
\end{align}

By similar arguments as in \cite{Strain}, we can deduce that each term of $\mathbf{K}_{\mathfrak{c}2}f$ is equal to
\begin{align*}
	\frac{\mathfrak{c}}{2}\frac{1}{p^0}\int_{\mathbb{R}^3}\frac{dq}{q^0}f(q)\Big\{\int_{\mathbb{R}^3}\frac{dq'}{q'^0}\int_{\mathbb{R}^3}\frac{dp'}{p'^0}\bar{s}\delta^{(4)}(p^\mu+p'^{\mu}-q^{\mu}-q'^{\mu})\sqrt{\mathbf{M}_{\mathfrak{c}}(p')\mathbf{M}_{\mathfrak{c}}(q')}\Big\},
\end{align*}
which yields that the kernel of $\mathbf{K}_{\mathfrak{c}2}$ is
\begin{align}\label{2.6-10}
	k_{\mathfrak{c}2}(p,q)=\frac{\mathfrak{c}}{p^0q^0}\int_{\mathbb{R}^3}\frac{dq'}{q'^0}\int_{\mathbb{R}^3}\frac{dp'}{p'^0}\bar{s}\delta^{(4)}(p^\mu+p'^{\mu}-q^{\mu}-q'^{\mu})\sqrt{\mathbf{M}_{\mathfrak{c}}(p')\mathbf{M}_{\mathfrak{c}}(q')},
\end{align}
where
\begin{align*}
	\bar{s}=\bar{g}^2+4\mathfrak{c}^2,\quad \bar{g}^2=g^2-\frac{1}{2}(p^{\mu}+q^{\mu})(p'_{\mu}+q'_{\mu}-p_{\mu}-q_{\mu}).
\end{align*}
We introduce the Lorentz transformation $\bar{\Lambda}$ 
\begin{equation}\label{2.4-20}
	\bar{\Lambda}=\left(\bar{\Lambda}^{\mu}_{\nu}\right)=\left(\begin{array}{cccc}
		\tilde{r} & \frac{\tilde{r} v_1}{\mathfrak{c}} & \frac{\tilde{r} v_2}{\mathfrak{c}} & \frac{\tilde{r} v_3}{\mathfrak{c}} \\
		\frac{\tilde{r} v_1}{\mathfrak{c}} & 1+(\tilde{r}-1) \frac{v_1^2}{|v|^2} & (\tilde{r}-1) \frac{v_1 v_2}{|v|^2} & (\tilde{r}-1) \frac{v_1 v_3}{\mid v^2} \\
		\frac{\tilde{r} v_2}{\mathfrak{c}} & (\tilde{r}-1) \frac{v_1 v_2}{|v|^2} & 1+(\tilde{r}-1) \frac{v_2^2}{|v|^2} & (\tilde{r}-1) \frac{v_2 v_3}{|v|^2} \\
		\frac{\tilde{r} v_3}{\mathfrak{c}} & (\tilde{r}-1) \frac{v_1 v_3}{|v|^2} & (\tilde{r}-1) \frac{v_2 v_3}{|v|^2} & 1+(\tilde{r}-1) \frac{v_3^2}{|v|^2}
	\end{array}\right)
\end{equation}
and its inverse transformation
\begin{equation*}
	\bar{\Lambda}^{-1}=\left(\begin{array}{cccc}
		\tilde{r} & -\frac{\tilde{r} v_1}{\mathfrak{c}} & -\frac{\tilde{r} v_2}{\mathfrak{c}} & -\frac{\tilde{r} v_3}{\mathfrak{c}} \\
		-\frac{\tilde{r} v_1}{\mathfrak{c}} & 1+(\tilde{r}-1) \frac{v_1^2}{|v|^2} & (\tilde{r}-1) \frac{v_1 v_2}{|v|^2} & (\tilde{r}-1) \frac{v_1 v_3}{\mid v^2} \\
		-\frac{\tilde{r} v_2}{\mathfrak{c}} & (\tilde{r}-1) \frac{v_1 v_2}{|v|^2} & 1+(\tilde{r}-1) \frac{v_2^2}{|v|^2} & (\tilde{r}-1) \frac{v_2 v_3}{|v|^2} \\
		-\frac{\tilde{r} v_3}{\mathfrak{c}} & (\tilde{r}-1) \frac{v_1 v_3}{|v|^2} & (\tilde{r}-1) \frac{v_2 v_3}{|v|^2} & 1+(\tilde{r}-1) \frac{v_3^2}{|v|^2}
	\end{array}\right),
\end{equation*}
where $\tilde{r}=\frac{u^0}{\mathfrak{c}}, v_i=\frac{\mathfrak{c}u_i}{u^0}$. A direct calculation shows that 
\begin{align*}
	\bar{\Lambda}^{-1}(u^0,u^1,u^2,u^3)^t=(\mathfrak{c},0,0,0)^t.
\end{align*} 
Assume $\bar{\Lambda}\bar{P}=P$, then one has
\begin{equation} \label{3.3-0}
	\bar{P}=\bar{\Lambda}^{-1} P=
	\begin{pmatrix}
		\frac{u^0 p^0-u \cdot p}{\mathfrak{c}}   \\ 
		-\frac{u_1 p^0}{\mathfrak{c}}+p_1+\big(\frac{u^0}{\mathfrak{c}}-1\big) \frac{u_1}{|u|^2} u \cdot p \\
		-\frac{u_2 p^0}{\mathfrak{c}}+p_2+\big(\frac{u^0}{\mathfrak{c}}-1\big) \frac{u_2}{|u|^2} u \cdot p\\
		-\frac{u_3 p^0}{\mathfrak{c}}+p_3+\big(\frac{u^0}{\mathfrak{c}}-1\big) \frac{u_3}{|u|^2} u \cdot p
	\end{pmatrix}.
\end{equation}
Using Lorentz transformation $\bar{\Lambda}$, we can express $k_{\mathfrak{c}2}(p,q)$ as
\begin{align*}
	k_{\mathfrak{c}2}(p,q)=\frac{\mathfrak{c}c_0}{p^0q^0}\int_{\mathbb{R}^3}\frac{dq'}{q'^0}\int_{\mathbb{R}^3}\frac{dp'}{p'^0}\tilde{s}\delta^{(4)}(\bar{p}^\mu+p'^{\mu}-\bar{q}^{\mu}-q'^{\mu})e^{-\frac{\mathfrak{c}(p^{\prime 0}+q^{\prime 0})}{2T_0}},
\end{align*}
where $\tilde{s}=-(\bar{p}^{\prime \mu}+p^{\prime \mu})(\bar{p}_{\mu}+p'_{\mu})$ and
\begin{align*}
	c_0:=\frac{n_0\gamma}{4\pi\mathfrak{c}^3K_2(\gamma)}.
\end{align*}

By similar arguments as in \cite{Strain}, we can write $k_{\mathfrak{c}2}(p,q)$ as 
\begin{align}\label{2.13-10}
	k_{\mathfrak{c}2}(p,q)=\frac{\mathfrak{c}c_0\pi s^{\frac{3}{2}}}{4gp^0 q^0}\int_0^{\infty} \frac{y(1+\sqrt{y^2+1})}{\sqrt{y^2+1}}e^{-\bar{\boldsymbol{\ell}}\sqrt{y^2+1}}I_0(\bar{\boldsymbol{j}}y)dy,
\end{align}
where
\begin{align*}
	I_{0}(r)=\frac{1}{2 \pi} \int_{0}^{2 \pi} e^{r \cos \Theta} d \Theta
\end{align*}
and 
\begin{align*}
	\bar{\boldsymbol{\ell}}=\frac{\boldsymbol{\ell}}{T_0},\quad 
	\bar{\boldsymbol{j}}=\frac{\boldsymbol{j}}{T_0},\quad \boldsymbol{\ell}=\frac{\mathfrak{c}}{2}(\bar{p}^0+\bar{q}^0),\quad \boldsymbol{j}=\mathfrak{c}\frac{|\bar{p}\times \bar{q}|}{g}.
\end{align*}
Using the fact that for any $R>r \geq 0$,
\begin{align*}
	\int_0^{\infty} \frac{e^{-R \sqrt{1+y^2}} y I_0(r y)}{\sqrt{1+y^2}} d y & =\frac{e^{-\sqrt{R^2-r^2}}}{\sqrt{R^2-r^2}}, \\
	\int_0^{\infty} e^{-R \sqrt{1+y^2}} y I_0(r y) d y & =\frac{R}{R^2-r^2}\left\{1+\frac{1}{\sqrt{R^2-r^2}}\right\} e^{-\sqrt{R^2-r^2}},
\end{align*}
one can express $k_{\mathfrak{c}2}(p,q)$ as
\begin{align}\label{1.44-0}
	k_{\mathfrak{c}2}(p, q)=\frac{\mathfrak{c}c_0\pi s^{\frac{3}{2}}}{4gp^0 q^0}\left[J_{1}(\bar{\boldsymbol{\ell}}, \bar{\boldsymbol{j}})+J_{2}(\bar{\boldsymbol{\ell}}, \bar{\boldsymbol{j}})\right],
\end{align}
where 
    \begin{align}\label{2.12}
	\begin{split}
		J_{1}(\bar{\boldsymbol{\ell}}, \bar{\boldsymbol{j}})=\frac{\bar{\boldsymbol{\ell}}}{\bar{\boldsymbol{\ell}}^{2}-\bar{\boldsymbol{j}}^{2}}\left[1+\frac{1}{\sqrt{\bar{\boldsymbol{\ell}}^{2}-\bar{\boldsymbol{j}}^{2}}}\right] e^{-\sqrt{\bar{\boldsymbol{\ell}}^{2}-\bar{\boldsymbol{j}}^{2}}}, \quad  
		J_{2}(\bar{\boldsymbol{\ell}}, \bar{\boldsymbol{j}})=\frac{1}{\sqrt{\bar{\boldsymbol{\ell}}^{2}-\bar{\boldsymbol{j}}^{2}}} e^{-\sqrt{\bar{\boldsymbol{\ell}}^{2}-\bar{\boldsymbol{j}}^{2}}}.%\label{2.13}
	\end{split}
    \end{align}
    For later use, we denote the kernel of $\mathbf{K}_{\mathfrak{c}}$ as 
    \begin{align}\label{2.9-20}
    	k_{\mathfrak{c}}(p, q):=k_{\mathfrak{c}2}(p, q)-k_{\mathfrak{c}1}(p, q).
    \end{align}

    It is well-known that $\mathbf{L}_{\mathfrak{c}}$ is a self-adjoint non-negative definite operator in $L_{p}^{2}$ space with the kernel
    \begin{align*} 
    	\mathcal{N}_{\mathfrak{c}}=\operatorname{span}\left\{\sqrt{\mathbf{M}_{\mathfrak{c}}},\ p_i\sqrt{\mathbf{M}_{\mathfrak{c}}} \ (i=1,2,3),\ p^0\sqrt{\mathbf{M}_{\mathfrak{c}}} \right\}.
    \end{align*}
    Let $\mathbf{P}_{\mathfrak{c}}$ be the orthogonal projection from $L_{p}^{2}$ onto $\mathcal{N}_{\mathfrak{c}}$. For given $f$,  we denote the macroscopic part $\mathbf{P}_{\mathfrak{c}} f$ as
    \begin{align*}
    	\mathbf{P}_{\mathfrak{c}} f=\Big\{a_f+b_f\cdot p+c_fp^0\Big\}\sqrt{\mathbf{M}_{\mathfrak{c}}},
    \end{align*}
    and further denote $\{\mathbf{I}-\mathbf{P}_{\mathfrak{c}}\}f$ to be the microscopic part of $f$. For the orthonormal basis of $\mathcal{N}_{\mathfrak{c}}$, see the appendix. 
    
%%%%%%%%%%%%%%%%%%%%%%%%%%%%%%%%%%%%%%%%%%%%%%%%%%%%%%%%%%%%%%%
\section{The Newtonian limit of the relativistic Euler equations}
%%%%%%%%%%%%%%%%%%%%%%%%%%%%%%%%%%%%%%%%%%%%%%%%%%%%%%%%%%%%%%%

%%%%%%%%%%%%%%%%%%%%%%%%%%%%%%%%%%%%%%%%%%%%%%%%%%%%%%%%%%%%%%%
\subsection{Reformulation of the relativistic Euler equations}

By a delicate computation, the relativistic Euler equations \eqref{ce} can be rewritten as the following symmetric hyperbolic system 
\begin{align}\label{4.8-0}
	\mathbf{B}_0 \partial_t V+\sum_{j=1}^3 \mathbf{B}_j \partial_j V=0,
\end{align}
where 
\begin{align*}
	\mathbf{B}_0=\begin{pmatrix}
		1 & n_0\f{\partial P_0}{\partial n_0}\f{u^t}{(u^0)^2} & 0\\
		n_0\f{\partial P_0}{\partial n_0} \f{u}{(u^0)^2}& \frac{1}{\mathfrak{c}^2}n_0\f{\partial P_0}{\partial n_0}(e_0+P_0)(\mathbf{I}-\f{u\otimes u}{(u^0)^2}) & 0\\
		0 & 0 & \f{1}{\fc}u^0 
	\end{pmatrix}
\end{align*}
and 
\begin{align*}
	\mathbf{B}_j=\begin{pmatrix}
		\dis\f{\fc}{u^0}u_j & \f{\fc}{u^0}n_0\f{\partial P_0}{\partial n_0}\mathbf{e}_j^t & 0\\
		\f{\fc}{u^0}n_0\f{\partial P_0}{\partial n_0}\mathbf{e}_j& \frac{1}{\mathfrak{c}u^0}n_0\f{\partial P_0}{\partial n_0}[(e_0+P_0)u_j\mathbf{I}-\f{u\otimes u}{(u^0)^2}u_j] & 0\\
		0 & 0& u_j
	\end{pmatrix}.
\end{align*}
It is clear that $\mathbf{B}_0$ and $\mathbf{B}_j\ (j=1,2,3)$ are symmetric. Recall that
%the thermodynamic relation
%\begin{align}
%	T_0dS=d\Big(\frac{e_0}{n_0}\Big)+P_0d\Big(\frac{1}{n_0}\Big).
%\end{align}
%It thereby infers that
%\begin{align}
%	de_0=\frac{e_0+P_0}{n_0}dn_0+n_0T_0dS,
%\end{align}
\begin{align*}
	\frac{\partial e_0}{\partial n_0}\Big|_{S}=\frac{e_0+P_0}{n_0}, \quad \frac{\partial e_0}{\partial S}\Big|_{n_0}=n_0T_0,
\end{align*}
then one has
\begin{align*}
	n_0\frac{\partial P_0}{\partial n_0}\Big|_{S}=n_0\frac{\partial P_0}{\partial e_0}\Big|_{S}\cdot \frac{\partial e_0}{\partial n_0}\Big|_{S}=\frac{a^2}{\mathfrak{c}^2}(e_0+P_0),
\end{align*}
where $a^2=\mathfrak{c}^2\frac{\partial P_0}{\partial e_0}|_{S}$ is the square of sound speed. Using the fact that $a\in \big(0,\frac{\mathfrak{c}}{\sqrt{3}}\big)$ (see \cite{Calvo,Speck}), one can show that $\mathbf{B}_0$ is a positive definite matrix.  
%It has been shown in \cite{Calvo} that 
%\begin{align}
%	\frac{\partial P_0}{\partial e_0}\Big|_{S}<\frac{1}{3},
%\end{align}
%which yields that $0<a<\frac{\mathfrak{c}}{\sqrt{3}}$. 

Denoting
\begin{align*}
	\zeta_0:=\frac{a}{\mathfrak{c}^2}(e_0+P_0)=a n_0\frac{K_3(\gamma)}{K_2(\gamma)}>0,
\end{align*}
we can rewrite $\mathbf{B}_0$ as 
\begin{align*}
	\mathbf{B}_0=\begin{pmatrix}
		1 & a\zeta_0\f{u^t}{(u^0)^2} & 0\\
		a\zeta_0 \f{u}{(u^0)^2}& \zeta_0^2(\mathbf{I}-\f{u\otimes u}{(u^0)^2}) & 0\\
		0 & 0 & \frac{u^0}{\mathfrak{c}}
	\end{pmatrix}.
\end{align*}
%A direct calculation shows that
%\begin{align}
%	&\operatorname{det}(\mathbf{B}_0)_{1\times 1}=1>0,\nonumber\\ &\operatorname{det}(\mathbf{B}_0)_{2\times 2}=\frac{\zeta_0^2}{(u^0)^4}\Big[(u^0)^2(|u|^2-u_1^2)+\{(u^0)^2\mathfrak{c}^2-u_1^2a^2\}\Big]>0,\nonumber\\
%	&\operatorname{det}(\mathbf{B}_0)_{3\times 3}=\frac{\zeta_0^4}{(u^0)^4}\Big[(u^0)^2(|u|^2-u_1^2-u_2^2)+\{(u^0)^2\mathfrak{c}^2-(u_1^2+u_2^2)a^2\}\Big]>0,\nonumber\\
%	&\operatorname{det}(\mathbf{B}_0)_{4\times 4}=\frac{\zeta_0^6}{(u^0)^4}\Big[(u^0)^2\mathfrak{c}^2-|u|^2a^2\Big]>0,\quad \nonumber\\
%	&\operatorname{det}(\mathbf{B}_0)_{5\times 5}=\frac{\zeta_0^6}{\mathfrak{c}(u^0)^3}\Big[(u^0)^2\mathfrak{c}^2-|u|^2a^2\Big]>0,
%\end{align}
%which yields that $\mathbf{B}_0$ is a positive definite matrix. Therefore, the relativistic Euler equations \eqref{4.8-0} is a first order quasilinear symmetric hyperbolic system. 

%%%%%%%%%%%%%%%%%%%%%%%%%%%%%%%%%%%%%%%%%%%%%%%%%%%%%%%%%%%%%%%
\subsection{Local smooth solutions to the relativistic Euler and classical Euler}
%%%%%%%%%%%%%%%%%%%%%%%%%%%%%%%%%%%%%%%%%%%%%%%%%%%%%%%%%%%%%%%

Assume that 
\begin{align*}
	\eta_1(V)\le \eta_2(V)\le \eta_3(V)\le \eta_4(V)\le \eta_5(V)
\end{align*}
are the five eigenvalues of $\mathbf{B}_0$. Since $\mathbf{B}_0$ is positive definite, it follows that $\eta_i(V)>0$ for all $V\neq 0$, $i=1,2,\cdots, 5$. By Vieta's theorem, one has
\begin{align}\label{4.19}
	\sum_{i=1}^5 \eta_i(V)&=\sum_{i=1}^5 (\mathbf{B}_0)_{ii}=1+\frac{u^0}{\mathfrak{c}}+\zeta_0^2\Big(2+\frac{\mathfrak{c}^2}{(u^0)^2}\Big)
\end{align}
and 
\begin{align}\label{4.20}
	\Pi_{i=1}^5 \eta_i(V)&=\operatorname{det}\mathbf{B}_0=\frac{\zeta_0^6}{\mathfrak{c}(u^0)^3}\Big[\mathfrak{c}^4+|u|^2(\mathfrak{c}^2-a^2)\Big].
\end{align}
Since all the elements of $\mathbf{B}_0$ are smooth functions of $V$, it yields that $\eta_i(V)\ (i=1,\cdots, 5)$ are continuous functions of $V$. Therefore, for any compact subset $\mathcal{V}\subset \mathbb{R}^{+}\times \mathbb{R}^3\times \mathbb{R}^{+}$ and  suitably large $\mathfrak{c}$, the RHS of \eqref{4.19} and \eqref{4.20} are bounded by positive constants from below and above and the constants are independent of $\mathfrak{c}$. Thus there exists a positive constant $\beta>0$ which is independent of $\mathfrak{c}$, such that
\begin{align}\label{3.21-01}
	\beta \mathbf{I}_5\le \mathbf{B}_0(V)\le \beta^{-1} \mathbf{I}_5, \quad V\in \mathcal{V}
\end{align}
holds in the sense of quadratic forms.

%To prove the local existence theorem for the relativistic Euler equations \eqref{4.8-0}, it is convenient to use the variables $(P_0,u,s)$. While, to construct the relativistic Maxwellian $\mathbf{M}_{\mathfrak{c}}$, we need to use the variable $(n_0,u,T_0)$.
%Actually, as in \cite{Speck}, we assume that the fluid variables $n_0, T_0, s, P_0, e_0$ are bound by the relations
%\begin{align}
%	& P_0=k_B n_0 T_0=m_0 \mathfrak{c}^2 \frac{n_0}{\gamma}, \label{3.32-0}\\
%	& e_0=m_0 \mathfrak{c}^2 n_0 \frac{K_1(\gamma)}{K_2(\gamma)}+3 P_0=m_0 \mathfrak{c}^2 n_0 \frac{K_3(\gamma)}{K_2(\gamma)}-P_0, \label{3.33-0}\\
%	& n_0=4 \pi e^4 m_0^3 \mathfrak{c}^3 \exp \left(\frac{-s}{k_B}\right) \frac{K_2(\gamma)}{\gamma} \exp \left(\gamma \frac{K_1(\gamma)}{K_2(\gamma)}\right)\label{3.34-0},
%\end{align}
%where $k_B>0$ is Boltzmann's constant, $h>0$ is Planck's constant. For simplicity, in the rest of this paper, we always assume that 
%\begin{align*}
%	k_B=h=m_0=1.
%\end{align*}

%Recall that 
%\begin{align}
%	\mathbf{\Phi}: (n_0, T_0)\mapsto (P_0, S),
%\end{align}
%is an auto-diffeomorphism of the region $(0,\infty)\times (0,\infty)$. 

\begin{Lemma}[Local existence for the relativistic Euler equations]\label{thm-re}
	Considering the relativistic Euler equations \eqref{4.8-0} with a complete equation of state \eqref{3.32-0}-\eqref{3.34-0} in some open domain
	$\mathscr{V} \subset\left\{(P_0, u, S) \in \mathbb{R}^{+}\times \mathbb{R}^3 \times \mathbb{R}^{+}\right\}$, 
	we assume that $\overline{V}=(\overline{P},0,\overline{S})\in \mathcal{V}$ with $\overline{P}>0$, $\overline{S}>0$ being given constants which are independent of the light speed $\mathfrak{c}$.
	Suppose that
	$$
	V_0 \in \overline{V}+H^{N_0}\left(\mathbb{R}^3\right),
	$$
	with $N_0\ge 3$ and $V_0\in \mathscr{V}_1 \subset \subset \mathscr{V}$. Then there exist a local existing time $T_1>0$ which is independent of $\mathfrak{c}$, and a unique classical solution $V \in C^1\left([0, T] \times \mathbb{R}^3 \right)$ of the Cauchy problem associated with \eqref{4.8-0} and the initial data $V(0)=V_0$ such that $V-\overline{V}$ belongs to $C\left([0, T_1] ; H^{N_0}\right) \cap C^1\left([0, T_1] ; H^{N_0-1}\right)$ and the following estimate holds
	\begin{align*}
	\|V-\overline{V}\|_{C\left([0, T_1] ; H^{N_0}\right) \cap C^1\left([0, T_1] ; H^{N_0-1}\right)}\le C_1,
    \end{align*}
	where $C_1$ depends on $\|V_0-\overline{V}\|_{H^{N_0}}$ and is independent of $\mathfrak{c}$.
\end{Lemma}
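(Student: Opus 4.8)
The plan is to establish the local existence of smooth solutions to the symmetric hyperbolic system \eqref{4.8-0} by the classical Friedrichs--Kato energy method, the only novelty being that we must track the $\mathfrak{c}$-dependence of all constants and confirm that the existence time $T_1$ and the energy bound $C_1$ can be chosen independent of $\mathfrak{c}$ for $\mathfrak{c}$ large. First I would record the structural facts already assembled in the excerpt: the matrices $\mathbf{B}_0(V),\mathbf{B}_j(V)$ are symmetric and depend smoothly on $V$ on the open set $\mathscr V$, and by \eqref{4.19}, \eqref{4.20} together with Vieta's theorem, on any compact $\mathcal V\subset\subset\mathscr V$ and for $\mathfrak{c}$ suitably large we have the uniform coercivity \eqref{3.21-01}, i.e.\ $\beta\mathbf I_5\le \mathbf B_0(V)\le\beta^{-1}\mathbf I_5$ with $\beta>0$ independent of $\mathfrak{c}$. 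I would also note that all entries of $\mathbf B_0,\mathbf B_j$ and their $V$-derivatives are bounded, uniformly in $\mathfrak{c}\ge\mathfrak{c}_0$, on $\mathcal V$: indeed the potentially dangerous factors are $\mathfrak{c}/u^0$, $u^0/\mathfrak{c}$, $a^2/\mathfrak{c}^2$, $|u|^2/(u^0)^2$, and $e_0/\mathfrak{c}^2$, and each of these is $O(1)$ as $\mathfrak{c}\to\infty$ on a compact set of fluid variables (using $u^0=\sqrt{\mathfrak{c}^2+|u|^2}$, $0<a<\mathfrak{c}/\sqrt3$, and the Bessel-function asymptotics of Lemma \ref{lem2.1-00} applied to $e_0=\mathfrak{c}^2 n_0 K_3(\gamma)/K_2(\gamma)-P_0$, which converges to $\tfrac32 n_0\theta+n_0\mathfrak{c}^2+\dots$ with the singular part $n_0\mathfrak{c}^2$ cancelling against $\mathbf B_0$'s normalization; more carefully one checks $n_0\,\partial P_0/\partial n_0=\tfrac{a^2}{\mathfrak{c}^2}(e_0+P_0)=a\,n_0 K_3(\gamma)/K_2(\gamma)$ is uniformly bounded).

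Next I would set up the standard iteration scheme. Choose a compact neighborhood $\mathscr V_1\subset\subset\mathscr V$ of $V_0$ and a slightly larger compact $\mathcal V$ with $\mathscr V_1\subset\subset\mathcal V\subset\subset\mathscr V$, and fix $\mathfrak{c}_0$ large enough that \eqref{3.21-01} and the uniform $C^{N_0}$-bounds on $\mathbf B_0,\mathbf B_j$ hold on $\mathcal V$. Define $U^{(0)}\equiv \overline V$ and, given $U^{(m)}$ taking values in $\mathcal V$, let $U^{(m+1)}-\overline V$ solve the linear symmetric hyperbolic system $\mathbf B_0(U^{(m)})\partial_t(U^{(m+1)})+\sum_j\mathbf B_j(U^{(m)})\partial_j(U^{(m+1)})=0$ with data $V_0$; linear theory gives $U^{(m+1)}-\overline V\in C([0,T];H^{N_0})\cap C^1([0,T];H^{N_0-1})$. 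The heart of the argument is the uniform-in-$m$ and uniform-in-$\mathfrak{c}$ a priori estimate: differentiating the linear equation $\partial_x^\alpha$ ($|\alpha|\le N_0$), pairing with $\partial_x^\alpha(U^{(m+1)}-\overline V)$, and using the symmetry of $\mathbf B_j$ and the coercivity of $\mathbf B_0$ to get
\begin{align*}
\frac{d}{dt}\big\|U^{(m+1)}(t)-\overline V\big\|_{H^{N_0}}^2
\le C\big(\|U^{(m)}-\overline V\|_{H^{N_0}}\big)\Big(1+\|U^{(m+1)}-\overline V\|_{H^{N_0}}^2\Big),
\end{align*}
where the commutator terms $[\partial_x^\alpha,\mathbf B_j(U^{(m)})]\partial_j U^{(m+1)}$ and the term from $\partial_t\mathbf B_0(U^{(m)})$ are controlled by Moser-type product and commutator estimates in $H^{N_0}$ together with $N_0\ge 3>3/2$ so that $H^{N_0}\hookrightarrow L^\infty$; crucially the constant $C$ depends only on the $C^{N_0}$-norm of $\mathbf B_0,\mathbf B_j$ on $\mathcal V$ and on $\beta$, hence is \emph{independent of $\mathfrak{c}$}. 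A continuity/bootstrap argument then yields a time $T_1>0$ and a radius $R>0$, both independent of $\mathfrak{c}$ and of $m$, such that $\sup_{[0,T_1]}\|U^{(m+1)}-\overline V\|_{H^{N_0}}\le R$ for all $m$, provided $R$ is chosen so that the ball of radius $R$ around $\overline V$ (in sup-in-time $H^{N_0}$, hence in $L^\infty$) stays inside $\mathcal V$ --- this is where one shrinks $T_1$. Convergence of the iterates is then obtained by a contraction estimate in the lower norm $L^2$ (or $H^{N_0-1}$): $\sup_{[0,T_1]}\|U^{(m+1)}-U^{(m)}\|_{L^2}\le \tfrac12\sup_{[0,T_1]}\|U^{(m)}-U^{(m-1)}\|_{L^2}$ after possibly further shrinking $T_1$, again with $\mathfrak{c}$-independent constants; interpolating against the uniform $H^{N_0}$ bound upgrades the limit to $C([0,T_1];H^{N_0})\cap C^1([0,T_1];H^{N_0-1})$, and uniqueness follows from the same $L^2$ energy estimate applied to the difference of two solutions.

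The main obstacle is not the abstract hyperbolic machinery --- that is entirely standard once one has a symmetric system with a uniformly positive $\mathbf B_0$ --- but rather the bookkeeping that makes every constant $\mathfrak{c}$-independent. Concretely, one must verify that the entries of $\mathbf B_0,\mathbf B_j$ and \emph{all} their $V$-derivatives up to order $N_0$ are bounded uniformly in $\mathfrak{c}\ge\mathfrak{c}_0$ on the compact set $\mathcal V$; this requires the uniform asymptotics of the Bessel ratios $K_{j+1}(\gamma)/K_j(\gamma)$ and their $\gamma$-derivatives from Lemma \ref{lem2.1-00} (note $\gamma=\mathfrak{c}^2/T_0\to\infty$), the elementary bounds on $u^0=\sqrt{\mathfrak{c}^2+|u|^2}$ and $v=\mathfrak{c}u/u^0$, and the causality bound $0<a<\mathfrak{c}/\sqrt3$ so that $a^2/\mathfrak{c}^2$ and $\zeta_0=a n_0 K_3(\gamma)/K_2(\gamma)$ stay bounded; the apparent singular factors $\mathfrak{c}^2$ in $\mathbf B_0$ cancel exactly as displayed in the reformulation \eqref{4.8-0}. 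Once this uniform smoothness of the coefficient matrices is in hand, the energy estimates close with $\mathfrak{c}$-independent constants and the lemma follows; the bound $\|V-\overline V\|_{C([0,T_1];H^{N_0})\cap C^1([0,T_1];H^{N_0-1})}\le C_1$ with $C_1=C_1(\|V_0-\overline V\|_{H^{N_0}})$ independent of $\mathfrak{c}$ is then just the a priori bound for the limit solution.
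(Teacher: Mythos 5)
Your proposal is correct and follows essentially the same route as the paper: both reduce to the classical Friedrichs--Kato local existence theory for symmetric hyperbolic systems (Gavage--Serre, Theorem 10.1) and observe that the $\mathfrak{c}$-independence of the existence time and the a priori bound follows from the $\mathfrak{c}$-independence of $\beta$ in \eqref{3.21-01} together with uniform-in-$\mathfrak{c}$ bounds on $\mathbf{B}_0,\mathbf{B}_j$ and their derivatives on compact $V$-sets (obtained from the Bessel asymptotics and the causality bound $0<a<\mathfrak{c}/\sqrt3$). You merely spell out the iteration scheme, commutator estimates, and contraction argument that the paper delegates to the cited reference; the only slip is cosmetic (you write $n_0\partial P_0/\partial n_0=a\,n_0K_3/K_2$ where it should be $a^2n_0K_3/K_2$, but the conclusion of uniform boundedness is unaffected).
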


\begin{proof}
	The proof is very similar to the one in \cite[Theorem 10.1]{Gavage}. The only difference lies in the argument of the independence of $\mathfrak{c}$ for $T_1$ and the upper bound for the solution. The fact that $T_1$ is independent of $\mathfrak{c}$ comes from the one that $\beta$ in \eqref{3.21-01} is  independent of $\mathfrak{c}$. In addition, from the specific expressions for the elements of $\mathbf{B}_{\alpha}\ (\alpha=0,1,2,3)$, we can easily derive that 
	\begin{align*}
		\|\nabla_x\mathbf{B}_{0}(V)\|_{H^{N_0-1}}+\sum_{j=1}^3	\|\mathbf{B}_{j}(V)\|_{H^{N_0}}\le C\|V-\overline{V}\|_{H^{N_0}},
	\end{align*}
	where $C$ depends on $\|V-\overline{V}\|_{L^{\infty}}$ and is independent of $\mathfrak{c}$. The remaining arguments are very similar to ones in \cite[Theorem 10.1]{Gavage} and we omit the details here for brevity. Therefore the proof is completed.
\end{proof}

%It follows from the diffeomorphism $\mathbf{\Phi}$ and Lemma \ref{thm-re} that 
%\begin{align}
%	\inf _{(t, x) \in[0, T] \times \mathbb{R}^3} n_0(t, x)\ge \delta_1>0,\quad   \inf _{(t, x) \in[0, T] \times \mathbb{R}^3} T_0(t, x)\ge \delta_2>0,
%\end{align}
%where $\delta_1$ and $\delta_2$ are independent of $\mathfrak{c}$.

For later use, we present the local result for the classical Euler equations \eqref{3.24-00}, see \cite{Friedrichs,Gavage, Kato,Majda} for instance. 

\begin{Lemma}\emph{\cite{Gavage}}\label{thm-ce}
	Considering the classical Euler equations \eqref{3.24-00} with equation of state \eqref{1.16-00} in some open domain
	$\mathscr{W} \subset\left\{(\mathcal{P}, \mathfrak{u}, \eta) \in \mathbb{R}^{+}\times \mathbb{R}^3\times \mathbb{R}^{+} \right\}$, 
	we assume that $\overline{W}=(\overline{\mathcal{P}},0,\overline{\eta})\in \mathcal{W}$ with $\overline{\mathcal{P}}>0$, $\overline{\eta}>0$ being given constants. Suppose that
	$$
	W_0 \in \overline{W}+H^{N_0}\left(\mathbb{R}^3\right), \quad \overline{W} \in \mathscr{W}
	$$
	with $N_0\ge 3$ and $W_0\in \mathscr{W}_1 \subset \subset \mathscr{W}$. Then there exist a local existing time $T_2>0$ and a unique classical solution $W \in C^1\left([0, T_2] \times \mathbb{R}^3 \right)$ of the Cauchy problem associated with \eqref{3.24-00} and the initial data $W(0)=W_0$ such that $W-\overline{W}$ belongs to $C\left([0, T_2] ; H^{N_0}\right) \cap C^1\left([0, T_2] ; H^{N_0-1}\right)$ and the following estimate holds
	\begin{align*}
		\|W-\overline{W}\|_{C\left([0, T_2] ; H^{N_0}\right) \cap C^1\left([0, T_2] ; H^{N_0-1}\right)}\le C_2,
	\end{align*}
	where $C_2$ depends on $\|W_0-\overline{W}\|_{H^{N_0}}$. Furthermore, the lifespan $T_2$ have the following lower bound
	\begin{align*}
		T_2\ge C_3 \Big(\|W_0-\overline{W}\|_{H^{N_0}}\Big)^{-1},
	\end{align*}
	where $C_3$ is independent of $\mathfrak{c}$ and $\|W_0-\overline{W}\|_{H^{N_0}}$. 
\end{Lemma}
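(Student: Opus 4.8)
The statement is the standard local well-posedness theorem for the quasilinear symmetric hyperbolic system \eqref{3.24-00}, so the plan is to invoke the classical Friedrichs--Kato--Majda machinery (cf. \cite{Friedrichs,Gavage,Kato,Majda}) and keep careful track of the constants.

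\emph{Step 1: structural verification and the iteration scheme.} The coefficient matrices $\mathbf{D}_0,\mathbf{D}_1,\mathbf{D}_2,\mathbf{D}_3$ are symmetric and, by the equations of state \eqref{1.16-00}, depend smoothly on $W=(\mathcal{P},\mathfrak{u},\eta)$ on any region where $\mathcal{P}>0$, $\rho>0$ and the sound speed $\sigma=\sqrt{\partial_\rho\mathcal{P}|_\eta}>0$; these positivity conditions hold on a neighborhood $\mathscr{W}$ of $\overline W=(\overline{\mathcal{P}},0,\overline\eta)$ because $\overline{\mathcal{P}},\overline\eta>0$. Moreover $\mathbf{D}_0=\operatorname{diag}(1,\sigma^2\rho^2\mathbf{I},1)$ is positive definite, and by continuity of its entries there is $\beta_2>0$, depending only on the fixed domain $\mathscr{W}_1$, such that $\beta_2\mathbf{I}_5\le\mathbf{D}_0(W)\le\beta_2^{-1}\mathbf{I}_5$ for $W$ ranging over a compact neighborhood of $\overline W$ containing the closure of $\mathscr{W}_1$. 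Since $N_0\ge 3>\tfrac32+1$, the space $H^{N_0}(\mathbb{R}^3)$ is a Banach algebra, so the classical iteration — solving the Friedrichs-mollified linearized problems and using the uniform positivity of $\mathbf{D}_0$ to obtain energy bounds — produces a bounded sequence of approximate solutions that converges to a solution $W$ with $W-\overline W\in C([0,T_2];H^{N_0})\cap C^1([0,T_2];H^{N_0-1})$; uniqueness follows from an $L^2$ energy estimate on the difference of two solutions.

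\emph{Step 2: energy estimate, the bound $C_2$, and the lifespan lower bound.} Put $U:=W-\overline W$. For each multi-index $|\alpha|\le N_0$ apply $\partial_x^\alpha$ to \eqref{3.24-00}, pair with $\mathbf{D}_0(W)\partial_x^\alpha U$, sum over $\alpha$, and use the symmetry of the $\mathbf{D}_j$ together with Moser-type product and commutator inequalities; using $\|U\|_{W^{1,\infty}}\lesssim\|U\|_{H^{N_0}}$ in $\mathbb{R}^3$, one arrives, with $E(t):=\sum_{|\alpha|\le N_0}\langle\mathbf{D}_0(W)\partial_x^\alpha U,\partial_x^\alpha U\rangle$ and $y(t):=\|U(t)\|_{H^{N_0}}^2$, at
\begin{align*}
\frac{d}{dt}E(t)\le C_0\,(1+\sqrt{y}\,)\,y,\qquad \beta_2\,y\le E\le\beta_2^{-1}y.
\end{align*}
Gronwall's inequality then gives the bound $\|U\|_{C([0,T_2];H^{N_0})}\le C_2(\|U(0)\|_{H^{N_0}})$, and the bound on $\partial_t U$ in $H^{N_0-1}$ follows from the equation. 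For the lifespan, when $y\ge1$ the term $1+\sqrt y$ is absorbed into $2\sqrt y$, so $E'\lesssim E^{3/2}$, whence $E(t)^{-1/2}\ge E(0)^{-1/2}-c\,t$ for a structural constant $c$; this forces $y$ to stay finite for $t<C_3\|W_0-\overline W\|_{H^{N_0}}^{-1}$ with $C_3$ depending only on $C_0$ and $\beta_2$.

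\emph{Main obstacle.} Nothing here is genuinely hard; the only point requiring care is that $C_0$, and hence $C_3$, be shown to depend solely on the structural data of the system — the $C^{N_0}$-norms of $W\mapsto\mathbf{D}_\alpha(W)$ and the ellipticity constant $\beta_2$ on the fixed domain $\mathscr{W}_1$, together with the Sobolev constants of $\mathbb{R}^3$ — and on a uniform $L^\infty$ bound for $U$, but \emph{not} on the size of the data nor on $\mathfrak{c}$. Independence of $\mathfrak{c}$ is in fact automatic, since the classical system \eqref{3.24-00} and its equation of state \eqref{1.16-00} contain no occurrence of $\mathfrak{c}$; independence of $\|W_0-\overline W\|_{H^{N_0}}$ is exactly the cubic structure $E'\lesssim E^{3/2}$ isolated above, which is the mechanism making the lifespan scale like the reciprocal of the data size.
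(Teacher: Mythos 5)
The paper offers no proof of this lemma; it is a direct citation of the symmetric-hyperbolic local-existence theorem in \cite[Theorem 10.1]{Gavage}, and your sketch follows the same Friedrichs--Kato--Majda energy scheme, so the overall route is the right one.

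There is, however, a genuine flaw in the energy inequality of Step 2 that undermines the lifespan claim for small data. Because \eqref{3.24-00} has no source term and $\overline{W}$ is a constant equilibrium (so $\mathbf{D}_j(\overline{W})$ drops out of every commutator), the standard Moser commutator estimates combined with the Sobolev bound $\|\nabla U\|_{L^\infty}\lesssim\|U\|_{H^{N_0}}$ yield the homogeneous inequality $\frac{d}{dt}E\le C_0\,y^{3/2}$ with no linear-in-$y$ term; your version $\frac{d}{dt}E\le C_0(1+\sqrt{y})y$ inserts a spurious $C_0\,y$ contribution. Your lifespan argument then only treats the regime $y\ge1$, where the factor $1+\sqrt{y}$ can be absorbed into $2\sqrt{y}$; but for data with $y_0<1$, the extra linear term gives at best exponential control $y(t)\le y_0e^{2C_0 t}$, hence a lifespan lower bound of order $\ln\bigl(1/\|W_0-\overline{W}\|_{H^{N_0}}\bigr)$, which is strictly weaker than the claimed $T_2\ge C_3\|W_0-\overline{W}\|_{H^{N_0}}^{-1}$ as the data shrinks. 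Once the spurious $+1$ is removed --- which is the correct estimate for this homogeneous system about an equilibrium --- the Riccati inequality $E'\lesssim E^{3/2}$ holds for all $y$, and $(E^{-1/2})'\ge-c$ gives the stated lower bound on $T_2$ directly, with no case distinction needed.
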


%%%%%%%%%%%%%%%%%%%%%%%%%%%%%%%%%%%%%%%%%%%%%%%%%%%%%%%%%%%%%%%
\subsection{Newtonian limit from the relativistic Euler to the classical Euler}\label{sec3.3}
%%%%%%%%%%%%%%%%%%%%%%%%%%%%%%%%%%%%%%%%%%%%%%%%%%%%%%%%%%%%%%%

In this subsection, we focus on the Newtonian limit of the relativistic Euler equations. 
%compare the solution for relativistic Euler equations \eqref{re-eu} with the one for classical Euler equations \eqref{3.24-00}.
%Now we can estimate the difference between these two solutions. Denote $T=\min\{T_1,T_2\}$. We claim that
%\begin{align}\label{3.37}
%	\sup_{0\le t\le T}\|(V-W)(t)\|_{L^{\infty}}\le \frac{C}{\mathfrak{c}^2},
%\end{align}
%where $C$ depends only on $\sup_{0\le t\le T}\|V(t)\|_{H^{N_0}}$ and $\sup_{0\le t\le T}\|W(t)\|_{H^{N_0}}$ and is independent of $\mathfrak{c}$.
%
%Denote the classical local Maxwellian as
%\begin{align}\label{1.1}
%	\mathbf{M}_{\mathfrak{c}}_0(t,x,p):=\frac{\rho}{(2\pi\theta)^{\frac{3}{2}}}\exp\Big\{\frac{|p-\mathfrak{u}|^2}{2\theta}\Big\}.
%\end{align}
%It follows from \eqref{3.37} that
%\begin{align}
%	\lim_{\mathfrak{c}\to \infty}\mathbf{M}_{\mathfrak{c}}(t,x,p)=\widetilde{\mathbf{M}_{\mathfrak{c}}}_0(t,x,p).
%\end{align}
%In the following, we focus on the proof of claim \eqref{3.37}.
It follows from \eqref{3.24-00} and \eqref{4.8-0} that
\begin{align}\label{3.44-0}
	\mathbf{D}_0\partial_t(W-V)+\sum_{j=1}^3\mathbf{D}_j\partial_j (W-V)=\Upsilon,\quad (W-V)\Big|_{t=0}=0,
\end{align}
where 
\begin{align*}
	\Upsilon
	&=(\mathbf{B}_0-\mathbf{D}_0)\partial_t V+\sum_{j=1}^3(\mathbf{B}_j-\mathbf{D}_j)\partial_j V.
\end{align*}

\begin{Lemma}\label{lem3.3}
	There hold
	\begin{align}\label{3.24-01}
		\sigma^2=\frac{\partial \mathcal{P}}{\partial \rho}\Big|_{\eta}=\frac{5}{3}\theta
	\end{align}
	and 
	\begin{align}\label{3.25-01}
		a^2=\mathfrak{c}^2\frac{\partial P_0}{\partial e_0}\Big|_{S}=\frac{5}{3}T_0+O(\mathfrak{c}^{-2}).
	\end{align}
\end{Lemma}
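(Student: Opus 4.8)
The plan is to prove the two identities separately, each by an elementary thermodynamic computation using only the equations of state already displayed in the excerpt. Both identities are statements about derivatives of the pressure with respect to a single density variable at fixed entropy, so the first task is to rewrite each quantity purely in terms of $\theta$ (resp.\ $\gamma$ and $T_0$), after which the relation follows by differentiation.

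For \eqref{3.24-01}: from \eqref{1.16-00} we have $\mathcal{P}=\rho\theta$ and $\eta=-\ln(A_0\rho\theta^{-3/2})$, so fixing $\eta$ amounts to fixing $A_0\rho\theta^{-3/2}$, i.e.\ $\rho\propto\theta^{3/2}$ along an isentrope. Hence along $\eta=\mathrm{const}$ one has $\mathcal{P}=\rho\theta\propto\theta^{5/2}\propto\rho^{5/3}$, and differentiating $\mathcal{P}=C\rho^{5/3}$ gives $\partial\mathcal{P}/\partial\rho|_\eta=\tfrac53\,\mathcal{P}/\rho=\tfrac53\theta$. Concretely I would take logarithmic differentials: from $\eta=\mathrm{const}$, $d\rho/\rho=\tfrac32\,d\theta/\theta$, and from $\mathcal{P}=\rho\theta$, $d\mathcal{P}/\mathcal{P}=d\rho/\rho+d\theta/\theta=\tfrac52\,d\theta/\theta$, so $\partial\mathcal{P}/\partial\rho|_\eta=(\mathcal{P}/\rho)\cdot(5/2)/(3/2)=\tfrac53\theta$. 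This step is routine.

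For \eqref{3.25-01}: the quantity $a^2=\mathfrak{c}^2\,\partial P_0/\partial e_0|_S$ must be computed from \eqref{3.32-0}--\eqref{3.34-0}, where the natural independent variables are $(n_0,T_0)$ (equivalently $\gamma=\mathfrak{c}^2/T_0$ after setting $m_0=k_B=1$) and $S$. Using $P_0=n_0T_0$ and $e_0=n_0T_0(\,\gamma K_1(\gamma)/K_2(\gamma)+3\,)$, together with the Gibbs relations $\partial e_0/\partial n_0|_S=(e_0+P_0)/n_0$ and $\partial e_0/\partial S|_{n_0}=n_0T_0$, I would express $\partial P_0/\partial e_0|_S$ via the chain rule along an isentrope: $\partial P_0/\partial e_0|_S=(\partial P_0/\partial n_0|_S)/(\partial e_0/\partial n_0|_S)$. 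The denominator is $(e_0+P_0)/n_0$. For the numerator, one needs $dP_0/dn_0$ along $S=\mathrm{const}$, which requires $dT_0/dn_0$ along the isentrope; this is obtained by differentiating \eqref{3.34-0} (the definition of $S$) at fixed $S$, which gives a relation between $dn_0/n_0$ and $d\gamma$ involving $K_1,K_2,K_3$ and their derivatives (handled via the recurrences and $\frac{d}{dz}(K_j/z^j)=-K_{j+1}/z^j$ in Lemma \ref{lem2.1-00}). Substituting and simplifying yields $a^2$ as an explicit function of $\gamma$ (equivalently of $T_0$ and $\mathfrak{c}$). Finally I would insert the large-$\gamma$ asymptotic expansion $K_j(z)=\sqrt{\pi/2z}\,e^{-z}[\,1+A_{j,1}z^{-1}+O(z^{-2})\,]$ with $A_{j,1}=\tfrac18(4j^2-1)$ from Lemma \ref{lem2.1-00}, so that ratios like $K_3(\gamma)/K_2(\gamma)$, $K_1(\gamma)/K_2(\gamma)$ expand as $1+O(\gamma^{-1})$ with explicit first-order coefficients; since $\gamma^{-1}=T_0/\mathfrak{c}^2$, collecting terms gives $a^2=\tfrac53 T_0+O(\mathfrak{c}^{-2})$.

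The main obstacle is the bookkeeping in \eqref{3.25-01}: one must differentiate the implicit relation \eqref{3.34-0} at fixed $S$ carefully to get the isentropic derivative $dT_0/dn_0$, then combine several Bessel-function ratios and carry their asymptotic expansions to sufficient order (at least through $O(\gamma^{-1})$, checking that the $O(1)$ terms combine to exactly $\tfrac53 T_0$ and that the leading relativistic correction is genuinely $O(\gamma^{-1})=O(\mathfrak{c}^{-2})$ rather than larger). The recurrence $K_{j+1}=\tfrac{2j}{z}K_j+K_{j-1}$ and the differentiation formula in Lemma \ref{lem2.1-00} are exactly what is needed to keep this manageable, and I expect the $\tfrac53$ to emerge as the nonrelativistic $C_p/C_v$ ratio for a monatomic gas, consistent with \eqref{3.24-01}. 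The remaining substitutions are routine algebra, so I would state the expansions and relegate the detailed simplification to a short computation.
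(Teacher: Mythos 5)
Your plan is correct and coincides with the paper's in substance: for \eqref{3.24-01} both compute $\partial\mathcal{P}/\partial\rho\big|_\eta$ directly from $\mathcal{P}=A_0^{2/3}\rho^{5/3}e^{2\eta/3}$, and for \eqref{3.25-01} both reduce $a^2$ to an explicit Bessel-function expression along the isentrope and then expand as $\gamma\to\infty$. The only real difference is cosmetic: the paper parametrizes the isentrope by $\gamma$ (writing $P_0$ and $e_0$ as explicit functions of $(\gamma,S)$, so that $\partial P_0/\partial e_0\big|_S=(\partial_\gamma P_0)/(\partial_\gamma e_0)$) and quotes the resulting closed form for $(\partial P_0/\partial e_0\big|_S)^{-1}$ from \cite[(3.32)]{Speck}, whereas you parametrize by $n_0$ and would re-derive it via the Gibbs relation $\partial e_0/\partial n_0\big|_S=(e_0+P_0)/n_0$ together with implicit differentiation of \eqref{3.34-0} at fixed $S$; both routes lead to the same $\gamma^{-1}$-expansion and the same constant $\tfrac53$.
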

\begin{proof}
	For \eqref{3.24-01}, it follows from \eqref{1.16-00} that
	\begin{align}\label{3.38-0}
		\mathcal{P}=\rho \theta=A_0^{\frac{2}{3}}\rho^{\frac{5}{3}}e^{\frac{2}{3}\eta},\quad A_0=(2\pi)^{-\frac{3}{2}}e^{-\frac{5}{2}},
	\end{align}
	which implies that
	\begin{align*}
		\frac{\partial \mathcal{P}}{\partial \rho}\Big|_{\eta}=\frac{5}{3}A_0^{\frac{2}{3}}\rho^{\frac{2}{3}}e^{\frac{2}{3}\eta}= \frac{5\mathcal{P}}{3\rho}=\frac{5}{3}\theta.
	\end{align*}

	For \eqref{3.25-01}, it follows from \eqref{3.32-0}--\eqref{3.34-0} that
	\begin{align*}
		P_0&=4 \pi e^4 e^{-S} \mathfrak{c}^5 \frac{K_2(\gamma)}{\gamma^2} \exp \left(\gamma \frac{K_1(\gamma)}{K_2(\gamma)}\right), \\
		e_0&=P_0\Big(\gamma \frac{K_1(\gamma)}{K_2(\gamma)}+3\Big).
	\end{align*}
	It is clear that
	\begin{align*}
		\frac{\partial P_0}{\partial \gamma}\Big|_{S}=\frac{\partial P_0}{\partial e_0}\Big|_{S}\cdot \frac{\partial e_0}{\partial \gamma}\Big|_{S}.
	\end{align*} 
	It follows from \cite[(3.32)]{Speck} that
	\begin{align*}
		\Big(\frac{\partial P_0}{\partial e_0}\Big|_{S}\Big)^{-1}=\frac{\frac{\partial e_0}{\partial \gamma}\Big|_{S}}{\frac{\partial P_0}{\partial \gamma}\Big|_{S}}=\gamma \frac{K_1(\gamma)}{K_2(\gamma)}+3+\frac{\gamma\left(\frac{K_1(\gamma)}{K_2(\gamma)}\right)^2+4 \frac{K_1(\gamma)}{K_2(\gamma)}-\gamma}{\gamma\left(\frac{K_1(\gamma)}{K_2(\gamma)}\right)^2+3 \frac{K_1(\gamma)}{K_2(\gamma)}-\gamma-\frac{4}{\gamma}} .
	\end{align*}
	Using the asymptotic expansions of $K_2(\gamma)$ and $K_3(\gamma)$ in Lemma \ref{lem2.1-00}, one has
	\begin{align}\label{3.54-0}
		\frac{K^2_3(\gamma)}{K^2_2(\gamma)}-1&=\frac{\frac{5}{\gamma}+\frac{115}{4\gamma^2}+\frac{2205}{32\gamma^3}+\frac{10395}{128\gamma^4}+O(\gamma^{-5})}{1+\frac{15}{4\gamma}+\frac{165}{32\gamma^2}+\frac{315}{128\gamma^3}+O(\gamma^{-4})}\nonumber\\
		&=\frac{5}{\gamma}+\frac{10}{\gamma^2}+\frac{45}{8\gamma^3}-\frac{15}{4\gamma^4}+O(\gamma^{-5})
	\end{align}
	and
	\begin{align}\label{3.55-0}
		\frac{K_3(\gamma)}{K_2(\gamma)}-1&=\frac{\frac{5}{2\gamma}+\frac{105}{16\gamma^2}+\frac{945}{256\gamma^3}+O(\gamma^{-4})}{1+\frac{15}{8\gamma}+\frac{105}{128\gamma^2}+O(\gamma^{-3})}=\frac{5}{2\gamma}+\frac{15}{8\gamma^2}-\frac{15}{8\gamma^{3}}+O(\gamma^{-4}).
	\end{align}
	Then one has
	\begin{align}\label{3.56-0}
		&\left(\frac{K_3(\gamma)}{K_2(\gamma)}\right)^2-\frac{5}{\gamma} \frac{K_3(\gamma)}{K_2(\gamma)}-1\nonumber\\
		&=\Big[\left(\frac{K_3(\gamma)}{K_2(\gamma)}\right)^2-1-\frac{5}{\gamma}\Big]-\frac{5}{\gamma}\Big(\frac{K_3(\gamma)}{K_2(\gamma)}-1\Big)\nonumber\\
		&=\frac{10}{\gamma^2}+\frac{45}{8\gamma^3}-\frac{15}{4\gamma^4}+O(\gamma^{-5})-\frac{5}{\gamma}\Big(\frac{5}{2\gamma}+\frac{15}{8\gamma^2}-\frac{15}{8\gamma^{3}}+O(\gamma^{-4})\Big)\nonumber\\
		&=-\frac{5}{2\gamma^{2}}-\frac{15}{4\gamma^{3}}+\frac{45}{8\gamma^4}+O(\gamma^{-5}).
	\end{align}
	Applying $\frac{K_1(\gamma)}{K_2(\gamma)}=\frac{K_3(\gamma)}{K_2(\gamma)}-\frac{4}{\gamma}$, we have
	\begin{align*}
		\Big(\gamma\frac{\partial P_0}{\partial e_0}\Big|_{S}\Big)^{-1}&= \frac{K_1(\gamma)}{K_2(\gamma)}+\frac{3}{\gamma}+\frac{\left(\frac{K_1(\gamma)}{K_2(\gamma)}\right)^2+\frac{4}{\gamma} \frac{K_1(\gamma)}{K_2(\gamma)}-1}{\left(\frac{K_1(\gamma)}{K_2(\gamma)}\right)^2+\frac{3}{\gamma} \frac{K_1(\gamma)}{K_2(\gamma)}-1-\frac{4}{\gamma^2}}\cdot \frac{1}{\gamma}\nonumber\\
		&=\frac{K_3(\gamma)}{K_2(\gamma)}+\frac{\frac{K_3(\gamma)}{K_2(\gamma)}}{\left(\frac{K_3(\gamma)}{K_2(\gamma)}\right)^2-\frac{5}{\gamma} \frac{K_3(\gamma)}{K_2(\gamma)}-1}\cdot \frac{1}{\gamma^2}\nonumber\\
		&=1+O(\gamma^{-1})+\frac{1+O(\gamma^{-1})}{-\frac{5}{2}+O(\gamma^{-1})}=\frac{3}{5}+O(\gamma^{-1}),
	\end{align*}
	which implies that
	\begin{align*}
		a^2=T_0\gamma\frac{\partial P_0}{\partial e_0}\Big|_{S}=T_0\Big(\frac{5}{3}+O(\gamma^{-1})\Big)=\frac{5}{3}T_0+O(\mathfrak{c}^{-2}).
	\end{align*}
	Therefore the proof is completed.
\end{proof}

\begin{Lemma}\label{lem3.4}
	It holds that
%	\begin{align}\label{3.37-01}
%		|n_0-\rho|&\le C|W-V|+\frac{C}{\mathfrak{c}^2}
%	\end{align}
%    and 
    \begin{align}\label{3.38-01}
    	|n_0-\rho|+|T_0-\theta|&\le  C|W-V|+\frac{C}{\mathfrak{c}^2},
    \end{align}
	where the constant $C$ depends on $\sup_{0\le t\le T}\|V(t)-\overline{V}\|_{H^{N_0}}$ and $\sup_{0\le t\le T}\|W(t)-\overline{W}\|_{H^{N_0}}$ and is independent of $\mathfrak{c}$. 
\end{Lemma}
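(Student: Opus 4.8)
The plan is to use the equations of state on both sides to express the relativistic variables $(n_0,T_0)$ and the classical variables $(\rho,\theta)$ in terms of the respective primitive unknowns $V=(P_0,u,S)$ and $W=(\mathcal{P},\mathfrak{u},\eta)$, and then exploit two facts: first, that $\Phi:(n_0,T_0)\mapsto(P_0,S)$ is a diffeomorphism (Property 1), so $(n_0,T_0)$ is a smooth function of $(P_0,S)$ with bounds uniform on compact sets; second, that this smooth dependence converges, as $\mathfrak{c}\to\infty$, to the corresponding classical map $(\mathcal{P},\eta)\mapsto(\rho,\theta)$ given by \eqref{1.16-00}, with an $O(\mathfrak{c}^{-2})$ error coming from the asymptotic expansion of the Bessel-function ratios. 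Concretely, I would write $n_0 = \mathcal{N}_{\mathfrak{c}}(P_0,S)$, $T_0 = \mathcal{T}_{\mathfrak{c}}(P_0,S)$ and $\rho = \mathcal{N}_\infty(\mathcal{P},\eta)$, $\theta = \mathcal{T}_\infty(\mathcal{P},\eta)$, and then split
\begin{align*}
    |n_0-\rho| &\le |\mathcal{N}_{\mathfrak{c}}(P_0,S)-\mathcal{N}_{\mathfrak{c}}(\mathcal{P},\eta)| + |\mathcal{N}_{\mathfrak{c}}(\mathcal{P},\eta)-\mathcal{N}_\infty(\mathcal{P},\eta)|,
\end{align*}
and similarly for $|T_0-\theta|$. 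The first term is bounded by $C|V-W|$ by the mean value theorem, since the a priori bounds \eqref{1.23-00} (and the analogous bound for $W$) confine $(P_0,S)$ and $(\mathcal{P},\eta)$ to a fixed compact set on which $\nabla\mathcal{N}_{\mathfrak{c}}$ is bounded uniformly in $\mathfrak{c}$ — this uniformity is exactly what Property 1 plus the uniform-in-$\mathfrak{c}$ control of the Bessel ratios provides. The second term is the genuinely $\mathfrak{c}$-dependent piece and is shown to be $O(\mathfrak{c}^{-2})$.

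For the $O(\mathfrak{c}^{-2})$ bound on $|\mathcal{N}_{\mathfrak{c}}-\mathcal{N}_\infty|$ and $|\mathcal{T}_{\mathfrak{c}}-\mathcal{T}_\infty|$, I would proceed as follows. From \eqref{3.32-0} one has $P_0 = n_0 T_0$ in both theories, so the pressure relation is identical and imposes no error; the discrepancy lives entirely in the entropy relation \eqref{3.34-0} versus \eqref{1.16-00}. Taking logarithms in \eqref{3.34-0}, using $\gamma = \mathfrak{c}^2/T_0$, and plugging in the asymptotic expansions of $K_1(\gamma)/K_2(\gamma)$ and $K_2(\gamma)$ from Lemma \ref{lem2.1-00} (the same computations that produced \eqref{3.54-0}--\eqref{3.56-0}), I expect the relativistic entropy to equal $-\ln(A_0 n_0 T_0^{-3/2}) + O(\gamma^{-1}) = \eta_{\mathrm{classical}}(n_0,T_0) + O(\mathfrak{c}^{-2})$, since $\gamma^{-1} = T_0/\mathfrak{c}^2$. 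Inverting (which is legitimate with uniform bounds by Property 1 and the classical inverse function theorem), one transfers this $O(\mathfrak{c}^{-2})$ error from the entropy to $(n_0,T_0)$ themselves, giving $|\mathcal{N}_{\mathfrak{c}}(\mathcal{P},\eta) - \mathcal{N}_\infty(\mathcal{P},\eta)| + |\mathcal{T}_{\mathfrak{c}}(\mathcal{P},\eta) - \mathcal{T}_\infty(\mathcal{P},\eta)| \lesssim \mathfrak{c}^{-2}$, where the implied constant depends only on the compact set, i.e. on $\sup_t\|V-\overline V\|_{H^{N_0}}$ and $\sup_t\|W-\overline W\|_{H^{N_0}}$ via Sobolev embedding $H^{N_0}\hookrightarrow L^\infty$.

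The main obstacle I anticipate is bookkeeping the uniform-in-$\mathfrak{c}$ nature of all the constants: one must check that the domain $\mathscr{V}_1\subset\subset\mathscr{V}$ on which $V$ lives, and the corresponding region for $(n_0,T_0)$, can be chosen independent of $\mathfrak{c}$ (this uses that $\overline V$ is a $\mathfrak{c}$-independent constant background and that $T_1$ in Lemma \ref{thm-re} is $\mathfrak{c}$-independent), and that the Bessel-ratio expansions are uniform for $\gamma$ ranging over $[\gamma_{\min},\infty)$ with $\gamma_{\min}$ bounded below independently of $\mathfrak{c}$ — which holds because $T_0$ stays in a fixed compact subinterval of $(0,\infty)$ and $\gamma = \mathfrak{c}^2/T_0\to\infty$. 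Once the compact sets and the uniformity of the asymptotic error terms in Lemma \ref{lem2.1-00} are pinned down, the estimate \eqref{3.38-01} follows by combining the two displayed terms, and the remaining steps are routine applications of the mean value theorem and Sobolev embedding.
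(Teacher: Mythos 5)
Your plan matches the paper's proof: the paper likewise uses the Bessel-function asymptotics from Lemma \ref{lem2.1-00} (specifically $K_2(\gamma)=\sqrt{\pi/(2\gamma)}\,e^{-\gamma}(1+O(\gamma^{-1}))$ and the expansion of $K_3/K_2$) to show $n_0=(2\pi P_0)^{3/5}e^{1-\frac{2}{5}S}+O(\gamma^{-1})$, compares this with $\rho=(2\pi\mathcal{P})^{3/5}e^{1-\frac{2}{5}\eta}$, and then gets $|T_0-\theta|$ from the algebraic identity $T_0-\theta=\tfrac{1}{n_0}(P_0-\mathcal{P})+\tfrac{P_0}{n_0\rho}(\rho-n_0)$. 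The only cosmetic difference is your ordering of the triangle inequality (you compare $\mathcal{N}_{\mathfrak{c}}$ at $V$ vs.\ $W$ first, which needs uniform-in-$\mathfrak{c}$ Lipschitz bounds on $\mathcal{N}_{\mathfrak{c}}$, whereas the paper subtracts the $O(\mathfrak{c}^{-2})$ asymptotic first and then uses the Lipschitz constant of the explicit, $\mathfrak{c}$-independent classical map), but both routes are valid.
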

\begin{proof}
	It follows from \eqref{3.38-0} that
%	\begin{align}
%		\frac{\mathcal{P}}{\rho}=A^{\frac{2}{3}}\rho^{\frac{2}{3}}e^{\frac{2}{3}\eta},
%	\end{align}
%	which yields that
	\begin{align}\label{3.61}
		\rho=(2\pi \mathcal{P})^{\frac{3}{5}}e^{1-\frac{2}{5}\eta}.
	\end{align}
	Since  $K_2(\gamma)=\sqrt{\frac{\pi}{2\gamma}}e^{-\gamma}(1+O(\gamma^{-1}))$, it follows from \eqref{3.34-0} that
	\begin{align*}
		n_0&=4 \pi e^{4-S} \mathfrak{c}^3 \frac{K_2(\gamma)}{\gamma} \exp \left(\gamma \frac{K_1(\gamma)}{K_2(\gamma)}\right)\nonumber\\
%		&=(2\pi T_0)^{\frac{3}{2}} e^{4-S}  \exp \left(\gamma \frac{K_1(\gamma)}{K_2(\gamma)}-\gamma\right)(1+O(\gamma^{-1}))\nonumber\\
		&=(2\pi)^{\frac{3}{2}} \Big(\frac{P_0}{n_0}\Big)^{\frac{3}{2}}e^{-S}  \exp \left(\gamma \frac{K_3(\gamma)}{K_2(\gamma)}-\gamma\right)(1+O(\gamma^{-1})),
	\end{align*}
	which yields immediately that
	\begin{align}\label{3.62}
		n_0&=(2\pi)^{\frac{3}{5}} P_0^{\frac{3}{5}}e^{-\frac{2}{5}S}  \exp \left(\frac{2}{5}\gamma \frac{K_3(\gamma)}{K_2(\gamma)}-\frac{2}{5}\gamma\right)(1+O(\gamma^{-1}))\nonumber\\
		&=(2\pi)^{\frac{3}{5}} P_0^{\frac{3}{5}}e^{1-\frac{2}{5}S}  \exp \left(O(\gamma^{-1})\right)(1+O(\gamma^{-1}))\nonumber\\
		&=(2\pi P_0)^{\frac{3}{5}} e^{1-\frac{2}{5}S}+O(\gamma^{-1}).
	\end{align}
	Using \eqref{3.61}--\eqref{3.62}, one has
	\begin{align}\label{3.63}
		|n_0-\rho|&\le C|W-V|+\frac{C}{\mathfrak{c}^2}.
	\end{align}
	
	For the estimate of $|T_0-\theta|$ in \eqref{3.38-01}, a direct calculation shows that
	\begin{align*}
		T_0-\theta=\frac{P_0}{n_0}-\frac{\mathcal{P}}{\rho}=\frac{1}{n_0}(P_0-\mathcal{P})+\frac{P_0}{n_0\rho}(\rho-n_0),
	\end{align*}
	which, together with \eqref{3.63}, yields that
	\begin{align*}
		|T_0-\theta|&\le C|W-V|+\frac{C}{\mathfrak{c}^2}.
	\end{align*}
	Therefore the proof is completed.
\end{proof}

Since $n_0=n_0(P_0,S)$ and $\rho=\rho(\mathcal{P},\eta)$, to consider the Newtonian limit of the relativistic Euler equations, we still need to  control the following functions
\begin{align*}
	\frac{\partial n_0}{\partial P_0}\Big|_{S}-\frac{\partial \rho}{\partial \mathcal{P}}\Big|_{\eta},\quad \frac{\partial n_0}{\partial S}\Big|_{P_0}-\frac{\partial \rho}{\partial \eta}\Big|_{\mathcal{P}}.
\end{align*} 
For simplicity of notations, we replace $\frac{\partial n_0}{\partial P_0}\Big|_{S}$ with $\frac{\partial n_0}{\partial P_0}$ and the remaining notations can be understood in the same way.

\begin{Lemma}\label{lem3.5}
	It holds that 
%	\begin{align}\label{3.68-0}
%		\Big|\frac{\partial n_0}{\partial P_0} -\frac{\partial \rho}{\partial \mathcal{P}} \Big|+\Big|\frac{\partial n_0}{\partial S} -\frac{\partial \rho}{\partial \eta} \Big|\le C|W-V|+\frac{C}{\mathfrak{c}^{2}},
%	\end{align}
%	and
	\begin{align}\label{3.69-0}
		\Big|\frac{\partial n_0}{\partial P_0} -\frac{\partial \rho}{\partial \mathcal{P}} \Big|+\Big|\frac{\partial n_0}{\partial S} -\frac{\partial \rho}{\partial \eta} \Big|+\Big|\frac{\partial T_0}{\partial P_0} -\frac{\partial \theta}{\partial \mathcal{P}} \Big|+\Big|\frac{\partial T_0}{\partial S} -\frac{\partial \theta}{\partial \eta} \Big|\le C|W-V|+\frac{C}{\mathfrak{c}^{2}},
	\end{align}
    where the constant $C$ depends on $\sup_{0\le t\le T}\|V(t)-\overline{V}\|_{H^{N_0}}$ and $\sup_{0\le t\le T}\|W(t)-\overline{W}\|_{H^{N_0}}$ and is independent of $\mathfrak{c}$.
\end{Lemma}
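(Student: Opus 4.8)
The plan is to write each of the four relativistic thermodynamic derivatives in closed form, expand it via Lemma \ref{lem2.1-00} so that its $\mathfrak{c}\to\infty$ limit is the corresponding classical derivative with an $O(\gamma^{-1})$ remainder, and then shift the evaluation point from the relativistic state $(P_0,S)$ to the classical state $(\mathcal{P},\eta)$ using the smoothness of the classical thermodynamic functions together with the uniform bounds of Lemma \ref{thm-re} and the expansions obtained in the proof of Lemma \ref{lem3.4}.

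First I would note that \eqref{3.32-0}--\eqref{3.34-0} realizes $n_0$ and $P_0=n_0T_0$ as functions of $(T_0,S)$ alone (through $\gamma=\mathfrak{c}^2/T_0$) and that $(T_0,S)\mapsto(P_0,S)$ is a diffeomorphism (\emph{Property 1}). Denoting the four ``primitive'' partials by $\partial_{T_0}n_0|_S$, $\partial_{S}n_0|_{T_0}$, $\partial_{T_0}P_0|_S$, $\partial_{S}P_0|_{T_0}$, the chain rule gives
\begin{gather*}
	\frac{\partial n_0}{\partial P_0}\Big|_S=\frac{\partial_{T_0}n_0|_S}{\partial_{T_0}P_0|_S},\qquad
	\frac{\partial T_0}{\partial P_0}\Big|_S=\frac{1}{\partial_{T_0}P_0|_S},\\
	\frac{\partial n_0}{\partial S}\Big|_{P_0}=\partial_{S}n_0|_{T_0}-\frac{\partial_{T_0}n_0|_S\,\partial_{S}P_0|_{T_0}}{\partial_{T_0}P_0|_S},\qquad
	\frac{\partial T_0}{\partial S}\Big|_{P_0}=-\frac{\partial_{S}P_0|_{T_0}}{\partial_{T_0}P_0|_S}.
\end{gather*}
Since $\log n_0$ depends on $S$ only through the explicit factor $e^{-S}$, one has $\partial_{S}n_0|_{T_0}=-n_0$ and $\partial_{S}P_0|_{T_0}=-P_0$; for the $T_0$-derivatives I would use $d\gamma/dT_0=-\gamma/T_0$ together with $\tfrac{d}{d\gamma}\big(K_j/\gamma^j\big)=-K_{j+1}/\gamma^j$ and $K_{j+1}=\tfrac{2j}{\gamma}K_j+K_{j-1}$ from Lemma \ref{lem2.1-00} (writing $K_j=K_j(\gamma)$), which yield $\tfrac{d}{d\gamma}\log K_2=\tfrac{2}{\gamma}-\tfrac{K_3}{K_2}$ and $\tfrac{d}{d\gamma}\big[\gamma\tfrac{K_1}{K_2}\big]=\gamma\big(\tfrac{K_1K_3}{K_2^{2}}-1\big)$; combining these (and $K_1/K_2=K_3/K_2-4/\gamma$) presents $\partial_{T_0}n_0|_S$, hence $\partial_{T_0}P_0|_S=T_0\,\partial_{T_0}n_0|_S+n_0$, as $n_0$ times a rational function of $\gamma$ and $K_3/K_2$.

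Next I would substitute the asymptotic expansions of $K_2,K_3$ from Lemma \ref{lem2.1-00}, carried to the same order as in the proof of Lemma \ref{lem3.3} (cf.\ \eqref{3.54-0}--\eqref{3.56-0}); one gets, for instance, $\partial_{T_0}\log n_0|_S=\tfrac{3}{2T_0}+O(\mathfrak{c}^{-2})$, hence $\partial_{T_0}P_0|_S=\tfrac{5}{2}n_0\big(1+O(\mathfrak{c}^{-2})\big)$ and $\partial n_0/\partial P_0|_S=\tfrac{3}{5T_0}+O(\mathfrak{c}^{-2})$, and similarly for the other three. Using that $n_0=(2\pi P_0)^{3/5}e^{1-2S/5}+O(\mathfrak{c}^{-2})$ (established in the proof of Lemma \ref{lem3.4}), so that $T_0=P_0/n_0$ equals the classical function $\theta$ evaluated at $(P_0,S)$ up to $O(\mathfrak{c}^{-2})$, each relativistic derivative can be rewritten as $G_i(P_0,S)+O(\mathfrak{c}^{-2})$, where $G_1=\partial\rho/\partial\mathcal{P}|_\eta=\tfrac35\rho/\mathcal{P}$, $G_2=\partial\rho/\partial\eta|_{\mathcal{P}}=-\tfrac25\rho$, $G_3=\partial\theta/\partial\mathcal{P}|_\eta=\tfrac25\theta/\mathcal{P}$, $G_4=\partial\theta/\partial\eta|_{\mathcal{P}}=\tfrac25\theta$ are the classical derivatives read off from \eqref{1.16-00}, each of class $C^\infty$ on the relevant region. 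By Lemmas \ref{thm-re} and \ref{thm-ce} the states $(P_0,S)$ and $(\mathcal{P},\eta)$ lie in a fixed compact subset of $(0,\infty)\times(0,\infty)$ on $[0,T]\times\mathbb{R}^3$; in particular $T_0$ is bounded away from $0$ and $\infty$, so $\gamma\cong\mathfrak{c}^2$ and the above remainders are genuinely $O(\mathfrak{c}^{-2})$. Then, by the mean value theorem,
\begin{align*}
	\Big|\frac{\partial n_0}{\partial P_0}\Big|_S-\frac{\partial\rho}{\partial\mathcal{P}}\Big|_\eta\Big|
	&\le\Big|\frac{\partial n_0}{\partial P_0}\Big|_S-G_1(P_0,S)\Big|+\big|G_1(P_0,S)-G_1(\mathcal{P},\eta)\big|\\
	&\le\frac{C}{\mathfrak{c}^2}+C\big(|P_0-\mathcal{P}|+|S-\eta|\big)\le\frac{C}{\mathfrak{c}^2}+C|W-V|,
\end{align*}
where the last inequality uses that $P_0-\mathcal{P}$ and $S-\eta$ are components of $V-W$. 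Handling $G_2,G_3,G_4$ identically and summing yields \eqref{3.69-0}.

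The step I expect to be the main obstacle is the third one: checking that after the Bessel substitutions the rational expressions really do collapse onto the classical formulas $G_i$ with only an $O(\gamma^{-1})$ error. As with \eqref{3.54-0}--\eqref{3.56-0}, this forces one to expand $K_3/K_2$ to several orders and to track the cancellations carefully — for example the term $\gamma\big(K_1K_3/K_2^{2}-1\big)$ produced by differentiation is individually $O(\gamma)$, and it is only after combining it with the $-K_3/K_2$-type terms and the $\tfrac1\gamma$ term that the sum becomes $O(\gamma^{-1})$, so one must make sure that no spurious $O(1)$ or $O(\gamma)$ contribution survives. Everything else — the algebraic reduction to the four primitive partials, the mean value theorem step, and the compactness of the range of the Euler solutions — is routine.
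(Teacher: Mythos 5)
Your proposal is correct and reaches the same asymptotic formulas as the paper's proof, with one minor organizational difference in the first half: the paper computes $\partial n_0/\partial P_0|_S$ and $\partial n_0/\partial S|_{P_0}$ by implicit differentiation of $n_0=4\pi e^{-S}\mathfrak{c}^3\,K_2(\gamma)\gamma^{-1}\exp(\gamma K_3/K_2)$ with $\gamma=\mathfrak{c}^2n_0/P_0$ depending on the unknown $n_0$ (leading to the fixed-point identity $\partial n_0/\partial P_0=\varphi(\gamma)(\partial n_0/\partial P_0-n_0/P_0)$, cf.\ \eqref{3.74-0}), while you instead parameterize by $(T_0,S)$, compute the primitive partials $\partial_{T_0}n_0|_S$, $\partial_{T_0}P_0|_S$, $\partial_S n_0|_{T_0}=-n_0$, $\partial_S P_0|_{T_0}=-P_0$ explicitly, and invert the Jacobian. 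Both routes hinge on the same Bessel-ratio expansions \eqref{3.54-0}--\eqref{3.56-0} and the same cancellations; your version just avoids the implicit equation. The comparison step is also a cosmetic variant: the paper bounds $|3/(5T_0)-3/(5\theta)|$ etc.\ directly via the $|T_0-\theta|$, $|n_0-\rho|$ estimates of Lemma \ref{lem3.4}, whereas you first recast each relativistic derivative as $G_i(P_0,S)+O(\mathfrak{c}^{-2})$ using \eqref{3.62} and then apply the mean value theorem in the $(P_0,S)$ variables; these are equivalent and both rest on Lemma \ref{lem3.4}. One small caveat: your claim that $(P_0,S)$ and $(\mathcal{P},\eta)$ lie in a fixed compact set should be tied explicitly to the diffeomorphism $(n_0,T_0)\leftrightarrow(P_0,S)$ of \emph{Property 1} together with the uniform bounds \eqref{3.90-10}--\eqref{3.91-10}, and the nondegeneracy $\partial_{T_0}P_0|_S\neq 0$ needed for your Jacobian inversion at finite $\mathfrak{c}$ (not just asymptotically) likewise follows from \emph{Property 1}, since $(n_0,T_0)\mapsto(T_0,S)$ is clearly a diffeomorphism. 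With those two references added, your argument is complete.
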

\begin{proof}
	Using \eqref{3.61}, one has
	\begin{align}\label{3.72-0}
		\frac{\partial \rho}{\partial \mathcal{P}}=\frac{3\rho}{5\mathcal{P}}=\frac{3}{5\theta},\quad \frac{\partial \rho}{\partial \eta}=-\frac{2\mathcal{P}}{5\theta}=-\frac{2}{5}\rho.
	\end{align}
	Since $\gamma=\frac{\mathfrak{c}^2}{T_0}=\mathfrak{c}^2\frac{n_0}{P_0}$ and 
	\begin{align*}
		n_0&=4 \pi e^{-S} \mathfrak{c}^3 \frac{K_2(\gamma)}{\gamma} \exp \left(\gamma \frac{K_3(\gamma)}{K_2(\gamma)}\right),
	\end{align*}
	it holds that
	\begin{align}\label{3.74-0}
		\frac{\partial n_0}{\partial P_0}&=4 \pi e^{-S} \mathfrak{c}^3\frac{d}{d\gamma}\Big[\frac{K_2(\gamma)}{\gamma} \exp \left(\gamma \frac{K_3(\gamma)}{K_2(\gamma)}\right)\Big]\cdot \frac{\partial \gamma}{\partial P_0}\nonumber\\
		&=4 \pi e^{-S} \mathfrak{c}^3\Big[\frac{K'_2(\gamma)\gamma-K_2(\gamma)}{\gamma^2}+\frac{K_2(\gamma)}{\gamma}\Big(\frac{K_3(\gamma)}{K_2(\gamma)}+\gamma \frac{K'_3(\gamma)K_2(\gamma)-K_3(\gamma)K'_2(\gamma)}{K^2_2(\gamma)}\Big) \Big] \nonumber\\
		&\qquad \times \exp \left(\gamma \frac{K_3(\gamma)}{K_2(\gamma)}\right)\cdot \frac{\mathfrak{c}^2}{P_0}\Big(\frac{\partial n_0}{\partial P_0}-\frac{n_0}{P_0}\Big)\nonumber\\
		&=\gamma^2\Big(\frac{K^2_3(\gamma)}{K^2_2(\gamma)}-\frac{5}{\gamma}\frac{K_3(\gamma)}{K_2(\gamma)}-1+\frac{1}{\gamma^2}\Big)\Big(\frac{\partial n_0}{\partial P_0}-\frac{n_0}{P_0}\Big)\nonumber\\
		&=\varphi(\gamma)\Big(\frac{\partial n_0}{\partial P_0}-\frac{n_0}{P_0}\Big),
	\end{align}
	where we have denoted
	\begin{align*}
		\varphi(\gamma):=\gamma^2\Big(\frac{K^2_3(\gamma)}{K^2_2(\gamma)}-\frac{5}{\gamma}\frac{K_3(\gamma)}{K_2(\gamma)}-1+\frac{1}{\gamma^2}\Big).
	\end{align*}
	It follows from \eqref{3.56-0} that
	\begin{align}\label{3.76-0}
		\varphi(\gamma)=-\frac{3}{2}+O(\gamma^{-1}).
	\end{align}
	Substituting \eqref{3.76-0} into \eqref{3.74-0}, one gets 
%	\begin{align}
%		\frac{\partial n_0}{\partial P_0}=-\frac{3}{2}\frac{\partial n_0}{\partial P_0}+\frac{3}{2}\frac{1}{T_0}+O(\gamma^{-1}),
%	\end{align}
%	which implies that
	\begin{align}\label{3.78-0}
		\frac{\partial n_0}{\partial P_0}=\frac{3}{5T_0}+O(\gamma^{-1}).
	\end{align}
	Similarly, one has
	\begin{align*}
		\frac{\partial n_0}{\partial S}&=-n_0+4 \pi e^{-S} \mathfrak{c}^3\frac{d}{d\gamma}\Big[\frac{K_2(\gamma)}{\gamma} \exp \left(\gamma \frac{K_3(\gamma)}{K_2(\gamma)}\right)\Big]\cdot \frac{\partial \gamma}{\partial S}\nonumber\\
		&=-n_0+\varphi(\gamma)\frac{\partial n_0}{\partial S}=-n_0+\Big(-\frac{3}{2}+O(\gamma^{-1})\Big)\frac{\partial n_0}{\partial S},
	\end{align*}
	which yields that
	\begin{align}\label{3.80-0}
		\frac{\partial n_0}{\partial S}=-\frac{2}{5}n_0+O(\gamma^{-1}).
	\end{align}
	It follows from \eqref{3.72-0}, \eqref{3.78-0} and \eqref{3.80-0} that
	\begin{align}\label{3.81-0}
		\Big|\frac{\partial n_0}{\partial P_0}-\frac{\partial \rho}{\partial \mathcal{P}}\Big|+\Big|\frac{\partial n_0}{\partial s}-\frac{\partial \rho}{\partial \eta}\Big|&\le C\Big|T_0-\theta|+ C\Big|n_0-\rho|+\frac{C}{\mathfrak{c}^2}\le C|W-V|+\frac{C}{\mathfrak{c}^2}.
	\end{align}

	Next, we consider $\big|\frac{\partial T_0}{\partial P_0}-\frac{\partial \theta}{\partial \mathcal{P}}\big|$ and $\big|\frac{\partial T_0}{\partial S}-\frac{\partial \theta}{\partial \eta}\big|$ in \eqref{3.69-0}. Noting $T_0=\frac{P_0}{n_0}$ and $\theta=\frac{\mathcal{P}}{\rho}$, we have
	\begin{align}\label{3.82-0}
		\frac{\partial T_0}{\partial P_0}=\frac{1}{n_0}-\frac{T_0}{n_0}\frac{\partial n_0}{\partial P_0},\quad \frac{\partial \theta}{\partial \mathcal{P}}=\frac{1}{\rho}-\frac{\theta}{\rho}\frac{\partial \rho}{\partial \mathcal{P}}
	\end{align}
	and
	\begin{align}\label{3.83-0}
		\frac{\partial T_0}{\partial S}= -\frac{T_0}{n_0}\frac{\partial n_0}{\partial S},\quad \frac{\partial \theta}{\partial \eta}=-\frac{\theta}{\rho}\frac{\partial \rho}{\partial \eta}.
	\end{align}
	Hence it is clear that
	\begin{align*}
		\Big|\frac{\partial T_0}{\partial P_0}-\frac{\partial \theta}{\partial \mathcal{P}}\Big|\le C\Big|n_0-\rho\Big|+C\Big|T_0-\theta\Big|+C\Big|\frac{\partial n_0}{\partial P_0}-\frac{\partial \rho}{\partial \mathcal{P}}\Big|
	\end{align*}
	and
	\begin{align*}
		\Big|\frac{\partial T_0}{\partial S}-\frac{\partial \theta}{\partial \eta}\Big|\le C\Big|n_0-\rho\Big|+C\Big|T_0-\theta\Big|+C\Big|\frac{\partial n_0}{\partial S}-\frac{\partial \rho}{\partial \eta}\Big|,
	\end{align*}
	which, together with \eqref{3.81-0}, yield \eqref{3.69-0}. Therefore the proof is completed.
\end{proof}

\begin{Lemma}\label{lem3.6}
	There hold
	\begin{align}\label{3.86-0}
		\Big|\frac{\partial^2 n_0}{\partial P^2_0} -\frac{\partial^2 \rho}{\partial \mathcal{P}^2} \Big|+\Big|\frac{\partial^2 n_0}{\partial S^2} -\frac{\partial^2 \rho}{\partial \eta^2} \Big|+\Big|\frac{\partial^2 n_0}{\partial P_0 \partial S} -\frac{\partial^2 \rho}{\partial \mathcal{P} \partial \eta} \Big|\le C|W-V|+\frac{C}{\mathfrak{c}^{2}}
	\end{align}
	and
	\begin{align}\label{3.87-0}
		\Big|\frac{\partial^2 T_0}{\partial P^2_0} -\frac{\partial^2 \theta}{\partial \mathcal{P}^2} \Big|+\Big|\frac{\partial^2 T_0}{\partial S^2} -\frac{\partial^2 \theta}{\partial \eta^2} \Big|+\Big|\frac{\partial^2 T_0}{\partial P_0 \partial S} -\frac{\partial^2 \theta}{\partial \mathcal{P} \partial \eta} \Big|\le C|W-V|+\frac{C}{\mathfrak{c}^{2}},
	\end{align}
    where the constant $C$ depends on $\sup_{0\le t\le T}\|V(t)-\overline{V}\|_{H^{N_0}}$ and $\sup_{0\le t\le T}\|W(t)-\overline{W}\|_{H^{N_0}}$ and is independent of $\mathfrak{c}$.
\end{Lemma}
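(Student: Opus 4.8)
The plan is to mimic the proof of Lemma~\ref{lem3.5}: differentiate once more the implicit identity \eqref{3.74-0} that characterizes the first derivatives of $n_0$, solve the resulting linear relations for the second derivatives of $n_0$, and then pass to the limit $\mathfrak{c}\to\infty$ (equivalently $\gamma\to\infty$) using the Bessel asymptotics of Lemma~\ref{lem2.1-00}. On the classical side the second derivatives of $\rho=\rho(\mathcal{P},\eta)$ are read off directly from \eqref{3.61}, namely $\frac{\partial^2\rho}{\partial\mathcal{P}^2}=-\frac{6}{25}\frac{\rho}{\mathcal{P}^2}$, $\frac{\partial^2\rho}{\partial\eta^2}=\frac{4}{25}\rho$ and $\frac{\partial^2\rho}{\partial\mathcal{P}\partial\eta}=-\frac{6}{25}\frac{\rho}{\mathcal{P}}$.

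First I would differentiate \eqref{3.74-0}, i.e. $\frac{\partial n_0}{\partial P_0}=\varphi(\gamma)\bigl(\frac{\partial n_0}{\partial P_0}-\frac{n_0}{P_0}\bigr)$, with respect to $P_0$ and to $S$, together with the companion relation $\bigl(1-\varphi(\gamma)\bigr)\frac{\partial n_0}{\partial S}=-n_0$ from Lemma~\ref{lem3.5}. Since $\gamma=\mathfrak{c}^2 n_0/P_0$ one has $\frac{\partial\gamma}{\partial P_0}=\frac{\mathfrak{c}^2}{P_0}\bigl(\frac{\partial n_0}{\partial P_0}-\frac{n_0}{P_0}\bigr)$ and $\frac{\partial\gamma}{\partial S}=\frac{\mathfrak{c}^2}{P_0}\frac{\partial n_0}{\partial S}$, so each differentiation produces, apart from the term $\varphi(\gamma)\,\partial^2 n_0$ that is to be isolated, only quantities built from $\varphi(\gamma)$, $\varphi'(\gamma)$, the first derivatives of $n_0$ already controlled in Lemma~\ref{lem3.5}, and $n_0,P_0$. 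Collecting terms one obtains, for instance,
\begin{align*}
\bigl(1-\varphi(\gamma)\bigr)\frac{\partial^2 n_0}{\partial P_0^2}
=\varphi'(\gamma)\,\frac{\partial\gamma}{\partial P_0}\Bigl(\frac{\partial n_0}{\partial P_0}-\frac{n_0}{P_0}\Bigr)
+\frac{\varphi(\gamma)}{P_0}\Bigl(\frac{n_0}{P_0}-\frac{\partial n_0}{\partial P_0}\Bigr),
\end{align*}
and analogous identities for $\frac{\partial^2 n_0}{\partial S^2}$ and $\frac{\partial^2 n_0}{\partial P_0\partial S}$. Since $1-\varphi(\gamma)=\frac52+O(\gamma^{-1})$ by \eqref{3.76-0}, the coefficient of $\partial^2 n_0$ is bounded away from $0$ uniformly in $\mathfrak{c}$, so these relations can be solved for the second derivatives.

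Next I would extract the asymptotics. This requires one more order in \eqref{3.56-0} than was used in Lemma~\ref{lem3.5}, yielding $\varphi(\gamma)=-\frac32-\frac{15}{4\gamma}+O(\gamma^{-2})$ and hence $\varphi'(\gamma)=O(\gamma^{-2})$; here I use that $K_{j+1}/K_j$, and therefore $\varphi$, is a smooth function of $\gamma^{-1}$ near $\gamma^{-1}=0$, so term-by-term differentiation of the expansions is legitimate. From \eqref{3.78-0}, \eqref{3.80-0} and $\frac{n_0}{P_0}=\frac1{T_0}$ one gets $\frac{\partial\gamma}{\partial P_0},\frac{\partial\gamma}{\partial S}=O(\mathfrak{c}^2)$, so the $\varphi'(\gamma)\,\partial\gamma$ contributions are $O(\gamma^{-2}\mathfrak{c}^2)=O(\mathfrak{c}^{-2})$; combining with $\varphi(\gamma)=-\frac32+O(\gamma^{-1})$ gives
\begin{align*}
\frac{\partial^2 n_0}{\partial P_0^2}&=-\frac{6}{25}\frac{n_0}{P_0^2}+O(\gamma^{-1}),\qquad
\frac{\partial^2 n_0}{\partial S^2}=\frac{4}{25}n_0+O(\gamma^{-1}),\\
\frac{\partial^2 n_0}{\partial P_0\partial S}&=-\frac{6}{25}\frac{n_0}{P_0}+O(\gamma^{-1}).
\end{align*}
Comparing with the classical formulas above and invoking $|n_0-\rho|+|T_0-\theta|+|P_0-\mathcal{P}|\le C|W-V|+C\mathfrak{c}^{-2}$ (Lemma~\ref{lem3.4}, together with the fact that $P_0-\mathcal{P}$ is a component of $V-W$), and noting $O(\gamma^{-1})=O(\mathfrak{c}^{-2})$ since $\gamma=\mathfrak{c}^2/T_0$ with $T_0$ bounded uniformly in $\mathfrak{c}$ (because $V$ remains in a fixed compact set), one arrives at \eqref{3.86-0}. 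Finally \eqref{3.87-0} follows by differentiating $T_0=P_0/n_0$ (resp. $\theta=\mathcal{P}/\rho$) twice: each second derivative of $T_0$ is a fixed rational expression in $P_0,n_0$ and the first and second derivatives of $n_0$, so \eqref{3.87-0} is a consequence of \eqref{3.86-0}, Lemma~\ref{lem3.4} and Lemma~\ref{lem3.5}.

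The step I expect to be most delicate is purely bookkeeping: one must carry the Bessel expansions one order further than before and verify that every term created by the second differentiation --- in particular the cross terms $\varphi'(\gamma)\,\partial\gamma\cdot\partial n_0$ and the products of two first derivatives --- is genuinely $O(\mathfrak{c}^{-2})$ after the cancellations, while confirming that $1-\varphi(\gamma)$ never degenerates uniformly in $\mathfrak{c}$.
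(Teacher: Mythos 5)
Your proposal is correct and follows essentially the same route as the paper: differentiate \eqref{3.74-0} (and its companion for $\partial n_0/\partial S$) once more, isolate $\partial^2 n_0$ using that $1-\varphi(\gamma)\to\tfrac52$ stays bounded away from zero, show the $\varphi'(\gamma)\cdot\partial\gamma$ contributions are $O(\mathfrak{c}^{-2})$, then deduce the $T_0$-derivative bounds as rational expressions in $n_0,P_0$ and the lower-order derivatives already controlled. The one place where the paper is more careful than your sketch is the assertion that $\varphi$ may be differentiated term by term as a power series in $\gamma^{-1}$: the Poincaré asymptotic expansion of a Bessel ratio is divergent, and term-by-term differentiability of asymptotic expansions is not automatic. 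The paper avoids this issue by first invoking the exact recurrence $\bigl(K_3/K_2\bigr)'=K_3^2/K_2^2-\tfrac{5}{\gamma}\,K_3/K_2-1$ (from Lemma~\ref{lem2.1-00}) to express $\varphi'(\gamma)$ as an algebraic combination of $K_3/K_2$ and $\gamma^{-1}$, and only then substitutes the one-sided asymptotics, obtaining $\varphi'(\gamma)=\tfrac{15}{4\gamma^2}+O(\gamma^{-3})$ without ever differentiating an asymptotic series. Your conclusion is the same; you should replace the ``smooth function of $\gamma^{-1}$'' appeal with this recurrence-based computation.
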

\begin{proof}
	It follows from \eqref{3.72-0} that
	\begin{align}\label{3.62-10}
		\frac{\partial^2 \rho}{\partial \mathcal{P}^2}=-\frac{6}{25}\frac{1}{\mathcal{P}\theta},\quad \frac{\partial^2 \rho}{\partial \mathcal{P} \partial \eta}=-\frac{6}{25}\frac{1}{\theta},\quad \frac{\partial^2 \rho}{\partial \eta^2}=\frac{4}{25}\rho.
	\end{align}
	Using \eqref{3.74-0}, one has
	\begin{align}\label{3.91-0}
		\frac{\partial^2 n_0}{\partial P_0^2}&=\varphi'(\gamma)\frac{\mathfrak{c}^2}{P_0}\Big(\frac{\partial n_0}{\partial P_0}-\frac{n_0}{P_0}\Big)\Big(\frac{\partial n_0}{\partial P_0}-\frac{n_0}{P_0}\Big)+\varphi(\gamma)\Big(\frac{\partial^2 n_0}{\partial P_0^2}-\frac{1}{P_0}\frac{\partial n_0}{\partial P_0}+\frac{n_0}{P_0^2}\Big)\nonumber\\
		&=\gamma \varphi'(\gamma)\frac{1}{n_0\varphi^2(\gamma)}\Big(\frac{\partial n_0}{\partial P_0}\Big)^2+\varphi(\gamma)\Big(\frac{\partial^2 n_0}{\partial P_0^2}-\frac{1}{P_0}\frac{\partial n_0}{\partial P_0}+\frac{1}{P_0 T_0}\Big).
	\end{align}
	Noting
	\begin{align}\label{3.92-0}
		\Big(\frac{K_3(\gamma)}{K_2(\gamma)}\Big)'=
		\frac{K^2_3(\gamma)}{K^2_2(\gamma)}-\frac{5}{\gamma}\frac{K_3(\gamma)}{K_2(\gamma)}-1,
	\end{align}
	one has
	\begin{align}\label{3.65-01}
		\varphi'(\gamma)&=\frac{d}{d\gamma}\Big\{ \gamma^2\Big(\frac{K^2_3(\gamma)}{K^2_2(\gamma)}-\frac{5}{\gamma}\frac{K_3(\gamma)}{K_2(\gamma)}-1+\frac{1}{\gamma^2}\Big) \Big\}\nonumber\\
		&=2\gamma\Big[\frac{K^2_3(\gamma)}{K^2_2(\gamma)}-1\Big]+2\gamma^2\Big[\frac{K_3(\gamma)}{K_2(\gamma)}-1\Big]\Big(\frac{K_3(\gamma)}{K_2(\gamma)}\Big)'\nonumber\\
		&\qquad +(2\gamma^2-5\gamma)\Big(\frac{K_3(\gamma)}{K_2(\gamma)}\Big)'-5 \frac{K_3(\gamma)}{K_2(\gamma)}:=\sum_{j=1}^4 \mathcal{R}_j.
	\end{align}
	Applying \eqref{3.54-0}, \eqref{3.55-0}, \eqref{3.56-0} and \eqref{3.92-0}, one can obtain
	\begin{align}
		\mathcal{R}_1&=2\gamma\Big[\frac{K^2_3(\gamma)}{K^2_2(\gamma)}-1\Big]=2\gamma\Big(\frac{5}{\gamma}+\frac{10}{\gamma^2}+\frac{45}{8\gamma^3}+O(\gamma^{-4})\Big)=10+\frac{20}{\gamma}+\frac{45}{4\gamma^2}+O(\gamma^{-3}),\label{3.66-01}\\
		\mathcal{R}_2&=2\gamma^2\Big[\frac{K_3(\gamma)}{K_2(\gamma)}-1\Big]\Big(\frac{K_3(\gamma)}{K_2(\gamma)}\Big)'\nonumber\\
		&=2\gamma^2\Big(\frac{5}{2\gamma}+\frac{15}{8\gamma^2}+O(\gamma^{-3})\Big)\Big(-\frac{5}{2\gamma^{2}}-\frac{15}{4\gamma^{3}}+O(\gamma^{-4})\Big)=-\frac{25}{2\gamma}-\frac{225}{8\gamma^2}+O(\gamma^{-3}),\label{3.67-01}\\
		\mathcal{R}_3&=(2\gamma^2-5\gamma)\Big(\frac{K_3(\gamma)}{K_2(\gamma)}\Big)'=(2\gamma^2-5\gamma)\Big(-\frac{5}{2\gamma^{2}}-\frac{15}{4\gamma^{3}}+\frac{45}{8\gamma^4}+O(\gamma^{-5})\Big)\nonumber\\
		&=-5+\frac{5}{\gamma}+\frac{30}{\gamma^2}+O(\gamma^{-3}),\label{3.68-01}\\
		\mathcal{R}_4&=-5\Big[\frac{K_3(\gamma)}{K_2(\gamma)}-1\Big]-5=-5\Big(\frac{5}{2\gamma}+\frac{15}{8\gamma^2}+O(\gamma^{-3})\Big)-5\nonumber\\
		&=-5-\frac{25}{2\gamma}-\frac{75}{8\gamma^2}+O(\gamma^{-3}).\label{3.69-001}
	\end{align}
	Hence it follows from \eqref{3.65-01}--\eqref{3.69-001} that
	\begin{align*}
		\varphi'(\gamma)=\frac{15}{4\gamma^2}+O(\gamma^{-3}),
	\end{align*}
	which implies that 
	\begin{align}\label{3.99-0}
		\gamma \varphi'(\gamma)\frac{1}{\varphi^2(\gamma)}=O(\gamma^{-1}).
	\end{align}
	Since $\frac{\partial n_0}{\partial P_0}=\frac{3}{5T_0}+O(\gamma^{-1})$ and $\varphi(\gamma)=-\frac{3}{2}+O(\gamma^{-1})$, it follows from \eqref{3.91-0} and \eqref{3.99-0} that
	\begin{align*}
		(1-\varphi(\gamma))\frac{\partial^2 n_0}{\partial P_0^2}=(-\frac{3}{2}+O(\gamma^{-1}))\Big\{-\frac{1}{P_0}\Big(\frac{3}{5T_0}+O(\gamma^{-1})\Big)+\frac{1}{P_0 T_0}\Big\}+O(\gamma^{-1}),
	\end{align*}
	which implies that
	\begin{align}\label{3.73-10}
		\frac{\partial^2 n_0}{\partial P_0^2}=-\frac{6}{25}\frac{1}{P_0T_0}+O(\gamma^{-1}).
	\end{align}
	Similarly, we can obtain that
	\begin{align}\label{3.74-10}
		\frac{\partial^2 n_0}{\partial P_0 \partial S}=-\frac{6}{25}\frac{1}{T_0}+O(\gamma^{-1}),\quad \frac{\partial^2 n_0}{\partial S^2}=\frac{4}{25}n_0+O(\gamma^{-1}).
	\end{align}
    Hence we conclude \eqref{3.86-0} from \eqref{3.62-10}, \eqref{3.73-10}-\eqref{3.74-10} and Lemma \ref{lem3.4}.
    
    Using \eqref{3.82-0} and \eqref{3.83-0}, one has
    \begin{align}
    		\frac{\partial^2 T_0}{\partial S^2}&=\Big(-\frac{1}{n_0}\frac{\partial T_0}{\partial S}+\frac{T_0}{n_0^2}\frac{\partial n_0}{\partial S}\Big)\frac{\partial n_0}{\partial S}-\frac{T_0}{n_0}\frac{\partial^2 n_0}{\partial S^2},\label{3.75-10}\\
    		\frac{\partial^2 T_0}{\partial S \partial P_0 }&=\Big(-\frac{1}{n_0}\frac{\partial T_0}{\partial P_0}+\frac{T_0}{n_0^2}\frac{\partial n_0}{\partial P_0}\Big)\frac{\partial n_0}{\partial S}-\frac{T_0}{n_0}\frac{\partial^2 n_0}{\partial S \partial P_0},\label{3.76-10}\\
    		\frac{\partial^2 T_0}{\partial P_0^2 }&=\Big(-\frac{1}{n_0^2}\frac{\partial n_0}{\partial P_0}-\frac{1}{n_0}\frac{\partial T_0}{\partial P_0}+\frac{T_0}{n_0^2}\frac{\partial n_0}{\partial P_0}\Big)\frac{\partial n_0}{\partial P_0}-\frac{T_0}{n_0}\frac{\partial^2 n_0}{\partial P_0^2},\label{3.77-10}
    \intertext{and}
    	\frac{\partial^2 \theta_0}{\partial \eta^2}&=\Big(-\frac{1}{\rho}\frac{\partial \theta}{\partial \eta}+\frac{\theta}{\rho^2}\frac{\partial \rho}{\partial \eta}\Big)\frac{\partial \rho}{\partial \eta}-\frac{\theta}{\rho}\frac{\partial^2 \rho}{\partial \eta^2},\label{3.78-10}\\
    	\frac{\partial^2 \theta}{\partial \eta \partial \mathcal{P} }&=\Big(-\frac{1}{\rho}\frac{\partial \theta}{\partial \mathcal{P}}+\frac{\theta}{\rho^2}\frac{\partial \rho}{\partial \mathcal{P}}\Big)\frac{\partial \rho}{\partial \eta}-\frac{\theta}{\rho}\frac{\partial^2 \rho}{\partial \eta \partial \mathcal{P}},\label{3.79-10}\\
    	\frac{\partial^2 \theta}{\partial \mathcal{P}^2 }&=\Big(-\frac{1}{\rho^2}\frac{\partial \rho}{\partial \mathcal{P}}-\frac{1}{\rho}\frac{\partial \theta}{\partial \mathcal{P}}+\frac{\theta}{\rho^2}\frac{\partial \rho}{\partial \mathcal{P}}\Big)\frac{\partial \rho}{\partial \mathcal{P}}-\frac{\theta}{\rho}\frac{\partial^2 \rho}{\partial \mathcal{P}^2}. \label{3.80-10}
    \end{align}
    Thus \eqref{3.87-0} follows from \eqref{3.75-10}--\eqref{3.80-10}, Lemmas \ref{lem3.4}--\ref{lem3.5} and \eqref{3.86-0}. Therefore the proof is completed.
\end{proof}
    By similar arguments as in Lemmas \ref{lem3.5}--\ref{lem3.6}, we can obtain the following lemma whose proof is omitted for brevity of presentation.
\begin{Lemma}\label{lem3.7}
	There hold
	\begin{align*}
		|\partial_i (a^2-\sigma^2)|\le C|W-V|+C|\partial_i(W-V)|+\frac{C}{\mathfrak{c}^2}, \quad i=1,2,3,
	\end{align*}
	and 
	\begin{align*}
		|\partial_{ij} (a^2-\sigma^2)|\le C|W-V|+C|\nabla_x(W-V)|+C|\partial_{ij}(W-V)|+\frac{C}{\mathfrak{c}^2},\quad i,j=1,2,3,
	\end{align*}
     where the constant $C$ depends on $\sup_{0\le t\le T}\|V(t)-\overline{V}\|_{H^{N_0}}$ and $\sup_{0\le t\le T}\|W(t)-\overline{W}\|_{H^{N_0}}$ and is independent of $\mathfrak{c}$.
\end{Lemma}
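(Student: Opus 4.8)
The plan is to follow the pattern of Lemmas \ref{lem3.5} and \ref{lem3.6}: write $a^2$ as $T_0$ times a function of the single variable $\gamma$, differentiate through the chain $\gamma=\mathfrak{c}^2n_0/P_0$ with $n_0=n_0(P_0,S)$, read off the leading coefficients and the $O(\mathfrak{c}^{-2})$ remainders from the Bessel asymptotics of Lemma \ref{lem2.1-00}, compare with the explicit derivatives of $\sigma^2=\tfrac{5}{3}\theta$, and absorb every difference by Lemmas \ref{lem3.4}--\ref{lem3.6}. Concretely, from the expansion of $\gamma\,\tfrac{\partial P_0}{\partial e_0}\big|_S$ obtained in the proof of Lemma \ref{lem3.3} one writes $a^2=T_0\,\psi(\gamma)$ with $\psi(\gamma):=\gamma\,\tfrac{\partial P_0}{\partial e_0}\big|_S$, and the expansions \eqref{3.54-0}, \eqref{3.55-0}, \eqref{3.56-0} give $\psi(\gamma)=\tfrac{5}{3}+O(\gamma^{-1})$, $\psi'(\gamma)=O(\gamma^{-2})$, $\psi''(\gamma)=O(\gamma^{-3})$. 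Since $\partial_{P_0}\gamma=\tfrac{\gamma}{n_0}\partial_{P_0}n_0-\tfrac{\gamma}{P_0}=O(\gamma)$ and $\partial_S\gamma=O(\gamma)$ on the compact state region $\mathcal V$ (by \eqref{3.78-0} and \eqref{3.80-0}), the products $T_0\psi'(\gamma)\partial_{P_0}\gamma$ and $T_0\psi'(\gamma)\partial_S\gamma$ are $O(\gamma^{-1})=O(\mathfrak{c}^{-2})$, so that
\[
\frac{\partial a^2}{\partial P_0}=\tfrac{5}{3}\frac{\partial T_0}{\partial P_0}+O(\mathfrak{c}^{-2}),\qquad
\frac{\partial a^2}{\partial S}=\tfrac{5}{3}\frac{\partial T_0}{\partial S}+O(\mathfrak{c}^{-2}),
\]
and, after one further differentiation and using the analogue of \eqref{3.75-10}--\eqref{3.77-10} for $a^2$, $\partial^2_{P_0}a^2=\tfrac{5}{3}\partial^2_{P_0}T_0+O(\mathfrak{c}^{-2})$, $\partial^2_{P_0S}a^2=\tfrac{5}{3}\partial^2_{P_0S}T_0+O(\mathfrak{c}^{-2})$, $\partial^2_{S}a^2=\tfrac{5}{3}\partial^2_{S}T_0+O(\mathfrak{c}^{-2})$. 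Matching these against $\partial_{\mathcal P}\sigma^2=\tfrac{5}{3}\partial_{\mathcal P}\theta$, $\partial_\eta\sigma^2=\tfrac{5}{3}\partial_\eta\theta$ and their second derivatives, Lemmas \ref{lem3.5}--\ref{lem3.6} then give
\[
\Big|\tfrac{\partial a^2}{\partial P_0}-\tfrac{\partial\sigma^2}{\partial\mathcal P}\Big|+\Big|\tfrac{\partial a^2}{\partial S}-\tfrac{\partial\sigma^2}{\partial\eta}\Big|+\Big|\tfrac{\partial^2 a^2}{\partial P_0^2}-\tfrac{\partial^2\sigma^2}{\partial\mathcal P^2}\Big|+\Big|\tfrac{\partial^2 a^2}{\partial P_0\partial S}-\tfrac{\partial^2\sigma^2}{\partial\mathcal P\partial\eta}\Big|+\Big|\tfrac{\partial^2 a^2}{\partial S^2}-\tfrac{\partial^2\sigma^2}{\partial\eta^2}\Big|\le C|W-V|+\tfrac{C}{\mathfrak{c}^2},
\]
together with the same bound for the first $x$-derivatives of each of these five coefficient differences, obtained by differentiating the identities of Lemmas \ref{lem3.5}--\ref{lem3.6} once more.

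Next I would pass to the spatial derivatives. Since $a^2=a^2(P_0,S)$ and $\sigma^2=\sigma^2(\mathcal P,\eta)$, the chain rule gives
\[
\partial_i(a^2-\sigma^2)=\Big(\tfrac{\partial a^2}{\partial P_0}-\tfrac{\partial\sigma^2}{\partial\mathcal P}\Big)\partial_iP_0+\tfrac{\partial\sigma^2}{\partial\mathcal P}\,\partial_i(P_0-\mathcal P)+\Big(\tfrac{\partial a^2}{\partial S}-\tfrac{\partial\sigma^2}{\partial\eta}\Big)\partial_iS+\tfrac{\partial\sigma^2}{\partial\eta}\,\partial_i(S-\eta),
\]
and a longer but analogous identity for $\partial_{ij}$ in which second $x$-derivatives of $P_0,S$ multiply the (already small) coefficient differences, first $x$-derivatives of those differences multiply first $x$-derivatives of $P_0,S$, and the genuinely new terms $\tfrac{\partial\sigma^2}{\partial\mathcal P}\,\partial_{ij}(P_0-\mathcal P)$ and $\tfrac{\partial\sigma^2}{\partial\eta}\,\partial_{ij}(S-\eta)$ occur. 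Using $\|V-\overline V\|_{H^{N_0}}\lesssim 1$ and $\|W-\overline W\|_{H^{N_0}}\lesssim 1$ (Lemmas \ref{thm-re} and \ref{thm-ce}, uniform in $\mathfrak{c}$) together with Sobolev embedding to bound $\partial_iP_0,\partial_iS,\partial_{ij}P_0,\partial_{ij}S$ in $L^\infty$ by constants depending only on those norms, and the pointwise bounds on the coefficient differences and their $x$-derivatives from the previous paragraph, one collects exactly the right-hand sides $C|W-V|+C|\partial_i(W-V)|+C\mathfrak{c}^{-2}$ and $C|W-V|+C|\nabla_x(W-V)|+C|\partial_{ij}(W-V)|+C\mathfrak{c}^{-2}$.

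The main obstacle is the bookkeeping in the chain rule through $\gamma=\mathfrak{c}^2n_0/P_0$: because $n_0$ depends on $P_0$ and $S$, $\gamma$ is not simply proportional to $\mathfrak{c}^2$, and each $P_0$- or $S$-derivative of $\gamma$ carries a factor of order $\gamma$, so one must verify that after multiplication by $\psi'(\gamma)=O(\gamma^{-2})$ (and, at second order, by $\psi''(\gamma)=O(\gamma^{-3})$ together with the cross terms $(\partial\gamma)^2$) all the potentially large powers of $\gamma$ cancel and leave genuine $O(\mathfrak{c}^{-2})$ remainders, uniformly on $\mathcal V$. This is the same cancellation that already underlies \eqref{3.99-0}; once it is checked at first and second order for $\psi$, the remaining steps are routine and structurally identical to the proofs of Lemmas \ref{lem3.5}--\ref{lem3.6}, so the details may be omitted.
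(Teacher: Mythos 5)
The paper gives no proof of this lemma, noting only that it follows by the same arguments as Lemmas \ref{lem3.5}--\ref{lem3.6}; your proposal is a correct and faithful filling-in of that omitted argument and takes exactly the approach the paper intends. In particular, the reduction $a^2=T_0\psi(\gamma)$ with $\psi(\gamma)=\gamma\,\frac{\partial P_0}{\partial e_0}\big|_S$, the Bessel-asymptotic estimates $\psi'=O(\gamma^{-2})$, $\psi''=O(\gamma^{-3})$, and the cancellation check that factors of $\partial\gamma=O(\gamma)$ and $(\partial\gamma)^2=O(\gamma^2)$ are absorbed by $\psi'$ and $\psi''$ to leave genuine $O(\mathfrak{c}^{-2})$ remainders, are precisely the analogue of \eqref{3.99-0} that the paper would have written down. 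One small imprecision: you claim the first $x$-derivatives of the coefficient differences obey the ``same bound'' $C|W-V|+C\mathfrak{c}^{-2}$, but the chain rule also produces a $C|\nabla_x(W-V)|$ contribution from terms such as $\frac{\partial^2\sigma^2}{\partial\mathcal P^2}\,\partial_i(P_0-\mathcal P)$. Since your final inequalities already allow $C|\nabla_x(W-V)|$ on the right-hand side, this slip is harmless, but the intermediate statement as written is slightly too strong.
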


We are now in a position to show the Newtonian limit from the relativistic Euler equations to the classical Euler equations. 

\begin{Proposition}\label{thm-retoce}
	Assume $\overline{V}=\overline{W}$. Suppose that $V=(P_0,u,S)$ is the unique smooth solution in Lemma \ref{thm-re} and $W=(\mathcal{P},\mathfrak{u},\eta)$ is the unique smooth solution in Lemma 
	\ref{thm-ce} with the same initial data. Let $T=\min\{T_1,T_2\}$, then it holds that
	\begin{align*}
		\sup_{0\le t\le T}\|(W-V)(t)\|_{L^{\infty}}\le \frac{C}{\mathfrak{c}^2},
	\end{align*}
	where the constant $C$ depends on $\sup_{0\le t\le T}\|V(t)-\overline{V}\|_{H^{N_0}}$ and $\sup_{0\le t\le T}\|W(t)-\overline{W}\|_{H^{N_0}}$ and is independent of $\mathfrak{c}$.
\end{Proposition}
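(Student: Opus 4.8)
We prove the bound by closing a $\mathfrak c$-uniform $H^{2}$ energy estimate on the difference system \eqref{3.44-0} for $G:=W-V$ and then reading off the $L^{\infty}$ control from the Sobolev embedding $H^{2}(\mathbb R^{3})\hookrightarrow L^{\infty}(\mathbb R^{3})$; the order $m=2$ is the natural choice since it is matched to the second-order estimates furnished by Lemmas \ref{lem3.3}--\ref{lem3.7}, and $N_{0}\ge 10$ leaves ample room. We use $\mathbf D_{0}=\operatorname{diag}(1,\sigma^{2}\rho^{2}\mathbf I,1)$ as the energy multiplier: it depends only on $W$, hence is independent of $\mathfrak c$, and since $\sigma^{2}\rho^{2}$ stays bounded above and below along the classical-Euler solution (Lemma \ref{thm-ce}) it satisfies $c^{-1}\mathbf I\le\mathbf D_{0}\le c\mathbf I$, while $\mathbf D_{0},\mathbf D_{j}$ (smooth functions of $W$) have $\mathfrak c$-independent $H^{N_{0}}$ and $W^{1,\infty}$ bounds on $[0,T]$. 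Applying $\partial_{x}^{\alpha}$ with $|\alpha|\le 2$ to \eqref{3.44-0}, pairing with $\partial_{x}^{\alpha}G$ in $L_{x}^{2}$, integrating by parts and using $\mathbf D_{j}=\mathbf D_{j}^{t}$, the standard Kato--Majda commutator/Moser estimates (cf. \cite{Kato,Majda,Gavage}) give
\begin{align*}
	\frac{d}{dt}\mathcal E(t)\le C\,\mathcal E(t)+C\,\|\Upsilon(t)\|_{H^{2}}^{2},\qquad \mathcal E(t):=\sum_{|\alpha|\le 2}\big\langle\mathbf D_{0}\,\partial_{x}^{\alpha}G,\ \partial_{x}^{\alpha}G\big\rangle\cong\|G(t)\|_{H^{2}}^{2},
\end{align*}
with $C$ depending only on $\sup_{[0,T]}\|W-\overline W\|_{H^{N_{0}}}$ and independent of $\mathfrak c$.

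The crux is the source estimate $\|\Upsilon(t)\|_{H^{2}}\le C\|G(t)\|_{H^{2}}+C\mathfrak c^{-2}$ on $[0,T]$, with $C$ independent of $\mathfrak c$. Since $\Upsilon=(\mathbf B_{0}-\mathbf D_{0})\partial_{t}V+\sum_{j=1}^{3}(\mathbf B_{j}-\mathbf D_{j})\partial_{j}V$, and $\partial_{t}V=-\mathbf B_{0}^{-1}\sum_{j}\mathbf B_{j}\partial_{j}V$ by \eqref{4.8-0}, Lemma \ref{thm-re} and the fact that the entries of $\mathbf B_{\alpha}$ are smooth in $V$ and $\mathfrak c$ with $\mathfrak c$-uniform bounds show that $\partial_{t}V,\partial_{j}V$ are bounded in $H^{N_{0}-1}$ uniformly in $\mathfrak c$; because $H^{2}(\mathbb R^{3})$ is an algebra it then suffices to bound $\mathbf B_{\alpha}-\mathbf D_{\alpha}$ (modulo its order-$\mathfrak c^{-2}$ value at spatial infinity, which is harmless) by $C\|G\|_{H^{2}}+C\mathfrak c^{-2}$ in $H^{2}$ for $\alpha=0,1,2,3$. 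We split $\mathbf B_{\alpha}(V,\mathfrak c)-\mathbf D_{\alpha}(W)=[\mathbf B_{\alpha}(V,\mathfrak c)-\mathbf D_{\alpha}(V)]+[\mathbf D_{\alpha}(V)-\mathbf D_{\alpha}(W)]$, where $\mathbf D_{\alpha}(V)$ denotes the classical coefficient matrix evaluated at the relativistic fluid state. The second bracket is $O(|G|)$ together with its first two derivatives (composition with the smooth map $\mathbf D_{\alpha}$, Moser), hence has $H^{2}$-norm $\lesssim\|G\|_{H^{2}}$. For the first bracket one writes out the entries of $\mathbf B_{\alpha}$ (built from $n_{0},T_{0},e_{0},a^{2}$ and the Lorentz factors $u^{0}/\mathfrak c,\ \mathfrak c/u^{0},\ (u^{0})^{-2},\ u\otimes u/(u^{0})^{2}$) and observes that every $\mathfrak c$-dependence enters through $|u|^{2}/\mathfrak c^{2}$ and $\gamma^{-1}=T_{0}/\mathfrak c^{2}$ — for instance $\mathfrak c/u^{0}=(1+|u|^{2}/\mathfrak c^{2})^{-1/2}=1+O(\mathfrak c^{-2})$, $K_{3}(\gamma)/K_{2}(\gamma)=1+O(\mathfrak c^{-2})$ by Lemma \ref{lem2.1-00}, $a^{2}=\frac{5}{3}T_{0}+O(\mathfrak c^{-2})$ and $n_{0}\frac{\partial P_{0}}{\partial n_{0}}=a^{2}n_{0}\frac{K_{3}(\gamma)}{K_{2}(\gamma)}=\frac{5}{3}P_{0}+O(\mathfrak c^{-2})$ — so that $\mathbf B_{\alpha}(V,\mathfrak c)-\mathbf D_{\alpha}(V)=\mathfrak c^{-2}h_{\alpha}(V,\mathfrak c)$ with $h_{\alpha}$ and its first two $V$-derivatives bounded uniformly in $\mathfrak c$ by Lemma \ref{lem2.1-00} and the equations of state \eqref{3.32-0}--\eqref{3.34-0}; thus this bracket has $H^{2}$-norm $\lesssim\mathfrak c^{-2}$. (Equivalently, the quantitative statements $|n_{0}-\rho|+|T_{0}-\theta|+|a^{2}-\sigma^{2}|\lesssim|G|+\mathfrak c^{-2}$ together with the analogous first- and second-derivative bounds of Lemmas \ref{lem3.4}--\ref{lem3.7} package exactly what is needed.)

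Inserting this bound into the energy inequality gives $\frac{d}{dt}\mathcal E(t)\le C\mathcal E(t)+C\mathfrak c^{-4}$ on $[0,T]$, $T=\min\{T_{1},T_{2}\}$, and $\mathcal E(0)=0$ because $G|_{t=0}=0$; Gr\"onwall's inequality then yields $\mathcal E(t)\le C_{T}\mathfrak c^{-4}$, i.e. $\sup_{[0,T]}\|(W-V)(t)\|_{H^{2}}\le C_{T}\mathfrak c^{-2}$ with $C_{T}$ independent of $\mathfrak c$, and $H^{2}(\mathbb R^{3})\hookrightarrow L^{\infty}(\mathbb R^{3})$ completes the proof. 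The genuine obstacle is the $\mathfrak c$-uniform control of $\Upsilon$: one must verify entry by entry — and for up to two spatial derivatives — that $\mathbf B_{\alpha}$ converges to its Newtonian counterpart at rate $\mathfrak c^{-2}$ with no residual growth hidden in the modified Bessel ratios or the Lorentz factors, and the asymptotic expansions of Lemma \ref{lem2.1-00}, the equations of state \eqref{3.32-0}--\eqref{3.34-0}, and Lemmas \ref{lem3.3}--\ref{lem3.7} are precisely what make this possible; everything else is the classical symmetric-hyperbolic energy argument.
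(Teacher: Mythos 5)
Your proposal is correct and follows essentially the same route as the paper: energy estimate on the difference system \eqref{3.44-0} using $\mathbf{D}_0$ as symmetrizer, bound on the source $\Upsilon$ via Lemmas \ref{lem3.3}--\ref{lem3.7}, Gr\"onwall with zero initial data, and Sobolev embedding $H^2\hookrightarrow L^\infty$. The only cosmetic difference is that you carry out the $H^2$ estimate in one stroke (with Moser-type commutator bounds), whereas the paper first closes the $L^2$ estimate using the slightly different Gr\"onwall form $\frac{d}{dt}\mathcal U\le C\mathcal U+\frac{C}{\mathfrak c^2}\sqrt{\mathcal U}$ and then repeats the argument ``similarly'' for $\nabla_x(W-V)$ and $\nabla_x^2(W-V)$ using Lemmas \ref{lem3.5}--\ref{lem3.7}.
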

\begin{proof}
Using Lemmas \ref{lem3.3}--\ref{lem3.4}, we have
\begin{align*}
	|\mathbf{B}_{\alpha}-\mathbf{D}_{\alpha}|\le C|W-V|+\frac{C}{\mathfrak{c}^2},\quad \alpha=0,1,2,3.
\end{align*}
Denote $\mathcal{U}(t):=\langle \mathbf{D}_0(W-V), W-V\rangle (t)$. It follows from \eqref{3.44-0} that
\begin{align*}
	\frac{d}{dt}\mathcal{U}(t)&=\frac{d}{dt}\langle \mathbf{D}_0(W-V), W-V\rangle\nonumber\\
	&=\langle(\partial_t \mathbf{D}_0+\sum_{j=1}^3\partial_j \mathbf{D}_j)(W-V),W-V\rangle+2\langle\Upsilon,W-V\rangle\nonumber\\
	&\le C\|W-V\|^2_{2}+\frac{C}{\mathfrak{c}^2}\|W-V\|_{2}\nonumber\\
	&\le C\mathcal{U}(t)+\frac{C}{\mathfrak{c}^2}\sqrt{\mathcal{U}(t)}.
\end{align*}
Applying Gr\"{o}nwall's inequality, one obtains
\begin{align*}
	\sup_{0\le t\le T}\mathcal{U}(t)\le \frac{C}{\mathfrak{c}^4},
\end{align*}
where the constant $C$ depends on $T$, $\sup_{0\le t\le T}\|V(t)-\overline{V}\|_{H^{N_0}}$ and $\sup_{0\le t\le T}\|W(t)-\overline{W}\|_{H^{N_0}}$ and is independent of $\mathfrak{c}$. 
Hence we get
\begin{align}\label{3.134-0}
	\sup_{0\le t\le T}\|(W-V)(t)\|_{2}\le \frac{C}{\mathfrak{c}^2}.
\end{align}
Similarly, by using Lemmas \ref{lem3.5}--\ref{lem3.7} and the energy method, we can obtain
\begin{align*}
	\sup_{0\le t\le T}\|\nabla_x(W-V)(t)\|_{2}+\sup_{0\le t\le T}\|\nabla^2_x(W-V)(t)\|_{2}\le \frac{C}{\mathfrak{c}^2},
\end{align*}
which, together with \eqref{3.134-0}, yields that 
\begin{align*}
	\sup_{0\le t\le T}\|(W-V)(t)\|_{\infty}\le C\sup_{0\le t\le T}\|(W-V)(t)\|_{H^2}\le \frac{C}{\mathfrak{c}^2}.
\end{align*}
Therefore the proof is completed.
\end{proof}

Based on Lemmas \ref{thm-re}--\ref{thm-ce} and the diffeomorphisms $(n_0,T_0) \leftrightarrow (P_0,S)$, $(\rho,\theta)\leftrightarrow (\mathcal{P},\eta)$ of $(0,\infty)\times (0,\infty)$, there exist positive constants $\bar{C}_0$, $\bar{c}_j\ (j=1,2,3,4)$ which are independent of $\mathfrak{c}$, such that for any $(t,x)\in [0,T]\times \mathbb{R}^3$, there holds
\begin{align}\label{3.90-10}
	|u(t,x)|\le \bar{C}_0,\quad 0<4\bar{c}_1 \le T_0(t,x)\le \frac{1}{4\bar{c}_1},\quad  0<4\bar{c}_2 \le \theta(t,x)\le \frac{1}{4\bar{c}_2}
\end{align}
and 
\begin{align}\label{3.91-10}
	0<\bar{c}_3\le n_0(t,x)\le \frac{1}{\bar{c}_3},\quad 0<\bar{c}_4\le \rho(t,x)\le \frac{1}{\bar{c}_4}.
\end{align}
%\begin{Corollary}
%	Suppose that  $(n_0(t,x), u(t,x), T_0(t,x))$ and $(\rho(t,x), \mathfrak{u}(t,x), \theta(t,x))$,  $(t, x) \in[0, T] \times \mathbb{R}^3$, are smooth solutions to the relativistic Euler equations \eqref{rb} and the classical Euler equations \eqref{ce} as stated in Lemma \ref{thm-re} and \ref{thm-ce}, respectively. Construct the relativistic local Maxwellian $$\mathbf{M}_{\mathfrak{c}}(t,x,p)=\mathbf{M}_{\mathfrak{c}}(n_0(t,x), u(t,x), T_0(t,x);p)$$ 
%	and the local Maxwellian $$\widetilde{\mathbf{M}_{\mathfrak{c}}}(t,x,p)=\mathbf{M}_{\mathfrak{c}}(\rho(t,x), \mathfrak{u}(t,x), \theta(t,x);p)$$
%	as in \eqref{1.13-00}. Then there exists a constant $C_T>0$ such that for suitably large $\mathfrak{c}$, one has
%		\begin{align}\label{3.127-00}
%			\sup _{0 \leq t \leq T}\left\|\mathbf{M}_{\mathfrak{c}}(t)-\widetilde{\mathbf{M}_{\mathfrak{c}}}(t)\right\|_{\infty}\le \frac{C_T}{\mathfrak{c}^2}.
%		\end{align} 
%\end{Corollary}
%\begin{remark}
%	Due to \eqref{3.127-00}, we have established the Newtonian limit from the relativistic Euler equations to the classical Euler equations. 
%\end{remark}
%%%%%%%%%%%%%%%%%%%%%%%%%%%%%%%%%%%%%%%%%%%%%%%%%%%%%%%%%%%%%%%
\section{Uniform-in-$\mathfrak{c}$ estimates on the linearized collision operators}
%%%%%%%%%%%%%%%%%%%%%%%%%%%%%%%%%%%%%%%%%%%%%%%%%%%%%%%%%%%%%%%
    We first present a useful lemma which is very similar to \cite[Lemma 3.1]{Glassey1}. Since the proof is similar, we omit the details here for brevity.
    \begin{Lemma}\emph{(\cite{Glassey1})}\label{lem4.1-00}
    	Denote 
    	\begin{align*}
    		\boldsymbol{\ell}_1:=\mathfrak{c}\frac{p^0+q^0}{2},\quad \boldsymbol{j}_1:=\mathfrak{c}\frac{|p\times q|}{g},
    	\end{align*}
    	then there hold\\
    	\item
    	$\displaystyle
    	(i)\ \frac{\sqrt{|p \times q|^{2}+\mathfrak{c}^2|p-q|^{2}}}{\sqrt{p^{0} q^{0}}} \leq g \leq|p-q| \ \text { and } \ g^2<s \leq 4p^{0} q^{0}.$ \\
    	\item
    	$\displaystyle
    	(ii)\ v_{\phi}=\frac{\mathfrak{c}}{4}\frac{g \sqrt{s}}{p^{0} q^{0}} \le \min\Big\{\mathfrak{c}, \frac{|p-q|}{2}\Big\}.$ \\
    	\item
    	$\displaystyle
    	(iii)\ \boldsymbol{\ell}_1^{2}-\boldsymbol{j}_1^{2}=\frac{s\mathfrak{c}^2}{4g^2}|p-q|^2 =\frac{\mathfrak{c}^2g^{2}+4\mathfrak{c}^4}{4 g^{2}}|p-q|^{2}\geq \mathfrak{c}^4+\frac{\mathfrak{c}^2}{4}|p-q|^{2}, \quad p\neq q.$
    	\item
    	$\displaystyle
    	(iv)\
    	\lim_{\mathfrak{c}\rightarrow\infty}\frac{g}{|p-q|}=\lim_{\mathfrak{c}\rightarrow\infty}\frac{s}{4\mathfrak{c}^2}=\lim_{\mathfrak{c}\rightarrow\infty}\frac{\boldsymbol{\ell}_1}{\mathfrak{c}^2}=	\lim_{\mathfrak{c}\rightarrow\infty}\frac{\boldsymbol{\ell}_1^{2}-\boldsymbol{j}_1^{2}}{\mathfrak{c}^4}=1,\quad p\neq q.$
    \end{Lemma}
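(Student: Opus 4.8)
The plan is to derive all four items from two exact algebraic identities satisfied by $p^{0}=\sqrt{\mathfrak{c}^{2}+|p|^{2}}$, $q^{0}=\sqrt{\mathfrak{c}^{2}+|q|^{2}}$, $s=2(p^{0}q^{0}-p\cdot q+\mathfrak{c}^{2})$, $g^{2}=2(p^{0}q^{0}-p\cdot q-\mathfrak{c}^{2})$, together with the trivial relations $s=g^{2}+4\mathfrak{c}^{2}$ and $(p^{0})^{2}-(q^{0})^{2}=|p|^{2}-|q|^{2}$. The first identity is
\begin{align*}
	p^{0}q^{0}g^{2}=|p\times q|^{2}+\mathfrak{c}^{2}|p-q|^{2}+\frac{1}{4}g^{4}.
\end{align*}
I would obtain it by first observing that equating the two expressions for the M{\o}ller velocity, $v_{\phi}=\frac{\mathfrak{c}}{2}\sqrt{|\frac{p}{p^{0}}-\frac{q}{q^{0}}|^{2}-|\frac{p}{p^{0}}\times\frac{q}{q^{0}}|^{2}}=\frac{\mathfrak{c}}{4}\frac{g\sqrt{s}}{p^{0}q^{0}}$, already yields $g^{2}s=4\big(|q^{0}p-p^{0}q|^{2}-|p\times q|^{2}\big)$, and then eliminating $s$ via $s=g^{2}+4\mathfrak{c}^{2}$ and expanding $|q^{0}p-p^{0}q|^{2}$ with $(p^{0})^{2}=\mathfrak{c}^{2}+|p|^{2}$, $(q^{0})^{2}=\mathfrak{c}^{2}+|q|^{2}$. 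In particular this shows $g=0$ if and only if $p=q$, so every division by $g$ below is legitimate for $p\neq q$. The second identity is
\begin{align*}
	\boldsymbol{\ell}_{1}^{2}-\boldsymbol{j}_{1}^{2}=\frac{\mathfrak{c}^{2}}{4g^{2}}\Big[g^{2}(p^{0}+q^{0})^{2}-4|p\times q|^{2}\Big]=\frac{\mathfrak{c}^{2}s}{4g^{2}}|p-q|^{2},
\end{align*}
whose only nontrivial content, $g^{2}(p^{0}+q^{0})^{2}-4|p\times q|^{2}=s|p-q|^{2}$, I would check directly: after substituting $s=(p^{0}+q^{0})^{2}-|p+q|^{2}$ and $g^{2}=|p-q|^{2}-(p^{0}-q^{0})^{2}$ both sides reduce to a polynomial identity that collapses to $(p^{0})^{2}-(q^{0})^{2}=|p|^{2}-|q|^{2}$.

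With these at hand, (i)--(iii) are immediate. For (i): $|p-q|^{2}-g^{2}=(p^{0}-q^{0})^{2}\geq0$ gives $g\leq|p-q|$; discarding the nonnegative term $\frac{1}{4}g^{4}$ in the first identity gives $p^{0}q^{0}g^{2}\geq|p\times q|^{2}+\mathfrak{c}^{2}|p-q|^{2}$, the lower bound; $s-g^{2}=4\mathfrak{c}^{2}>0$ gives $g^{2}<s$; and $4p^{0}q^{0}-s=2(p^{0}q^{0}+p\cdot q-\mathfrak{c}^{2})\geq0$ follows from $(p^{0}q^{0})^{2}\geq(\mathfrak{c}^{2}+|p||q|)^{2}$. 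For (ii): since $|p/p^{0}|,|q/q^{0}|<1$, the radicand defining $v_{\phi}$ is $<4$, hence $v_{\phi}<\mathfrak{c}$; and $v_{\phi}^{2}=\frac{\mathfrak{c}^{2}}{16}\frac{g^{2}s}{(p^{0}q^{0})^{2}}\leq\frac{\mathfrak{c}^{2}}{16}\frac{|p-q|^{2}\cdot4p^{0}q^{0}}{(p^{0}q^{0})^{2}}\leq\frac{|p-q|^{2}}{4}$ using $g\leq|p-q|$, $s\leq4p^{0}q^{0}$ and $p^{0}q^{0}\geq\mathfrak{c}^{2}$. For (iii): substitute $s=g^{2}+4\mathfrak{c}^{2}$ into the second identity and bound $\frac{|p-q|^{2}}{g^{2}}\geq1$.

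For (iv) I would use the fixed-$p,q$ asymptotics as $\mathfrak{c}\to\infty$. Since $p^{0}-q^{0}=\frac{|p|^{2}-|q|^{2}}{p^{0}+q^{0}}\to0$, one gets $\frac{g^{2}}{|p-q|^{2}}=1-\frac{(p^{0}-q^{0})^{2}}{|p-q|^{2}}\to1$; since $p^{0}q^{0}=\mathfrak{c}^{2}+O(1)$ and $p^{0}+q^{0}=2\mathfrak{c}+O(\mathfrak{c}^{-1})$, one gets $\frac{s}{4\mathfrak{c}^{2}}\to1$ and $\frac{\boldsymbol{\ell}_{1}}{\mathfrak{c}^{2}}=\frac{p^{0}+q^{0}}{2\mathfrak{c}}\to1$; and $\frac{\boldsymbol{\ell}_{1}^{2}-\boldsymbol{j}_{1}^{2}}{\mathfrak{c}^{4}}=\frac{s}{4\mathfrak{c}^{2}}\cdot\frac{|p-q|^{2}}{g^{2}}\to1$ by the second identity combined with the first two limits. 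I do not expect any analytic obstacle here; the one point that needs care is the bookkeeping in the two polynomial identities, and since the assertion is exactly \cite[Lemma 3.1]{Glassey1} with the powers of $\mathfrak{c}$ tracked explicitly, the proof is essentially a careful rerun of that computation with $\mathfrak{c}$ kept general rather than normalized to $1$.
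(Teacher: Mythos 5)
Your proof is correct, and since the paper itself omits any proof of this lemma (it simply cites \cite[Lemma 3.1]{Glassey1} and notes ``the proof is similar''), what you have written is a genuine filling-in of details rather than a retracing of a published argument. I checked both of your pivot identities: $p^{0}q^{0}g^{2}=|p\times q|^{2}+\mathfrak{c}^{2}|p-q|^{2}+\tfrac14 g^{4}$ expands correctly once one uses $(p^{0}q^{0})^{2}=\mathfrak{c}^{4}+\mathfrak{c}^{2}(|p|^{2}+|q|^{2})+|p|^{2}|q|^{2}$, and $g^{2}(p^{0}+q^{0})^{2}-4|p\times q|^{2}=s\,|p-q|^{2}$ follows from the Lagrange identity $|p-q|^{2}|p+q|^{2}-(|p|^{2}-|q|^{2})^{2}=4|p\times q|^{2}$ together with $g^{2}=|p-q|^{2}-(p^{0}-q^{0})^{2}$ and $s=(p^{0}+q^{0})^{2}-|p+q|^{2}$. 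All the deductions you draw from these for items (i)--(iii) are sound, including the observation that $g=0$ iff $p=q$ (needed to justify the divisions), the bound $4p^{0}q^{0}-s=2(p^{0}q^{0}+p\cdot q-\mathfrak{c}^{2})\ge0$ via $p^{0}q^{0}\ge\mathfrak{c}^{2}+|p||q|$, and the step $\frac{|p-q|^{2}}{g^{2}}\ge1$ used to get the lower bound in (iii). The limits in (iv) are elementary given the preceding identities. The one stylistic remark: your argument for $v_{\phi}<\mathfrak{c}$ in (ii) uses the first form of the M{\o}ller velocity directly, which is slightly cleaner than bounding $\frac{\mathfrak{c}}{4}\frac{g\sqrt s}{p^{0}q^{0}}$ by parts, but both routes work.
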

%     Assume that 
%     $$\sup_{(t,x)\in [0,T]\times \mathbb{R}^3}|u(t,x)|\le \bar{C}_0$$
%     for some positive $\bar{C}_0>0$ which is independent of $\mathfrak{c}$. 
    \begin{Lemma}\label{lem4.2-00}
        Recall $\bar{C}_0$ in \eqref{3.90-10} and $\bar{p}$ in \eqref{3.3-0}. For $\mathfrak{c}$ suitably large, there hold
       \begin{gather}
       	\frac{1}{2}|p-q|\le |\bar{p}-\bar{q}|\le \frac{3}{2}|p-q|, \quad p\in \mathbb{R}^3,\label{2.1-00}\\
       	\frac{1}{2}|p^0|\le |\bar{p}^0|\le \frac{3}{2}|p^0|,\quad p\in \mathbb{R}^3,\label{2.2-00}\\
       	\frac{|p|}{2}-\bar{C}_0\le |\bar{p}|\le \frac{3|p|}{2}+\bar{C}_0, \quad p\in \mathbb{R}^3, \label{2.3-00}\\
       	\frac{1}{2}\le \operatorname{det}\Big(\frac{\partial \bar{p}}{\partial p}\Big)\le \frac{3}{2}, \quad p\in \mathbb{R}^3. \label{2.4-00}
       \end{gather}
   \end{Lemma}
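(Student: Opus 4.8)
The plan is to argue directly from the explicit formula \eqref{3.3-0} for $\bar p=\bar\Lambda^{-1}P$, together with the normalization $(u^0)^2=\mathfrak{c}^2+|u|^2$ (which is exactly what makes $\bar\Lambda^{-1}$ the Lorentz boost sending $(u^0,u)^t$ to $(\mathfrak{c},0,0,0)^t$) and the uniform-in-$\mathfrak{c}$ bound $|u|\le\bar{C}_0$ from \eqref{3.90-10}. First I would record the elementary facts that get used repeatedly: $\max\{\mathfrak{c},|p|\}\le p^0\le\mathfrak{c}+|p|$; $|p^0-q^0|=\big||p|^2-|q|^2\big|/(p^0+q^0)\le|p-q|$; and the two ``smallness'' quantities
\begin{align*}
	\frac{|u|}{\mathfrak{c}}\le\frac{\bar{C}_0}{\mathfrak{c}},\qquad 0\le\frac{u^0}{\mathfrak{c}}-1=\frac{|u|^2/\mathfrak{c}^2}{\sqrt{1+|u|^2/\mathfrak{c}^2}+1}\le\frac{\bar{C}_0^2}{2\mathfrak{c}^2},
\end{align*}
both of which tend to $0$ as $\mathfrak{c}\to\infty$ since $\bar{C}_0$ is fixed. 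Every error term below will carry one of these two factors; that is the whole content of the lemma.

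For \eqref{2.2-00}: from \eqref{3.3-0}, $\bar p^0=\mathfrak{c}^{-1}(u^0p^0-u\cdot p)$, and using $u^0\ge\mathfrak{c}$, $|p|\le p^0$ and $u^0\le\sqrt{\mathfrak{c}^2+\bar{C}_0^2}$ one gets $(1-\tfrac{\bar{C}_0}{\mathfrak{c}})p^0\le\bar p^0\le(\sqrt{1+\bar{C}_0^2/\mathfrak{c}^2}+\tfrac{\bar{C}_0}{\mathfrak{c}})p^0$, where both prefactors lie in $[\tfrac12,\tfrac32]$ — and in particular $\bar p^0>0$ — once $\mathfrak{c}$ is large. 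For \eqref{2.1-00}: by linearity of $\bar\Lambda^{-1}$ applied to $P-Q=(p^0-q^0,p-q)$, the spatial part of \eqref{3.3-0} yields
\begin{align*}
	\bar p-\bar q-(p-q)=-\frac{u}{\mathfrak{c}}\,(p^0-q^0)+\Big(\frac{u^0}{\mathfrak{c}}-1\Big)\frac{u}{|u|^2}\,\big(u\cdot(p-q)\big),
\end{align*}
whose Euclidean norm is at most $\big(\tfrac{\bar{C}_0}{\mathfrak{c}}+\tfrac{\bar{C}_0^2}{2\mathfrak{c}^2}\big)|p-q|\le\tfrac12|p-q|$ for $\mathfrak{c}$ large, so \eqref{2.1-00} follows from the triangle inequality. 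The proof of \eqref{2.3-00} is the same: \eqref{3.3-0} gives $\bar p-p=-\tfrac{u}{\mathfrak{c}}p^0+\big(\tfrac{u^0}{\mathfrak{c}}-1\big)\tfrac{u}{|u|^2}(u\cdot p)$, hence $|\bar p-p|\le\tfrac{|u|}{\mathfrak{c}}(\mathfrak{c}+|p|)+\big(\tfrac{u^0}{\mathfrak{c}}-1\big)|p|\le\bar{C}_0+\tfrac12|p|$ for $\mathfrak{c}$ large, and once more the triangle inequality closes it.

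For the Jacobian \eqref{2.4-00}: differentiating the spatial part of \eqref{3.3-0} in $p$ and using $\partial p^0/\partial p_j=p_j/p^0$ gives
\begin{align*}
	\frac{\partial\bar p}{\partial p}=\mathbf{I}+A,\qquad A:=-\frac{1}{\mathfrak{c}\,p^0}\,u\otimes p+\Big(\frac{u^0}{\mathfrak{c}}-1\Big)\frac{u\otimes u}{|u|^2}.
\end{align*}
Since $A$ is a sum of two rank-one matrices, its operator norm satisfies $\|A\|\le\tfrac{|u|\,|p|}{\mathfrak{c}\,p^0}+\big(\tfrac{u^0}{\mathfrak{c}}-1\big)\le\tfrac{\bar{C}_0}{\mathfrak{c}}+\tfrac{\bar{C}_0^2}{2\mathfrak{c}^2}$, which is less than $\tfrac1{10}$ for $\mathfrak{c}$ large; hence every eigenvalue of $\mathbf{I}+A$ has modulus in $[1-\|A\|,1+\|A\|]$, and the (real, positive) determinant satisfies $\det(\mathbf{I}+A)\in[(1-\|A\|)^3,(1+\|A\|)^3]\subset[\tfrac12,\tfrac32]$; alternatively one can evaluate this $3\times3$ determinant explicitly via the matrix determinant lemma. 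I do not expect a genuine obstacle here: the lemma is essentially bookkeeping, and the one point that truly requires care is that every error carries a factor $\bar{C}_0/\mathfrak{c}$ or $\tfrac{u^0}{\mathfrak{c}}-1=O(\mathfrak{c}^{-2})$ with $\bar{C}_0$ independent of $\mathfrak{c}$, which is exactly the uniform-in-$\mathfrak{c}$ information furnished by \eqref{3.90-10}.
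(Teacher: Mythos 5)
Your proof is correct and follows essentially the same route as the paper: the same decompositions $\bar p-\bar q=(p-q)+(\text{error})$, $\bar p^0=p^0+(\text{error})$, $\bar p=p+(\text{error})$, $\partial\bar p/\partial p=\mathbf{I}+A$, with the identical smallness factors $|u|/\mathfrak{c}\le\bar C_0/\mathfrak{c}$ and $u^0/\mathfrak{c}-1=|u|^2/(\mathfrak{c}(u^0+\mathfrak{c}))=O(\mathfrak{c}^{-2})$ driving the error terms, and the observation $|p^0-q^0|\le|p-q|$ used in \eqref{2.1-00}. The only cosmetic difference is in \eqref{2.4-00}, where you make explicit (via the operator-norm bound on the rank-two perturbation $A$ and the spectral radius/continuity argument for positivity) what the paper leaves implicit after bounding the entries of $A$ by $1/16$; both arguments are routine and equivalent in substance.
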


   \begin{proof}
   		It follows from \eqref{3.3-0} that 
   	\begin{align}\label{3.21-00}
   		\bar{p}-\bar{q}=p-q+\big(\frac{u^0}{\mathfrak{c}}-1\big)\frac{u\cdot (p-q)}{|u|^2}u-\frac{p^0-q^0}{\mathfrak{c}}u
   	\end{align}
   and 
   \begin{align}\label{2.6-00}
   	    \bar{p}^0=\frac{u^0}{\mathfrak{c}}p^0-\frac{u\cdot p}{\mathfrak{c}}=p^0+\Big(\frac{u^0}{\mathfrak{c}}-1\Big)p^0-\frac{u\cdot p}{\mathfrak{c}}.
   \end{align}
   	For $\frac{\bar{C}_0}{\mathfrak{c}}\le \frac{1}{4}$, it holds that
   	\begin{align*}
   		\Big|\big(\frac{u^0}{\mathfrak{c}}-1\big)\frac{u\cdot (p-q)}{|u|^2}u\Big|+\Big|\frac{p^0-q^0}{\mathfrak{c}}u\Big|&\le \frac{|u|^2}{\mathfrak{c}(u^0+\mathfrak{c})}|p-q|+\frac{|u|}{\mathfrak{c}}|p-q|\le \frac{1}{2}|p-q|
   	\end{align*}
   and
   \begin{align*}
   	\Big|\Big(\frac{u^0}{\mathfrak{c}}-1\Big)p^0-\frac{u\cdot p}{\mathfrak{c}}\Big|\le \frac{|u|^2}{\mathfrak{c}(u^0+\mathfrak{c})}p^0+\frac{|u|}{\mathfrak{c}}p^0\le \frac{1}{2}p^0,
   \end{align*}
   	which, together with \eqref{3.21-00} and \eqref{2.6-00}, yield \eqref{2.1-00} and \eqref{2.2-00}. 
   	 
   	Observing
   	\begin{align}\label{4.9-10}
   		\bar{p}_i=p_i+\Big(\frac{u^0}{\mathfrak{c}}-1\Big)\frac{u_i}{|u|^2}\sum_{j=1}^3u_jp_j-\frac{u_i}{\mathfrak{c}}p^0,
   	\end{align}
   we have
   \begin{align*}
   	\Big|\big(\frac{u^0}{\mathfrak{c}}-1\big) \frac{u \cdot p}{|u|^2}u\Big|+\Big|\frac{p^0}{\mathfrak{c}}u\Big|\le
   	\frac{|u|^2}{\mathfrak{c}(u^0+\mathfrak{c})}|p|+\frac{|u|}{\mathfrak{c}}|p|+|u|\le \frac{|p|}{2}+\bar{C}_0, 
   \end{align*}
   which, together with \eqref{4.9-10}, implies \eqref{2.3-00}.
%   \begin{align}
%   	|\bar{p}|\ge \frac{|p|}{2}-\bar{C}_0.
%   \end{align}

   It follows from \eqref{4.9-10} that
   	\begin{align}\label{4.11-00}
   		\frac{\partial \bar{p}_i}{\partial p_j}=\delta_{ij}+\Big(\frac{u^0}{\mathfrak{c}}-1\Big)\frac{u_i}{|u|^2}u_j-\frac{u_i}{\mathfrak{c}}\frac{p_j}{p^0}.
   	\end{align}
   	For $\mathfrak{c}$ suitably large, it is clear that
   	\begin{align*}
   		\Big|\Big(\frac{u^0}{\mathfrak{c}}-1\Big)\frac{u_i}{|u|^2}u_j-\frac{u_i}{\mathfrak{c}}\frac{p_j}{p^0}\Big|\le \frac{|u|^2}{\mathfrak{c}(u^0+\mathfrak{c})}+\frac{|u|}{\mathfrak{c}}\le \frac{1}{16},
   	\end{align*}
   	which, together with \eqref{4.11-00}, implies \eqref{2.4-00}. 
%   	\begin{align}
%   		\frac{1}{2}\le \operatorname{det}\Big(\frac{\partial \bar{p}}{\partial p}\Big)\le \frac{3}{2}.
%   	\end{align}
    Therefore the proof is completed.
   \end{proof}
%    Assume that 
%    \begin{align}
%    	\max_{(t,x)\in [0,T]\times \mathbb{R}^3}T_0(t,x)\le \frac{1}{4\bar{c}_1}
%    \end{align}
%    for some positive constant $\bar{c}_1>0$ which is independent of $\mathfrak{c}$, 
	\begin{Lemma}\label{lem4.3-00}
	Recall $\bar{c}_1$ in \eqref{3.90-10}. Then there hold
	\begin{align}\label{2.33}
		k_{\mathfrak{c}1}(p,q)\lesssim |\bar{p}-\bar{q}|e^{-2\bar{c}_1|\bar{p}|-2\bar{c}_1|\bar{q}|}\lesssim |p-q|e^{-\bar{c}_1|p|-\bar{c}_1|q|}
	\end{align}
	and
	\begin{align}\label{2.34-1}
		k_{\mathfrak{c}2}(p,q)\lesssim \Big[\frac{1}{\mathfrak{c}}+\frac{1}{|\bar{p}-\bar{q}|}\Big]e^{-2\bar{c}_1|\bar{p}-\bar{q}|} \lesssim  \Big[\frac{1}{\mathfrak{c}}+\frac{1}{|p-q|}\Big]e^{-\bar{c}_1|p-q|}.
	\end{align}
    Moreover, it holds that
    \begin{align}\label{2.21-0}
    	k_{\mathfrak{c}2}(p,q)\lesssim
    	\frac{1}{|\bar{p}-\bar{q}|}e^{-\bar{c}_1|\bar{p}-\bar{q}|}\lesssim \frac{1}{|p-q|}e^{-\frac{\bar{c}_1}{2}|p-q|}.
    \end{align}
\end{Lemma}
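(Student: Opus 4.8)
The plan is to reduce everything to the two explicit formulas available for the kernels together with the comparison estimates of Lemma \ref{lem4.2-00}, which convert the barred variables $\bar p,\bar q$ back into $p,q$ at the cost of harmless constants. For the bound \eqref{2.33} on $k_{\mathfrak{c}1}$ I would start from the closed form \eqref{1.34-0},
\[
k_{\mathfrak{c}1}(p,q)=\frac{\pi \mathfrak{c} g\sqrt{s}}{p^0 q^0}\sqrt{\mathbf{M}_{\mathfrak{c}}(p)\mathbf{M}_{\mathfrak{c}}(q)}.
\]
Using the Lorentz invariance of $g$ and $s$ (computed in the rest frame of $u$), one has $g\sqrt{s}\lesssim \mathfrak{c}|\bar p-\bar q|\,\bar p^0$ up to constants for $\mathfrak{c}$ large, while $\mathbf{M}_{\mathfrak{c}}(p)=c_0\exp\{u^\mu p_\mu/T_0\}\cong c_0 e^{-\mathfrak{c}\bar p^0/T_0}$ by \eqref{3.3-0}; combining these and using $\mathfrak{c}^2/(p^0 q^0)\lesssim 1$, $c_0\cong \mathfrak{c}^{-3}$ (from the Bessel asymptotics of Lemma \ref{lem2.1-00}), one gets $k_{\mathfrak{c}1}\lesssim |\bar p-\bar q|\,e^{-\frac{\mathfrak{c}}{2T_0}(\bar p^0+\bar q^0)}$. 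Since $\mathfrak{c}\bar p^0\ge \mathfrak{c}\sqrt{\mathfrak{c}^2+|\bar p|^2}\gtrsim \mathfrak{c}^2 + |\bar p|$ (more precisely one absorbs the $\mathfrak{c}^2$ into the normalization $c_0 e^{\mathfrak{c}^2\cdots}$ and keeps a factor $e^{-2\bar c_1|\bar p|}$ using $4\bar c_1\le T_0$), one obtains the first inequality in \eqref{2.33}; the second inequality then follows from \eqref{2.1-00} and \eqref{2.3-00} (absorbing the additive $\bar C_0$ into the constant after shrinking $\bar c_1$).

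For the bounds \eqref{2.34-1} and \eqref{2.21-0} on $k_{\mathfrak{c}2}$ I would work from the explicit representation \eqref{1.44-0}--\eqref{2.12}, i.e.
\[
k_{\mathfrak{c}2}(p,q)=\frac{\mathfrak{c}c_0\pi s^{3/2}}{4g p^0 q^0}\bigl[J_1(\bar{\boldsymbol\ell},\bar{\boldsymbol j})+J_2(\bar{\boldsymbol\ell},\bar{\boldsymbol j})\bigr],
\]
with $\bar{\boldsymbol\ell}=\boldsymbol\ell/T_0$, $\bar{\boldsymbol j}=\boldsymbol j/T_0$, $\boldsymbol\ell=\tfrac{\mathfrak{c}}{2}(\bar p^0+\bar q^0)$, $\boldsymbol j=\mathfrak{c}|\bar p\times\bar q|/g$. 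The decisive point is the identity in Lemma \ref{lem4.1-00}(iii), applied to the barred momenta: $\boldsymbol\ell^2-\boldsymbol j^2 = \frac{\mathfrak{c}^2 g^2 + 4\mathfrak{c}^4}{4g^2}|\bar p-\bar q|^2 \ge \mathfrak{c}^4 + \frac{\mathfrak{c}^2}{4}|\bar p-\bar q|^2$, so that $\sqrt{\bar{\boldsymbol\ell}^2-\bar{\boldsymbol j}^2}\ge \frac{1}{T_0}\sqrt{\mathfrak{c}^4+\tfrac{\mathfrak{c}^2}{4}|\bar p-\bar q|^2}\gtrsim \mathfrak{c}^2 + \mathfrak{c}|\bar p-\bar q|$ (and also $\gtrsim \mathfrak{c}(\mathfrak{c}+|\bar p-\bar q|)$ in a form convenient for extracting both the $1/\mathfrak{c}$ and the $1/|\bar p-\bar q|$ terms). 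Then $J_1\lesssim \frac{\bar{\boldsymbol\ell}}{(\bar{\boldsymbol\ell}^2-\bar{\boldsymbol j}^2)}\,e^{-\sqrt{\bar{\boldsymbol\ell}^2-\bar{\boldsymbol j}^2}}$ and $J_2\lesssim (\bar{\boldsymbol\ell}^2-\bar{\boldsymbol j}^2)^{-1/2}e^{-\sqrt{\bar{\boldsymbol\ell}^2-\bar{\boldsymbol j}^2}}$; bounding $\bar{\boldsymbol\ell}\lesssim \mathfrak{c}(\mathfrak{c}+|\bar p-\bar q|)/T_0$ (from $\bar p^0+\bar q^0\lesssim \mathfrak{c}+|\bar p-\bar q|$, valid since $\bar p^0-\bar q^0=\tfrac{\mathfrak{c}}{?}\cdots$ — concretely $p^0+q^0\le 2\mathfrak{c} + |p|+|q|$ and $|p-q|$ controls the difference only partially, so one uses $\bar p^0\le \mathfrak{c}+|\bar p|$ and splits), the prefactor $\frac{\mathfrak{c}c_0 s^{3/2}}{g p^0 q^0}$ behaves like $\mathfrak{c}^{-3}\cdot \mathfrak{c}^3 \cdot \frac{|\bar p - \bar q|}{g}\cdot\frac{1}{?}$; after collecting powers of $\mathfrak{c}$ (using $c_0\cong\mathfrak{c}^{-3}$, $s^{3/2}\cong\mathfrak{c}^3$, $g/|\bar p-\bar q|$ bounded above and below, $\mathfrak{c}^2/(p^0 q^0)\lesssim 1$) the prefactor is $O(1)$ times $|\bar p-\bar q|/g\le 1$, and one is left with
\[
k_{\mathfrak{c}2}(p,q)\lesssim \Bigl(\frac{1}{\mathfrak{c}}+\frac{1}{|\bar p-\bar q|}\Bigr)\,e^{-\frac{1}{T_0}\sqrt{\mathfrak{c}^4+\frac{\mathfrak{c}^2}{4}|\bar p-\bar q|^2}}.
\]
Finally, $\sqrt{\mathfrak{c}^4+\frac{\mathfrak{c}^2}{4}|\bar p-\bar q|^2}\ge \frac{\mathfrak{c}^2}{2}+\frac{\mathfrak{c}}{4}|\bar p-\bar q|\ge \frac{\mathfrak{c}}{2}|\bar p-\bar q|$ (for $\mathfrak{c}\ge 1$), so, after again using $4\bar c_1\le T_0$ and absorbing the $e^{-\mathfrak{c}^2/(2T_0)}$ factor harmlessly (or simply discarding it), we get $k_{\mathfrak{c}2}\lesssim (\frac1{\mathfrak{c}}+\frac1{|\bar p-\bar q|})e^{-2\bar c_1|\bar p-\bar q|}$, which is the first half of \eqref{2.34-1}; the second half follows from \eqref{2.1-00}. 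For \eqref{2.21-0} one instead only keeps the bound $\sqrt{\bar{\boldsymbol\ell}^2-\bar{\boldsymbol j}^2}\gtrsim \frac{\mathfrak{c}}{T_0}(1+|\bar p-\bar q|)$ directly, so that $\frac{1}{\mathfrak{c}}\le\frac{1}{\mathfrak{c}|\bar p-\bar q|}\cdot|\bar p-\bar q|$ is dominated, giving the single term $\frac{1}{|\bar p-\bar q|}e^{-\bar c_1|\bar p-\bar q|}$, then pass to $p,q$ via \eqref{2.1-00} (halving $\bar c_1$ to absorb the factor $\tfrac32$).

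The main obstacle I anticipate is bookkeeping the powers of $\mathfrak{c}$ uniformly: one must check that $c_0\cong\mathfrak{c}^{-3}$, $s^{3/2}\cong\mathfrak{c}^3$, $\mathfrak{c}^2/(p^0 q^0)\lesssim 1$, and $g/|\bar p-\bar q|$, $\bar p^0/(1+|\bar p|+\mathfrak{c})$, $\det(\partial\bar p/\partial p)$ are all bounded above and below by constants \emph{independent of} $\mathfrak{c}$ and of $(t,x)$ in the compact range \eqref{3.90-10}; the asymptotics of the Bessel quotients $K_2(\gamma),K_3(\gamma)$ (Lemma \ref{lem2.1-00}) together with $\gamma=\mathfrak{c}^2/T_0\to\infty$ give the normalization estimate $\mathfrak{c}^3 c_0 = \frac{n_0\gamma}{4\pi K_2(\gamma)}\cong n_0\sqrt{\gamma}\,e^{\gamma}$, and it is the interplay of this exponentially large normalization with the exponentially small factor $e^{-\mathfrak{c}p^0/T_0}=e^{-\mathfrak{c}\bar p^0/T_0}$ (whose exponent is $-\gamma\sqrt{1+|\bar p|^2/\mathfrak{c}^2}\le -\gamma - \frac{|\bar p|^2}{2T_0}\cdot\frac{1}{\sqrt{1+|\bar p|^2/\mathfrak c^2}}$) that produces the clean Gaussian-free bound $|\bar p-\bar q|e^{-2\bar c_1|\bar p|-2\bar c_1|\bar q|}$ once the $e^{\gamma}$'s cancel. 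This cancellation is exactly the content of hypothesis \eqref{1.53-0} (which controls $\mathbf{M}_{\mathfrak{c}}$ between $J_{\mathfrak{c}}/C$ and $CJ_{\mathfrak{c}}^\alpha$); indeed the cleanest route is to bypass $c_0$ altogether and just write $\sqrt{\mathbf{M}_{\mathfrak{c}}(p)\mathbf{M}_{\mathfrak{c}}(q)}\lesssim J_{\mathfrak{c}}^{\alpha/2}(p)J_{\mathfrak{c}}^{\alpha/2}(q)$ and $\mathbf{M}_{\mathfrak{c}}(p')\mathbf{M}_{\mathfrak{c}}(q')\lesssim J_{\mathfrak{c}}^\alpha(p')J_{\mathfrak{c}}^\alpha(q')$ in \eqref{2.6-10}, whereupon the energy conservation $p'^0+q'^0=\bar p^0+\bar q^0$ (used already in deriving \eqref{2.13-10}) funnels everything into $e^{-\alpha\mathfrak{c}(\bar p^0+\bar q^0)/(2T_M)}$ and the same Bessel-free manipulation of $J_1,J_2$ applies with $T_0$ replaced by $T_M/\alpha$; the weight $4\bar c_1\le T_0$ must then be compared with $\alpha\mathfrak{c}/T_M$, which is harmless for $\mathfrak{c}$ large. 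I would present the argument this latter way to keep the Bessel estimates confined to a single scaling lemma.
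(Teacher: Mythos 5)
Your overall plan is in the right direction (use the closed formulas \eqref{1.34-0} and \eqref{1.44-0}--\eqref{2.12}, Lemma \ref{lem4.1-00}(iii), and Lemma \ref{lem4.2-00} to switch between barred and unbarred variables), and indeed this is the route the paper takes. However, the execution has a real flaw at the one place where the argument is genuinely delicate: the cancellation between the exponentially large normalization of $c_0$ and the exponentially small tail of the exponent $e^{-\sqrt{\boldsymbol\ell^2-\boldsymbol j^2}/T_0}$.

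Your main-text bookkeeping uses $c_0\cong\mathfrak{c}^{-3}$, but in fact $c_0=\frac{n_0\gamma}{4\pi\mathfrak{c}^3K_2(\gamma)}\cong T_0^{-3/2}e^{\gamma}=T_0^{-3/2}e^{\mathfrak{c}^2/T_0}$ (you state this correctly in the closing paragraph, so the argument is internally inconsistent). Once you restore the $e^{\mathfrak{c}^2/T_0}$ factor, your proposed lower bound $\sqrt{\mathfrak{c}^4+\frac{\mathfrak{c}^2}{4}|\bar p-\bar q|^2}\ge\frac{\mathfrak{c}^2}{2}+\frac{\mathfrak{c}}{4}|\bar p-\bar q|$ is not strong enough: it gives
\[
e^{\mathfrak{c}^2/T_0}\,e^{-\sqrt{\mathfrak{c}^4+\frac{\mathfrak{c}^2}{4}|\bar p-\bar q|^2}/T_0}\le e^{\mathfrak{c}^2/(2T_0)}\,e^{-\mathfrak{c}|\bar p-\bar q|/(4T_0)},
\]
and the factor $e^{\mathfrak{c}^2/(2T_0)}$ blows up as $\mathfrak{c}\to\infty$ for any bounded $|\bar p-\bar q|$. ``Discarding'' that factor is not an option, since it is exactly the uniform-in-$\mathfrak{c}$ boundedness that one needs. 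The paper avoids this by rationalizing rather than splitting:
\[
\mathfrak{c}^2-\sqrt{\mathfrak{c}^4+\tfrac{\mathfrak{c}^2}{4}|\bar p-\bar q|^2}
=-\frac{\tfrac14|\bar p-\bar q|^2}{1+\sqrt{1+\tfrac1{4\mathfrak{c}^2}|\bar p-\bar q|^2}}
\le -\frac{\tfrac14|\bar p-\bar q|^2}{1+\sqrt{1+\tfrac14|\bar p-\bar q|^2}}
\le -\tfrac12|\bar p-\bar q|+1,
\]
which is a bound that is both uniform in $\mathfrak{c}$ \emph{and} linearly decaying in $|\bar p-\bar q|$; this is essentially the same elementary inequality \eqref{2.36-00} used for $k_{\mathfrak{c}1}$. (The second bound \eqref{2.49} in the paper refines this with the additional non-positive contribution coming from $g^2-|\bar p-\bar q|^2$, which is what produces the Gaussian decay used later.) Your last-paragraph hint ``$-\gamma\sqrt{1+|\bar p|^2/\mathfrak{c}^2}\le -\gamma-\dots$'' is this rationalization for $k_{\mathfrak{c}1}$, but you should carry it through, and you need the corresponding manipulation for $\sqrt{\boldsymbol\ell^2-\boldsymbol j^2}$ in the $k_{\mathfrak{c}2}$ case. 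Finally, a smaller point: the alternative route you sketch via hypothesis \eqref{1.53-0} and $J_{\mathfrak{c}}$ would replace the rate $\frac{1}{T_0}$ in the exponent by $\frac{\alpha}{T_M}$, and it is exactly the \emph{upper} bound $\mathbf{M}_{\mathfrak{c}}\le C J_{\mathfrak{c}}^\alpha$ that forces $\frac{\alpha}{T_M}\lesssim\frac{1}{T_0}$; so this detour only gives a \emph{slower} rate, not the rate $\bar c_1$ that the lemma asserts (the statement fixes $\bar c_1$ by \eqref{3.90-10}, i.e., in terms of $T_0$).
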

\begin{proof}
	For any $p\in \mathbb{R}^3$, it is clear that
	\begin{align*}
		\mathfrak{c}^2-\mathfrak{c}p^0=\mathfrak{c}^2(1-\sqrt{1+\frac{|p|^2}{\mathfrak{c}^2}})=-\frac{|p|^2}{1+\sqrt{1+\frac{|p|^2}{\mathfrak{c}^2}}},
	\end{align*}
	which yields
	\begin{align}\label{2.36-00}
		-\frac{|p|^2}{2}\le \mathfrak{c}^2-\mathfrak{c}p^0\le -\frac{|p|^2}{1+\sqrt{1+|p|^2}}=-\sqrt{1+|p|^2}+1\le -|p|+1.
	\end{align}
	It follows from \eqref{1.34-0}, Lemmas \ref{lem4.1-00}--\ref{lem4.2-00} and \eqref{2.36-00} that
	\begin{align*}
		k_{\mathfrak{c}1}(p,q)
%		&=\frac{\pi \mathfrak{c}g\sqrt{s}}{p^0q^0}\sqrt{\mathbf{M}_{\mathfrak{c}}(p)\mathbf{M}_{\mathfrak{c}}(q)}\nonumber\\
		&\lesssim |p-q|\exp{\Big(\frac{\mathfrak{c}^2+u^{\mu}p_{\mu}}{2T_0}\Big)}\exp{\Big(\frac{\mathfrak{c}^2+u^{\mu}q_{\mu}}{2T_0}\Big)}\nonumber\\
		&= |p-q|\exp{\Big(\frac{\mathfrak{c}^2-\mathfrak{c}\bar{p}^{0}}{2T_0}\Big)}\exp{\Big(\frac{\mathfrak{c}^2-\mathfrak{c}\bar{q}^{0}}{2T_0}\Big)}\nonumber\\
		&\lesssim |p-q|\exp{\Big(-\frac{|\bar{p}|+|\bar{q}|}{2T_0}\Big)}\lesssim |p-q|\exp{\Big(-\frac{|p|+|q|}{4T_0}\Big)}
	\end{align*}
    and \eqref{2.33} follows.
	For \eqref{2.34-1}, observing $J_2(\bar{\boldsymbol{\ell}},\bar{\boldsymbol{j}})\le J_1(\bar{\boldsymbol{\ell}},\bar{\boldsymbol{j}})$, we have from \eqref{1.44-0}--\eqref{2.12} that 
	\begin{align*}
		k_{\mathfrak{c}2}(p,q)\lesssim \mathfrak{c}\frac{s^{3 / 2}}{g p^{0} q^{0}}e^{\frac{\mathfrak{c}^2}{T_0}}J_1(\bar{\boldsymbol{\ell}},\bar{\boldsymbol{j}})&\lesssim \mathfrak{c}\frac{s^{3 / 2}}{g p^{0} q^{0}}\frac{\bar{\boldsymbol{\ell}}}{\bar{\boldsymbol{\ell}}^{2}-\bar{\boldsymbol{j}}^{2}}\left[1+\frac{1}{\sqrt{\bar{\boldsymbol{\ell}}^{2}-\bar{\boldsymbol{j}^{2}}}}\right] e^{\frac{\mathfrak{c}^2-\sqrt{\boldsymbol{\ell}^{2}-\boldsymbol{j}^{2}}}{T_0}}\nonumber\\
		&\lesssim \mathfrak{c}\frac{s^{3 / 2}}{g p^{0} q^{0}}\frac{\bar{\boldsymbol{\ell}}}{\bar{\boldsymbol{\ell}}^{2}-\bar{\boldsymbol{j}}^{2}} e^{\frac{\mathfrak{c}^2-\sqrt{\boldsymbol{\ell}^{2}-\boldsymbol{j}^{2}}}{T_0}}.
	\end{align*}
	It follows from Lemma \ref{lem4.1-00} that
	\begin{align*}
		\mathfrak{c}^2-\sqrt{\boldsymbol{\ell}^2-\boldsymbol{j}^2}&\le \mathfrak{c}^2-\sqrt{\mathfrak{c}^4+\frac{\mathfrak{c}^2}{4}|\bar{p}-\bar{q}|^2}\le -\frac{\mathfrak{c}^2}{4} \frac{|\bar{p}-\bar{q}|^2}{\mathfrak{c}^2+\sqrt{\mathfrak{c}^4+\frac{\mathfrak{c}^2}{4}|\bar{p}-\bar{q}|^2}}\nonumber\\ 
		&= -\frac{1}{4} \frac{|\bar{p}-\bar{q}|^2}{1+\sqrt{1+\frac{1}{4\mathfrak{c}^2}|\bar{p}-\bar{q}|^2}}\le -\frac{1}{4} \frac{|\bar{p}-\bar{q}|^2}{1+\sqrt{1+\frac{1}{4}|\bar{p}-\bar{q}|^2}}\nonumber\\
		&=-\sqrt{1+\frac{1}{4}|\bar{p}-\bar{q}|^2}+1\le -\frac{|\bar{p}-\bar{q}|}{2}+1,
	\end{align*}
	then we have
	\begin{align}\label{2.40}
		k_{\mathfrak{c}2}(p,q)\lesssim& \mathfrak{c}\frac{s^{3 / 2}}{g p^{0} q^{0}} 
		\frac{\mathfrak{c}(\bar{p}^0+\bar{q}^0)}{2T_0}\frac{1}{\frac{s\mathfrak{c}^2|\bar{p}-\bar{q}|^2}{4g^2T_0^2}}e^{-\frac{|\bar{p}-\bar{q}|}{2T_0}}
		\lesssim\frac{s^{1 / 2}(\bar{p}^0+\bar{q}^0)}{p^{0} q^{0}}
		\frac{g}{|\bar{p}-\bar{q}|^2}e^{-\frac{|\bar{p}-\bar{q}|}{2T_0}}\nonumber\\
		\lesssim&\frac{\sqrt{g^2+4\mathfrak{c}^2}(\bar{p}^0+\bar{q}^0)}{p^{0} q^{0}}
		\frac{1}{|\bar{p}-\bar{q}|}e^{-\frac{|\bar{p}-\bar{q}|}{2T_0}}
		\lesssim\frac{(|\bar{p}-\bar{q}|+\mathfrak{c})(\bar{p}^0+\bar{q}^0)}{p^{0} q^{0}}
		\frac{1}{|\bar{p}-\bar{q}|}e^{-\frac{|\bar{p}-\bar{q}|}{2T_0}}\nonumber\\
		\lesssim & \Big[\frac{1}{p^0}+\frac{1}{q^0}+\frac{1}{|p-q|}\Big(\frac{\mathfrak{c}}{p^0}+\frac{\mathfrak{c}}{q^0}\Big)\Big]e^{-\frac{|p-q|}{4T_0}}
		\lesssim  \Big[\frac{1}{\mathfrak{c}}+\frac{1}{|p-q|}\Big]e^{-\bar{c}_1|p-q|},
	\end{align}
	where we used the fact that both $s$ and $g$ are Lorentz invariant, i.e., 
	\begin{align*}
		s(p,q)=s(\bar{p},\bar{q}),\quad g(p,q)=g(\bar{p},\bar{q}).
	\end{align*}
    Moreover, it follows from the fourth inequality of \eqref{2.40} that
    \begin{align*}
    	k_{\mathfrak{c}2}(p,q)&\lesssim \frac{(|\bar{p}-\bar{q}|+\mathfrak{c})(\bar{p}^0+\bar{q}^0)}{p^{0} q^{0}}
    	\frac{1}{|\bar{p}-\bar{q}|}e^{-\frac{|\bar{p}-\bar{q}|}{2T_0}}
    	\lesssim (|\bar{p}-\bar{q}|+1)
    	\frac{1}{|\bar{p}-\bar{q}|}e^{-\frac{|\bar{p}-\bar{q}|}{2T_0}}\nonumber\\
    	&\lesssim
    	\frac{1}{|\bar{p}-\bar{q}|}e^{-\frac{|\bar{p}-\bar{q}|}{4T_0}} \lesssim \frac{1}{|p-q|}e^{-\frac{\bar{c}_1}{2}|p-q|}. 
    \end{align*}
	Therefore the proof is completed.
\end{proof}
    It follows from Lemmas \ref{lem4.1-00}--\ref{lem4.2-00} and \eqref{2.36-00} that
    \begin{align}\label{4.14-20}
    	\mathbf{M}_{\mathfrak{c}}(t,x,p)\lesssim e^{-2\bar{c}_1|p|}.
    \end{align}

Based on Lemma \ref{lem4.3-00}, we can show the following key estimates.
\begin{Lemma} \label{lem4.4-00}
	Let $\ell \ge 0,\, 0\le \varpi\le \frac{\bar{c}_1}{4}$ and recall $w_{\ell}(p)$ in \eqref{1.46-00}. Then there hold
	\begin{align}\label{2.41}
		\int_{\mathbb{R}^3}k_{\mathfrak{c}1}(p,q)\frac{w_{\ell}(p)e^{\varpi |p|}}{w_{\ell}(q)e^{\varpi |q|}}dq&\lesssim \frac{1}{1+|p|}
	\end{align}
	and
	\begin{align}\label{2.42}
		\int_{\mathbb{R}^3}k_{\mathfrak{c}2}(p,q)\frac{w_{\ell}(p)e^{\varpi |p|}}{w_{\ell}(q)e^{\varpi |q|}}dq&\lesssim
		\left\{
		\begin{aligned}
			&\frac{1}{1+|p|}, \quad |p|\le \mathfrak{c},\\
			&\frac{1}{\mathfrak{c}}, \quad |p|\ge \mathfrak{c}.
		\end{aligned}
		\right.
	\end{align}
\end{Lemma}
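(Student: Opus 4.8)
# Proof Proposal

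The plan is to estimate the two integrals separately using the pointwise kernel bounds from Lemma \ref{lem4.3-00}, combined with the elementary weight comparison inequalities from Lemma \ref{lem4.2-00} (namely \eqref{2.1-00} and \eqref{2.3-00}), and the change of variables $q \mapsto \bar q$ whose Jacobian is comparable to $1$ by \eqref{2.4-00}.

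For \eqref{2.41}, I would start from $k_{\mathfrak{c}1}(p,q)\lesssim |p-q|e^{-\bar c_1|p|-\bar c_1|q|}$. Since $0\le \varpi\le \frac{\bar c_1}{4}$, the factor $e^{\varpi|p|}/e^{\varpi|q|}$ is absorbed: $e^{-\bar c_1|p|+\varpi|p|-\bar c_1|q|-\varpi|q|}\le e^{-\frac{3}{4}\bar c_1|p|-\frac{3}{4}\bar c_1|q|}$. The polynomial weight ratio $w_\ell(p)/w_\ell(q)=(1+|p|^2)^{\ell/2}(1+|q|^2)^{-\ell/2}$ is controlled by $e^{\frac{\bar c_1}{8}(|p|+|q|)}$ up to a constant depending on $\ell$ and $\bar c_1$ (since any polynomial in $|p|$ is dominated by an exponential), so after possibly shrinking the exponential rate we are left with bounding
\begin{align*}
	\int_{\mathbb{R}^3}|p-q|\,e^{-c(|p|+|q|)}\,dq
\end{align*}
for some fixed $c>0$ independent of $\mathfrak{c}$. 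Using $|p-q|\le |p|+|q|$, this splits into $(|p|+1)\int e^{-c|q|}dq$ plus $\int |q|e^{-c|q|}dq$, hence is $\lesssim 1+|p|$; but the extra exponential factor $e^{-c|p|}$ extracted from $k_{\mathfrak{c}1}$ is $\lesssim (1+|p|)^{-2}$, which beats the $(1+|p|)$ growth and yields the claimed $(1+|p|)^{-1}$. This part is routine.

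The harder case is \eqref{2.42}, because $k_{\mathfrak{c}2}$ carries the non-integrable-at-zero singularity $|p-q|^{-1}$ and we must produce a bound that is uniform in $\mathfrak{c}$ and improves to $\mathfrak{c}^{-1}$ for large $|p|$. Here I would use the sharper form \eqref{2.21-0}, $k_{\mathfrak{c}2}(p,q)\lesssim |\bar p-\bar q|^{-1}e^{-\bar c_1|\bar p-\bar q|}$, together with \eqref{2.34-1}, $k_{\mathfrak{c}2}(p,q)\lesssim \bigl(\mathfrak{c}^{-1}+|\bar p-\bar q|^{-1}\bigr)e^{-2\bar c_1|\bar p-\bar q|}$. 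The strategy for the weight ratio is different now: the kernel decays only in $|\bar p-\bar q|$, not in $|\bar p|+|\bar q|$ individually, so I would control $w_\ell(p)e^{\varpi|p|}/\bigl(w_\ell(q)e^{\varpi|q|}\bigr)$ by noting $|p|\le |q|+|p-q|\le |q|+2|\bar p-\bar q|$ (using \eqref{2.1-00}), hence $w_\ell(p)/w_\ell(q)\lesssim (1+|\bar p-\bar q|)^\ell$ and $e^{\varpi|p|-\varpi|q|}\le e^{2\varpi|\bar p-\bar q|}\le e^{\frac{\bar c_1}{2}|\bar p-\bar q|}$. Absorbing these into the exponential $e^{-2\bar c_1|\bar p-\bar q|}$ (or $e^{-\bar c_1|\bar p-\bar q|}$) leaves, after the change of variables $q\mapsto \bar q$ with bounded Jacobian,
\begin{align*}
	\int_{\mathbb{R}^3}\Bigl(\frac{1}{\mathfrak{c}}+\frac{1}{|\bar p-\bar q|}\Bigr)e^{-c|\bar p-\bar q|}\,d\bar q.
\end{align*}
The second piece $\int |\bar p-\bar q|^{-1}e^{-c|\bar p-\bar q|}d\bar q$ is a convergent integral equal to a finite constant ($\int_{\mathbb R^3}|z|^{-1}e^{-c|z|}dz<\infty$), giving the uniform bound; the first piece contributes $\mathfrak{c}^{-1}\int e^{-c|\bar p-\bar q|}d\bar q\lesssim \mathfrak{c}^{-1}$. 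This already gives a bound $\lesssim 1$ everywhere and $\lesssim \mathfrak{c}^{-1}$ whenever we can discard the constant piece.

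To split along $|p|\le \mathfrak{c}$ versus $|p|\ge \mathfrak{c}$ as in the statement, the main obstacle is squeezing the extra decay in $|p|$ for the first regime and the $\mathfrak{c}^{-1}$ gain in the second. For $|p|\le \mathfrak{c}$: I would retain a fraction of the exponential $e^{-\frac{c}{2}|\bar p-\bar q|}$ as a multiplier and observe that when $|p|$ is large the mass of the $\bar q$-integral near $\bar p$ is still $O(1)$, but I need an extra $(1+|p|)^{-1}$; this comes from noticing that $k_{\mathfrak{c}2}$ actually carries the prefactor $(\bar p^0+\bar q^0)/(p^0 q^0)\lesssim 1/p^0+1/q^0$ visible in the fourth line of \eqref{2.40}, so before collapsing to \eqref{2.21-0} one keeps $1/p^0\cong 1/(1+|p|)$ for $|p|\le\mathfrak{c}$ — I would redo that one line of the estimate keeping the $1/p^0$ factor explicit, then integrate. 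For $|p|\ge \mathfrak{c}$: here $1/p^0\cong 1/\mathfrak{c}$, and the same computation with $1/p^0$ retained directly yields the $\mathfrak{c}^{-1}$ bound. So the key is: \textbf{do not throw away the $1/p^0+1/q^0$ prefactor} that appears in the chain of inequalities in the proof of Lemma \ref{lem4.3-00}; track it through the change of variables and the $\bar q$-integration. I expect this bookkeeping of the $1/p^0$ factor, and verifying that the $q$-dependent factor $1/q^0$ integrates against $e^{-c|\bar p-\bar q|}$ to something $O(1)$ uniformly in $\bar p$, to be the only genuinely delicate point; everything else reduces to the convergence of $\int_{\mathbb R^3}(1+|z|)^N|z|^{-1}e^{-c|z|}\,dz$.
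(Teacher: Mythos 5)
Your treatment of \eqref{2.41} is fine and matches the paper's.

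Your plan for \eqref{2.42}, however, has a genuine gap: the decay in $|p|$ (for $|p|\le\mathfrak{c}$) and the $\mathfrak{c}^{-1}$ gain (for $|p|\ge\mathfrak{c}$) do \emph{not} come from the $1/p^0$ prefactor. First, your claim that ``$1/p^0\cong 1/(1+|p|)$ for $|p|\le\mathfrak{c}$'' is false: $p^0=\sqrt{\mathfrak{c}^2+|p|^2}\cong\mathfrak{c}$ there, so $1/p^0\cong 1/\mathfrak{c}$, not $1/(1+|p|)$. Second, and more decisively, in the chain \eqref{2.40} the singular factor $1/|\bar p-\bar q|$ appears multiplied by $\mathfrak{c}/p^0+\mathfrak{c}/q^0$, not by $1/p^0+1/q^0$. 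For $|p|\le\mathfrak{c}$ (and hence, by the exponential localisation, $|q|\lesssim\mathfrak{c}$), both $\mathfrak{c}/p^0$ and $\mathfrak{c}/q^0$ are $\cong 1$, so keeping them explicit still gives only
\begin{equation*}
	\int_{\mathbb{R}^3}\frac{1}{|\bar p-\bar q|}\,e^{-c|\bar p-\bar q|}\,d\bar q\cong 1,
\end{equation*}
a bound that is $O(1)$, not $O\bigl((1+|p|)^{-1}\bigr)$. Likewise at $|p|\cong\mathfrak{c}$ your bookkeeping gives $\mathfrak{c}/p^0\cong 1$, i.e.\ $O(1)$ rather than $O(\mathfrak{c}^{-1})$; it only beats the claimed bound when $|p|\gg\mathfrak{c}^2$. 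So the $1/p^0$ prefactor cannot produce \eqref{2.42}, no matter how carefully you track it.

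The missing idea, which is the heart of the paper's Cases 2 and 3, is that one must \emph{not} collapse the Maxwellian factor to the coarse bound $e^{-\bar c_1|\bar p-\bar q|}$ as in \eqref{2.21-0}/\eqref{2.34-1}. Rather, one keeps the full exponent $e^{(\mathfrak{c}^2-\sqrt{\boldsymbol{\ell}^2-\boldsymbol{j}^2})/T_0}$ and, using the algebraic identity $g^2-|p-q|^2=-(q^0|p|^2-p^0|q|^2)^2/(p^0q^0+\mathfrak{c}^2)^2$, extracts from it an extra Gaussian factor $e^{M(\bar p,\bar q)}$ with (see \eqref{2.49}, \eqref{2.51}, \eqref{2.55}, \eqref{2.64})
\begin{equation*}
	M(\bar p,\bar q)\le -\alpha\,\frac{\mathfrak{c}^2}{\mathfrak{c}^2+|\bar p|^2}\,\frac{(|\bar q|^2-|\bar p|^2)^2}{|\bar p-\bar q|^2}.
\end{equation*}
The Glassey change of variables $r=|\bar p-\bar q|$, $(\bar q-\bar p)\cdot\bar p=|\bar p|\,r\cos\theta$ turns the ratio $(|\bar q|^2-|\bar p|^2)^2/|\bar p-\bar q|^2$ into $(r+2|\bar p|\cos\theta)^2$, and integrating the Gaussian $e^{-\alpha_2^2(r+2|\bar p|\cos\theta)^2}\sin\theta\,d\theta$ produces the crucial factor $1/(\alpha_2|\bar p|)$, which equals $\sqrt{\mathfrak{c}^2+|\bar p|^2}/(\mathfrak{c}|\bar p|)\cong 1/(1+|p|)$ for $|p|\le\mathfrak{c}$ and $\cong 1/\mathfrak{c}$ for $|p|\ge\mathfrak{c}$. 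This radial-Gaussian mechanism, not the $1/p^0$ prefactor, is what makes \eqref{2.42} work, and your proposal omits it.
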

\begin{proof}
	For $\ell> 0$, notice that
	\begin{align*}
		\frac{(1+|p|^2)^{\frac{\ell}{2}}e^{\varpi |p|}}{(1+|q|^2)^{\frac{\ell}{2}}e^{\varpi |q|}}\lesssim (1+|p-q|^2)^{\frac{\ell}{2}}e^{\varpi |p-q|},
	\end{align*}
	which can be absorbed by the exponential part $e^{-\frac{\bar{c}_1}{3}|p-q|}$ of $k_{\mathfrak{c}1}(p,q)$ and $k_{\mathfrak{c}2}(p,q)$. In the following, we mainly focus on the case of $\ell=0$.
	
	For \eqref{2.41}, it follows from \eqref{2.33} that
	\begin{align*}
		\int_{\mathbb{R}^3}k_{\mathfrak{c}1}(p,q)dq\lesssim \int_{\mathbb{R}^3}|p-q|e^{-\bar{c}_1|p|-\bar{c}_1|q|}dq\lesssim \int_{\mathbb{R}^3}e^{-\frac{\bar{c}_1}{2}|p|-\frac{\bar{c}_1}{2}|q|}dq\lesssim e^{-\frac{\bar{c}_1}{2}|p|}\lesssim \frac{1}{1+|p|}.
	\end{align*}
	Next we consider \eqref{2.42}. If $|p|\le 3\bar{C}_0$, we have from Lemmas \ref{lem4.2-00}--\ref{lem4.3-00} that
	\begin{align}\label{2.45-00}
		\int_{\mathbb{R}^3}k_{\mathfrak{c}2}(p,q)dq\lesssim \int_{\mathbb{R}^3}\frac{1}{|p-q|}e^{-\frac{\bar{c}_1}{2}|p-q|}d\bar{q}\lesssim 1\lesssim \frac{1}{1+|p|}.
	\end{align}
	For $|p|\ge 3\bar{C}_0$, we divide the proof into three cases. \\
	\noindent \textit{Case 1:} $|\bar{p}-\bar{q}|\ge \mathfrak{c}^{\frac{1}{8}}$. It holds that
	\begin{align}\label{2.47}
		\int_{|\bar{p}-\bar{q}|\ge\mathfrak{c}^{\frac{1}{8}}}k_{\mathfrak{c}2}(p,q)dq
		&\lesssim \int_{|\bar{p}-\bar{q}|\ge\mathfrak{c}^{\frac{1}{8}}}\frac{1}{|\bar{p}-\bar{q}|}e^{-\bar{c}_1|\bar{p}-\bar{q}|}d\bar{q}\nonumber\\
		&\lesssim e^{-\frac{\bar{c}_1}{2} \mathfrak{c}^{\frac{1}{8}}} \int_{|\bar{p}-\bar{q}|\ge\mathfrak{c}^{\frac{1}{8}}}\frac{1}{|\bar{p}-\bar{q}|}e^{-\frac{\bar{c}_1}{2}|\bar{p}-\bar{q}|}d\bar{q}\nonumber\\
		&\lesssim e^{-\frac{\bar{c}_1}{2} \mathfrak{c}^{\frac{1}{8}}}\lesssim \frac{1}{1+\mathfrak{c}}.
	\end{align}
	\noindent \textit{Case 2:} $|\bar{p}-\bar{q}|\le \mathfrak{c}^{\frac{1}{8}}$ and $|p|\le \mathfrak{c}$. For any $p,q\in \mathbb{R}^3$, it is direct to check that
	\begin{align*}
		g^2-|p-q|^2&=2p^0q^0-2\mathfrak{c}^2-2p\cdot q-|p-q|^2\nonumber\\
		&=2\frac{\mathfrak{c}^2(|p|^2+|q|^2)+|p|^2|q|^2}{p^0q^0+\mathfrak{c}^2}-(|p|^2+|q|^2)\nonumber\\
		&=\frac{2}{p^0q^0+\mathfrak{c}^2}\Big\{\mathfrak{c}^2(|p|^2+|q|^2)+|p|^2|q|^2-\frac{1}{2}(|p|^2+|q|^2)(p^0q^0+\mathfrak{c}^2)\Big\}\nonumber\\
		&=\frac{1}{p^0q^0+\mathfrak{c}^2}\{(\mathfrak{c}^2-p^0q^0)(|p|^2+|q|^2)+2|p|^2|q|^2\}\nonumber\\
		&= \frac{1}{p^0q^0+\mathfrak{c}^2}\Big\{-\frac{\mathfrak{c}^2(|p|^2+|q|^2)+|p|^2|q|^2}{\mathfrak{c}^2+p^0q^0}(|p|^2+|q|^2)+2|p|^2|q|^2\Big\}\nonumber\\
		&=-\frac{(q^0|p|^2-p^0|q|^2)^2}{(p^0q^0+\mathfrak{c}^2)^2},
	\end{align*}
	which, together with Lemma \ref{lem4.1-00}, yields that
	\begin{align}\label{2.49}
		\mathfrak{c}^2-\sqrt{\boldsymbol{\ell}^2-\boldsymbol{j}^2}&=\frac{\mathfrak{c}^4-(\boldsymbol{\ell}^2-\boldsymbol{j}^2)}{\mathfrak{c}^2+\sqrt{\boldsymbol{\ell}^2-\boldsymbol{j}^2}}=\frac{\mathfrak{c}^4-\frac{s\mathfrak{c}^2}{4g^2}|\bar{p}-\bar{q}|^2}{\mathfrak{c}^2+\frac{\sqrt{s}\mathfrak{c}}{2g}|\bar{p}-\bar{q}|}=\frac{\mathfrak{c}^2-\frac{s}{4g^2}|\bar{p}-\bar{q}|^2}{1+\sqrt{\frac{s}{4\mathfrak{c}^2}}\frac{|\bar{p}-\bar{q}|}{g}}\nonumber\\
		&=\frac{\mathfrak{c}^2-\frac{g^2+4\mathfrak{c}^2}{4g^2}|\bar{p}-\bar{q}|^2}{1+\sqrt{\frac{s}{4\mathfrak{c}^2}}\frac{|\bar{p}-\bar{q}|}{g}}=\frac{1}{1+\sqrt{\frac{s}{4\mathfrak{c}^2}}\frac{|\bar{p}-\bar{q}|}{g}}\Big\{-\frac{1}{4}|\bar{p}-\bar{q}|^2+\frac{\mathfrak{c}^2}{g^2}(g^2-|\bar{p}-\bar{q}|^2)\Big\}\nonumber\\
		&=-\frac{1}{4}\frac{|\bar{p}-\bar{q}|^2}{1+\sqrt{\frac{s}{4\mathfrak{c}^2}}\frac{|\bar{p}-\bar{q}|}{g}}-\frac{1}{1+\sqrt{\frac{s}{4\mathfrak{c}^2}}\frac{|\bar{p}-\bar{q}|}{g}}\frac{\mathfrak{c}^2}{g^2}\frac{(\bar{q}^0|\bar{p}|^2-\bar{p}^0|\bar{q}|^2)^2}{(\bar{p}^0\bar{q}^0+\mathfrak{c}^2)^2}.
	\end{align}
	Hence, it follows from \eqref{2.40} and \eqref{2.49} that
	\begin{align}\label{2.50}
		&\int_{|\bar{p}-\bar{q}|\le \mathfrak{c}^{\frac{1}{8}}}k_{\mathfrak{c}2}(p,q)dq\nonumber\\
		&\lesssim \int_{|\bar{p}-\bar{q}|\le\mathfrak{c}^{\frac{1}{8}} }\frac{|\bar{p}-\bar{q}|+1}{|\bar{p}-\bar{q}|}e^{-\bar{c}_1|\bar{p}-\bar{q}|}e^{\frac{1}{2T_0}(\mathfrak{c}^2-\sqrt{\boldsymbol{\ell}^2-\boldsymbol{j}^2})}dq\nonumber\\
		&\lesssim  \int_{|\bar{p}-\bar{q}|\le \mathfrak{c}^{\frac{1}{8}}}\frac{1}{|\bar{p}-\bar{q}|}e^{-\frac{\bar{c}_1}{2}|\bar{p}-\bar{q}|}e^{M(\bar{p},\bar{q})}
		\exp{\Big(-\frac{1}{8T_0}\frac{|\bar{p}-\bar{q}|^2}{1+\sqrt{\frac{s}{4\mathfrak{c}^2}}\frac{|\bar{p}-\bar{q}|}{g}}\Big)}d\bar{q}\nonumber\\
		&\lesssim  \int_{|\bar{p}-\bar{q}|\le \mathfrak{c}^{\frac{1}{8}}}\frac{1}{|\bar{p}-\bar{q}|}e^{-\frac{\bar{c}_1}{2}|\bar{p}-\bar{q}|}
		e^{M(\bar{p},\bar{q})}d\bar{q},
	\end{align}
	where we made a change of variables $q\rightarrow \bar{q}$ and 
	\begin{align}\label{2.51}
		M(\bar{p},\bar{q}):=-\frac{1}{1+\sqrt{\frac{s}{4\mathfrak{c}^2}}\frac{|\bar{p}-\bar{q}|}{g}}\frac{\mathfrak{c}^2}{g^2}\frac{(\bar{q}^0|\bar{p}|^2-\bar{p}^0|\bar{q}|^2)^2}{(\bar{p}^0\bar{q}^0+\mathfrak{c}^2)^2}\frac{1}{2T_0}.
	\end{align}
	Noting $|\bar{p}-\bar{q}|\le \mathfrak{c}^{\frac{1}{8}}$ and $|p|\le \mathfrak{c}$, one has $|\bar{p}|\lesssim \mathfrak{c}$ and so 
	\begin{align}\label{2.52}
		\bar{p}^0=\sqrt{\mathfrak{c}^2+|\bar{p}|^2}\lesssim \mathfrak{c},\quad 	\bar{q}^0=\sqrt{\mathfrak{c}^2+|\bar{q}|^2}\le \sqrt{\mathfrak{c}^2+2|\bar{p}-\bar{q}|^2+2|\bar{p}|^2}\lesssim \mathfrak{c},
	\end{align}
	which yields that
	\begin{align*}
		(1+\sqrt{\frac{s}{4\mathfrak{c}^2}}\frac{|\bar{p}-\bar{q}|}{g})g^2(\bar{p}^0\bar{q}^0+\mathfrak{c}^2)^2&\le (1+\sqrt{1+\frac{g^2}{4\mathfrak{c}^2}})|\bar{p}-\bar{q}|^2(\bar{p}^0\bar{q}^0+\mathfrak{c}^2)^2\nonumber\\
		&\lesssim \mathfrak{c}^4|\bar{p}-\bar{q}|^2.
	\end{align*}
	A direct calculation shows that
	\begin{align}\label{2.54-00}
		\bar{p}^0|\bar{q}|^2-\bar{q}^0|\bar{p}|^2&=\mathfrak{c}(|\bar{q}|^2-|\bar{p}|^2)+(\bar{p}^0-\mathfrak{c})|\bar{q}|^2-(\bar{q}^0-\mathfrak{c})|\bar{p}|^2\nonumber\\
		&=\mathfrak{c}(|\bar{q}|^2-|\bar{p}|^2)+\frac{|\bar{p}|^2|\bar{q}|^2}{\bar{p}^0+\mathfrak{c}}-\frac{|\bar{p}|^2|\bar{q}|^2}{\bar{q}^0+\mathfrak{c}}\nonumber\\
		&=\mathfrak{c}(|\bar{q}|^2-|\bar{p}|^2)+\frac{|\bar{p}|^2|\bar{q}|^2}{(\bar{p}^0+\mathfrak{c})(\bar{q}^0+\mathfrak{c})}(\bar{q}^0-\bar{p}^0)\nonumber\\
		&=\mathfrak{c}(|\bar{q}|^2-|\bar{p}|^2)+\frac{|\bar{p}|^2|\bar{q}|^2(|\bar{q}|^2-|\bar{p}|^2)}{(\bar{p}^0+\mathfrak{c})(\bar{q}^0+\mathfrak{c})(\bar{p}^0+\bar{q}^0)}.
	\end{align}
	Thus, in view of \eqref{3.90-10} and \eqref{2.52}--\eqref{2.54-00}, there exists a positive constant $\alpha_0$ which is independent of $\mathfrak{c}$ such that
	\begin{align}\label{2.55}
		M(\bar{p},\bar{q})&\le  -\alpha_0\frac{1}{\mathfrak{c}^2}\frac{1}{|\bar{p}-\bar{q}|^2}\left(\mathfrak{c}(|\bar{q}|^2-|\bar{p}|^2)+\frac{|\bar{p}|^2|\bar{q}|^2(|\bar{q}|^2-|\bar{p}|^2)}{(\bar{p}^0+\mathfrak{c})(\bar{q}^0+\mathfrak{c})(\bar{p}^0+\bar{q}^0)}\right)^2\nonumber\\
		&\le  -\alpha_0\frac{(|\bar{q}|^2-|\bar{p}|^2)^2}{|\bar{p}-\bar{q}|^2}.
	\end{align}
	Combining \eqref{2.50} and \eqref{2.55}, one has that
	\begin{align*}
		\int_{|\bar{p}-\bar{q}|\le \mathfrak{c}^{\frac{1}{8}}}k_{\mathfrak{c}2}(p,q)dq
		&\lesssim \int_{|\bar{p}-\bar{q}|\le \mathfrak{c}^{\frac{1}{8}}}\frac{1}{|\bar{p}-\bar{q}|}e^{-\frac{\bar{c}_1}{2}|\bar{p}-\bar{q}|}
		e^{-\alpha_0\frac{(|\bar{q}|^2-|\bar{p}|^2)^2}{|\bar{p}-\bar{q}|^2}}d\bar{q}.
	\end{align*}
	By taking similar arguments as in \cite[Lemma 3.3.1]{Glassey} (see also Case 3 below), we obtain
	\begin{align}\label{2.57}
		\int_{|\bar{p}-\bar{q}|\le \mathfrak{c}^{\frac{1}{8}}}k_{\mathfrak{c}2}(p,q)dq\lesssim \frac{1}{1+|\bar{p}|}\lesssim \frac{1}{1+|p|}, \quad \text{for}\ |p|\le \mathfrak{c}.
	\end{align}
	\noindent \textit{Case 3:} $|\bar{p}-\bar{q}|\le \mathfrak{c}^{\frac{1}{8}}$ and $|p|\ge \mathfrak{c}$. It is clear that $|\bar{p}|\gtrsim \mathfrak{c}$. Noting
	\begin{align*}
		|\bar{q}|\le |\bar{q}-\bar{p}|+|\bar{p}|\lesssim |\bar{p}|,\quad |\bar{q}|\ge |\bar{p}|-|\bar{p}-\bar{q}|\gtrsim|\bar{p}|,
	\end{align*}
	then we have
	\begin{align*}
		|\bar{p}|\cong |\bar{q}|, \quad  \bar{p}^0\cong \bar{q}^0.
	\end{align*}
	Hence it is clear that
	\begin{align}\label{2.60}
		(1+\sqrt{\frac{s}{4\mathfrak{c}^2}}\frac{|\bar{p}-\bar{q}|}{g})g^2(\bar{p}^0\bar{q}^0+\mathfrak{c}^2)^2&\le (1+\sqrt{1+\frac{g^2}{4\mathfrak{c}^2}})|\bar{p}-\bar{q}|^2(\bar{p}^0\bar{q}^0+\mathfrak{c}^2)^2\nonumber\\
		&\lesssim |\bar{p}-\bar{q}|^2(\mathfrak{c}^2+|\bar{p}|^2)^2.
	\end{align}
	For $|\bar{p}|\gtrsim \mathfrak{c}$, it holds that
	\begin{align*}
		\mathfrak{c}+\frac{|\bar{p}|^2|\bar{q}|^2}{(\bar{p}^0+\mathfrak{c})(\bar{q}^0+\mathfrak{c})(\bar{p}^0+\bar{q}^0)}\cong \mathfrak{c}+\frac{|\bar{p}|^4}{(\bar{p}^0)^3}\cong \mathfrak{c}+\frac{|\bar{p}|^4}{(\mathfrak{c}^2+|\bar{p}|^2)^{\frac{3}{2}}}\cong \mathfrak{c}+|\bar{p}|,
	\end{align*}
	which, together with \eqref{2.54-00}, yields that
	\begin{align}\label{2.63}
		(\bar{p}^0|\bar{q}|^2-\bar{q}^0|\bar{p}|^2)^2\cong (|\bar{q}|^2-|\bar{p}|^2)^2(\mathfrak{c}^2+|\bar{p}|^2).
	\end{align}
	Combining \eqref{2.51}, \eqref{2.60} and \eqref{2.63}, for some positive constant $\alpha_1$ which is independent of $\mathfrak{c}$, we have
	\begin{align}\label{2.64}
		M(\bar{p},\bar{q})\le -\alpha_1 \frac{\mathfrak{c}^2}{\mathfrak{c}^2+|\bar{p}|^2}\frac{(|\bar{q}|^2-|\bar{p}|^2)^2}{|\bar{p}-\bar{q}|^2}.
	\end{align}
	Hence, for $|p|\ge \mathfrak{c}$, it follows from \eqref{2.50} and \eqref{2.64} that
	\begin{align*}
		\int_{|\bar{p}-\bar{q}|\le \mathfrak{c}^{\frac{1}{8}}}k_{\mathfrak{c}2}(p,q)dq&\lesssim \int_{|\bar{p}-\bar{q}|\le \mathfrak{c}^{\frac{1}{8}}}\frac{1}{|\bar{p}-\bar{q}|}e^{-\frac{\bar{c}_1}{2}|\bar{p}-\bar{q}|}
		e^{M(\bar{p},\bar{q})}d\bar{q}\nonumber\\
		&\lesssim \int_{|\bar{p}-\bar{q}|\le \mathfrak{c}^{\frac{1}{8}}}\frac{1}{|\bar{p}-\bar{q}|}e^{-\frac{\bar{c}_1}{2}|\bar{p}-\bar{q}|}
		e^{-\alpha_1\frac{\mathfrak{c}^2}{\mathfrak{c}^2+|\bar{p}|^2}\frac{(|\bar{q}|^2-|\bar{p}|^2)^2}{|\bar{p}-\bar{q}|^2}}d\bar{q}.
	\end{align*}
	Following the arguments as in \cite[Lemma 3.3.1]{Glassey}, we can make a change of variables
	\begin{align*}
		|\bar{p}-\bar{q}|=r, \quad (\bar{q}-\bar{p})\cdot \bar{p}=|\bar{p}|r\cos\theta, \quad 0\le r<\infty, \ 0\le \theta \le \pi,
	\end{align*}
	which yields that
	\begin{align*}
		|\bar{q}|^2=|\bar{q}-\bar{p}|^2+|\bar{p}|^2+2(\bar{q}-\bar{p})\cdot \bar{p}=r^2+|\bar{p}|^2+2r|\bar{p}|\cos\theta.
	\end{align*}
	Denoting $\alpha_2^2:=\alpha_1\frac{\mathfrak{c}^2}{\mathfrak{c}^2+|\bar{p}|^2}$ and $u=\alpha_2(r+2|\bar{p}|\cos\theta)$, one has
	\begin{align*}
		\int_{|\bar{p}-\bar{q}|\le \mathfrak{c}^{\frac{1}{4}}}k_{\mathfrak{c}2}(p,q)dq&\lesssim \int_{0}^{\infty}re^{-\frac{\bar{c}_1}{2}r}dr\int_{0}^{\pi}e^{-\alpha_2^2(r+2|\bar{p}|\cos\theta)^2}\sin\theta d\theta\nonumber\\
		&\lesssim \frac{1}{\alpha_2|\bar{p}|}\int_{-\infty}^{\infty}e^{-u^2}du\lesssim \frac{\sqrt{\mathfrak{c}^2+|\bar{p}|^2}}{\mathfrak{c}|\bar{p}|}\nonumber\\
		&\lesssim \frac{1}{\mathfrak{c}},
	\end{align*}
	which, together with \eqref{2.45-00}, \eqref{2.47}, \eqref{2.57}, yields \eqref{2.42}. Therefore the proof is completed.
\end{proof}
By similar arguments as in Lemma \ref{lem4.4-00}, one can also obtain
\begin{Lemma}\label{lem2.8}
	There hold
	\begin{align*}
		\int_{\mathbb{R}^3}k_{\mathfrak{c}1}^2(p,q)\Big(\frac{w_{\ell}(p)}{w_{\ell}(q)}\Big)^2dq\lesssim \frac{1}{1+|p|}
	\end{align*}
	and
	\begin{align*}
		\int_{\mathbb{R}^3}k_{\mathfrak{c}2}^2(p,q)\Big(\frac{w_{\ell}(p)}{w_{\ell}(q)}\Big)^2dq\lesssim
		\left\{
		\begin{aligned}
			&\frac{1}{1+|p|}, \quad |p|\le \mathfrak{c},\\
			&\frac{1}{\mathfrak{c}}, \quad |p|\ge \mathfrak{c}.
		\end{aligned}
		\right.
	\end{align*}
\end{Lemma}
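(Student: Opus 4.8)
For the kernels $k_{\mathfrak{c}1}$, $k_{\mathfrak{c}2}$ of $\mathbf K_{\mathfrak c1},\mathbf K_{\mathfrak c2}$,
$$\int_{\mathbb R^3}k_{\mathfrak c1}^2(p,q)\Big(\tfrac{w_\ell(p)}{w_\ell(q)}\Big)^2dq\lesssim \tfrac1{1+|p|},\qquad \int_{\mathbb R^3}k_{\mathfrak c2}^2(p,q)\Big(\tfrac{w_\ell(p)}{w_\ell(q)}\Big)^2dq\lesssim \begin{cases}\tfrac1{1+|p|}, & |p|\le\mathfrak c,\\[2pt] \tfrac1{\mathfrak c}, & |p|\ge\mathfrak c.\end{cases}$$

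The plan is to mimic the proof of Lemma~\ref{lem4.4-00}, squaring the pointwise kernel bounds from Lemma~\ref{lem4.3-00} instead of using them linearly. First, for the weight: as in Lemma~\ref{lem4.4-00}, for $\ell>0$ we have $(1+|p|^2)^{\ell/2}(1+|q|^2)^{-\ell/2}\lesssim(1+|p-q|^2)^{\ell/2}$, so $\big(w_\ell(p)/w_\ell(q)\big)^2\lesssim(1+|p-q|^2)^{\ell}$, a polynomial in $|p-q|$ (resp.\ in $|\bar p-\bar q|$, after invoking \eqref{2.1-00} of Lemma~\ref{lem4.2-00}), which is absorbed into a fraction of the exponential decay $e^{-\bar c_1|p-q|}$ (resp.\ $e^{-2\bar c_1|\bar p-\bar q|}$) already present in $k_{\mathfrak c1}^2$ and $k_{\mathfrak c2}^2$; hence it suffices to treat $\ell=0$.

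For $k_{\mathfrak c1}$: square \eqref{2.33} to get $k_{\mathfrak c1}^2(p,q)\lesssim|p-q|^2e^{-2\bar c_1|p|-2\bar c_1|q|}$, and then $\int_{\mathbb R^3}k_{\mathfrak c1}^2(p,q)\,dq\lesssim e^{-\bar c_1|p|}\int_{\mathbb R^3}e^{-\bar c_1|q|}\,dq\lesssim e^{-\bar c_1|p|}\lesssim (1+|p|)^{-1}$, exactly as in the $k_{\mathfrak c1}$ case of Lemma~\ref{lem4.4-00}. For $k_{\mathfrak c2}$: I would first dispose of $|p|\le 3\bar C_0$ by \eqref{2.21-0}, squared: $k_{\mathfrak c2}^2(p,q)\lesssim|\bar p-\bar q|^{-2}e^{-\bar c_1|\bar p-\bar q|}$, integrable in $\bar q$ near the origin since $|\bar p-\bar q|^{-2}$ is locally integrable in $\mathbb R^3$, giving a bound $\lesssim 1\lesssim(1+|p|)^{-1}$. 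For $|p|\ge 3\bar C_0$ I split into the same three regions as in Lemma~\ref{lem4.4-00}: (Case 1) $|\bar p-\bar q|\ge\mathfrak c^{1/8}$, where squaring the $k_{\mathfrak c2}$-bound still leaves exponential smallness $e^{-\frac{\bar c_1}{2}\mathfrak c^{1/8}}\lesssim(1+\mathfrak c)^{-1}$; (Case 2) $|\bar p-\bar q|\le\mathfrak c^{1/8}$, $|p|\le\mathfrak c$, where I reuse \eqref{2.49}--\eqref{2.55}, i.e.\ the Gaussian-in-$(|\bar q|^2-|\bar p|^2)/|\bar p-\bar q|$ factor $e^{M(\bar p,\bar q)}\le e^{-\alpha_0(|\bar q|^2-|\bar p|^2)^2/|\bar p-\bar q|^2}$, and conclude $\lesssim(1+|\bar p|)^{-1}\lesssim(1+|p|)^{-1}$ via the change-of-variables estimate of \cite[Lemma 3.3.1]{Glassey}; (Case 3) $|\bar p-\bar q|\le\mathfrak c^{1/8}$, $|p|\ge\mathfrak c$, where \eqref{2.60}--\eqref{2.64} give $M(\bar p,\bar q)\le-\alpha_1\frac{\mathfrak c^2}{\mathfrak c^2+|\bar p|^2}\frac{(|\bar q|^2-|\bar p|^2)^2}{|\bar p-\bar q|^2}$ and the polar-coordinate computation yields $\lesssim\frac{\sqrt{\mathfrak c^2+|\bar p|^2}}{\mathfrak c|\bar p|}\lesssim\mathfrak c^{-1}$. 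The only real subtlety is bookkeeping: squaring the kernel doubles the rate in the outer exponential $e^{-\bar c_1|\bar p-\bar q|}$ but, crucially, does \emph{not} square the singular prefactor structure in a way that breaks local integrability — $|\bar p-\bar q|^{-2}$ is still $L^1_{\text{loc}}(\mathbb R^3)$ — and the Gaussian factor $e^{M}$ is unaffected in form (one simply keeps, say, half of it for the decay estimate), so every step of Lemma~\ref{lem4.4-00} goes through verbatim with $k_{\mathfrak c i}$ replaced by $k_{\mathfrak c i}^2$ and constants adjusted.

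The main obstacle — really the only place requiring care — is Case 2 and Case 3 near the singularity $\bar q=\bar p$: one must check that after squaring, the prefactor is at worst $|\bar p-\bar q|^{-2}$ (not a higher power), which follows because the squared fourth line of \eqref{2.40} contributes $\big(\frac{|\bar p-\bar q|+\mathfrak c}{p^0q^0}\big)^2\frac{1}{|\bar p-\bar q|^2}$ and the $(|\bar p-\bar q|+\mathfrak c)^2$ numerator is controlled exactly as before using $\bar p^0,\bar q^0\lesssim\mathfrak c$ (Case 2) or $\bar p^0\cong\bar q^0$, $|\bar p|\cong|\bar q|$ (Case 3); then $\int_{|\bar p-\bar q|\le\mathfrak c^{1/8}}|\bar p-\bar q|^{-2}e^{-\frac{\bar c_1}{2}|\bar p-\bar q|}e^{M}\,d\bar q$ is handled by the identical $(r,\theta)$ substitution as in Lemma~\ref{lem4.4-00}, with the same output. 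Since the arguments are word-for-word those of Lemma~\ref{lem4.4-00}, I would state the lemma and write only ``By similar arguments as in Lemma~\ref{lem4.4-00}'' together with the two one-line observations (weight absorption; $|\bar p-\bar q|^{-2}\in L^1_{\text{loc}}$), omitting the repeated computation for brevity.
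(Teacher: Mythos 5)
Your proposal is correct and is essentially the argument the paper intends: the paper states Lemma~\ref{lem2.8} without proof, merely appealing to ``similar arguments as in Lemma~\ref{lem4.4-00},'' and your sketch correctly identifies that squaring the pointwise bounds of Lemma~\ref{lem4.3-00} and repeating the three-region decomposition with $|\bar p-\bar q|^{-1}$ replaced by $|\bar p-\bar q|^{-2}$ (still $L^1_{\mathrm{loc}}$ after the polar Jacobian) and the $e^{M}$ Gaussian retained yields the claimed bounds. The two observations you single out (weight absorption into the exponential, and local integrability of $|\bar p-\bar q|^{-2}$ in $\mathbb{R}^3$) are precisely what makes the verbatim repetition go through.
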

Recall $k_{\mathfrak{c}}(p,q)=k_{\mathfrak{c}2}(p,q)-k_{\mathfrak{c}1}(p,q)$ in \eqref{2.9-20} and denote
\begin{align*}
	k_{\mathfrak{c} w}(p,q):=k_{\mathfrak{c}}(p,q)\frac{w_{\ell}(p)}{w_{\ell}(q)}.
\end{align*}
%It is clear that
%\begin{align*}
%	\frac{w_{\ell}(p)}{w_{\ell}(q)}e^{\frac{\bar{c}_1}{4}|p-q|}\lesssim (1+|p-q|^2)^{\frac{\ell}{2}}e^{\frac{\bar{c}_1}{4} |p-q|},
%\end{align*}
%which can be absorbed by the exponential part of $k_{\mathfrak{c}1}(p,q)$ and $k_{\mathfrak{c}1}(p,q)$. Hence,
By similar arguments as in Lemma \ref{lem4.4-00}, one can also obtain
\begin{align*} 
	\int_{\mathbb{R}^3}k_{\mathfrak{c} w}(p,q)e^{\frac{\bar{c}_1}{4}|p-q|}dq\lesssim
	\left\{
	\begin{aligned}
		&\frac{1}{1+|p|}, \quad \text{for}\ |p|\le \mathfrak{c},\\
		&\frac{1}{\mathfrak{c}}, \quad \text{for}\  |p|\ge \mathfrak{c}.
	\end{aligned}
	\right.
\end{align*}
Next we estimate the collision frequency $\nu_{\mathfrak{c}}(p)$.
\begin{Lemma}\label{lem2.9}
	It holds that
	\begin{align}\label{2.72-10}
		\nu_{\mathfrak{c}}(p)\cong
		\left\{
		\begin{aligned}
			&1+|p|, \quad |p|\le \mathfrak{c},\\
			&\mathfrak{c}, \quad |p|\ge \mathfrak{c}.
		\end{aligned}
		\right.
	\end{align}
\end{Lemma}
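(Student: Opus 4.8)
The plan is to carry out the trivial $\mathbb{S}^{2}$-integration first and then bound the resulting momentum integral above and below, keeping the regimes $|p|\le\mathfrak{c}$ and $|p|\ge\mathfrak{c}$ together until the very last step. Since $\varsigma\equiv1$ and $v_{\phi}(p,q)=\tfrac{\mathfrak{c}}{4}\,\tfrac{g\sqrt{s}}{p^{0}q^{0}}$ is independent of $\omega$,
\begin{align*}
	\nu_{\mathfrak{c}}(p)=4\pi\int_{\mathbb{R}^{3}}v_{\phi}(p,q)\,\mathbf{M}_{\mathfrak{c}}(q)\,dq=\pi\mathfrak{c}\int_{\mathbb{R}^{3}}\frac{g\sqrt{s}}{p^{0}q^{0}}\,\mathbf{M}_{\mathfrak{c}}(q)\,dq .
\end{align*}

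For the upper bound I would use $v_{\phi}\le\min\{\mathfrak{c},\tfrac12|p-q|\}$ from Lemma~\ref{lem4.1-00}(ii) together with $\mathbf{M}_{\mathfrak{c}}(q)\lesssim e^{-2\bar{c}_{1}|q|}$ from \eqref{4.14-20}. If $|p|\le\mathfrak{c}$, bound $v_{\phi}\le\tfrac12(|p|+|q|)$, so $\nu_{\mathfrak{c}}(p)\lesssim\int_{\mathbb{R}^{3}}(|p|+|q|)e^{-2\bar{c}_{1}|q|}\,dq\lesssim1+|p|$; if $|p|\ge\mathfrak{c}$, bound $v_{\phi}\le\mathfrak{c}$, so $\nu_{\mathfrak{c}}(p)\lesssim\mathfrak{c}\int_{\mathbb{R}^{3}}e^{-2\bar{c}_{1}|q|}\,dq\lesssim\mathfrak{c}$.

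For the lower bound I would restrict the $q$-integral to the fixed ball $\{|q|\le1\}$, where $\mathfrak{c}\le q^{0}=\sqrt{\mathfrak{c}^{2}+|q|^{2}}\le2\mathfrak{c}$ once $\mathfrak{c}\ge1$. First I claim $\mathbf{M}_{\mathfrak{c}}(q)\gtrsim1$ on this ball, uniformly in $\mathfrak{c}$ and in $(t,x)$: by the left inequality in \eqref{1.53-0} it suffices to bound $J_{\mathfrak{c}}(q)$ in \eqref{1.50-0} from below, and since $\gamma_{M}=\mathfrak{c}^{2}/T_{M}$, the Bessel asymptotics of Lemma~\ref{lem2.1-00} give $K_{2}(\gamma_{M})\cong\sqrt{\pi/(2\gamma_{M})}\,e^{-\gamma_{M}}$, so the normalizing constant is $\cong\mathfrak{c}^{-3}\gamma_{M}^{3/2}e^{\gamma_{M}}\cong e^{\gamma_{M}}$; combined with $\mathfrak{c}^{2}-\mathfrak{c}q^{0}\ge-\tfrac12|q|^{2}\ge-\tfrac12$ from \eqref{2.36-00}, this yields $J_{\mathfrak{c}}(q)\cong e^{(\mathfrak{c}^{2}-\mathfrak{c}q^{0})/T_{M}}\gtrsim1$. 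Next, on $\{|q|\le1\}$ Lemma~\ref{lem4.1-00}(i) gives $g\ge\mathfrak{c}|p-q|/\sqrt{p^{0}q^{0}}\gtrsim\sqrt{\mathfrak{c}}\,|p-q|/\sqrt{p^{0}}$, while $s=g^{2}+4\mathfrak{c}^{2}$ forces $\sqrt{s}\ge\max\{g,2\mathfrak{c}\}$; multiplying these and using $q^{0}\cong\mathfrak{c}$ yields $g\sqrt{s}\gtrsim\mathfrak{c}^{3/2}|p-q|/\sqrt{p^{0}}+\mathfrak{c}|p-q|^{2}/p^{0}$. Substituting into the identity above, pulling $\mathbf{M}_{\mathfrak{c}}(q)\gtrsim1$ and $1/q^{0}\gtrsim1/\mathfrak{c}$ out of the integral, and using the elementary bounds $\int_{|q|\le1}|p-q|\,dq\gtrsim1+|p|$ and $\int_{|q|\le1}|p-q|^{2}\,dq\gtrsim1+|p|^{2}$, I obtain
\begin{align*}
	\nu_{\mathfrak{c}}(p)\gtrsim\frac{\mathfrak{c}^{3/2}}{(p^{0})^{3/2}}\,(1+|p|)+\frac{\mathfrak{c}}{(p^{0})^{2}}\,(1+|p|^{2}).
\end{align*}
For $|p|\le\mathfrak{c}$ one has $p^{0}\cong\mathfrak{c}$, so the first term is $\cong1+|p|$; for $|p|\ge\mathfrak{c}$ one has $p^{0}\cong|p|$, so the second term is $\cong\mathfrak{c}$. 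Together with the upper bounds this gives \eqref{2.72-10}.

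I expect the main obstacle to be the uniform-in-$\mathfrak{c}$ pointwise lower bound $\mathbf{M}_{\mathfrak{c}}(q)\gtrsim1$ on the fixed ball: one cannot simply discard the $\mathfrak{c}$-dependent prefactor $\frac{n_{M}\gamma_{M}}{4\pi\mathfrak{c}^{3}K_{2}(\gamma_{M})}$, but must track its size $\cong e^{\gamma_{M}}$ through the Bessel asymptotics of Lemma~\ref{lem2.1-00} and cancel it against $e^{-\mathfrak{c}q^{0}/T_{M}}$ using that $\mathfrak{c}^{2}-\mathfrak{c}q^{0}$ stays bounded on $\{|q|\le1\}$. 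A secondary point is that neither of the two lower bounds for $g\sqrt{s}$ suffices on its own: the $\mathfrak{c}^{3/2}/(p^{0})^{3/2}$ term is decisive when $|p|\le\mathfrak{c}$ and the $\mathfrak{c}/(p^{0})^{2}$ term when $|p|\ge\mathfrak{c}$, so one keeps both. The remaining steps — the Gaussian integrals, the comparisons $q^{0}\cong\mathfrak{c}$ and $p^{0}\cong\max\{\mathfrak{c},|p|\}$, and the lower bounds on $\int_{|q|\le1}|p-q|^{k}\,dq$ — are routine.
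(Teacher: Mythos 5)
Your proof is correct but takes a genuinely different, and cleaner, route than the paper's. The paper splits the $q$-integral into the regimes $|q|\gtrless\mathfrak{c}^{1/8}$ and then subdivides further by $|p|$ against the thresholds $\mathfrak{c}^{3/8}$ and $\mathfrak{c}$; in its hardest case ($|q|\le\mathfrak{c}^{1/8}$, $|p|\le\mathfrak{c}^{3/8}$) it Taylor-expands $v_\phi$ around $|p-q|/2$ and tracks the error terms $\mathcal{H}_1$, $\mathcal{H}_2$, so that both the lower and upper bounds come out of a single asymptotic computation. You instead decouple the two directions entirely. For the upper bound you use $v_\phi\le\min\{\mathfrak{c},\tfrac12|p-q|\}$ (Lemma~\ref{lem4.1-00}(ii)) together with $\mathbf{M}_{\mathfrak{c}}(q)\lesssim e^{-2\bar{c}_1|q|}$ from \eqref{4.14-20}, each regime in one line. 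For the lower bound you restrict to the fixed ball $\{|q|\le 1\}$ (rather than the paper's $\mathfrak{c}$-dependent ball $\{|q|\le\mathfrak{c}^{1/8}\}$), show $\mathbf{M}_{\mathfrak{c}}(q)\gtrsim 1$ there, and combine $g\ge\mathfrak{c}|p-q|/\sqrt{p^0q^0}$ with $\sqrt{s}\ge\max\{g,2\mathfrak{c}\}$ to get the two-term lower bound $\mathfrak{c}^{3/2}(1+|p|)/(p^0)^{3/2}+\mathfrak{c}(1+|p|^2)/(p^0)^2$, where the first term is decisive for $|p|\le\mathfrak{c}$ and the second for $|p|\ge\mathfrak{c}$. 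This avoids the paper's expansion bookkeeping, at the cost of losing the more precise asymptotics that the paper's computation delivers as a by-product (and which the paper would obtain anyway in its development of Lemma~\ref{lem2.13}). One stylistic point: you deduce $\mathbf{M}_{\mathfrak{c}}\gtrsim 1$ on the unit ball by invoking the left inequality of \eqref{1.53-0} and then analyzing $J_{\mathfrak{c}}$; since the same Bessel asymptotics applied directly to $\mathbf{M}_{\mathfrak{c}}$ together with the a priori bounds \eqref{3.90-10}, \eqref{3.91-10} on $(n_0,u,T_0)$ give $\mathbf{M}_{\mathfrak{c}}(q)\cong\frac{n_0}{(2\pi T_0)^{3/2}}e^{(\mathfrak{c}^2-\mathfrak{c}\bar q^0)/T_0}\gtrsim 1$ on $\{|q|\le 1\}$ without any detour through $J_\mathfrak{c}$, the appeal to the hypothesis \eqref{1.53-0} is unnecessary and makes the lemma appear conditional on an assumption it does not actually need.
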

\begin{proof}
	Recall
	\begin{align*}
		\nu_{\mathfrak{c}}(p)&=\int_{\mathbb{R}^3}\int_{\mathbb{S}^2}\frac{\mathfrak{c}}{4}\frac{g\sqrt{s}}{p^0q^0}\mathbf{M}_{\mathfrak{c}}(q)d\omega dq.
	\end{align*}
	Since the proof is complicated, we split it into four cases.
	
	\smallskip
	
	\noindent \textit{Case 1:} $|q|\ge \mathfrak{c}^{\frac{1}{8}}$. Using Lemma \ref{lem4.1-00} and \eqref{4.14-20}, one has
	\begin{align}\label{2.73}
		&\int_{|q|\ge \mathfrak{c}^{\frac{1}{8}}}\int_{\mathbb{S}^2}\frac{\mathfrak{c}}{4}\frac{g\sqrt{s}}{p^0q^0}\mathbf{M}_{\mathfrak{c}}(q)d\omega dq \lesssim \int_{|q|\ge \mathfrak{c}^{\frac{1}{8}}}\mathfrak{c}e^{-2\bar{c}_1|q|}dq 
		\lesssim e^{-\bar{c}_1\mathfrak{c}^{\frac{1}{8}}}.
	\end{align}
	\noindent \textit{Case 2:} $|q|\le \mathfrak{c}^{\frac{1}{8}}$ and $|p|\le \mathfrak{c}^{\frac{3}{8}}$. It holds that
	\begin{align}\label{2.74}
		&\int_{|q|\le \mathfrak{c}^{\frac{1}{8}}}\int_{\mathbb{S}^2}\frac{\mathfrak{c}g\sqrt{s}}{4p^0q^0}\frac{n_0\gamma}{4\pi \mathfrak{c}^3K_2(\gamma)}\exp{\Big(\frac{u^{\mu}q_{\mu}}{T_0}\Big)}d\omega dq \nonumber\\
		&=\int_{|q|\le \mathfrak{c}^{\frac{1}{8}}}\int_{\mathbb{S}^2}\frac{\mathfrak{c}g\sqrt{s}}{4p^0q^0}\frac{n_0}{(2\pi T_0)^{\frac{3}{2}}}(1+O(\gamma^{-1}))\exp{\Big(\frac{\mathfrak{c}^2-\mathfrak{c}\bar{q}^0}{T_0}\Big)}d\omega dq \nonumber\\
		&=\frac{n_0}{(2\pi T_0)^{\frac{3}{2}}}\int_{|q|\le \mathfrak{c}^{\frac{1}{8}}}\int_{\mathbb{S}^2}\frac{\mathfrak{c}g\sqrt{s}}{4p^0q^0}\exp{\Big(\frac{\mathfrak{c}^2-\mathfrak{c}\bar{q}^0}{T_0}\Big)}d\omega dq\cdot O(\gamma^{-1}) \nonumber\\
		&\qquad +\frac{n_0}{(2\pi T_0)^{\frac{3}{2}}}\int_{|q|\le \mathfrak{c}^{\frac{1}{8}}}\int_{\mathbb{S}^2}\Big(\frac{\mathfrak{c}g\sqrt{s}}{4p^0q^0}-\frac{|p-q|}{2}\Big)\exp{\Big(\frac{\mathfrak{c}^2-\mathfrak{c}\bar{q}^0}{T_0}\Big)}d\omega dq\nonumber\\
		&\qquad +\frac{n_0}{(2\pi T_0)^{\frac{3}{2}}}\int_{|q|\le \mathfrak{c}^{\frac{1}{8}}}\int_{\mathbb{S}^2}\frac{|p-q|}{2}\exp{\Big(\frac{\mathfrak{c}^2-\mathfrak{c}\bar{q}^0}{T_0}\Big)}d\omega dq\nonumber\\
		&:=\mathcal{H}_1+\mathcal{H}_2+\mathcal{H}_3.
	\end{align}
	It is clear that
	\begin{align}\label{2.75}
		|\mathcal{H}_1|\lesssim \frac{1}{\mathfrak{c}^2}\int_{|q|\le \mathfrak{c}^{\frac{1}{8}}} |p-q|e^{-2\bar{c}_1|q|}dq\cong \frac{1+|p|}{\mathfrak{c}^2}\lesssim \mathfrak{c}^{-\frac{13}{8}}.
	\end{align}
%    Let $\tilde{c}_1$ be a positive constant which is independent of $\mathfrak{c}$, such that
%    \begin{align}
%    	0<\frac{1}{4\tilde{c}_1}\le \inf_{(t, x) \in[0, T] \times \mathbb{R}^3}T_0(t,x).
%    \end{align}
	Using Lemma \ref{lem4.2-00}, \eqref{3.90-10} and \eqref{2.36-00}, we have
	\begin{align}\label{2.76}
		\mathcal{H}_3 \gtrsim \int_{|\bar{q}|\le \frac{1}{2}\mathfrak{c}^{\frac{1}{8}}}|\bar{p}-\bar{q}|\exp{\Big(-\frac{|\bar{q}|^2}{8\bar{c}_1}\Big)} d\bar{q} \cong 1+|\bar{p}|\gtrsim1+|p|.
	\end{align}
	For $\mathcal{H}_2$, notice that
	\begin{align}\label{2.77}
		g^2&=2p^0q^0-2p\cdot q-2\mathfrak{c}^2=|p-q|^2+2p^0q^0-2\mathfrak{c}^2-|p|^2-|q|^2\nonumber\\
		&=|p-q|^2+\frac{4(|p|^2+\mathfrak{c}^2)(|q|^2+\mathfrak{c}^2)-(2\mathfrak{c}^2+|p|^2+|q|^2)^2}{2p^0q^0+(2\mathfrak{c}^2+|p|^2+|q|^2)}\nonumber\\	&=|p-q|^2-\frac{(|p|^2-|q|^2)^2}{2p^0q^0+(2\mathfrak{c}^2+|p|^2+|q|^2)},
	\end{align}
	then one has
	\begin{align*}
		\frac{\mathfrak{c}g\sqrt{s}}{4p^0q^0}-\frac{|p-q|}{2}&=\frac{1}{4p^0q^0}\{\mathfrak{c}g\sqrt{s}-2p^0q^0|p-q|\}\nonumber\\
		&=\frac{\mathfrak{c}^2g^2(g^2+4\mathfrak{c}^2)-4|p-q|^2(|p|^2+\mathfrak{c}^2)(|q|^2+\mathfrak{c}^2)}{4p^0q^0(\mathfrak{c}g\sqrt{s}+2p^0q^0|p-q|)}\nonumber\\
		&=\frac{4\mathfrak{c}^4(g^2-|p-q|^2)+\mathfrak{c}^2g^4- 4|p-q|^2\{|p|^2|q|^2+\mathfrak{c}^2(|p|^2+|q|^2)\}}{4p^0q^0(\mathfrak{c}g\sqrt{s}+2p^0q^0|p-q|)}\nonumber\\
		&\lesssim O(\mathfrak{c}^{-\frac{7}{8}}),
	\end{align*}
	which implies that
	\begin{align}\label{2.79}
		|\mathcal{H}_2|\lesssim \int_{|q|\le \mathfrak{c}^{\frac{1}{8}}} \mathfrak{c}^{-\frac{7}{8}}e^{-2\bar{c}_1|q|}dq\lesssim \mathfrak{c}^{-\frac{7}{8}}.
	\end{align}
	It follows from \eqref{2.74}--\eqref{2.76} and \eqref{2.79} that
	\begin{align}\label{2.80}
		\int_{|q|\le \mathfrak{c}^{\frac{1}{8}}}\int_{\mathbb{S}^2}\frac{\mathfrak{c}}{4}\frac{g\sqrt{s}}{p^0q^0}\mathbf{M}_{\mathfrak{c}}(q)d\omega dq \cong 1+|p|.
	\end{align}
	\noindent \textit{Case 3:} $|q|\le \mathfrak{c}^{\frac{1}{8}}$ and $\mathfrak{c}\ge |p|\ge \mathfrak{c}^{\frac{3}{8}}$.
	It follows from Lemma \ref{lem4.1-00} that
	\begin{align*}
		g\ge \frac{\mathfrak{c}|p-q|}{\sqrt{p^0q^0}}\gtrsim \frac{\mathfrak{c}|p|}{\mathfrak{c}}=|p|
	\end{align*}
	and
	\begin{align*}
		g\le |p-q|\lesssim |p|,
	\end{align*}
	which yields that $g\cong |p|$. Thus we have
	\begin{align}\label{2.84}
		\int_{|q|\le \mathfrak{c}^{\frac{1}{8}}}\int_{\mathbb{S}^2}\frac{\mathfrak{c}}{4}\frac{g\sqrt{s}}{p^0q^0}\mathbf{M}_{\mathfrak{c}}(q)d\omega dq
		\cong  \int_{|q|\le \mathfrak{c}^{\frac{1}{8}}}\int_{\mathbb{S}^2}|p|\exp{\Big(\frac{\mathfrak{c}^2-\mathfrak{c}\bar{q}^0}{T_0}\Big)}d\omega dq
		\cong 1+|p|.
	\end{align}
	\noindent \textit{Case 4:} $|q|\le \mathfrak{c}^{\frac{1}{8}}$ and $|p|\ge \mathfrak{c}$. It is obvious that
	\begin{align}\label{2.85}
		\int_{|q|\le \mathfrak{c}^{\frac{1}{8}}}\int_{\mathbb{S}^2}\frac{\mathfrak{c}}{4}\frac{g\sqrt{s}}{p^0q^0}\mathbf{M}_{\mathfrak{c}}(q)d\omega dq
		&\lesssim  \int_{|q|\le \mathfrak{c}^{\frac{1}{8}}}\mathfrak{c}e^{-2\bar{c}_1|q|} dq\lesssim \mathfrak{c}.
	\end{align}
	On the other hand, since $|p|\ge \mathfrak{c}$, one has
	\begin{align*}
		g\ge \frac{\mathfrak{c}|p-q|}{\sqrt{p^0q^0}}\gtrsim \frac{\mathfrak{c}|p|}{(|p|^2+\mathfrak{c}^2)^{\frac{1}{4}}\sqrt{\mathfrak{c}}}\gtrsim \sqrt{\mathfrak{c}|p|}.
	\end{align*}
	Thus we have
	\begin{align}\label{2.87}
		\int_{|q|\le \mathfrak{c}^{\frac{1}{8}}}\int_{\mathbb{S}^2}\frac{\mathfrak{c}}{4}\frac{g\sqrt{s}}{p^0q^0}\mathbf{M}_{\mathfrak{c}}(q)d\omega dq
		&\gtrsim \int_{|q|\le \mathfrak{c}^{\frac{1}{8}}}\frac{\sqrt{\mathfrak{c}|p|}\sqrt{\mathfrak{c}^2+\mathfrak{c}|p|}}{p^0}\exp{\Big(\frac{\mathfrak{c}^2-\mathfrak{c}\bar{q}^0}{T_0}\Big)} dq \gtrsim \mathfrak{c}.
	\end{align}
	It follows from \eqref{2.85} and \eqref{2.87} that
	\begin{align}\label{2.88}
		\int_{|q|\le \mathfrak{c}^{\frac{1}{8}}}\int_{\mathbb{S}^2}\frac{\mathfrak{c}}{4}\frac{g\sqrt{s}}{p^0q^0}\mathbf{M}_{\mathfrak{c}}(q)d\omega dq
		&\cong \mathfrak{c}.
	\end{align}
	Combining \eqref{2.73}, \eqref{2.80}, \eqref{2.84} and \eqref{2.88},  we conclude \eqref{2.72-10}. Therefore the proof is completed.
\end{proof}
\begin{remark}
	By similar arguments as in Lemma \ref{lem2.9}, we can obtain
	\begin{align}\label{2.90}
		\int_{\mathbb{R}^{3}}\int_{\mathbb{S}^2} v_{\phi} \mathbf{M}_{\mathfrak{c}}^{\alpha}(q)d \omega d q&\cong \nu_{\mathfrak{c}}(p), \quad \text{for}\ \alpha>0.
	\end{align}
\end{remark}

\subsection{Uniform-in-$\mathfrak{c}$ coercivity estimate on $\mathbf{L}_{\mathfrak{c}}$} In this subsection, we shall derive a uniform-in-$\mathfrak{c}$ coercivity estimate for the linearized relativistic collision operator $\mathbf{L}_{\mathfrak{c}}$.
For later use, we denote
\begin{align}
k_1(p,q)&:=2\pi|p-q|\frac{\rho}{(2\pi \theta)^{\frac{3}{2}}}e^{-\frac{|p-\mathfrak{u}|^2}{4\theta}-\frac{|q-\mathfrak{u}|^2}{4\theta}},\label{2.97}\\
k_2(p,q)&:=\frac{2}{|p-q|}\frac{\rho}{\sqrt{2\pi \theta}}e^{-\frac{|p-q|^2}{8\theta}-\frac{(|p-\mathfrak{u}|^2-|q-\mathfrak{u}|^2)^2}{8\theta|p-q|^2}},\label{2.98} 
\end{align}
which are indeed the corresponding kernels of Newtonian Boltzmann equation.

\begin{Lemma}\label{lem2.13}
It holds that
\begin{align}\label{2.106}
	\int_{\mathbb{R}^3}|k_{\mathfrak{c}1}(p,q)-k_1(p,q)|dq&\lesssim \mathfrak{c}^{-\frac{3}{2}}, \quad p\in \mathbb{R}^3.
\end{align}
\end{Lemma}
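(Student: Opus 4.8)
The plan is to compare the two kernels $k_{\mathfrak{c}1}(p,q)$ and $k_1(p,q)$ piece by piece, exploiting the fact that $k_{\mathfrak{c}1}$ is an explicit elementary function (see \eqref{1.34-0}) while $k_1$ is its formal Newtonian analogue \eqref{2.97}. Recall that
\begin{align*}
	k_{\mathfrak{c}1}(p,q)=\frac{\pi \mathfrak{c}g\sqrt{s}}{p^0q^0}\sqrt{\mathbf{M}_{\mathfrak{c}}(p)\mathbf{M}_{\mathfrak{c}}(q)},\qquad k_1(p,q)=2\pi|p-q|\sqrt{\mu(p)\mu(q)},
\end{align*}
so the strategy is to write the difference as a sum of three error terms: (i) the discrepancy between the prefactors $\frac{\mathfrak{c}g\sqrt{s}}{p^0q^0}$ and $2|p-q|$; (ii) the discrepancy between $\sqrt{\mathbf{M}_{\mathfrak{c}}(p)\mathbf{M}_{\mathfrak{c}}(q)}$ and $\sqrt{\mu(p)\mu(q)}$ with the \emph{same} fluid variables $(n_0,u,T_0)$; and (iii) the discrepancy coming from replacing $(n_0,u,T_0)$ by $(\rho,\mathfrak{u},\theta)$, which is controlled by Proposition \ref{thm-retoce} and Lemma \ref{lem3.4} (each difference is $O(\mathfrak{c}^{-2})$ in $L^\infty_{t,x}$). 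Throughout, all terms will be dominated by a fixed Gaussian-type weight in $q$ (uniformly in $\mathfrak{c}$ large), so that the $dq$-integral converges and the pointwise $\mathfrak{c}$-rates survive integration.

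\textbf{Step 1 (prefactor).} Using Lemma \ref{lem4.1-00}, in particular $s=g^2+4\mathfrak{c}^2$ and the limits $g/|p-q|\to 1$, $s/(4\mathfrak{c}^2)\to 1$, together with the algebraic identity \eqref{2.77} for $g^2-|p-q|^2$, I would show
\begin{align*}
	\Big|\frac{\mathfrak{c}g\sqrt{s}}{p^0q^0}-2|p-q|\Big|\lesssim \frac{1}{\mathfrak{c}^2}\,(1+|p|^2)(1+|q|^2),
\end{align*}
by expanding $p^0=\mathfrak{c}\sqrt{1+|p|^2/\mathfrak{c}^2}=\mathfrak{c}+O(|p|^2/\mathfrak{c})$ and $\sqrt{s}=2\mathfrak{c}\sqrt{1+g^2/(4\mathfrak{c}^2)}=2\mathfrak{c}+O(g^2/\mathfrak{c})$; the polynomial growth in $p,q$ is harmless since $\mathbf{M}_{\mathfrak{c}}(p)\le Ce^{-2\bar c_1|p|}$ by \eqref{4.14-20}.

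\textbf{Step 2 (Maxwellian).} Here the key is to pass from the relativistic exponent $u^\mu p_\mu/T_0$ to the Newtonian one. Writing $\mathbf{M}_{\mathfrak{c}}$ in the boosted frame as in the proof of Lemma \ref{lem4.3-00}, one has $\mathbf{M}_{\mathfrak{c}}(p)=\frac{n_0\gamma}{4\pi\mathfrak{c}^3K_2(\gamma)}e^{(\mathfrak{c}^2-\mathfrak{c}\bar p^0)/T_0}$, and using $\mathfrak{c}^2-\mathfrak{c}\bar p^0=-|\bar p|^2/(1+\sqrt{1+|\bar p|^2/\mathfrak{c}^2})=-\tfrac12|\bar p|^2+O(|\bar p|^4/\mathfrak{c}^2)$, together with $\bar p\to p-\mathfrak{u}$ as $\mathfrak{c}\to\infty$ (from \eqref{3.3-0}: $\bar p_i=p_i-u_i+O(1/\mathfrak{c}^2)$) and the Bessel asymptotics $\frac{\gamma}{4\pi\mathfrak{c}^3 K_2(\gamma)}=\frac{1}{(2\pi T_0)^{3/2}}(1+O(\gamma^{-1}))$ from Lemma \ref{lem2.1-00}, I would obtain
\begin{align*}
	\Big|\sqrt{\mathbf{M}_{\mathfrak{c}}(p)\mathbf{M}_{\mathfrak{c}}(q)}-\sqrt{\widetilde\mu(p)\widetilde\mu(q)}\Big|\lesssim \frac{1}{\mathfrak{c}^2}\,P(|p|,|q|)\,e^{-\bar c_1|p|-\bar c_1|q|},
\end{align*}
where $\widetilde\mu$ denotes the Newtonian Maxwellian built from $(n_0,u,T_0)$ and $P$ is polynomial. (One must be slightly careful that the $O(1/\mathfrak{c}^2)$ corrections inside the exponential are multiplied by the Gaussian and hence remain globally bounded; this is the point where $|p|\le\mathfrak{c}$ vs.\ $|p|\ge\mathfrak{c}$ could matter, but since $\mathbf{M}_{\mathfrak{c}}$ decays exponentially and $e^{(\mathfrak{c}^2-\mathfrak{c}\bar p^0)/T_0}\le e^{-(|\bar p|-1)/T_0}$ by \eqref{2.36-00}, no case split is really needed after one keeps track of the exact sign.)

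\textbf{Step 3 (fluid variables) and conclusion.} Finally, $\widetilde\mu$ and $k_1$ differ only through $(n_0,u,T_0)$ vs.\ $(\rho,\mathfrak{u},\theta)$, and by Proposition \ref{thm-retoce}, Lemma \ref{lem3.4}, and \eqref{3.90-10}--\eqref{3.91-10} these are $O(\mathfrak{c}^{-2})$ in $L^\infty_{t,x}$ with the Gaussian structure preserved (the fluid quantities stay in a fixed compact set), giving $|\widetilde\mu(p)-\mu(p)|\lesssim \mathfrak{c}^{-2}e^{-c|p|}$. Combining Steps 1--3 via $|ab-a'b'|\le|a-a'|\,|b|+|a'|\,|b-b'|$ and integrating in $q$ against the uniform Gaussian bound yields $\int_{\mathbb{R}^3}|k_{\mathfrak{c}1}(p,q)-k_1(p,q)|\,dq\lesssim \mathfrak{c}^{-2}\lesssim \mathfrak{c}^{-3/2}$, which is stronger than claimed. \textbf{The main obstacle} I anticipate is Step 2: ensuring that the $O(\mathfrak{c}^{-2})$ error in the exponent of $\mathbf{M}_{\mathfrak{c}}$, which is itself $O(|p|^4/\mathfrak{c}^2)$ and hence grows in $p$, is genuinely absorbed by the Gaussian decay uniformly in $\mathfrak{c}$ — this requires using the \emph{exact} convexity inequality $\mathfrak{c}^2-\mathfrak{c}\bar p^0\le -\tfrac12|\bar p|^2+ C|\bar p|^4/\mathfrak{c}^2$ only in the regime $|\bar p|\lesssim\mathfrak{c}$ and, for $|\bar p|\gtrsim\mathfrak{c}$, falling back on the bound $\mathbf{M}_{\mathfrak{c}}(p)\lesssim e^{-2\bar c_1|p|}$ from \eqref{4.14-20} so that both $k_{\mathfrak{c}1}$ and $k_1$ are individually $\lesssim e^{-c|p|-c|q|}$ and the difference there is trivially $O(\mathfrak{c}^{-\infty})$ after restricting the integral to $|q|\lesssim\mathfrak{c}^{1/8}$, say.
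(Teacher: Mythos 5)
Your proposal matches the paper's proof in its essential structure: you split off a region where $|p|$ or $|q|$ is large (so that both $k_{\mathfrak{c}1}$ and $k_1$ are exponentially negligible, which is the paper's Cases 1--2), and in the bulk region you decompose the kernel difference into a prefactor error, a Maxwellian-shape/exponent error, and a fluid-variable error, which correspond precisely to the paper's terms $\mathcal{D}_2$, $\mathcal{D}_1+\mathcal{D}_4$, and $\mathcal{D}_3$ in its Case 3. Your further observation that absorbing the polynomial growth $|p|^4/\mathfrak{c}^2$ into the Gaussian weight directly (rather than the paper's cruder cut at $|p|\le\mathfrak{c}^{1/8}$, which is what limits the paper to $\mathfrak{c}^{-3/2}$) upgrades the rate to $\mathfrak{c}^{-2}$ is correct and consistent with the authors' own remark that the Newtonian convergence rate can be pushed to $\mathfrak{c}^{-(2-\epsilon)}$; just be sure to carry the $|p|$-dependence in $\bar{p}_i = p_i - u_i + O((1+|p|^2)/\mathfrak{c}^2)$ (not $O(1/\mathfrak{c}^2)$) through Step~2, exactly as you anticipate in your ``main obstacle'' discussion.
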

\begin{proof} We remark that throughout the proof, we make no attempt to be optimal in our estimates. We split the proof into three cases.\\
\noindent \textit{Case 1.} $|p|\ge \mathfrak{c}^{\frac{1}{8}}$. It follows from \eqref{3.90-10}, \eqref{2.97} and  Lemma \ref{lem4.2-00} that
\begin{align}\label{2.107}
	&\int_{\mathbb{R}^3}|k_{\mathfrak{c}1}(p,q)-k_1(p,q)|dq\nonumber\\
	&\lesssim  \int_{\mathbb{R}^3}|p-q|e^{-\bar{c}_1|p|-\bar{c}_1|q|}dq+\int_{\mathbb{R}^3}|p-q|e^{-\frac{|p|^2}{8\theta}-\frac{|q|^2}{8\theta}}dq\nonumber\\
%	&\lesssim  \int_{\mathbb{R}^3}(|p-\mathfrak{u}|+|q-\mathfrak{u}|)e^{-\bar{c}_1|p-\mathfrak{u}|-\bar{c}_1|q-\mathfrak{u}|}dq+\int_{\mathbb{R}^3}(|p-\mathfrak{u}|+|q-\mathfrak{u}|)e^{-\frac{|p-\mathfrak{u}|^2}{8\theta}-\frac{|q-\mathfrak{u}|^2}{8\theta}}dq\nonumber\\
	&\lesssim e^{-\frac{\bar{c}_1}{2}\mathfrak{c}^{\frac{1}{8}}}+e^{-\frac{\bar{c}_2}{4}\mathfrak{c}^{\frac{1}{4}}}\lesssim \mathfrak{c}^{-\frac{3}{2}}.
\end{align}
\noindent \textit{Case 2.} $|p|\le \mathfrak{c}^{\frac{1}{8}}$ and $|q|\ge \mathfrak{c}^{\frac{1}{8}}$. Similar to \eqref{2.107}, one has
\begin{align}\label{2.108}
	\int_{|q|\ge \mathfrak{c}^{\frac{1}{8}}}|k_{\mathfrak{c}1}(p,q)-k_1(p,q)|dq&\lesssim  \int_{|q|\ge \mathfrak{c}^{\frac{1}{8}}}|p-q|e^{-\bar{c}_1|p|-\bar{c}_1|q|}dq+\int_{|q|\ge \mathfrak{c}^{\frac{1}{8}}}|p-q|e^{-\frac{|p|^2}{8\theta}-\frac{|q|^2}{8\theta}}dq\nonumber\\
	&\lesssim e^{-\frac{\bar{c}_1}{2}\mathfrak{c}^{\frac{1}{8}}}+e^{-\frac{\bar{c}_2}{4}\mathfrak{c}^{\frac{1}{4}}}\lesssim \mathfrak{c}^{-\frac{3}{2}}.
\end{align}
\noindent \textit{Case 3.} $|p|\le \mathfrak{c}^{\frac{1}{8}}$ and $|q|\le \mathfrak{c}^{\frac{1}{8}}$. Recall that
\begin{align*}
	k_{\mathfrak{c}1}(p,q)&=\frac{\pi\mathfrak{c}g\sqrt{s}}{p^0q^0}\frac{n_0}{4\pi \mathfrak{c}T_0K_2(\gamma)}\exp{\Big(\frac{u^{\mu}p_{\mu}}{2T_0}\Big)}\exp{\Big(\frac{u^{\mu}q_{\mu}}{2T_0}\Big)}\nonumber\\
	&=\frac{\pi\mathfrak{c}g\sqrt{s}}{p^0q^0}\frac{n_0}{(2\pi T_0)^{\frac{3}{2}}}(1+O(\gamma^{-1}))\exp{\Big(\frac{\mathfrak{c}^2+u^{\mu}p_{\mu}}{2T_0}\Big)}\exp{\Big(\frac{\mathfrak{c}^2+u^{\mu}q_{\mu}}{2T_0}\Big)}\nonumber\\
	&=\frac{\pi\mathfrak{c}g\sqrt{s}}{p^0q^0}\frac{n_0}{(2\pi T_0)^{\frac{3}{2}}}(1+O(\gamma^{-1}))\exp{\Big(\frac{\mathfrak{c}^2-\mathfrak{c}\bar{p}^0}{2T_0}\Big)}\exp{\Big(\frac{\mathfrak{c}^2-\mathfrak{c}\bar{q}^0}{2T_0}\Big)}.
\end{align*}
Then we have
\begin{align}\label{4.45-00}
	&|k_{\mathfrak{c}1}(p,q)-k_1(p,q)|\nonumber\\
	&\le 
	\frac{\pi\mathfrak{c}g\sqrt{s}}{p^0q^0}\frac{n_0}{(2\pi T_0)^{\frac{3}{2}}}\exp{\Big(\frac{\mathfrak{c}^2-\mathfrak{c}\bar{p}^0}{2T_0}\Big)}\exp{\Big(\frac{\mathfrak{c}^2-\mathfrak{c}\bar{q}^0}{2T_0}\Big)} \cdot O(\gamma^{-1})\nonumber\\
	&\qquad +\Big|\frac{\pi\mathfrak{c}g\sqrt{s}}{p^0q^0}-2\pi|p-q|\Big|\frac{n_0}{(2\pi T_0)^{\frac{3}{2}}}\exp{\Big(\frac{\mathfrak{c}^2-\mathfrak{c}\bar{p}^0}{2T_0}\Big)}\exp{\Big(\frac{\mathfrak{c}^2-\mathfrak{c}\bar{q}^0}{2T_0}\Big)}\nonumber\\
	&\qquad +2\pi|p-q|\Big|\frac{n_0}{(2\pi T_0)^{\frac{3}{2}}}-\frac{\rho}{(2\pi \theta)^{\frac{3}{2}}}\Big|\exp{\Big(\frac{\mathfrak{c}^2-\mathfrak{c}\bar{p}^0}{2T_0}\Big)}\exp{\Big(\frac{\mathfrak{c}^2-\mathfrak{c}\bar{q}^0}{2T_0}\Big)}\nonumber\\
	&\qquad +
	 2\pi|p-q|\frac{\rho}{(2\pi \theta)^{\frac{3}{2}}}e^{-\frac{|p-\mathfrak{u}|^2}{4\theta}-\frac{|q-\mathfrak{u}|^2}{4\theta}}\Big|\exp{\Big(\frac{|p-\mathfrak{u}|^2}{4\theta}+\frac{|q-\mathfrak{u}|^2}{4\theta}+\frac{\mathfrak{c}^2-\mathfrak{c}\bar{p}^0}{2T_0}+\frac{\mathfrak{c}^2-\mathfrak{c}\bar{q}^0}{2T_0}\Big)}-1\Big|\nonumber\\
	 &:=\mathcal{D}_1+\mathcal{D}_2+\mathcal{D}_3+\mathcal{D}_4.
\end{align}
It is clear that
\begin{align*} 
	|\mathcal{D}_1|\lesssim \frac{|p-q|}{\mathfrak{c}^2}e^{-\bar{c}_1|p|-\bar{c}_1|q|},
\end{align*}
which implies that 
\begin{align}\label{2.58-00}
	\int_{|q|\le \mathfrak{c}^{\frac{1}{8}}}|\mathcal{D}_1(q,p)|dq\lesssim \mathfrak{c}^{-2}.
\end{align}
For $\mathcal{D}_2$, we notice that
\begin{align}\label{2.111}
	\frac{\mathfrak{c}\sqrt{s}}{2p^0q^0}-1&=\frac{\mathfrak{c}\sqrt{s}-2p^0q^0}{2p^0q^0}
	=\frac{\mathfrak{c}^2g^2-4\mathfrak{c}^2(|p|^2+|q|^2)-4|p|^2|q|^2}{2p^0q^0(\mathfrak{c}\sqrt{s}+2p^0q^0)}\lesssim O(\mathfrak{c}^{-\frac{3}{2}}).
\end{align}
It follows from \eqref{2.77} that
\begin{align}\label{2.112}
	g^2-|p-q|^2&=-\frac{(|p|^2-|q|^2)^2}{2p^0q^0+(2\mathfrak{c}^2+|p|^2+|q|^2)}\lesssim O(\mathfrak{c}^{-\frac{3}{2}}),
\end{align}
which yields that
\begin{align}\label{2.113}
	|g-|p-q||=\frac{|g^2-|p-q|^2|}{g+|p-q|}\lesssim \frac{O(\mathfrak{c}^{-\frac{3}{2}})}{g+|p-q|}\lesssim\frac{O(\mathfrak{c}^{-\frac{3}{2}})}{|p-q|}.
\end{align}
Using \eqref{2.111} and \eqref{2.113}, one has
\begin{align*}
	\Big|\frac{\mathfrak{c}}{2}\frac{g\sqrt{s}}{p^0q^0}-|p-q|\Big|&\le |g(\frac{\mathfrak{c}\sqrt{s}}{2p^0q^0}-1)|+|g-|p-q||\nonumber\\
	&\lesssim (g+\frac{1}{|p-q|})\mathfrak{c}^{-\frac{3}{2}}\lesssim (|p-q|+\frac{1}{|p-q|})\mathfrak{c}^{-\frac{3}{2}},
\end{align*}
which implies that
\begin{align}\label{2.62-00}
	\int_{|q|\le \mathfrak{c}^{\frac{1}{8}}}|\mathcal{D}_2(q,p)|dq&\lesssim \int_{|q|\le \mathfrak{c}^{\frac{1}{8}}}\Big|\frac{\mathfrak{c}}{2}\frac{g\sqrt{s}}{p^0q^0}-|p-q|\Big|e^{-\bar{c}_1|p|-\bar{c}_1|q|}dq\nonumber\\
	&\lesssim  \mathfrak{c}^{-\frac{3}{2}}\int_{|q|\le \mathfrak{c}^{\frac{1}{8}}} \Big(|p-q|+\frac{1}{|p-q|}\Big)e^{-\bar{c}_1|p|-\bar{c}_1|q|}\lesssim \mathfrak{c}^{-\frac{3}{2}}.
\end{align}
For $\mathcal{D}_3$, it follows from Proposition \ref{thm-retoce} that
\begin{align}\label{2.64-0}
	\Big|\frac{n_0}{(2\pi T_0)^{\frac{3}{2}}}-\frac{\rho}{(2\pi \theta)^{\frac{3}{2}}}\Big|\lesssim |T_0-\theta|+|n_0-\rho|\lesssim \mathfrak{c}^{-2},
\end{align}
which yields that
\begin{align}\label{2.64-00}
	\int_{|q|\le \mathfrak{c}^{\frac{1}{8}}}|\mathcal{D}_3(q,p)|dq
	&\lesssim  \mathfrak{c}^{-2}\int_{|q|\le \mathfrak{c}^{\frac{1}{8}}} |p-q|e^{-\bar{c}_1|p|-\bar{c}_1|q|}\lesssim \mathfrak{c}^{-2}.
\end{align}
For $\mathcal{D}_4$, a direct calculation shows that
\begin{align}\label{4.68-00}
	\frac{|p-\mathfrak{u}|^2}{4\theta}+\frac{\mathfrak{c}^2-\mathfrak{c}\bar{p}^0}{2T_0}&=\frac{|p-\mathfrak{u}|^2}{4\theta T_0}(T_0-\theta)+\frac{1}{4T_0}(|p-\mathfrak{u}|^2+2\mathfrak{c}^2-2\mathfrak{c}\bar{p}^0)\nonumber\\
	&=\frac{|p-\mathfrak{u}|^2}{4\theta T_0}(T_0-\theta)+\frac{1}{4T_0}\Big[\frac{|p|^4}{(p^0+\mathfrak{c})^2}+2p\cdot (u-\mathfrak{u})+(|\mathfrak{u}|^2-|u|^2)\Big]\nonumber\\
	&\qquad +\frac{1}{4T_0}\Big[\frac{|u|^4}{(u^0+\mathfrak{c})^2}-2\frac{|p|^2|u|^2}{(u^0+\mathfrak{c})(p^0+\mathfrak{c})}\Big],
\end{align}
which implies that 
\begin{align}\label{4.69-00}
	\Big|\frac{|p-\mathfrak{u}|^2}{4\theta}+\frac{\mathfrak{c}^2-\mathfrak{c}\bar{p}^0}{2T_0}\Big|\lesssim \mathfrak{c}^{-\frac{3}{2}}.
\end{align}
Similarly, one has
\begin{align*}
	\Big|\frac{|q-\mathfrak{u}|^2}{4\theta}+\frac{\mathfrak{c}^2-\mathfrak{c}\bar{q}^0}{2T_0}\Big|\lesssim \mathfrak{c}^{-\frac{3}{2}}.
\end{align*}
Thus we have
\begin{align}\label{2.67-00}
	\int_{|q|\le \mathfrak{c}^{\frac{1}{8}}}|\mathcal{D}_4(q,p)|dq\lesssim \mathfrak{c}^{-\frac{3}{2}}\int_{|q|\le \mathfrak{c}^{\frac{1}{8}}}|p-q|e^{-\frac{|p|^2}{8\theta}-\frac{|q|^2}{8\theta}}dq \lesssim \mathfrak{c}^{-\frac{3}{2}}.
\end{align}
Combining \eqref{4.45-00}, \eqref{2.58-00}, \eqref{2.62-00}, \eqref{2.64-00} and \eqref{2.67-00}, we have that
\begin{align}\label{2.116}
	\int_{|q|\le \mathfrak{c}^{\frac{1}{8}}}|k_{\mathfrak{c}1}(p,q)-k_1(p,q)|dq&\lesssim \mathfrak{c}^{-\frac{3}{2}}, \quad   |p|\le \mathfrak{c}^{\frac{1}{8}}.
\end{align}
Hence, we conclude \eqref{2.106} from \eqref{2.107}, \eqref{2.108} and \eqref{2.116}. Therefore the proof is completed.
\end{proof}

\begin{Lemma}\label{lem2.14}
It holds that
\begin{align*}
	\int_{\mathbb{R}^3}|k_{\mathfrak{c}2}(p,q)-k_2(p,q)|dq&\lesssim \mathfrak{c}^{-\frac{3}{8}}, \quad p\in \mathbb{R}^3.
\end{align*}
\end{Lemma}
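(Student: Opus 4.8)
The plan is to mimic the three-region argument of Lemma~\ref{lem2.13}, with the thresholds adjusted to the fact that $k_{\mathfrak{c}2}$ and $k_2$ decay only in $|p-q|$ rather than in $|p|+|q|$, so that the genuinely delicate asymptotic matching is confined to a small ``core'' set. First, on the region $|p|\ge\mathfrak{c}^{3/8}$ I would not compare the two kernels at all: by \eqref{2.42} of Lemma~\ref{lem4.4-00} one has $\int_{\mathbb{R}^3}k_{\mathfrak{c}2}(p,q)\,dq\lesssim(1+|p|)^{-1}\le\mathfrak{c}^{-3/8}$ for $\mathfrak{c}^{3/8}\le|p|\le\mathfrak{c}$ and $\lesssim\mathfrak{c}^{-1}$ for $|p|\ge\mathfrak{c}$, while the classical Grad estimate $\int_{\mathbb{R}^3}k_2(p,q)\,dq\lesssim(1+|p|)^{-1}$ (the $\mathfrak{c}\to\infty$ analogue of Lemma~\ref{lem4.4-00}) gives $\int_{\mathbb{R}^3}k_2\,dq\lesssim\mathfrak{c}^{-3/8}$; the triangle inequality then yields $\int|k_{\mathfrak{c}2}-k_2|\,dq\lesssim\mathfrak{c}^{-3/8}$ there. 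For $|p|\le\mathfrak{c}^{3/8}$ and $|p-q|\ge\mathfrak{c}^{1/8}$, the bounds \eqref{2.21-0} and $k_2(p,q)\lesssim\frac1{|p-q|}e^{-|p-q|^2/8\theta}$ (with \eqref{3.90-10}) show that both $\int_{|p-q|\ge\mathfrak{c}^{1/8}}k_{\mathfrak{c}2}\,dq$ and $\int_{|p-q|\ge\mathfrak{c}^{1/8}}k_2\,dq$ are $\lesssim\mathfrak{c}^{1/8}e^{-c\mathfrak{c}^{1/8}}\ll\mathfrak{c}^{-3/8}$. Thus everything reduces to the core set $\mathscr{C}:=\{|p|\le\mathfrak{c}^{3/8},\ |p-q|\le\mathfrak{c}^{1/8}\}$, on which I must prove the pointwise bound $|k_{\mathfrak{c}2}(p,q)-k_2(p,q)|\lesssim\frac1{|p-q|}e^{-c|p-q|^2}\mathfrak{c}^{-3/8}$.

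On $\mathscr{C}$ one automatically has $|q|,|\bar p|,|\bar q|\lesssim\mathfrak{c}^{3/8}$ (Lemma~\ref{lem4.2-00}), $\bar p^0,\bar q^0=\mathfrak{c}(1+O(\mathfrak{c}^{-5/4}))$, $g\cong|p-q|$ (Lemma~\ref{lem4.1-00}), and $\bigl|g-|p-q|\bigr|\lesssim\mathfrak{c}^{-5/4}|p-q|$ from the algebraic identity $g^2-|p-q|^2=-\frac{(q^0|p|^2-p^0|q|^2)^2}{(p^0q^0+\mathfrak{c}^2)^2}$ established in the proof of Lemma~\ref{lem4.4-00} (applied with $\bar p,\bar q$, using the Lorentz invariance of $g$ and $s$). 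Starting from \eqref{1.44-0}--\eqref{2.12} I would expand $k_{\mathfrak{c}2}=\frac{\mathfrak{c}c_0\pi s^{3/2}}{4gp^0q^0}(J_1+J_2)$ using: the Bessel asymptotics $K_2(\gamma)=\sqrt{\frac{\pi}{2\gamma}}\,e^{-\gamma}(1+O(\gamma^{-1}))$ of Lemma~\ref{lem2.1-00}, which gives $c_0=\frac{n_0}{(2\pi T_0)^{3/2}}e^{\gamma}(1+O(\gamma^{-1}))$; the estimates $\sqrt s=2\mathfrak{c}+O(|p-q|^2\mathfrak{c}^{-1})$ and $p^0q^0=\mathfrak{c}^2(1+O(\mathfrak{c}^{-5/4}))$; the fact that $\bar{\boldsymbol{\ell}}\cong\gamma$ and $\sqrt{\bar{\boldsymbol{\ell}}^2-\bar{\boldsymbol{j}}^2}\cong\gamma$, so $J_1+J_2=(\tfrac2\gamma+O(\gamma^{-2}))e^{-\sqrt{\bar{\boldsymbol{\ell}}^2-\bar{\boldsymbol{j}}^2}}$; and, for the exponent, the refined identity \eqref{2.49} together with $A:=1+\sqrt{s/4\mathfrak{c}^2}\,\frac{|\bar p-\bar q|}g=2+O(\mathfrak{c}^{-5/4})$ and the factorization $\bar q^0|\bar p|^2-\bar p^0|\bar q|^2=(|\bar p|^2-|\bar q|^2)(\mathfrak{c}+O(\mathfrak{c}^{-3/2}))$, which yield $\gamma-\sqrt{\bar{\boldsymbol{\ell}}^2-\bar{\boldsymbol{j}}^2}=\frac1{T_0}(\mathfrak{c}^2-\sqrt{\boldsymbol{\ell}^2-\boldsymbol{j}^2})=-\frac{|\bar p-\bar q|^2}{8T_0}-\frac{(|\bar p|^2-|\bar q|^2)^2}{8T_0|\bar p-\bar q|^2}+O(\mathfrak{c}^{-1/2})$ uniformly on $\mathscr{C}$ (here one uses $\frac{(|\bar p|^2-|\bar q|^2)^2}{|\bar p-\bar q|^2}=\bigl(\frac{\bar p-\bar q}{|\bar p-\bar q|}\cdot(\bar p+\bar q)\bigr)^2\lesssim\mathfrak{c}^{3/4}$, so that the relative $O(\mathfrak{c}^{-5/4})$ errors become absolute $O(\mathfrak{c}^{-1/2})$ errors). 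Altogether $k_{\mathfrak{c}2}=\frac{2n_0}{g\sqrt{2\pi T_0}}e^{\gamma-\sqrt{\bar{\boldsymbol{\ell}}^2-\bar{\boldsymbol{j}}^2}}(1+O(\mathfrak{c}^{-1/2}))$.

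Next I would match $\bar p$ to the Newtonian shift: from the explicit formula \eqref{4.9-10}, $\tilde r-1=O(\mathfrak{c}^{-2})$ and $p^0/\mathfrak{c}=1+O(\mathfrak{c}^{-2}|p|^2)$, and the Newtonian limit of the relativistic Euler equations (Proposition~\ref{thm-retoce}, Lemma~\ref{lem3.4}) gives $|u-\mathfrak{u}|+|n_0-\rho|+|T_0-\theta|\lesssim\mathfrak{c}^{-2}$; hence $|\bar p-(p-\mathfrak{u})|\lesssim(1+|p|^2)\mathfrak{c}^{-2}\lesssim\mathfrak{c}^{-5/4}$ and $|(\bar p-\bar q)-(p-q)|\lesssim(1+|p|+|q|)|p-q|\mathfrak{c}^{-2}\lesssim\mathfrak{c}^{-13/8}|p-q|$ on $\mathscr{C}$. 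Expressing both $\frac{(|\bar p|^2-|\bar q|^2)^2}{|\bar p-\bar q|^2}$ and $\frac{(|p-\mathfrak{u}|^2-|q-\mathfrak{u}|^2)^2}{|p-q|^2}$ as squared projections onto the respective unit vectors, these give $\bigl|(\gamma-\sqrt{\bar{\boldsymbol{\ell}}^2-\bar{\boldsymbol{j}}^2})-(-\frac{|p-q|^2}{8\theta}-\frac{(|p-\mathfrak{u}|^2-|q-\mathfrak{u}|^2)^2}{8\theta|p-q|^2})\bigr|\lesssim\mathfrak{c}^{-1/2}$. Decomposing $k_{\mathfrak{c}2}-k_2$ as in Case~3 of Lemma~\ref{lem2.13} into a prefactor difference, an exponent difference, and the $O(\mathfrak{c}^{-1/2})$ remainder, and using $|\tfrac1g-\tfrac1{|p-q|}|\lesssim\frac{\mathfrak{c}^{-5/4}}{|p-q|}$, $|n_0-\rho|+|T_0-\theta|\lesssim\mathfrak{c}^{-2}$, $|e^X-e^Y|\le e^{\max(X,Y)}|X-Y|$, and $e^{\gamma-\sqrt{\bar{\boldsymbol{\ell}}^2-\bar{\boldsymbol{j}}^2}},\,e^{-|p-q|^2/8\theta-\cdots}\lesssim e^{-c|p-q|^2}$, each piece is $\lesssim\frac1{|p-q|}e^{-c|p-q|^2}\mathfrak{c}^{-3/8}$, hence integrable in $q$ with integral $\lesssim\mathfrak{c}^{-3/8}$; combined with the first two regions this proves the lemma.

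The main obstacle will be the exponent comparison in the core region, i.e.\ controlling $\gamma-\sqrt{\bar{\boldsymbol{\ell}}^2-\bar{\boldsymbol{j}}^2}$ against the classical Gaussian exponent \emph{uniformly up to the diagonal} $p=q$: both exponents carry the superficially singular quantity $\frac{(|\bar p|^2-|\bar q|^2)^2}{|\bar p-\bar q|^2}$, which is moreover only $O(\mathfrak{c}^{3/4})$, so one cannot afford to lose any power of $\mathfrak{c}^{3/4}$ in the relative error. This is handled by the exact identity for $g^2-|p-q|^2$, the factorization $\bar q^0|\bar p|^2-\bar p^0|\bar q|^2=(|\bar p|^2-|\bar q|^2)(\mathfrak{c}+O(\mathfrak{c}^{-3/2}))$, and the unit-vector representation of the quadratic forms, all of which keep the relative errors at $O(\mathfrak{c}^{-5/4})$. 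The stated rate $\mathfrak{c}^{-3/8}$ is not optimal — the core region in fact contributes $O(\mathfrak{c}^{-1/2})$, and taking the threshold $\mathfrak{c}^{5/12}$ in place of $\mathfrak{c}^{3/8}$ would give $\mathfrak{c}^{-5/12}$ — but the bottleneck is the large-momentum region, where $\int k_{\mathfrak{c}2}\,dq$ decays only like $(1+|p|)^{-1}$, and $\mathfrak{c}^{-3/8}$ is amply sufficient for the subsequent arguments.
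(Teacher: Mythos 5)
Your proposal is correct and follows essentially the same route as the paper: the same three-region split (with your Region~A simply merging the paper's Case~1 restricted to large $|p|$ with its Case~2), the same use of \eqref{2.42} and the Grad estimate for $k_2$ on the large-momentum region, and the same core decomposition into Bessel-asymptotics error, fluid-variable error, prefactor error, and exponent error, keyed to the exact identity \eqref{2.49}, the factorization \eqref{2.54-00}, and the elementary bound $\frac{(|\bar p|^2-|\bar q|^2)^2}{|\bar p-\bar q|^2}\lesssim(|\bar p|+|\bar q|)^2$. The only slip is in your closing optimization aside: if the threshold is moved to $\mathfrak{c}^{\alpha}$, the core exponent error scales like $\mathfrak{c}^{4\alpha-2}$ against the large-momentum contribution $\mathfrak{c}^{-\alpha}$, so the balance point is $\alpha=2/5$ giving $\mathfrak{c}^{-2/5}$, not $\mathfrak{c}^{-5/12}$; this does not touch the proof of the lemma as stated.
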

\begin{proof}
Since the proof is complicated,	we split the proof into three cases.\\
\noindent \textit{Case 1.} $|p-q|\ge \mathfrak{c}^{\frac{1}{8}}$. It follows from \eqref{3.90-10}, \eqref{2.21-0} and \eqref{2.98} that
\begin{align} \label{2.119}
	\int_{|p-q|\ge \mathfrak{c}^{\frac{1}{8}}}|k_{\mathfrak{c}2}(p,q)-k_2(p,q)|dq&\lesssim  \int_{|p-q|\ge \mathfrak{c}^{\frac{1}{8}}}\frac{1}{|p-q|}e^{-\frac{\bar{c}_1}{2}|p-q|}dq+\int_{|p-q|\ge \mathfrak{c}^{\frac{1}{8}}}\frac{1}{|p-q|}e^{-\frac{|p-q|^2}{8\theta}}dq\nonumber\\
	&\lesssim e^{-\frac{\bar{c}_1}{4}\mathfrak{c}^{\frac{1}{8}}}+e^{-\frac{\bar{c}_2}{4}\mathfrak{c}^{\frac{1}{4}}}\lesssim \mathfrak{c}^{-\frac{3}{8}}.
\end{align}
\noindent \textit{Case 2.} $|p-q|\le \mathfrak{c}^{\frac{1}{8}}$ and $|p|\ge \mathfrak{c}^{\frac{3}{8}}$. By Lemma \ref{lem4.4-00} and similar arguments for $k_2(p,q)$ as in \cite[Lemma 3.3.1]{Glassey}, one has
\begin{align}\label{2.120}
	\int_{|p-q|\le \mathfrak{c}^{\frac{1}{8}}}|k_{\mathfrak{c}2}(p,q)-k_2(p,q)|dq
	\lesssim \frac{1}{\mathfrak{c}}+\frac{1}{1+|p|}+\frac{1}{1+|p-\mathfrak{u}|}\lesssim \mathfrak{c}^{-\frac{3}{8}}.
\end{align}
\noindent \textit{Case 3.} $|p-q|\le \mathfrak{c}^{\frac{1}{8}}$ and $|p|\le \mathfrak{c}^{\frac{3}{8}}$. In this case, we have $|q|\lesssim \mathfrak{c}^{\frac{3}{8}}$. Recall that
\begin{align*}
	k_{\mathfrak{c}2}(p, q)=\frac{\mathfrak{c}\pi s^{\frac{3}{2}}}{4gp^0 q^0}\frac{n_0}{(2\pi T_0)^{\frac{3}{2}}}(1+O(\gamma^{-1}))\frac{\bar{\boldsymbol{\ell}}\sqrt{\bar{\boldsymbol{\ell}}^{2}-\bar{\boldsymbol{j}}^{2}}+\bar{\boldsymbol{\ell}}+(\bar{\boldsymbol{\ell}}^{2}-\bar{\boldsymbol{j}}^{2})}{(\bar{\boldsymbol{\ell}}^{2}-\bar{\boldsymbol{j}}^{2})^{\frac{3}{2}}}e^{\frac{\mathfrak{c}^2-\sqrt{\boldsymbol{\ell}^2-\boldsymbol{j}^2}}{T_0}}.
\end{align*}
Then one has
\begin{align}\label{2.74-0}
	&|k_{\mathfrak{c}2}(p,q)-k_2(p,q)|\nonumber\\
	&\le \frac{\mathfrak{c}\pi s^{\frac{3}{2}}}{4gp^0 q^0}\frac{n_0}{(2\pi T_0)^{\frac{3}{2}}}\frac{\bar{\boldsymbol{\ell}}\sqrt{\bar{\boldsymbol{\ell}}^{2}-\bar{\boldsymbol{j}}^{2}}+\bar{\boldsymbol{\ell}}+(\bar{\boldsymbol{\ell}}^{2}-\bar{\boldsymbol{j}}^{2})}{(\bar{\boldsymbol{\ell}}^{2}-\bar{\boldsymbol{j}}^{2})^{\frac{3}{2}}}e^{\frac{\mathfrak{c}^2-\sqrt{\boldsymbol{\ell}^2-\boldsymbol{j}^2}}{T_0}}\cdot O(\gamma^{-1})\nonumber\\
	&\qquad +\frac{\mathfrak{c}\pi s^{\frac{3}{2}}}{4gp^0 q^0}\Big|\frac{n_0}{(2\pi T_0)^{\frac{3}{2}}}-\frac{\rho}{(2\pi \theta)^{\frac{3}{2}}}\Big|\frac{\bar{\boldsymbol{\ell}}\sqrt{\bar{\boldsymbol{\ell}}^{2}-\bar{\boldsymbol{j}}^{2}}+\bar{\boldsymbol{\ell}}+(\bar{\boldsymbol{\ell}}^{2}-\bar{\boldsymbol{j}}^{2})}{(\bar{\boldsymbol{\ell}}^{2}-\bar{\boldsymbol{j}}^{2})^{\frac{3}{2}}}e^{\frac{\mathfrak{c}^2-\sqrt{\boldsymbol{\ell}^2-\boldsymbol{j}^2}}{T_0}}\nonumber\\
	 &\qquad+ \frac{4\pi\rho}{(2\pi \theta)^{\frac{3}{2}}}\Big|\frac{\mathfrak{c} s^{\frac{3}{2}}}{16gp^0 q^0}\frac{\bar{\boldsymbol{\ell}}\sqrt{\bar{\boldsymbol{\ell}}^{2}-\bar{\boldsymbol{j}}^{2}}+\bar{\boldsymbol{\ell}}+(\bar{\boldsymbol{\ell}}^{2}-\bar{\boldsymbol{j}}^{2})}{(\bar{\boldsymbol{\ell}}^{2}-\bar{\boldsymbol{j}}^{2})^{\frac{3}{2}}}-\frac{\theta}{|p-q|}\Big|e^{\frac{\mathfrak{c}^2-\sqrt{\boldsymbol{\ell}^2-\boldsymbol{j}^2}}{T_0}}\nonumber\\
	 &\qquad+\frac{2}{|p-q|}\frac{\rho}{\sqrt{2\pi \theta}}e^{-\frac{|p-q|^2}{8\theta}-\frac{(|p-\mathfrak{u}|^2-|q-\mathfrak{u}|^2)^2}{8\theta|p-q|^2}}\Big|e^{\frac{\mathfrak{c}^2-\sqrt{\boldsymbol{\ell}^2-\boldsymbol{j}^2}}{T_0}+\frac{|p-q|^2}{8\theta}+\frac{(|p-\mathfrak{u}|^2-|q-\mathfrak{u}|^2)^2}{8\theta|p-q|^2}}-1\Big|\nonumber\\
	 &:=\mathcal{E}_1+\mathcal{E}_2+\mathcal{E}_3+\mathcal{E}_4.
\end{align}
It follows from \eqref{2.21-0} that 
\begin{align}\label{2.75-0}
	\int_{|p-q|\le \mathfrak{c}^{\frac{1}{8}}}|\mathcal{E}_1|dq\lesssim \frac{1}{\mathfrak{c}^2}\int_{|p-q|\le \mathfrak{c}^{\frac{1}{8}}} \frac{1}{|p-q|}e^{-\frac{\bar{c}_1}{2}|p-q|}dq\lesssim \frac{1}{\mathfrak{c}^2}.
\end{align}
By \eqref{2.64-0}, one has
\begin{align}\label{2.76-0}
	\int_{|p-q|\le \mathfrak{c}^{\frac{1}{8}}}|\mathcal{E}_2|dq\lesssim \frac{1}{\mathfrak{c}^2}\int_{|p-q|\le \mathfrak{c}^{\frac{1}{8}}} \frac{1}{|p-q|}e^{-\frac{\bar{c}_1}{2}|p-q|}dq\lesssim \frac{1}{\mathfrak{c}^2}.
\end{align}

We next focus on $\mathcal{E}_3$. It holds that
\begin{align}
	&\frac{\mathfrak{c} s^{\frac{3}{2}}}{16gp^0 q^0}\frac{\bar{\boldsymbol{\ell}}\sqrt{\bar{\boldsymbol{\ell}}^{2}-\bar{\boldsymbol{j}}^{2}}+\bar{\boldsymbol{\ell}}+(\bar{\boldsymbol{\ell}}^{2}-\bar{\boldsymbol{j}}^{2})}{(\bar{\boldsymbol{\ell}}^{2}-\bar{\boldsymbol{j}}^{2})^{\frac{3}{2}}}-\frac{\theta}{|p-q|}\nonumber\\
	&=\frac{1}{2\mathfrak{c}^2}\frac{g^2T_0^3}{p^0q^0|\bar{p}-\bar{q}|^3}\Big(\bar{\boldsymbol{\ell}}\sqrt{\bar{\boldsymbol{\ell}}^{2}-\bar{\boldsymbol{j}}^{2}}+\bar{\boldsymbol{\ell}}+(\bar{\boldsymbol{\ell}}^{2}-\bar{\boldsymbol{j}}^{2})\Big)-\frac{\theta}{|p-q|}\nonumber\\
	&=\frac{1}{2\mathfrak{c}^2}\frac{g^2}{p^0q^0|\bar{p}-\bar{q}|^3}\Big(\frac{\mathfrak{c}^2\sqrt{s}(\bar{p}^0+\bar{q}^0)}{4g}|\bar{p}-\bar{q}|T_0+\frac{\mathfrak{c}^2s}{4g^2}|\bar{p}-\bar{q}|^2T_0+\frac{\mathfrak{c}}{2}(\bar{p}^0+\bar{q}^0)T_0^2\Big)-\frac{\theta}{|p-q|}\nonumber\\
	&=\Big\{\frac{1}{2}\frac{g^2}{\bar{p}^0\bar{q}^0|\bar{p}-\bar{q}|^3}\Big(\frac{\sqrt{s}(\bar{p}^0+\bar{q}^0)}{4g}|\bar{p}-\bar{q}|+\frac{s}{4g^2}|\bar{p}-\bar{q}|^2\Big)\theta-\frac{\theta}{|\bar{p}-\bar{q}|}\Big\}+\frac{g^2(\bar{p}^0+\bar{q}^0)}{4\mathfrak{c}p^0q^0|\bar{p}-\bar{q}|^3}T_0^2\nonumber\\
	&\qquad +\frac{1}{2}\frac{g^2}{p^0q^0|\bar{p}-\bar{q}|^3}\Big(\frac{\sqrt{s}(\bar{p}^0+\bar{q}^0)}{4g}|\bar{p}-\bar{q}|+\frac{s}{4g^2}|\bar{p}-\bar{q}|^2\Big)(T_0-\theta)+\Big(\frac{\theta}{|\bar{p}-\bar{q}|}-\frac{\theta}{|p-q|}\Big)\nonumber\\
	&\qquad 
	+\frac{1}{2}\frac{g^2}{|\bar{p}-\bar{q}|^3}\Big(\frac{1}{p^0q^0}-\frac{1}{\bar{p}^0\bar{q}^0}\Big)\Big(\frac{\sqrt{s}(\bar{p}^0+\bar{q}^0)}{4g}|\bar{p}-\bar{q}|+\frac{s}{4g^2}|\bar{p}-\bar{q}|^2\Big)\theta\nonumber\\
	&:=\mathcal{E}_{31}+\mathcal{E}_{32}+\mathcal{E}_{33}+\mathcal{E}_{34}+\mathcal{E}_{35}.
\end{align}
A direct calculation shows that
\begin{align}\label{2.77-0}
	|\mathcal{E}_{32}|+|\mathcal{E}_{33}|+|\mathcal{E}_{34}|+|\mathcal{E}_{35}|\lesssim \frac{1}{|p-q|}\mathfrak{c}^{-\frac{13}{8}}.
\end{align}
For $\mathcal{E}_{31}$, one has
\begin{align*}
	\frac{\mathcal{E}_{31}}{\theta}|\bar{p}-\bar{q}|&=\frac{1}{2}\frac{g^2}{\bar{p}^0\bar{q}^0|\bar{p}-\bar{q}|^2}\Big(\frac{\sqrt{s}(\bar{p}^0+\bar{q}^0)}{4g}|\bar{p}-\bar{q}|+\frac{s}{4g^2}|\bar{p}-\bar{q}|^2\Big)-1\nonumber\\
	&=\frac{1}{2}\Big(\frac{\sqrt{s}g(\bar{p}^0+\bar{q}^0)}{4\bar{p}^0\bar{q}^0|\bar{p}-\bar{q}|}-1\Big)+\frac{1}{2}\Big(\frac{s}{4\bar{p}^0\bar{q}^0}-1\Big)\nonumber\\
	&:=\mathcal{E}_{311}+\mathcal{E}_{312}.
\end{align*}
For $\mathcal{E}_{312}$, we notice that
\begin{align*}
	\frac{s}{4\bar{p}^0\bar{q}^0}-1&=\frac{s-4\bar{p}^0\bar{q}^0}{4\bar{p}^0\bar{q}^0}=\frac{(g^2+4\mathfrak{c}^2)^2-16(\mathfrak{c}^2+|\bar{p}|^2)(\mathfrak{c}^2+|\bar{q}|^2)}{4\bar{p}^0\bar{q}^0(s+4\bar{p}^0\bar{q}^0)}\nonumber\\
	&=\frac{g^4+8g^2\mathfrak{c}^2-16\mathfrak{c}^2(|\bar{p}|^2+|\bar{q}|^2)-16|\bar{p}|^2|\bar{q}|^2}{4\bar{p}^0\bar{q}^0(s+4\bar{p}^0\bar{q}^0)}\nonumber\\
	&\lesssim O(\mathfrak{c}^{-\frac{5}{4}}).
\end{align*}
For $\mathcal{E}_{311}$, it is clear that
\begin{align*}
	\frac{\sqrt{s}g(\bar{p}^0+\bar{q}^0)}{4\bar{p}^0\bar{q}^0|\bar{p}-\bar{q}|}-1&=\frac{\sqrt{s}g(\bar{p}^0+\bar{q}^0)-4\bar{p}^0\bar{q}^0|\bar{p}-\bar{q}|}{4\bar{p}^0\bar{q}^0|\bar{p}-\bar{q}|}\nonumber\\
	&=\frac{\sqrt{s}g-2\bar{q}^0|\bar{p}-\bar{q}|}{4\bar{q}^0|\bar{p}-\bar{q}|}+\frac{\sqrt{s}g-2\bar{p}^0|\bar{p}-\bar{q}|}{4p^0|\bar{p}-\bar{q}|}.
\end{align*}
Due to \eqref{2.112}, one has
\begin{align*}
	\frac{\sqrt{s}g-2\bar{q}^0|\bar{p}-\bar{q}|}{4\bar{q}^0|\bar{p}-\bar{q}|}=&\frac{(g^2+4\mathfrak{c}^2)g^2-4(|\bar{q}|^2+\mathfrak{c}^2)|\bar{p}-\bar{q}|^2}{4\bar{q}^0|\bar{p}-\bar{q}|(\sqrt{s}g+2\bar{q}^0|\bar{p}-\bar{q}|)}\nonumber\\
	=&\frac{4\mathfrak{c}^2(g^2-|\bar{p}-\bar{q}|^2)+g^4-4|\bar{q}|^2|\bar{p}-\bar{q}|^2}{4\bar{q}^0|\bar{p}-\bar{q}|(\sqrt{s}g+2\bar{q}^0|\bar{p}-\bar{q}|)}\nonumber\\
	=&-\frac{4\mathfrak{c}^2(|\bar{p}|^2-|\bar{q}|^2)^2}{4\bar{q}^0|\bar{p}-\bar{q}|(\sqrt{s}g+2\bar{q}^0|\bar{p}-\bar{q}|)(2\bar{p}^0\bar{q}^0+(2\mathfrak{c}^2+|\bar{p}|^2+|\bar{q}|^2))}\nonumber\\
	&\quad +\frac{g^4-4|\bar{q}|^2|\bar{p}-\bar{q}|^2}{4\bar{q}^0|\bar{p}-\bar{q}|(\sqrt{s}g+2\bar{q}^0|\bar{p}-\bar{q}|)}\nonumber\\
	\lesssim &O(\mathfrak{c}^{-\frac{5}{4}})
\end{align*}
and
\begin{align*}
	\frac{\sqrt{s}g-2\bar{p}^0|\bar{p}-\bar{q}|}{4\bar{p}^0|\bar{p}-\bar{q}|}\lesssim O(\mathfrak{c}^{-\frac{5}{4}}).
\end{align*}
Thus we can obtain
\begin{align}\label{2.129}
	|\mathcal{E}_{31}|\lesssim \frac{1}{|p-q|}\mathfrak{c}^{-\frac{5}{4}}.
\end{align}
Combining \eqref{2.77-0} and \eqref{2.129}, one obtains, for $|p|\le \mathfrak{c}^{\frac{3}{8}}$, that
\begin{align}\label{2.81-0}
	\int_{|p-q|\le \mathfrak{c}^{\frac{1}{8}}}|\mathcal{E}_3|dq\lesssim \mathfrak{c}^{-\frac{5}{4}} \int_{|p-q|\le \mathfrak{c}^{\frac{1}{8}}}\frac{1}{|p-q|}e^{-\frac{\bar{c}_1}{2}|p-q|}dq\lesssim \mathfrak{c}^{-\frac{5}{4}}.
\end{align}

Next, we consider $\mathcal{E}_4$. It follows from \eqref{2.49} and \eqref{2.77} that
\begin{align*}
	&\frac{\mathfrak{c}^2-\sqrt{\boldsymbol{\ell}^2-\boldsymbol{j}^2}}{T_0}+\frac{|p-q|^2}{8\theta}+\frac{(|p-\mathfrak{u}|^2-|q-\mathfrak{u}|^2)^2}{8\theta|p-q|^2}\nonumber\\
	&=\frac{1}{T_0}\Big[\mathfrak{c}^2-\sqrt{\boldsymbol{\ell}^2-\boldsymbol{j}^2}+\frac{|\bar{p}-\bar{q}|^2}{8}+\frac{(|\bar{p}|^2-|\bar{q}|^2)^2}{8|\bar{p}-\bar{q}|^2}\Big]+\Big[\frac{|p-q|^2}{8\theta}-\frac{|\bar{p}-\bar{q}|^2}{8T_0}\Big]\nonumber\\
	&\qquad +\Big[\frac{(|p-\mathfrak{u}|^2-|q-\mathfrak{u}|^2)^2}{8\theta|p-q|^2}-\frac{(|\bar{p}|^2-|\bar{q}|^2)^2}{8T_0|\bar{p}-\bar{q}|^2}\Big]\nonumber\\
	&:=\mathcal{G}_1+\mathcal{G}_2+\mathcal{G}_3.
\end{align*}
For $\mathcal{G}_2$, using \eqref{3.3-0} and Proposition \ref{thm-retoce}, one has
\begin{align*}
	|\mathcal{G}_2|\lesssim \frac{1}{\theta}\Big||p-q|^2-|\bar{p}-\bar{q}|^2\Big|+|\bar{p}-\bar{q}|^2\frac{|\theta-T_0|}{\theta T_0}\lesssim \mathfrak{c}^{-\frac{7}{8}}.
\end{align*}
For $\mathcal{G}_3$, it holds that
\begin{align*}
	\mathcal{G}_3
	&=\frac{1}{8\theta}(|p-\mathfrak{u}|^2-|q-\mathfrak{u}|^2)^2\Big[\frac{1}{|p-q|^2}-\frac{1}{|\bar{p}-\bar{q}|^2}\Big]\nonumber\\
	&\qquad +\frac{1}{8\theta}\frac{1}{|\bar{p}-\bar{q}|^2}\Big[(|p-\mathfrak{u}|^2-|q-\mathfrak{u}|^2)^2-(|\bar{p}|^2-|\bar{q}|^2)^2\Big]\nonumber\\
	&\qquad +\frac{(|\bar{p}|^2-|\bar{q}|^2)^2}{|\bar{p}-\bar{q}|^2}\frac{T_0-\theta}{8\theta T_0}\nonumber\\
	&:=\mathcal{G}_{31}+\mathcal{G}_{32}+\mathcal{G}_{33}.
\end{align*}
Applying \eqref{3.3-0} and Proposition \ref{thm-retoce} again, we have
\begin{align*}
	|\mathcal{G}_{31}|\lesssim \mathfrak{c}^{-\frac{7}{8}},\quad |\mathcal{G}_{32}|\lesssim \mathfrak{c}^{-\frac{5}{4}}, \quad 	|\mathcal{G}_{33}|\lesssim  \mathfrak{c}^{-\frac{5}{4}},
\end{align*}
which yields that
\begin{align*}
		|\mathcal{G}_3|\lesssim \mathfrak{c}^{-\frac{7}{8}}.
\end{align*}
We finally focus on $\mathcal{G}_1$. It holds that
\begin{align}\label{2.131}
	&\frac{|\bar{p}-\bar{q}|^2}{8}+\frac{(|\bar{p}|^2-|\bar{q}|^2)^2}{8|\bar{p}-\bar{q}|^2}+\mathfrak{c}^2-\sqrt{\boldsymbol{\ell}^{2}-\boldsymbol{j}^{2}}\nonumber\\
	&=\Big[\frac{(|\bar{p}|^2-|\bar{q}|^2)^2}{8|\bar{p}-\bar{q}|^2}-\frac{1}{1+\sqrt{\frac{s}{4\mathfrak{c}^2}}\frac{|\bar{p}-\bar{q}|}{g}}\frac{\mathfrak{c}^2}{g^2}\frac{(|\bar{p}|^2-|\bar{q}|^2)^2}{2\bar{p}^0\bar{q}^0+(2\mathfrak{c}^2+|\bar{p}|^2+|\bar{q}|^2)}\Big]\nonumber\\
	&\quad +\Big[\frac{|\bar{p}-\bar{q}|^2}{8}-\frac{1}{4}\frac{|\bar{p}-\bar{q}|^2}{1+\sqrt{\frac{s}{4\mathfrak{c}^2}}\frac{|\bar{p}-\bar{q}|}{g}}\Big]\nonumber\\
	&:=\mathcal{G}_{11}+\mathcal{G}_{12}.
\end{align}
For $\mathcal{G}_{12}$, we have from \eqref{2.77} that
\begin{align}\label{2.132}
	\mathcal{G}_{12}=&\frac{|\bar{p}-\bar{q}|^2}{8}\Big(1-\frac{2}{1+\sqrt{\frac{s}{4\mathfrak{c}^2}}\frac{|\bar{p}-\bar{q}|}{g}}\Big)\nonumber\\
	=&\frac{|\bar{p}-\bar{q}|^2}{8}\frac{\sqrt{\frac{s}{4\mathfrak{c}^2}}\frac{|\bar{p}-\bar{q}|}{g}-1}{\sqrt{\frac{s}{4\mathfrak{c}^2}}\frac{|\bar{p}-\bar{q}|}{g}+1}=\frac{|\bar{p}-\bar{q}|^2}{8}\frac{\sqrt{s}|\bar{p}-\bar{q}|-2\mathfrak{c}g}{\sqrt{s}|\bar{p}-\bar{q}|+2\mathfrak{c}g}\nonumber\\
	=&\frac{|\bar{p}-\bar{q}|^2}{8}\frac{g^2|\bar{p}-\bar{q}|^2+4\mathfrak{c}^2(|\bar{p}-\bar{q}|^2-g^2)}{(\sqrt{s}|\bar{p}-\bar{q}|+2\mathfrak{c}g)^2}\nonumber\\	=&\frac{|\bar{p}-\bar{q}|^2}{8}\Big\{\frac{g^2|\bar{p}-\bar{q}|^2}{(\sqrt{s}|\bar{p}-\bar{q}|+2\mathfrak{c}g)^2}+\frac{4\mathfrak{c}^2}{(\sqrt{s}|\bar{p}-\bar{q}|+2\mathfrak{c}g)^2}\frac{(|\bar{p}|^2-|\bar{q}|^2)^2}{2\bar{p}^0\bar{q}^0+(2\mathfrak{c}^2+|\bar{p}|^2+|\bar{q}|^2)}\Big\}\nonumber\\
	\lesssim &O(\mathfrak{c}^{-1}).
\end{align}
For $\mathcal{G}_{11}$, it can be written as
\begin{align}\label{2.133}
	&\frac{(|\bar{p}|^2-|\bar{q}|^2)^2}{8|\bar{p}-\bar{q}|^2}\Big\{1-\frac{1}{1+\sqrt{\frac{s}{4\mathfrak{c}^2}}\frac{|\bar{p}-\bar{q}|}{g}}\frac{8|\bar{p}-\bar{q}|^2}{g^2}\frac{\mathfrak{c}^2}{2\bar{p}^0\bar{q}^0+(2\mathfrak{c}^2+|\bar{p}|^2+|\bar{q}|^2)}\Big\}\nonumber\\
	&=\frac{(|\bar{p}|^2-|\bar{q}|^2)^2}{8|\bar{p}-\bar{q}|^2}\frac{(2\mathfrak{c}g^2+\sqrt{s}|\bar{p}-\bar{q}|g)(2\bar{p}^0\bar{q}^0+(2\mathfrak{c}^2+|\bar{p}|^2+|\bar{q}|^2))-16\mathfrak{c}^3|\bar{p}-\bar{q}|^2}{(2\mathfrak{c}g^2+\sqrt{s}|\bar{p}-\bar{q}|g)(2\bar{p}^0\bar{q}^0+(2\mathfrak{c}^2+|\bar{p}|^2+|\bar{q}|^2))}\nonumber\\
	&=\frac{(|\bar{p}|^2-|\bar{q}|^2)^2}{8|\bar{p}-\bar{q}|^2}\frac{(4\mathfrak{c}^3g^2-4\mathfrak{c}^3|\bar{p}-\bar{q}|^2)+(4\mathfrak{c}g^2\bar{p}^0\bar{q}^0-4\mathfrak{c}^3|\bar{p}-\bar{q}|^2)+(2\sqrt{s}|\bar{p}-\bar{q}|g\mathfrak{c}^2-4\mathfrak{c}^3|\bar{p}-\bar{q}|^2)}{(2\mathfrak{c}g^2+\sqrt{s}|\bar{p}-\bar{q}|g)(2\bar{p}^0\bar{q}^0+(2\mathfrak{c}^2+|\bar{p}|^2+|\bar{q}|^2))}\nonumber\\
	&\quad+\frac{(|\bar{p}|^2-|\bar{q}|^2)^2}{8|\bar{p}-\bar{q}|^2}\frac{(2\sqrt{s}\bar{p}^0\bar{q}^0|\bar{p}-\bar{q}|g-4\mathfrak{c}^3|\bar{p}-\bar{q}|^2)+(2\mathfrak{c}g^2+\sqrt{s}|\bar{p}-\bar{q}|g)(|\bar{p}|^2+|\bar{q}|^2)}{(2\mathfrak{c}g^2+\sqrt{s}|\bar{p}-\bar{q}|g)(2\bar{p}^0\bar{q}^0+(2\mathfrak{c}^2+|\bar{p}|^2+|\bar{q}|^2))}\nonumber\\
	&:=\frac{(|\bar{p}|^2-|\bar{q}|^2)^2}{8|\bar{p}-\bar{q}|^2}(\mathcal{G}_{111}+\mathcal{G}_{112}+\mathcal{G}_{113}+\mathcal{G}_{114}+\mathcal{G}_{115}).
\end{align}
We have from \eqref{2.77} that
\begin{align}
	\mathcal{G}_{111}&=\frac{4\mathfrak{c}^3g^2-4\mathfrak{c}^3|\bar{p}-\bar{q}|^2}{(2\mathfrak{c}g^2+\sqrt{s}|\bar{p}-\bar{q}|g)(2\bar{p}^0\bar{q}^0+(2\mathfrak{c}^2+|\bar{p}|^2+|\bar{q}|^2))}\nonumber\\
	&=\frac{-4\mathfrak{c}^3}{(2\mathfrak{c}g^2+\sqrt{s}|\bar{p}-\bar{q}|g)(2\bar{p}^0\bar{q}^0+(2\mathfrak{c}^2+|\bar{p}|^2+|\bar{q}|^2))}\frac{(|\bar{p}|^2-|\bar{q}|^2)^2}{2\bar{p}^0\bar{q}^0+(2\mathfrak{c}^2+|\bar{p}|^2+|\bar{q}|^2)}\nonumber\\
	&\lesssim O(\mathfrak{c}^{-\frac{5}{4}}),\label{2.134}\\
\intertext{where we have used  $g^2\bar{p}^0\bar{q}^0\ge \mathfrak{c}^2|\bar{p}-\bar{q}|^2$. Similarly, one has} 
	\mathcal{G}_{112}&=\frac{4\mathfrak{c}g^2\bar{p}^0\bar{q}^0-4\mathfrak{c}^3|\bar{p}-\bar{q}|^2}{(2\mathfrak{c}g^2+\sqrt{s}|\bar{p}-\bar{q}|g)(2\bar{p}^0\bar{q}^0+(2\mathfrak{c}^2+|\bar{p}|^2+|\bar{q}|^2))}\nonumber\\
	&=\frac{-4\mathfrak{c}\bar{p}^0\bar{q}^0(|\bar{p}-\bar{q}|^2-g^2)+4\mathfrak{c}|\bar{p}-\bar{q}|^2(\bar{p}^0\bar{q}^0-\mathfrak{c}^2)}{(2\mathfrak{c}g^2+\sqrt{s}|\bar{p}-\bar{q}|g)(2\bar{p}^0\bar{q}^0+(2\mathfrak{c}^2+|\bar{p}|^2+|\bar{q}|^2))}\nonumber\\
	&\lesssim O(\mathfrak{c}^{-\frac{5}{4}}),\label{2.135}\\
	\mathcal{G}_{113}&=\frac{2\sqrt{s}|\bar{p}-\bar{q}|g\mathfrak{c}^2-4\mathfrak{c}^3|\bar{p}-\bar{q}|^2}{(2\mathfrak{c}g^2+\sqrt{s}|\bar{p}-\bar{q}|g)(2\bar{p}^0\bar{q}^0+(2\mathfrak{c}^2+|\bar{p}|^2+|\bar{q}|^2))}\nonumber\\
	&=\frac{-2\mathfrak{c}^2|\bar{p}-\bar{q}|}{(2\mathfrak{c}g^2+\sqrt{s}|\bar{p}-\bar{q}|g)(2\bar{p}^0\bar{q}^0+(2\mathfrak{c}^2+|\bar{p}|^2+|\bar{q}|^2))}(2\mathfrak{c}|\bar{p}-\bar{q}|-\sqrt{s}g)\nonumber\\
	&=\frac{-2\mathfrak{c}^2|\bar{p}-\bar{q}|}{(2\mathfrak{c}g^2+\sqrt{s}|\bar{p}-\bar{q}|g)(2\bar{p}^0\bar{q}^0+(2\mathfrak{c}^2+|\bar{p}|^2+|\bar{q}|^2))}\frac{4\mathfrak{c}^2(|\bar{p}-\bar{q}|^2-g^2)-g^4}{2\mathfrak{c}|\bar{p}-\bar{q}|+\sqrt{s}g}\nonumber\\
	&\lesssim O(\mathfrak{c}^{-\frac{5}{4}}),\label{2.136}\\
	\mathcal{G}_{114}&=\frac{2\sqrt{s}\bar{p}^0\bar{q}^0|\bar{p}-\bar{q}|g-4\mathfrak{c}^3|\bar{p}-\bar{q}|^2}{(2\mathfrak{c}g^2+\sqrt{s}|\bar{p}-\bar{q}|g)(2\bar{p}^0\bar{q}^0+(2\mathfrak{c}^2+|\bar{p}|^2+|\bar{q}|^2))}\nonumber\\
	&=\frac{-2|\bar{p}-\bar{q}|}{(2\mathfrak{c}g^2+\sqrt{s}|\bar{p}-\bar{q}|g)(2\bar{p}^0\bar{q}^0+(2\mathfrak{c}^2+|\bar{p}|^2+|\bar{q}|^2))}(2\mathfrak{c}^3|\bar{p}-\bar{q}|-\sqrt{s}\bar{p}^0\bar{q}^0g)\nonumber\\
	&=\frac{-2|\bar{p}-\bar{q}|}{(2\mathfrak{c}g^2+\sqrt{s}|\bar{p}-\bar{q}|g)(2\bar{p}^0\bar{q}^0+(2\mathfrak{c}^2+|\bar{p}|^2+|\bar{q}|^2))}\frac{4\mathfrak{c}^6|\bar{p}-\bar{q}|^2-s(\bar{p}^0)^2(\bar{q}^0)^2g^2}{2\mathfrak{c}^3|\bar{p}-\bar{q}|+\sqrt{s}\bar{p}^0\bar{q}^0g}\nonumber\\
	&=\frac{-2|\bar{p}-\bar{q}|}{(2\mathfrak{c}g^2+\sqrt{s}|\bar{p}-\bar{q}|g)(2\bar{p}^0\bar{q}^0+(2\mathfrak{c}^2+|\bar{p}|^2+|\bar{q}|^2))}\nonumber\\
	&\qquad \times \Big\{\frac{4\mathfrak{c}^6(|\bar{p}-\bar{q}|^2-g^2)-g^4(|\bar{p}|^2+\mathfrak{c}^2)(|\bar{q}|^2+\mathfrak{c}^2)}{2\mathfrak{c}^3|\bar{p}-\bar{q}|+\sqrt{s}\bar{p}^0\bar{q}^0g}-\frac{4\mathfrak{c}^2g^2(|\bar{p}|^2\mathfrak{c}^2+|\bar{q}|^2\mathfrak{c}^2+|\bar{p}|^2|\bar{q}|^2)}{2\mathfrak{c}^3|\bar{p}-\bar{q}|+\sqrt{s}\bar{p}^0\bar{q}^0g}\Big\} \nonumber\\
	&\lesssim O(\mathfrak{c}^{-\frac{5}{4}}) \label{2.137}\\
\intertext{and}
	\mathcal{G}_{115}&=\frac{(2\mathfrak{c}g^2+\sqrt{s}|\bar{p}-\bar{q}|g)(|\bar{p}|^2+|\bar{q}|^2)}{(2\mathfrak{c}g^2+\sqrt{s}|\bar{p}-\bar{q}|g)(2\bar{p}^0\bar{q}^0+(2\mathfrak{c}^2+|\bar{p}|^2+|\bar{q}|^2))}\lesssim O(\mathfrak{c}^{-\frac{5}{4}}).\label{2.138}
\end{align}
Combining \eqref{2.133}--\eqref{2.138}, we have
\begin{align*}
	|\mathcal{G}_{11}|\lesssim \mathfrak{c}^{-\frac{1}{2}},
\end{align*}
which, together with \eqref{2.131}--\eqref{2.132}, yields that
\begin{align}\label{2.140}
	\int_{|p-q|\le \mathfrak{c}^{\frac{1}{8}}}|\mathcal{E}_4|dq &\lesssim \mathfrak{c}^{-\frac{1}{2}}\int_{|p-q|\le \mathfrak{c}^{\frac{1}{8}}}\frac{1}{|p-q|} e^{-\frac{|p-q|^2}{8\theta}-\frac{(|p|^2-|q|^2)^2}{8\theta|p-q|^2}}dq
	\lesssim \mathfrak{c}^{-\frac{1}{2}}.
\end{align}
Combining \eqref{2.119}--\eqref{2.76-0}, \eqref{2.81-0} and \eqref{2.140}, one has
\begin{align*}
	\int_{\mathbb{R}^3}|k_{\mathfrak{c}2}(p,q)-k_2(p,q)|dq
	\lesssim \mathfrak{c}^{-\frac{3}{8}}, \quad p\in \mathbb{R}.
\end{align*}
Therefore the proof is completed.
\end{proof}

Denote $\overline{\mathbf{M}}_{\mathfrak{c}}$ as the local Maxwellian in the rest frame where $(u^0,u^1,u^2,u^3)^t=(\mathfrak{c},0,0,0)^t$:
\begin{align*}
	\overline{\mathbf{M}}_{\mathfrak{c}}(t,x,p):=\frac{n_0\gamma}{4\pi \mathfrak{c}^3K_2(\gamma)}\exp\Big\{\frac{-\mathfrak{c}p^0}{T_0}\Big\}.
\end{align*}
Define the third momentum
\begin{align*}
	T^{\alpha \beta \gamma}[\mathbf{M}_{\mathfrak{c}}]:=\int_{\mathbb{R}^3} \frac{p^\alpha p^\beta p^\gamma}{p^0} \mathbf{M}_{\mathfrak{c}} d p,\quad
	\overline{T}^{\alpha \beta \gamma}:=\int_{\mathbb{R}^3} \frac{p^\alpha p^\beta p^\gamma}{p^0} \overline{\mathbf{M}}_{\mathfrak{c}} d p.
\end{align*}
We first give the expression of $\overline{T}^{\alpha \beta \gamma}$ which can be proved directly and we omit the details here for brevity.
\begin{Lemma}\label{lem4.10}
	Let $i, j, k \in\{1,2,3\}$. For the third momentum $\overline{T}^{\alpha \beta \gamma}$ which corresponds to $T^{\alpha \beta \gamma}[\mathbf{M}_{\mathfrak{c}}]$ in the rest frame, there hold
	\begin{align*}
		\overline{T}^{000}&=\frac{n_0\mathfrak{c}^2\left[3 K_3(\gamma)+\gamma K_2(\gamma)\right]}{\gamma K_2(\gamma)}, \\
		\overline{T}^{0 i i}&=\overline{T}^{i i 0}=\overline{T}^{i 0 i}=\frac{n_0\mathfrak{c}^2 K_3(\gamma)}{\gamma K_2(\gamma)}, \\
		\overline{T}^{\alpha \beta \gamma}&=0, \quad \text { if }(\alpha, \beta, \gamma) \neq(0,0,0),(0, i, i),(i, i, 0),(i, 0, i) .
	\end{align*}
\end{Lemma}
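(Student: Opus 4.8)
The plan is to first use the rotational and parity symmetry of $\overline{\mathbf{M}}_{\mathfrak{c}}$ to eliminate all but two families of components, then reduce the surviving $(0,i,i)$--type integral to the single energy integral $\overline{T}^{000}$, and finally evaluate $\overline{T}^{000}$ by a hyperbolic change of variables together with the integral representation of the Bessel functions $K_j$ coming from \eqref{2.1-01} and the recurrences in Lemma \ref{lem2.1-00}.

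\emph{Step 1 (symmetry reduction).} Since $\overline{\mathbf{M}}_{\mathfrak{c}}(t,x,p)$ depends on $p$ only through $p^0=\sqrt{\mathfrak{c}^2+|p|^2}$, it is invariant under rotations of $p$ and even in each coordinate $p_i$; note $p^0$ itself is even in every $p_i$. Hence $\frac{p^\alpha p^\beta p^\gamma}{p^0}$ is odd in some $p_i$ — and the integral vanishes — unless each spatial index among $\alpha,\beta,\gamma$ occurs an even number of times. Running through $\{0,1,2,3\}^3$, this leaves exactly $(0,0,0)$ and the permutations of $(0,i,i)$, $i\in\{1,2,3\}$; all other components are zero. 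Rotational invariance forces $\overline{T}^{0ii}$ to be the same for $i=1,2,3$, and the integrand symmetry gives $\overline{T}^{0ii}=\overline{T}^{i0i}=\overline{T}^{ii0}$.

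\emph{Step 2 (reduction of $\overline{T}^{0ii}$ to $\overline{T}^{000}$).} Averaging over $i=1,2,3$, $\overline{T}^{0ii}=\int_{\mathbb{R}^3}p_i^2\,\overline{\mathbf{M}}_{\mathfrak{c}}\,dp=\tfrac13\int_{\mathbb{R}^3}|p|^2\,\overline{\mathbf{M}}_{\mathfrak{c}}\,dp$. Using $|p|^2=(p^0)^2-\mathfrak{c}^2$ and $\int_{\mathbb{R}^3}\overline{\mathbf{M}}_{\mathfrak{c}}\,dp=I^0[\overline{\mathbf{M}}_{\mathfrak{c}}]=n_0$ (this is \eqref{1.16-01} evaluated in the rest frame, where $u^0=\mathfrak{c}$), we get $\overline{T}^{0ii}=\tfrac13\big(\overline{T}^{000}-\mathfrak{c}^2 n_0\big)$. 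Once $\overline{T}^{000}=\frac{n_0\mathfrak{c}^2[3K_3(\gamma)+\gamma K_2(\gamma)]}{\gamma K_2(\gamma)}$ is established, the cancellation of the $\gamma K_2(\gamma)$ term yields $\overline{T}^{0ii}=\frac{n_0\mathfrak{c}^2 K_3(\gamma)}{\gamma K_2(\gamma)}$, which is the claimed value.

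\emph{Step 3 (evaluation of $\overline{T}^{000}$).} Passing to spherical coordinates and substituting $|p|=\mathfrak{c}\sinh\xi$ (so $p^0=\mathfrak{c}\cosh\xi$, $e^{-\mathfrak{c}p^0/T_0}=e^{-\gamma\cosh\xi}$ with $\gamma=\mathfrak{c}^2/T_0$, and $dp$ becomes $4\pi\mathfrak{c}^3\sinh^2\xi\cosh\xi\,d\xi$ after angular integration),
\begin{equation*}
\overline{T}^{000}=\int_{\mathbb{R}^3}(p^0)^2\,\overline{\mathbf{M}}_{\mathfrak{c}}\,dp=\frac{n_0\gamma\mathfrak{c}^2}{K_2(\gamma)}\int_0^\infty e^{-\gamma\cosh\xi}\cosh^3\xi\,\sinh^2\xi\,d\xi .
\end{equation*}
Writing $\cosh^3\xi=\cosh\xi+\cosh\xi\sinh^2\xi$, integrating by parts to obtain $\int_0^\infty e^{-\gamma\cosh\xi}\cosh\xi\sinh^n\xi\,d\xi=\frac{\gamma}{n+1}\int_0^\infty e^{-\gamma\cosh\xi}\sinh^{n+2}\xi\,d\xi$, and applying the $\sinh$–representation $\int_0^\infty e^{-\gamma\cosh\xi}\sinh^{2j}\xi\,d\xi=\frac{\Gamma(j+\frac12)}{\Gamma(\frac12)}\big(\frac{2}{\gamma}\big)^{j}K_j(\gamma)$ (which follows from \eqref{2.1-01} via $t=\cosh\xi$) gives $\frac{1}{\gamma}K_2(\gamma)+\frac{3}{\gamma^2}K_3(\gamma)$ for the $\xi$–integral, hence $\overline{T}^{000}=\frac{n_0\mathfrak{c}^2[3K_3(\gamma)+\gamma K_2(\gamma)]}{\gamma K_2(\gamma)}$ as claimed. (Equivalently one may expand $\cosh^3\xi\sinh^2\xi=\tfrac1{16}\cosh5\xi+\tfrac1{16}\cosh3\xi-\tfrac18\cosh\xi$, use $K_m(\gamma)=\int_0^\infty e^{-\gamma\cosh\xi}\cosh(m\xi)\,d\xi$, and reduce $K_5,K_1$ to $K_2,K_3$ by the recurrence $K_{j+1}=\frac{2j}{z}K_j+K_{j-1}$ of Lemma \ref{lem2.1-00}.) The computations in Step 3 are the only place requiring care, but they are entirely routine Bessel-function bookkeeping; Steps 1 and 2 are forced. $\square$
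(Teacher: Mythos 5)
Your proof is correct, and it fills in precisely the "direct" computation the paper omits for brevity: symmetry reduction, reduction of $\overline{T}^{0ii}$ to $\overline{T}^{000}$ via $|p|^2=(p^0)^2-\mathfrak{c}^2$ and $I^0=n_0$, and the hyperbolic substitution with the $\sinh$-representation of $K_j$ derived from \eqref{2.1-01}. All the bookkeeping (the integration by parts, the $\Gamma$-ratios, and the final cancellation of the $\gamma K_2(\gamma)$ term in Step 2) checks out.
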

Recalling the Lorentz transformation in \eqref{2.4-20} and observing
$$
T^{\alpha \beta \gamma}[\mathbf{M}_{\mathfrak{c}}]=\bar{\Lambda}_{\alpha^{\prime}}^\alpha \bar{\Lambda}_{\beta^{\prime}}^\beta \bar{\Lambda}_{\gamma^{\prime}}^\gamma \overline{T}^{\alpha^{\prime} \beta^{\prime} \gamma^{\prime}},
$$
we can obtain the expression of $T^{\alpha \beta \gamma}[\mathbf{M}_{\mathfrak{c}}]$ from Lemma \ref{lem4.10}.
\begin{Lemma}\label{lem4.11-0}
	For $i, j, k \in\{1,2,3\}$, there hold
	\begin{align*}
		T^{000}[\mathbf{M}_{\mathfrak{c}}] =& \frac{n_0}{\mathfrak{c}\gamma K_2(\gamma)}\left[\left(3 K_3(\gamma)+\gamma K_2(\gamma)\right)\left(u^0\right)^3+3 K_3(\gamma) u^0|u|^2\right],\\
		T^{00 i}[\mathbf{M}_{\mathfrak{c}}] =& \frac{n_0}{\mathfrak{c}\gamma K_2(\gamma)}\left[\left(5 K_3(\gamma)+\gamma K_2(\gamma)\right)\left(u^0\right)^2 u_i+K_3(\gamma)|u|^2 u_i\right], \\
		T^{0 i j}[\mathbf{M}_{\mathfrak{c}}]= & \frac{n_0}{\mathfrak{c}\gamma K_2(\gamma)}\left[\left(6 K_3(\gamma)+\gamma K_2(\gamma)\right) u^0 u_i u_j+\mathfrak{c}^2K_3(\gamma) u^0\delta_{i j}\right], \\
		T^{i j k}[\mathbf{M}_{\mathfrak{c}}]= & \frac{n_0}{\mathfrak{c}\gamma K_2(\gamma)}\Big[(6 K_3(\gamma)+\gamma K_2(\gamma)) u_i u_j u_k+\mathfrak{c}^2K_3(\gamma)\left(u_i \delta_{j k}+u_j \delta_{i k}+u_k \delta_{i j}\right)\Big].
	\end{align*}
\end{Lemma}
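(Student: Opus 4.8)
The plan is to use the tensorial nature of the third moment, as already indicated just above the statement, so that the componentwise expansion of the triple contraction $\bar{\Lambda}^\alpha_{\alpha'}\bar{\Lambda}^\beta_{\beta'}\bar{\Lambda}^\gamma_{\gamma'}\overline{T}^{\alpha'\beta'\gamma'}$ need never be carried out by hand. First I would record a manifestly covariant form of the rest-frame moment: since $\overline{\mathbf{M}}_{\mathfrak{c}}$ depends on $p$ only through $p^0$, the tensor $\overline{T}^{\alpha\beta\gamma}$ is isotropic about the rest four-velocity $\bar{u}=(\mathfrak{c},0,0,0)^t$, and hence must be
\[
\overline{T}^{\alpha\beta\gamma}=a\,\bar{u}^\alpha\bar{u}^\beta\bar{u}^\gamma+b\big(\bar{u}^\alpha g^{\beta\gamma}+\bar{u}^\beta g^{\gamma\alpha}+\bar{u}^\gamma g^{\alpha\beta}\big)
\]
for two scalars $a,b$. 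Matching the $(0,0,0)$ and $(0,i,i)$ entries against Lemma~\ref{lem4.10}, together with $g^{00}=-1$ and $g^{ii}=1$, pins down $b=\overline{T}^{0ii}/\mathfrak{c}=\tfrac{n_0\mathfrak{c}K_3(\gamma)}{\gamma K_2(\gamma)}$ and $a=\big(\overline{T}^{000}+3b\mathfrak{c}\big)/\mathfrak{c}^3=\tfrac{n_0(6K_3(\gamma)+\gamma K_2(\gamma))}{\mathfrak{c}\gamma K_2(\gamma)}$.

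Next I would transport this under the boost $\bar{\Lambda}$ of \eqref{2.4-20}. Two elementary facts do all the work: the first column of $\bar{\Lambda}$ equals $(u^0,u_1,u_2,u_3)^t$, so $\bar{\Lambda}^\alpha_{\alpha'}\bar{u}^{\alpha'}=u^\alpha$; and $\bar{\Lambda}$ is a Lorentz transformation, so $\bar{\Lambda}^\alpha_{\alpha'}\bar{\Lambda}^\beta_{\beta'}g^{\alpha'\beta'}=g^{\alpha\beta}$. Feeding these into $T^{\alpha\beta\gamma}[\mathbf{M}_{\mathfrak{c}}]=\bar{\Lambda}^\alpha_{\alpha'}\bar{\Lambda}^\beta_{\beta'}\bar{\Lambda}^\gamma_{\gamma'}\overline{T}^{\alpha'\beta'\gamma'}$ gives at once
\[
T^{\alpha\beta\gamma}[\mathbf{M}_{\mathfrak{c}}]=a\,u^\alpha u^\beta u^\gamma+b\big(u^\alpha g^{\beta\gamma}+u^\beta g^{\gamma\alpha}+u^\gamma g^{\alpha\beta}\big),
\]
with the same $a,b$ as above.

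Finally I would read off the four cases by substituting $g^{00}=-1$, $g^{0i}=0$, $g^{ij}=\delta_{ij}$ and then using the mass-shell identity $(u^0)^2-|u|^2=\mathfrak{c}^2$ to rewrite each resulting expression into the normalization of the statement; for instance $a(u^0)^3-3bu^0=\tfrac{n_0}{\mathfrak{c}\gamma K_2(\gamma)}\big[(3K_3(\gamma)+\gamma K_2(\gamma))(u^0)^3+3K_3(\gamma)u^0|u|^2\big]=T^{000}[\mathbf{M}_{\mathfrak{c}}]$, while $T^{00i}$, $T^{0ij}$, $T^{ijk}$ emerge in the same way from $a(u^0)^2u_i-bu_i$, $au^0u_iu_j+bu^0\delta_{ij}$ and $au_iu_ju_k+b(u_i\delta_{jk}+u_j\delta_{ik}+u_k\delta_{ij})$ respectively. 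The only real obstacle is bookkeeping: justifying the isotropic-tensor ansatz (or, should one prefer the brute-force route, organizing the $64$-term expansion of the triple $\bar{\Lambda}$-contraction against the short list of nonzero components $\overline{T}^{\alpha'\beta'\gamma'}$ from Lemma~\ref{lem4.10}) and then the elementary algebra that puts each component into the displayed form; there is no analytic content beyond this.
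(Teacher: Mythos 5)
Your proof is correct and is essentially the same idea the paper uses: the identity $T^{\alpha\beta\gamma}[\mathbf{M}_{\mathfrak{c}}]=\bar{\Lambda}^{\alpha}_{\alpha'}\bar{\Lambda}^{\beta}_{\beta'}\bar{\Lambda}^{\gamma}_{\gamma'}\overline{T}^{\alpha'\beta'\gamma'}$ combined with Lemma~\ref{lem4.10}. The paper leaves the componentwise expansion implicit; you sidestep it by writing $\overline{T}^{\alpha\beta\gamma}=a\,\bar{u}^\alpha\bar{u}^\beta\bar{u}^\gamma+b\,\mathrm{sym}(\bar{u}\,g)$ (justified since $\overline{\mathbf{M}}_{\mathfrak{c}}/p^0$ is a radial function of $p$, so the rest-frame moment is $SO(3)$-invariant, and Lemma~\ref{lem4.10} supplies exactly two independent nonzero entries to pin down $a,b$), after which Lorentz-covariance and the mass-shell identity $(u^0)^2-|u|^2=\mathfrak{c}^2$ give all four formulas. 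One small slip worth noting: the first column of $\bar{\Lambda}$ from \eqref{2.4-20} is $u^\mu/\mathfrak{c}$, not $u^\mu$; your next clause $\bar{\Lambda}^\alpha_{\alpha'}\bar{u}^{\alpha'}=u^\alpha$ is nevertheless correct because $\bar{u}^{\alpha'}=(\mathfrak{c},0,0,0)^t$ supplies the compensating factor $\mathfrak{c}$. I re-derived the values $b=\overline{T}^{0ii}/\mathfrak{c}$ and $a=(\overline{T}^{000}+3b\mathfrak{c})/\mathfrak{c}^3$ as well as the final substitutions into $T^{000},T^{00i},T^{0ij},T^{ijk}$, and they all agree with the statement.
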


Since we have not found a direct reference which gives the orthonormal basis of $\mathcal{N}_{\mathfrak{c}}$, so we present details of calculation in the appendix for completeness though it is somehow routine. Indeed, the orthonormal basis of $\mathcal{N}_{\mathfrak{c}}$ for the relativistic Boltzmann equation has the form
\begin{align}\label{4.116-00}
	\chi^{\mathfrak{c}}_0=\mathfrak{a}_0\mathbf{\sqrt{M_{\mathfrak{c}}}},\quad  \chi^{\mathfrak{c}}_j=\frac{p_j-\mathfrak{a}_j}{\mathfrak{b}_j}\mathbf{\sqrt{M_{\mathfrak{c}}}}\ (j=1,2,3),\quad \chi^{\mathfrak{c}}_4=\frac{p^0/\mathfrak{c}+\sum_{i=1}^3\lambda_ip_i+\mathfrak{e}}{\zeta}\mathbf{\sqrt{M_{\mathfrak{c}}}},  
\end{align}
where $\mathfrak{a}_{\alpha}\ (\alpha=0,1,2,3)$, $\mathfrak{b}_j\ (j=1,2,3)$, $\lambda_i\ (i=1,2,3)$ and $\mathfrak{e}$ are all given in the appendix. 

In the following lemma, we shall show that, as $\mathfrak{c}\rightarrow \infty$,  the relativistic orthonormal basis in \eqref{4.116-00} converges to the following Newtonian orthonormal basis
\begin{align}\label{4.98-00}
	\chi_{0}=\frac{1}{\sqrt{\rho}}\sqrt{\mu},\quad \chi_{j}=\frac{p_j-\mathfrak{u}_j}{\sqrt{\rho \theta}}\sqrt{\mu}\ (j=1,2,3), \quad \chi_{4}=\frac{1}{\sqrt{6\rho}}\Big(\frac{|p-\mathfrak{u}|^2}{\theta}-3\Big)\sqrt{\mu},
\end{align}
where $\mu(t,x,p)$ is defined by \eqref{1.34-20}.

\begin{Lemma}\label{lem2.10-0}
	For any fixed $p\in \mathbb{R}^3$, it holds that
	\begin{align*}
		\lim_{\mathfrak{c}\rightarrow \infty}\chi^{\mathfrak{c}}_{\alpha}=\chi_{\alpha}, \quad \alpha=0,1,\cdots, 4.
	\end{align*}
\end{Lemma}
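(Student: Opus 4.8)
The plan is to prove the convergence $\chi^{\mathfrak{c}}_\alpha \to \chi_\alpha$ coordinate-by-coordinate, reducing everything to the asymptotic behaviour of the local relativistic Maxwellian $\mathbf{M}_{\mathfrak{c}}$ and of the constants $\mathfrak{a}_\alpha, \mathfrak{b}_j, \lambda_i, \mathfrak{e}, \zeta$ built from the momenta $I^\alpha[\mathbf{M}_{\mathfrak{c}}]$, $T^{\alpha\beta}[\mathbf{M}_{\mathfrak{c}}]$, $T^{\alpha\beta\gamma}[\mathbf{M}_{\mathfrak{c}}]$. The first and most basic step is the pointwise limit
\[
\lim_{\mathfrak{c}\to\infty} \mathbf{M}_{\mathfrak{c}}(t,x,p) = \mu(t,x,p),
\]
for each fixed $(t,x,p)$. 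This follows from $\mathbf{M}_{\mathfrak{c}} = \frac{n_0\gamma}{4\pi\mathfrak{c}^3 K_2(\gamma)}\exp\{u^\mu p_\mu/T_0\}$ together with: (i) the asymptotic expansion $K_2(\gamma)=\sqrt{\tfrac{\pi}{2\gamma}}e^{-\gamma}(1+O(\gamma^{-1}))$ from Lemma \ref{lem2.1-00}, which gives $\frac{n_0\gamma}{4\pi\mathfrak{c}^3 K_2(\gamma)} = \frac{n_0}{(2\pi T_0)^{3/2}}e^{\gamma}(1+O(\gamma^{-1}))$ since $\gamma = \mathfrak{c}^2/T_0$; (ii) the identity $\mathfrak{c}^2 - \mathfrak{c}\bar p^0 \to -\tfrac12|p-\mathfrak{u}|^2\cdot\tfrac{1}{T_0}\cdot T_0$-type expansion, more precisely using \eqref{3.3-0}, \eqref{4.68-00} and $u^\mu p_\mu = -\mathfrak{c}\bar p^0$ together with $\mathfrak{c}^2-\mathfrak{c}\bar p^0 = -\frac{|p|^4}{(p^0+\mathfrak{c})^2}/\!\cdots$ giving $\frac{\mathfrak{c}^2 + u^\mu p_\mu}{T_0} \to -\frac{|p-\mathfrak{u}|^2}{2T_0}$; and (iii) the Newtonian limit $n_0\to\rho$, $T_0\to\theta$, $u\to\mathfrak{u}$ of Proposition \ref{thm-retoce} and Lemma \ref{lem3.4}. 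Combining these yields $\mathbf{M}_{\mathfrak{c}}\to \frac{\rho}{(2\pi\theta)^{3/2}}e^{-|p-\mathfrak{u}|^2/(2\theta)} = \mu$, and hence $\sqrt{\mathbf{M}_{\mathfrak{c}}}\to\sqrt\mu$.

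Next I would compute the limits of the normalizing constants. Using the explicit formulae $I^\alpha[\mathbf{M}_{\mathfrak{c}}]=n_0 u^\alpha/\mathfrak{c}$, $T^{\alpha\beta}[\mathbf{M}_{\mathfrak{c}}]$ from \eqref{1.17-01}, and $T^{\alpha\beta\gamma}[\mathbf{M}_{\mathfrak{c}}]$ from Lemma \ref{lem4.11-0}, one extracts, via the asymptotics $\frac{K_3(\gamma)}{K_2(\gamma)} = 1 + \frac{5}{2\gamma}+O(\gamma^{-2})$ of \eqref{3.55-0} and $u^0 = \sqrt{\mathfrak{c}^2+|u|^2} = \mathfrak{c}(1+O(\mathfrak{c}^{-2}))$, the leading-order behaviour of each of $\mathfrak{a}_0,\mathfrak{a}_j,\mathfrak{b}_j,\lambda_i,\mathfrak{e},\zeta$ as $\mathfrak{c}\to\infty$. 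The expectation is that $\mathfrak{a}_0\to 1/\sqrt\rho$, $\mathfrak{a}_j\to\mathfrak{u}_j$, $\mathfrak{b}_j\to\sqrt{\rho\theta}$, $\lambda_i p_i + p^0/\mathfrak{c} + \mathfrak{e}$ reorganizes (after absorbing the divergent $\mathfrak{c}$ via $p^0/\mathfrak{c} = 1 + |p|^2/(2\mathfrak{c}^2)+\cdots$) into $\frac{1}{6}\big(\frac{|p-\mathfrak{u}|^2}{\theta}-3\big)\cdot(\text{const})$, and $\zeta\to\sqrt{6\rho}\cdot(\text{const})$, so that $\chi^{\mathfrak{c}}_4 \to \chi_4$. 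Concretely, for $\chi^{\mathfrak{c}}_4$ the key algebraic point is that $p^0/\mathfrak{c}$ and the constant piece $\mathfrak{e}$ each diverge but their combination $p^0/\mathfrak{c} + \sum\lambda_i p_i + \mathfrak{e}$ stays bounded, with limit proportional to $|p-\mathfrak{u}|^2/\theta - 3$; this is exactly the relativistic analogue of the classical fact that the energy eigenfunction must be $L^2$-orthogonalized against $1$ and $p$. I would verify this by tracking the appendix formulae for $\lambda_i,\mathfrak{e},\zeta$ to order $O(\mathfrak{c}^{-2})$ using Lemmas \ref{lem2.1-00} and \ref{lem4.11-0}.

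Then the proof assembles: $\chi^{\mathfrak{c}}_0 = \mathfrak{a}_0\sqrt{\mathbf{M}_{\mathfrak{c}}} \to \frac{1}{\sqrt\rho}\sqrt\mu = \chi_0$; $\chi^{\mathfrak{c}}_j = \frac{p_j-\mathfrak{a}_j}{\mathfrak{b}_j}\sqrt{\mathbf{M}_{\mathfrak{c}}} \to \frac{p_j-\mathfrak{u}_j}{\sqrt{\rho\theta}}\sqrt\mu = \chi_j$; and $\chi^{\mathfrak{c}}_4 \to \chi_4$ as above, each limit being a product/quotient of convergent scalar sequences with the convergent function $\sqrt{\mathbf{M}_{\mathfrak{c}}}$, with denominators ($\mathfrak{b}_j$, $\zeta$) bounded away from $0$ uniformly in large $\mathfrak{c}$ because their limits are positive (here \eqref{3.90-10}--\eqref{3.91-10} guarantee $\rho,\theta$ are bounded above and below).

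\textbf{Main obstacle.} The delicate point is the energy mode $\chi^{\mathfrak{c}}_4$: unlike $\chi_0$ and $\chi_j$, the expression $p^0/\mathfrak{c} + \sum_i\lambda_i p_i + \mathfrak{e}$ is a difference of quantities that individually blow up like $\mathfrak{c}$ or $1$, so one cannot take limits termwise — one must carry the expansions of $\lambda_i$, $\mathfrak{e}$, and $\zeta$ to sufficiently high order in $\gamma^{-1}=T_0/\mathfrak{c}^2$ (using the Bessel ratios \eqref{3.54-0}--\eqref{3.56-0} and Lemma \ref{lem4.11-0}) and exhibit the exact cancellations that leave a finite $O(1)$ remainder equal to $\frac{\zeta_\infty}{6\rho}\big(\frac{|p-\mathfrak{u}|^2}{\theta}-3\big)$. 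This is where essentially all the work lies; the other three coordinates are immediate once the pointwise limit $\mathbf{M}_{\mathfrak{c}}\to\mu$ and the scalar limits $\mathfrak{a}_0\to 1/\sqrt\rho$, $\mathfrak{a}_j\to\mathfrak{u}_j$, $\mathfrak{b}_j\to\sqrt{\rho\theta}$ are in hand.
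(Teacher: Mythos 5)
Your overall plan is the same as the paper's: take scalar limits of $\mathfrak{a}_0,\mathfrak{a}_j,\mathfrak{b}_j,\lambda_i,\mathfrak{e},\zeta$ and of $\sqrt{\mathbf{M}_{\mathfrak{c}}}$, then assemble. For $\chi^{\mathfrak{c}}_0,\chi^{\mathfrak{c}}_1,\chi^{\mathfrak{c}}_2,\chi^{\mathfrak{c}}_3$ this is fine and matches the paper. But the lemma's content is essentially all in $\chi^{\mathfrak{c}}_4$, and there your proposal has a genuine gap: you describe the obstacle and say you would "track the appendix formulae to order $O(\mathfrak{c}^{-2})$", but you do not supply the mechanism that actually produces a finite limit. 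Moreover your diagnosis of the obstacle is off: $p^0/\mathfrak{c}\to 1$, $\mathfrak{e}\to -1$, $\lambda_i\to 0$, so nothing in the numerator diverges. The real issue is that the numerator $p^0/\mathfrak{c}+\sum_i\lambda_i p_i+\mathfrak{e}\to 0$ and the denominator $\zeta=O(\gamma^{-1})\to 0$ as well, so one faces a $0/0$ limit and a naive termwise expansion is not enough.

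The missing idea, which the paper uses, is to multiply numerator and denominator by $(p^0-\mathfrak{c}\mathfrak{e})$ so that the difference of squares converts the problematic piece into something manifestly finite:
\[
\chi^{\mathfrak{c}}_4=\frac{(p^0+\mathfrak{c}\mathfrak{e})(p^0-\mathfrak{c}\mathfrak{e})+\mathfrak{c}(p^0-\mathfrak{c}\mathfrak{e})\sum_i\lambda_i p_i}{\mathfrak{c}\zeta\,(p^0-\mathfrak{c}\mathfrak{e})}\sqrt{\mathbf{M}_{\mathfrak{c}}}
=\frac{|p|^2+\gamma T_0(1-\mathfrak{e})(1+\mathfrak{e})+\gamma T_0\bigl(\tfrac{p^0}{\mathfrak{c}}-\mathfrak{e}\bigr)\sum_i\lambda_i p_i}{\mathfrak{c}^2\zeta\,\bigl(\tfrac{p^0}{\mathfrak{c}}-\mathfrak{e}\bigr)}\sqrt{\mathbf{M}_{\mathfrak{c}}},
\]
after which the whole computation reduces to the finite limits $\lim\gamma(1+\mathfrak{e})=-\tfrac32+\tfrac{|\mathfrak{u}|^2}{2\theta}$, $\lim\gamma\lambda_i=-\tfrac{\mathfrak{u}_i}{\theta}$, $\lim(\tfrac{p^0}{\mathfrak{c}}-\mathfrak{e})=2$, and $\lim\gamma^2\zeta^2=\tfrac32\rho$. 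Each of these finite limits in turn requires a careful Bessel-ratio expansion (including the cancellation of $O(\gamma^2)$ and $O(\gamma)$ pieces among $\mathcal{I}_1,\mathcal{I}_2,\mathcal{I}_3$ in the expression for $\zeta^2$), which your sketch does not exhibit. Without this algebraic rearrangement, "expanding to $O(\mathfrak{c}^{-2})$" is not a proof but a statement of intent; the $(p^0-\mathfrak{c}\mathfrak{e})$ device is precisely what turns the plan into a computation.
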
  
\begin{proof}
	In view of Proposition \ref{thm-retoce}, one has
	\begin{align*}
		\lim_{\mathfrak{c}\rightarrow \infty}\mathbf{M}_{\mathfrak{c}}(p)=\mu(p),\quad 
		\lim_{\mathfrak{c}\rightarrow \infty}I^0=\lim_{\mathfrak{c}\rightarrow \infty} \frac{n_0u^0}{\mathfrak{c}}=\rho.
	\end{align*}
	Then we have
	\begin{align*}
		\lim_{\mathfrak{c}\rightarrow \infty} \mathfrak{a}_0=\lim_{\mathfrak{c}\rightarrow \infty} \frac{1}{\sqrt{I^0}}=\frac{1}{\sqrt{\rho}},
	\end{align*}
	which implies that $\lim_{\mathfrak{c}\rightarrow \infty}\chi^{\mathfrak{c}}_{0}=\chi_0$. 
	
	For $j=1,2,3$, a direct calculation shows that
	\begin{align}\label{4.119-00}
		\lim_{\mathfrak{c}\rightarrow \infty}T^{0 j}=\lim_{\mathfrak{c}\rightarrow \infty} \frac{n_0}{\mathfrak{c}}\frac{K_3(\gamma)}{K_2(\gamma)} u^{0}u_j=\rho \mathfrak{u}_j
	\end{align}
	and
	\begin{align*}
		\lim_{\mathfrak{c}\rightarrow \infty}T^{0 j j}&= \lim_{\mathfrak{c}\rightarrow \infty}\frac{n_0}{\mathfrak{c}\gamma K_2(\gamma)}\left[\left(6 K_3(\gamma)+\gamma K_2(\gamma)\right) u^0 u^2_j +\mathfrak{c}^2K_3(\gamma) u^0\right]=\rho \mathfrak{u}^2_j+\rho \theta.
	\end{align*}
	Thus one has
	\begin{align*}
		\lim_{\mathfrak{c}\rightarrow \infty} \mathfrak{a}_j=	\lim_{\mathfrak{c}\rightarrow \infty} \frac{T^{0j}}{I_0}=\mathfrak{u}_j
	\end{align*}
	and 
	\begin{align*}
		\lim_{\mathfrak{c}\rightarrow \infty} \mathfrak{b}_j=	\lim_{\mathfrak{c}\rightarrow \infty} \sqrt{T^{0jj}-\frac{(T^{0j})^2}{I^0}}=\sqrt{\rho \theta},
	\end{align*}
	which implies that $\lim_{\mathfrak{c}\rightarrow \infty}\chi^{\mathfrak{c}}_{j}=\chi_j$, $j=1,2,3$. 
	
	The proof for $\lim_{\mathfrak{c}\rightarrow \infty}\chi^{\mathfrak{c}}_{4}=\chi_4$ is much more complicated. It is clear that
	\begin{align*}
		\chi^{\mathfrak{c}}_4&=\frac{p^0+\mathfrak{c}\mathfrak{e}+\mathfrak{c}\sum_{i=1}^3\lambda_ip_i}{\mathfrak{c}\zeta}\mathbf{\sqrt{M_{\mathfrak{c}}}}\nonumber\\
		&=\frac{(p^0+\mathfrak{c}\mathfrak{e})(p^0-\mathfrak{c}\mathfrak{e})+\mathfrak{c}(p^0-\mathfrak{c}\mathfrak{e})\sum_{i=1}^3\lambda_ip_i}{\mathfrak{c}\zeta (p^0-\mathfrak{c}\mathfrak{e})}\mathbf{\sqrt{M_{\mathfrak{c}}}}\nonumber\\
		&=\frac{\operatorname{Num}}{\operatorname{Den}}\mathbf{\sqrt{M_{\mathfrak{c}}}}.
	\end{align*}
	We first calculate the numerator. 
	Denote $\hat{A}(\gamma):=\frac{K_3(\gamma)}{K_2(\gamma)}-\frac{6}{\gamma}-\frac{K_2(\gamma)}{K_3(\gamma)}$. It follows from Lemma \ref{lem2.1-00} that 
	\begin{align*}
		\hat{A}(\gamma)=-\frac{1}{\gamma}+O(\gamma^{-2}).
	\end{align*}
	Now we have
	\begin{align*}
		1+\mathfrak{e}&=1+\frac{\frac{1}{\gamma}-\frac{(u^0)^2}{\gamma T_0}\frac{K_3(\gamma)}{K_2(\gamma)}-\hat{A}(\gamma)\frac{|u|^2}{\gamma T_0}}{\frac{u^0}{\mathfrak{c}}-\hat{A}(\gamma)\frac{u^0|u|^2}{\mathfrak{c}T_0}}\nonumber\\
		&=\frac{\frac{u^0}{\mathfrak{c}}-\hat{A}(\gamma)\frac{u^0|u|^2}{\mathfrak{c}T_0}+\frac{1}{\gamma}-\frac{(u^0)^2}{\mathfrak{c}^2}\frac{K_3(\gamma)}{K_2(\gamma)}-\hat{A}(\gamma)\frac{|u|^2}{\gamma T_0}}{\frac{u^0}{\mathfrak{c}}-\hat{A}(\gamma)\frac{u^0|u|^2}{\mathfrak{c}T_0}}\nonumber\\
		&=\frac{1-\frac{u^0}{\mathfrak{c}}\frac{K_3(\gamma)}{K_2(\gamma)}-\hat{A}(\gamma)\frac{|u|^2}{T_0}+\frac{\mathfrak{c}}{\gamma u^0}-\hat{A}(\gamma)\frac{\mathfrak{c}|u|^2}{\gamma u^0 T_0}}{1-\hat{A}(\gamma)\frac{|u|^2}{T_0}}
	\end{align*}
	and 
	\begin{align*}
		\lambda_i=\frac{\hat{A}(\gamma)\frac{(u^0)^2}{\mathfrak{c}^2 T_0}u_i}{\frac{u^0}{\mathfrak{c}}-\hat{A}(\gamma)\frac{u^0|u|^2}{\mathfrak{c}T_0}}=\frac{\big(-\frac{1}{\gamma}+O(\gamma^{-2})\big)\frac{(u^0)^2}{\mathfrak{c}^2 T_0}u_i}{\frac{u^0}{\mathfrak{c}}-\hat{A}(\gamma)\frac{u^0|u|^2}{\mathfrak{c}T_0}}, \quad i=1,2,3,
	\end{align*}
	thus we obtain
	\begin{align*}
		&\lim_{\mathfrak{c}\rightarrow \infty} \mathfrak{e}=\lim_{\mathfrak{c}\rightarrow \infty} \frac{\frac{1}{\gamma}-\frac{(u^0)^2}{\mathfrak{c}^2}\frac{K_3(\gamma)}{K_2(\gamma)}-\hat{A}(\gamma)\frac{|u|^2}{\gamma T_0}}{\frac{u^0}{\mathfrak{c}}-\hat{A}(\gamma)\frac{u^0|u|^2}{\mathfrak{c}T_0}}=-1,\\
		&\lim_{\mathfrak{c}\rightarrow \infty} \gamma(1+\mathfrak{e})=-\frac{3}{2}+\frac{|\mathfrak{u}|^2}{2\theta},\\
		&\lim_{\mathfrak{c}\rightarrow \infty} \gamma \lambda_i=
		-\frac{\mathfrak{u}_i}{\theta}, \quad i=1,2,3,
	\end{align*}
	where we used the fact that
	\begin{align*}
		\lim_{\mathfrak{c}\rightarrow \infty} \gamma\Big(1-\frac{u^0}{\mathfrak{c}}\frac{K_3(\gamma)}{K_2(\gamma)}\Big)=\lim_{\mathfrak{c}\rightarrow \infty}\gamma\Big[-\frac{u^0}{\mathfrak{c}}\Big(\frac{K_3(\gamma)}{K_2(\gamma)}-1\Big)-\Big(\frac{u^0}{\mathfrak{c}}-1\Big)\Big]=-\frac{5}{2}-\frac{|\mathfrak{u}|^2}{2\theta}.
	\end{align*}
	Hence we get
	\begin{align}\label{5.18-0}
		\lim_{\mathfrak{c}\rightarrow \infty}\operatorname{Num}&=\lim_{\mathfrak{c}\rightarrow \infty}\Big(|p|^2+\gamma T_0(1-\mathfrak{e})(1+\mathfrak{e})+\gamma T_0\Big(\frac{p^0}{\mathfrak{c}}-\mathfrak{e}\Big)\sum_{i=1}^3\lambda_ip_i\Big)\nonumber\\
		&=|p|^2-3\theta+|\mathfrak{u}|^2-2\sum_{i=1}^3\mathfrak{u}_ip_i=|p-\mathfrak{u}|^2-3\theta.
	\end{align}

	We next consider the denominator. Notice that
	\begin{align*}
		\operatorname{Den}=\mathfrak{c} \zeta (p^0-\mathfrak{c}\mathfrak{e})=\mathfrak{c}^2\zeta   \Big(\frac{p^0}{\mathfrak{c}}-\mathfrak{e}\Big)
	\end{align*}
	and 
	\begin{align*}
		\lim_{\mathfrak{c}\rightarrow \infty}   \Big(\frac{p^0}{\mathfrak{c}}-\mathfrak{e}\Big)=2,
	\end{align*}
	then we focus on the quantity $\mathfrak{c}^2 \zeta=T_0\sqrt{\gamma^2\zeta^2}$. By the expression of $\zeta$ in the appendix , one has
	\begin{align}\label{5.21-0}
		\zeta^2&=\Big(\sum_{i,j=1}^3\lambda_i\lambda_jT^{0ij}\Big)+\Big(2\sum_{i=1}^3\lambda_i\mathfrak{e} T^{0i}+2\sum_{i=1}^3\frac{\lambda_i}{\mathfrak{c}}T^{00i}\Big)+\Big(\frac{T^{000}}{\mathfrak{c}^2}+\mathfrak{e}^2I^0+2\frac{\mathfrak{e}}{\mathfrak{c}}T^{00}\Big)\nonumber\\
		&:=\mathcal{I}_1+\mathcal{I}_2+\mathcal{I}_3.
	\end{align}
	It is easy to see that
	\begin{align*}
		\lim_{\mathfrak{c}\rightarrow \infty}T^{0 i j}&= \lim_{\mathfrak{c}\rightarrow \infty}\frac{n_0}{\mathfrak{c}\gamma K_2(\gamma)}\left[\left(6 K_3(\gamma)+\gamma K_2(\gamma)\right) u^0 u_i u_j +\mathfrak{c}^2K_3(\gamma) u^0\delta_{ij}\right]=\rho \mathfrak{u}_i \mathfrak{u}_j+\rho \theta \delta_{ij},
	\end{align*}
	which yields that
	\begin{align}\label{5.23-0}
		\lim_{\mathfrak{c}\rightarrow \infty}\gamma^2 \mathcal{I}_1&=\lim_{\mathfrak{c}\rightarrow \infty} \sum_{i,j=1}^3(\gamma\lambda_i)\cdot (\gamma\lambda_j) T^{0ij}\nonumber\\
		&=\sum_{i,j=1}^3\Big(-\frac{\mathfrak{u}_i}{\theta}\Big)\Big(-\frac{\mathfrak{u}_j}{\theta}\Big)(\rho \mathfrak{u}_i \mathfrak{u}_j+\rho \theta \delta_{ij})\nonumber\\
		&=\frac{\rho |\mathfrak{u}|^4}{\theta^2}+\frac{\rho |\mathfrak{u}|^2}{\theta}.
	\end{align}
	We notice that
	\begin{align*}
		\mathfrak{e} T^{0i}+\frac{T^{00i}}{\mathfrak{c}}=(\mathfrak{e}+1) T^{0i}+\Big(\frac{T^{00i}}{\mathfrak{c}}-T^{0i}\Big)
	\end{align*}
	and 
	\begin{align}\label{4.138-00}
		\frac{T^{00i}}{\mathfrak{c}}-T^{0i}
		&=\frac{n_0}{\mathfrak{c}^2\gamma K_2(\gamma)}\left[\left(5 K_3(\gamma)+\gamma K_2(\gamma)\right)\left(u^0\right)^2 u_i+K_3(\gamma)|u|^2 u_i\right]-\frac{n_0}{\mathfrak{c}}\frac{K_3(\gamma)}{K_2(\gamma)} u^{0}u_i\nonumber\\
		&=n_0u_i\Big\{\frac{5}{\gamma}\frac{K_3(\gamma)}{K_2(\gamma)}\Big(\frac{u^0}{\mathfrak{c}}\Big)^2+\frac{|u|^2}{\mathfrak{c}\gamma}\frac{K_3(\gamma)}{K_2(\gamma)}+\frac{u^{0}}{\mathfrak{c}}\Big(\frac{u^0}{\mathfrak{c}}-1\Big)+\frac{u^{0}}{\mathfrak{c}}\Big(1-\frac{K_3(\gamma)}{K_2(\gamma)}\Big) \Big\}.
	\end{align}
	Then it follows from \eqref{4.119-00} and \eqref{4.138-00} that 
	\begin{align*}
		\lim_{\mathfrak{c}\rightarrow \infty}\gamma (\mathfrak{e}+1) T^{0i}=\rho \mathfrak{u}_i\Big(-\frac{3}{2}+\frac{|\mathfrak{u}|^2}{2\theta}\Big)
	\end{align*}
	and 
	\begin{align*}
		\lim_{\mathfrak{c}\rightarrow \infty}\gamma\Big(\frac{T^{00i}}{\mathfrak{c}}-T^{0i}\Big)=\rho \mathfrak{u}_i\Big(5+0+\frac{|\mathfrak{u}|^2}{2\theta}-\frac{5}{2}\Big)=\rho \mathfrak{u}_i\Big(\frac{5}{2}+\frac{|\mathfrak{u}|^2}{2\theta}\Big).
	\end{align*}
	Hence one obtains
	\begin{align}\label{5.28-0}
		\lim_{\mathfrak{c}\rightarrow \infty}\gamma^2 \mathcal{I}_2&=2\lim_{\mathfrak{c}\rightarrow \infty} \sum_{i=1}^3 (\gamma \lambda_i)\cdot \gamma \Big(\frac{T^{00i}}{\mathfrak{c}}-T^{0i}\Big)+2\lim_{\mathfrak{c}\rightarrow \infty} \sum_{i=1}^3 (\gamma \lambda_i)\cdot \gamma (\mathfrak{e}+1) T^{0i}\nonumber\\
		&=2\sum_{i=1}^3 \Big(-\frac{\mathfrak{u}_i}{\theta}\Big)\cdot \Big[\rho \mathfrak{u}_i\Big(\frac{5}{2}+\frac{|\mathfrak{u}|^2}{2\theta}\Big)+\rho \mathfrak{u}_i\Big(-\frac{3}{2}+\frac{|\mathfrak{u}|^2}{2\theta}\Big)\Big]\nonumber\\
		&=-2\frac{\rho |\mathfrak{u}|^4}{\theta^2}-2\frac{\rho |\mathfrak{u}|^2}{\theta}.
	\end{align}

	We finally consider $\gamma^2\mathcal{I}_3$. It holds that
	\begin{align*}
		\frac{\mathcal{I}_3}{n_0}&=\frac{1}{\mathfrak{c}^2}\frac{T^{000}}{n_0}+\mathfrak{e}^2\frac{I^0}{n_0}+2\frac{\mathfrak{e}}{\mathfrak{c}}\frac{T^{00}}{n_0}\nonumber\\
		&=\frac{1}{\mathfrak{c}^3\gamma K_2(\gamma)}\left[\left(3 K_3(\gamma)+\gamma K_2(\gamma)\right)\left(u^0\right)^3+3 K_3(\gamma) u^0|u|^2\right]+\mathfrak{e}^2\frac{u^0}{\mathfrak{c}}\nonumber\\
		&\qquad +2\mathfrak{e}\Big(\frac{1}{\mathfrak{c}^2}\frac{K_3(\gamma)}{K_2(\gamma)} (u^{0})^2-\frac{1}{\gamma}\Big)\nonumber\\
		&=\frac{3}{\gamma}\frac{K_3(\gamma)}{K_2(\gamma)}\Big(\frac{u^0}{\mathfrak{c}}\Big)^3+\Big(\frac{u^0}{\mathfrak{c}}\Big)^3+\frac{3}{\gamma}\frac{K_3(\gamma)}{K_2(\gamma)}\frac{u^0|u|^2}{\mathfrak{c}^3}+\mathfrak{e}^2\frac{u^0}{\mathfrak{c}}+2\mathfrak{e}\frac{K_3(\gamma)}{K_2(\gamma)} \Big(\frac{u^0}{\mathfrak{c}}\Big)^2 -\mathfrak{e}\frac{2}{\gamma}\nonumber\\
		&=\frac{3}{\gamma}\frac{K_3(\gamma)}{K_2(\gamma)}\frac{u^0|u|^2}{\mathfrak{c}^3}+\frac{3}{\gamma}\frac{K_3(\gamma)}{K_2(\gamma)}\Big(\frac{u^0}{\mathfrak{c}}\Big)^2 \Big(\frac{u^0}{\mathfrak{c}}-1\Big)+\mathfrak{e}\Big(2\frac{u^0}{\mathfrak{c}}+2\Big)\Big(\frac{K_3(\gamma)}{K_2(\gamma)}-1\Big)\Big(\frac{u^0}{\mathfrak{c}}-1\Big)\nonumber\\
		&\qquad +2\mathfrak{e}\Big(\frac{K_3(\gamma)}{K_2(\gamma)}-1-\frac{5}{2\gamma}\Big)+\frac{u^0}{\mathfrak{c}}\Big(\frac{u^0}{\mathfrak{c}}-1\Big)^2+2\frac{u^0}{\mathfrak{c}}(1+\mathfrak{e})\Big(\frac{u^0}{\mathfrak{c}}-1\Big)\nonumber\\
		&\qquad +\frac{u^0}{\mathfrak{c}}(1+\mathfrak{e})^2+\frac{3}{\gamma}\Big[\Big(\frac{K_3(\gamma)}{K_2(\gamma)}-1\Big)\Big(\frac{u^0}{\mathfrak{c}}\Big)^2+\Big(\frac{u^0}{\mathfrak{c}}+1\Big)\Big(\frac{u^0}{\mathfrak{c}}-1\Big)+(1+\mathfrak{e})\Big].
	\end{align*}
	Thus we have
	\begin{align}\label{5.30-0}
		\lim_{\mathfrak{c}\rightarrow \infty}\gamma^2 \mathcal{I}_3=\frac{3}{2}\rho+\frac{\rho |\mathfrak{u}|^4}{\theta^2}+\frac{\rho |\mathfrak{u}|^2}{\theta}.
	\end{align}
	Combining \eqref{5.21-0}, \eqref{5.23-0}, \eqref{5.28-0} and \eqref{5.30-0}, we finally obtain
	\begin{align*}
		\lim_{\mathfrak{c}\rightarrow \infty}\gamma^2 \zeta^2=\frac{3}{2}\rho.
	\end{align*}
	Hence one obtains
	\begin{align*}
		\lim_{\mathfrak{c}\rightarrow \infty}\operatorname{Den}=\theta\sqrt{6\rho},
	\end{align*}
	which, together with \eqref{5.18-0}, yields that
	\begin{align*}
		\lim_{\mathfrak{c}\rightarrow \infty}\chi^{\mathfrak{c}}_{4}=\frac{|p-\mathfrak{u}|^2-3\theta}{\theta\sqrt{6\rho}}=\chi_4.
	\end{align*}
	Therefore the proof is completed.
\end{proof}

With above preparations, we shall prove the coercivity estimate for the linear operator $\mathbf{L}_{\mathfrak{c}}$.

\begin{Proposition}[Uniform coercivity estimate on $\mathbf{L}_{\mathfrak{c}}$] \label{prop4.13}
There exists a positive constant $\zeta_0>0$, which is independent of $\mathfrak{c}$, such that
\begin{align*}
	\langle \mathbf{L}_{\mathfrak{c}}g, g\rangle \geq \zeta_0\|\{\mathbf{I-P_{\mathfrak{c}}}\}g\|_{\nu_{\mathfrak{c}}}^{2}
\end{align*}
for any $g\in L_{\nu}^2(\mathbb{R}^3)$.
\end{Proposition}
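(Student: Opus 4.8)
The plan is to establish the coercivity by a standard contradiction argument, but with careful attention to uniformity in $\mathfrak{c}$, which is the whole point here. First I would recall the classical facts about $\mathbf{L}_{\mathfrak{c}}$: it is self-adjoint and non-negative on $L^2_p$, its null space is $\mathcal{N}_{\mathfrak{c}}$, and by the decomposition $\mathbf{L}_{\mathfrak{c}}=\nu_{\mathfrak{c}}-\mathbf{K}_{\mathfrak{c}}$ together with Lemmas \ref{lem4.4-00}--\ref{lem2.8} (compactness-type bounds on the kernel $k_{\mathfrak{c}}$) and Lemma \ref{lem2.9} ($\nu_{\mathfrak{c}}\cong 1+|p|$ for $|p|\le\mathfrak{c}$), one gets for each \emph{fixed} $\mathfrak{c}$ a positive spectral gap $\zeta_0(\mathfrak{c})$, i.e. $\langle\mathbf{L}_{\mathfrak{c}}g,g\rangle\ge \zeta_0(\mathfrak{c})\|\{\mathbf{I}-\mathbf{P}_{\mathfrak{c}}\}g\|_{\nu_{\mathfrak{c}}}^2$. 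The task is to show $\inf_{\mathfrak{c}\ge \mathfrak{c}_0}\zeta_0(\mathfrak{c})>0$.

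Suppose not. Then there is a sequence $\mathfrak{c}_n\to\infty$ and functions $g_n$ with $\{\mathbf{I}-\mathbf{P}_{\mathfrak{c}_n}\}g_n=g_n$ (replace $g_n$ by its microscopic part), $\|g_n\|_{\nu_{\mathfrak{c}_n}}=1$, and $\langle\mathbf{L}_{\mathfrak{c}_n}g_n,g_n\rangle\to 0$. Writing $\mathbf{L}_{\mathfrak{c}_n}=\nu_{\mathfrak{c}_n}-\mathbf{K}_{\mathfrak{c}_n}$, this forces $1=\langle\nu_{\mathfrak{c}_n}g_n,g_n\rangle \to \lim\langle\mathbf{K}_{\mathfrak{c}_n}g_n,g_n\rangle$, so $\mathbf{K}_{\mathfrak{c}_n}$ must asymptotically recover all the mass. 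The key is a uniform-in-$\mathfrak{c}$ smoothing/compactness step: using the kernel convergence $\int|k_{\mathfrak{c}}(p,q)-k(p,q)|\,dq\lesssim\mathfrak{c}^{-3/8}\to 0$ from Lemmas \ref{lem2.13}--\ref{lem2.14} and the uniform bounds on $\int|k_\mathfrak{c}(p,q)|\,dq$, I would extract (after passing to a subsequence) a weak limit $g_n\rightharpoonup g$ in the relevant weighted $L^2$ space and show $\langle\mathbf{K}_{\mathfrak{c}_n}g_n,g_n\rangle\to\langle\mathbf{K}g,g\rangle$, where $\mathbf{K}$ is the Newtonian operator, while the weak lower semicontinuity gives $\langle\nu g,g\rangle\le\liminf=1$. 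Combined with $\langle\mathbf{L}_{\mathfrak{c}_n}g_n,g_n\rangle\to 0$ this yields $\langle\mathbf{L}g,g\rangle\le 0$, hence $g\in\mathcal{N}$, the null space of the classical $\mathbf{L}$; moreover the limit argument actually shows strong convergence $g_n\to g$ in $L^2_\nu$ so that $\|g\|_\nu=1$ and $g\ne 0$.

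Next I would derive a contradiction from the orthogonality. Each $g_n$ satisfies $\langle g_n,\chi^{\mathfrak{c}_n}_\alpha\rangle=0$ for $\alpha=0,\dots,4$ since $g_n$ is microscopic for $\mathbf{L}_{\mathfrak{c}_n}$. By Lemma \ref{lem2.10-0}, $\chi^{\mathfrak{c}_n}_\alpha\to\chi_\alpha$ pointwise, and with the Gaussian-type domination $\mathbf{M}_{\mathfrak{c}}\lesssim e^{-2\bar c_1|p|}$ from \eqref{4.14-20} one upgrades this to convergence in $L^2_p$ (dominated convergence), so passing to the limit in $\langle g_n,\chi^{\mathfrak{c}_n}_\alpha\rangle=0$ gives $\langle g,\chi_\alpha\rangle=0$ for all $\alpha$, i.e. $g\perp\mathcal{N}$. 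But we also showed $g\in\mathcal{N}$, so $g=0$, contradicting $\|g\|_\nu=1$. This proves $\zeta_0:=\inf_{\mathfrak{c}\ge\mathfrak{c}_0}\zeta_0(\mathfrak{c})>0$.

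The main obstacle will be making the limiting argument $\langle\mathbf{K}_{\mathfrak{c}_n}g_n,g_n\rangle\to\langle\mathbf{K}g,g\rangle$ rigorous \emph{uniformly} in $\mathfrak{c}$: one must split the integral over $|p|\le R$ and $|p|>R$, use Lemma \ref{lem2.9} to absorb the large-$|p|$ part into $\varepsilon\|g_n\|_{\nu_{\mathfrak{c}_n}}^2$ since $\nu_{\mathfrak{c}_n}(p)$ is large there (both for $|p|\le\mathfrak{c}_n$ and $|p|\ge\mathfrak{c}_n$), and on the bounded region use the kernel convergence \eqref{1.33-3} together with an Arzel\`a--Ascoli/Rellich-type compactness for the operators $\mathbf{K}_{\mathfrak{c}_n}$ acting on the weak limit. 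Here the fact that the constants in Lemmas \ref{lem4.4-00}, \ref{lem2.8}, \ref{lem2.9} and \ref{lem2.13}--\ref{lem2.14} are all independent of $\mathfrak{c}$ (established precisely so that this step goes through) is exactly what is needed; once these are in hand the argument follows the template of \cite{Jiang} and \cite{Guo2}, adapted to the $\mathfrak{c}$-dependent null spaces via Lemma \ref{lem2.10-0}.
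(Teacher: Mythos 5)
Your proposal follows essentially the same contradiction strategy as the paper: normalize to $g_n\in\mathcal{N}_{\mathfrak{c}_n}^{\perp}$ with $\|g_n\|_{\nu_{\mathfrak{c}_n}}=1$ and $\langle\mathbf{L}_{\mathfrak{c}_n}g_n,g_n\rangle\to 0$, extract a weak $L^2$ limit $g$, use the kernel convergence of Lemmas \ref{lem2.13}--\ref{lem2.14} to replace $\mathbf{K}_{\mathfrak{c}_n}$ by the Newtonian $\mathbf{K}$, the compactness of $\mathbf{K}$ to pass to the limit, weak lower semicontinuity for $\|\sqrt{\nu}g\|_2\le 1$, and the convergence of the orthonormal bases (Lemma \ref{lem2.10-0}) to show $g\in\mathcal{N}^{\perp}$, forcing $g=0$. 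This is precisely the paper's route.

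One small point: you assert that ``the limit argument actually shows strong convergence $g_n\to g$ in $L^2_\nu$ so that $\|g\|_\nu=1$ and $g\ne 0$.'' That strong-convergence claim is not immediate here because the norms $\|\cdot\|_{\nu_{\mathfrak{c}_n}}$ change with $n$, so one cannot directly apply the Hilbert-space fact that weak convergence plus norm convergence gives strong convergence. It is also unnecessary: the paper instead records that $\langle\mathbf{K}g,g\rangle=1$ (which follows from your own limit computation since $\langle\nu_{\mathfrak{c}_n}g_n,g_n\rangle\equiv 1$ and $\langle\mathbf{L}_{\mathfrak{c}_n}g_n,g_n\rangle\to 0$), and derives the contradiction from $g=0$ versus $\langle\mathbf{K}g,g\rangle=1$; this already shows $g\ne 0$ without any strong-convergence step. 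With that minor adjustment your argument is the paper's.
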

\begin{proof}
%In the following, we shall replace $\mathbf{L}_{\mathfrak{c}}$, $\nu$ and $K$ with $\mathbf{L}_{\mathfrak{c}}$, $\nu_{\mathfrak{c}}$ and $K_{\mathfrak{c}}$, respectively, to show the explicit dependency of $\mathbf{L}_{\mathfrak{c}}$, $\nu$ and $K$ on $\mathfrak{c}$. 
It is clear that one only needs to show that there is a positive constant $\zeta_0>0$, which is independent of $\mathfrak{c}$, such that
\begin{align}\label{2.161}
	\langle \mathbf{L}_{\mathfrak{c}}g, g\rangle \geq \zeta_0\|g\|_{\nu_{\mathfrak{c}}}^{2}=\zeta_0
\end{align}
holds for any $\mathfrak{c}$ and any $g\in \mathcal{N}_{\mathfrak{c}}^{\perp}$ with $\|g\|_{\nu_{\mathfrak{c}}}=1$.

For any given $\mathfrak{c}$, the linearized Boltzmann collision operator $\mathbf{L}_{\mathfrak{c}}$ satisfies the well-known hypocoercivity (see \cite{Golse} for instance), i.e., there exists a positive constant $\alpha_{\mathfrak{c}}>0$, such that
\begin{align}\label{2.162}
	\langle \mathbf{L}_{\mathfrak{c}}g, g\rangle \geq \alpha_{\mathfrak{c}}\|g\|_{\nu_{\mathfrak{c}}}^{2}=\alpha_{\mathfrak{c}}
\end{align}
for any $g\in \mathcal{N}_{\mathfrak{c}}^{\perp}$ with $\|g\|_{\nu_{\mathfrak{c}}}=1$. Denote
\begin{align}\label{2.163}
	\zeta_{\mathfrak{c}}:=\inf_{\substack{g\in \mathcal{N}_{\mathfrak{c}}^{\perp}\\ \|g\|_{\nu_{\mathfrak{c}}}=1}}\langle \mathbf{L}_{\mathfrak{c}}g, g\rangle.
\end{align}
It follows from \eqref{2.162} that $\zeta_{\mathfrak{c}}\ge \alpha_{\mathfrak{c}}>0$ for any $\mathfrak{c}$. To prove \eqref{2.161}, it suffices to show that
\begin{align}\label{2.164}
	\inf_{\mathfrak{c}\ge 1}\zeta_{\mathfrak{c}}>0.
\end{align}
We prove \eqref{2.164} by contradiction.  Assume that \eqref{2.164} is not true, then there exists a sequence $\{\zeta_{\mathfrak{c}_n}\}$ such that
\begin{align}\label{2.165}
	\lim_{n \rightarrow \infty}\mathfrak{c}_n=\infty \quad \text{and}\quad \lim_{n\rightarrow \infty}\zeta_{\mathfrak{c}_n}=0.
\end{align}
For each $n$, owing to \eqref{2.163}, there exists $g_n\in \mathcal{N}^{\perp}_{\mathfrak{c}_n}$ with $\|g_n\|_{\nu_{\mathfrak{c}_n}}=1$, so that 		
\begin{align*}		
	\zeta_{\mathfrak{c}_n}\le \langle \mathbf{L}_{\mathfrak{c}_n}g_{n}, g_{n}\rangle<\zeta_{\mathfrak{c}_n}+\frac{1}{n},		
\end{align*}
which, together with \eqref{2.165}, yields that
\begin{align}\label{2.167}
	\lim_{n\rightarrow \infty}\langle \mathbf{L}_{\mathfrak{c}_n}g_{n}, g_{n}\rangle=0.
\end{align}
It is clear that $\{g_n\}^{\infty}_{n=1}$ is a bounded sequence in $L^2(\mathbb{R}^3)$. Since $L^{2}$ is a Hilbert space, based on the Eberlein-Smulian theorem, we have the weakly convergent sequence (up to extracting a subsequence with an abuse of notation) $g_{n} \rightharpoonup g$ in $L^{2}$. Moreover, for any fixed $N\ge 1$, one has
\begin{align*}
	\chi_{\{|p|\le N\}}\sqrt{\nu_{\mathfrak{c}_n}}g_n \rightharpoonup \chi_{\{|p|\le N\}}\sqrt{\nu}g \quad \text{in} \ L^2,
\end{align*}
where $\nu(p)=\lim_{\mathfrak{c}\rightarrow\infty} \nu_{\mathfrak{c}}(p)$. Hence, by the weak semi-continuity, for any fixed $N$, we have
\begin{align*}
	\|\chi_{\{|p|\le N\}}\sqrt{\nu}g\|_{2}\le \liminf\limits_{n\rightarrow \infty} \|\chi_{\{|p|\le N\}}\sqrt{\nu_{\mathfrak{c}_n}}g_n\|_{2}\le 1,
\end{align*}
which implies that
\begin{align}\label{2.169}
	\|\sqrt{\nu}g\|_{2}\le 1.
\end{align}
For later use, we denote
\begin{align*}
	\mathbf{L}f:=\nu f-\mathbf{K}f,
\end{align*}
where 
\begin{align}\label{4.93-20}
	\mathbf{K}f:=\int_{\mathbb{R}^3}k(p,q)f(q)dq=\int_{\mathbb{R}^3}[k_2(p,q)-k_1(p,q)]f(q)dq
\end{align}
with $k_1(p,q)$ and $k_2(p,q)$ defined in \eqref{2.97}--\eqref{2.98}. We also denote $\mathcal{N}$ as the null space of $\mathbf{L}$, that is,  $\mathcal{N}:=\operatorname{span}\{\chi_0, \chi_1, \chi_2, \chi_3, \chi_4\}$.
Clearly, we have
\begin{align}\label{2.170}
	0\le \left\langle \mathbf{L}_{\mathfrak{c}_{n}} g_{n}, g_{n}\right\rangle&=\left\|g_{n}\right\|_{\nu_{\mathfrak{c}_n}}^{2}-\left\langle (\mathbf{K}_{\mathfrak{c}_{n}}-\mathbf{K})g_{n}, g_{n}\right\rangle-\left\langle \mathbf{K}g_{n}, g_{n}\right\rangle\nonumber\\
	&=1-\left\langle (\mathbf{K}_{\mathfrak{c}_{n}}-\mathbf{K})g_{n}, g_{n}\right\rangle-\left\langle \mathbf{K}g_{n}, g_{n}\right\rangle.
\end{align}
Since $\mathbf{K}$ is a compact operator on $L^{2}$, it holds that
\begin{align*}
	\lim_{n \rightarrow \infty} \|\mathbf{K}g_{n}-\mathbf{K}g\|_{2}=0.
\end{align*}
Hence we have
\begin{align*}
	\langle \mathbf{K}g_{n},g_{n}\rangle-\langle \mathbf{K}g, g\rangle=\langle \mathbf{K}g_n-\mathbf{K}g, g_{n}\rangle+\langle \mathbf{K}g, g_{n}-g\rangle\rightarrow 0, \quad n\rightarrow \infty.
\end{align*}
It follows from Lemmas \ref{lem2.13}--\ref{lem2.14} that
\begin{align}\label{2.172}
	\left\langle (\mathbf{K}_{\mathfrak{c}_{n}}-\mathbf{K})g_{n},g_{n}\right\rangle \rightarrow 0, \quad n\rightarrow \infty.
\end{align}
Combining \eqref{2.167}, \eqref{2.170}-\eqref{2.172}, we have
\begin{align}\label{2.173}
	\langle \mathbf{K}g, g\rangle=1,
\end{align}
which, together with \eqref{2.169}, yields that
\begin{align*}
	0\le \langle\mathbf{L}g,g\rangle=\|g\|^2_{\nu}-\langle \mathbf{K}g, g\rangle\le 0.
\end{align*}
Thus we have $g\in \mathcal{N}$.

Next, we shall show that $g\in \mathcal{N}^{\perp}$. Recall $\chi^{\mathfrak{c}_n}_{\alpha}$, $\chi_{\alpha}$ defined in \eqref{4.116-00} (with $\mathfrak{c}$ replaced by $\mathfrak{c}_n$) and \eqref{4.98-00}. Notice that
\begin{align}\label{2.174}
	0=\langle g_n,\chi^{\mathfrak{c}_n}_{\alpha}\rangle=\langle g_n-g, \chi^{\mathfrak{c}_n}_{\alpha}-\chi_{\alpha}\rangle+\langle g_n-g, \chi_{\alpha}\rangle+\langle g,\chi^{\mathfrak{c}_n}_{\alpha}-\chi_{\alpha}\rangle+\langle g, \chi_{\alpha}\rangle, \quad \alpha=0,1,\cdots,4.
\end{align}
Using Lemma \ref{lem2.10-0} and $g_n \rightharpoonup g$ in $L^2$, we take the limit $n\rightarrow \infty$ in \eqref{2.174} to obtain
\begin{align*}
	\langle g,\chi_{\alpha}\rangle=0, \quad \alpha=0,1,\cdots,4,
\end{align*}
which implies that $g\in \mathcal{N}^{\perp}$. Since we also have $g\in \mathcal{N}$, one concludes that $g=0$, which contradicts with \eqref{2.173}.
Therefore the proof of Proposition \ref{prop4.13} is completed.
\end{proof}

\subsection{Uniform estimate on $\mathbf{L}_{\mathfrak{c}}^{-1}$}\label{sec4.2}
%%%%%%%%%%%%%%%%%%%%%%%%%%%%%%%%%%%%%%%%%%%%%%%%%%%%%%%%%%%%%%%
%Inspired by \cite{Jiang}, we need to prove the exponential decay for the linearized operator $\mathbf{L}_{\mathfrak{c}}^{-1}$. It is crucial that this exponential decay holds uniformly with respect to $\mathfrak{c}$ so that we can obtain the uniform-in-$\mathfrak{c}$ estimates for the Hilbert expansion $F^{\mathfrak{c}}_n\ (n\ge 1)$.
To apply the Hilbert expansion procedure, we need  uniform-in-$\mathfrak{c}$ estimate on $\mathbf{L}_{\mathfrak{c}}^{-1}$. The proof is inspired by \cite{Jiang}.
\begin{Lemma}\label{lem5.1}
	For any fixed $0\le \lambda<1$, it holds that 
%	\begin{align}
%		\mathbf{M}_{\mathfrak{c}}^{-\frac{\lambda}{2}}(p)|K_{\mathfrak{c}1}f(p)|\lesssim \|f\|_{L^2},\quad p\in \mathbb{R}^3,
%	\end{align}
%	and
%	\begin{align}\label{2.27-0}
%		\mathbf{M}_{\mathfrak{c}}^{-\frac{\lambda}{2}}(p)|K_{\mathfrak{c}2}f(p)|\lesssim \|\mathbf{M}_{\mathfrak{c}}^{-\frac{\lambda}{2}}f\|_{L^2},\quad p\in \mathbb{R}^3.
%	\end{align} 
%	Then we have
	\begin{align*}
		\mathbf{M}_{\mathfrak{c}}^{-\frac{\lambda}{2}}(p)|\mathbf{K}_{\mathfrak{c}}f(p)|\lesssim \|\mathbf{M}_{\mathfrak{c}}^{-\frac{\lambda}{2}}f\|_{2}, \quad p\in \mathbb{R}^3.
	\end{align*}
\end{Lemma}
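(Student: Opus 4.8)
The plan is to estimate $\mathbf{K}_{\mathfrak{c}}f$ pointwise using the explicit kernel bounds from Lemma \ref{lem4.3-00} together with the Cauchy--Schwarz inequality, being careful to extract the right weight $\mathbf{M}_{\mathfrak{c}}^{-\lambda/2}$. Recalling $\mathbf{K}_{\mathfrak{c}}f(p)=\int_{\mathbb{R}^3}k_{\mathfrak{c}}(p,q)f(q)\,dq$ with $k_{\mathfrak{c}}=k_{\mathfrak{c}2}-k_{\mathfrak{c}1}$, I would write
\begin{align*}
	\mathbf{M}_{\mathfrak{c}}^{-\frac{\lambda}{2}}(p)|\mathbf{K}_{\mathfrak{c}}f(p)|\le \int_{\mathbb{R}^3}|k_{\mathfrak{c}}(p,q)|\,\frac{\mathbf{M}_{\mathfrak{c}}^{-\frac{\lambda}{2}}(p)}{\mathbf{M}_{\mathfrak{c}}^{-\frac{\lambda}{2}}(q)}\,\big(\mathbf{M}_{\mathfrak{c}}^{-\frac{\lambda}{2}}(q)|f(q)|\big)\,dq,
\end{align*}
and then apply Cauchy--Schwarz in $q$, so that the result follows once we show
\begin{align*}
	\int_{\mathbb{R}^3}|k_{\mathfrak{c}}(p,q)|\,\frac{\mathbf{M}_{\mathfrak{c}}^{-\frac{\lambda}{2}}(p)}{\mathbf{M}_{\mathfrak{c}}^{-\frac{\lambda}{2}}(q)}\,dq\lesssim 1
\end{align*}
uniformly in $\mathfrak{c}$ and $p$.

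\textbf{Key steps.} First I would use the explicit form of the local Maxwellian: by \eqref{1.13-00} and the Lorentz computations preceding \eqref{2.33}, $\mathbf{M}_{\mathfrak{c}}(p)\cong \exp\{(\mathfrak{c}^2-\mathfrak{c}\bar{p}^0)/T_0\}$, hence by \eqref{2.36-00} and Lemma \ref{lem4.2-00} the ratio $\mathbf{M}_{\mathfrak{c}}^{-\lambda/2}(p)/\mathbf{M}_{\mathfrak{c}}^{-\lambda/2}(q)$ is controlled by $\exp\{\tfrac{\lambda}{2T_0}(\mathfrak{c}\bar{p}^0-\mathfrak{c}\bar{q}^0)\}\lesssim\exp\{C\lambda(|\bar p|-|\bar q|)\}\lesssim \exp\{C'\lambda|\bar p-\bar q|\}\lesssim \exp\{C''\lambda|p-q|\}$ for a constant depending only on $\bar c_1$. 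Since $\lambda<1$ is fixed, choosing the split so that $C''\lambda<\bar c_1/2$ is not automatic; instead I would note that in Lemma \ref{lem4.3-00} the kernels $k_{\mathfrak{c}1},k_{\mathfrak{c}2}$ actually carry exponential factors $e^{-\bar c_1|p-q|}$ and $e^{-\bar c_1|p-q|}$ with the \emph{full} constant $\bar c_1$ before any losses, but more robustly one should track the true decay. The cleanest route is to go back to \eqref{2.33}: $k_{\mathfrak{c}1}(p,q)\lesssim|\bar p-\bar q|e^{-2\bar c_1|\bar p|-2\bar c_1|\bar q|}$, and since $\mathbf{M}_{\mathfrak{c}}^{-\lambda/2}(p)\lesssim e^{C\lambda|\bar p|}$ with $C\lambda<2\bar c_1$ achievable by taking $C$ absolute and noting $\lambda<1$ — more precisely one has $\mathbf{M}_{\mathfrak{c}}^{-\lambda/2}(p)/\mathbf{M}_{\mathfrak{c}}^{-\lambda/2}(q)\lesssim e^{\lambda(2\bar c_1'|\bar p|-\,\cdots)}$ which combined with the \emph{two} copies $e^{-2\bar c_1|\bar p|}e^{-2\bar c_1|\bar q|}$ leaves a net exponential decay $e^{-(2\bar c_1-\lambda C)(|\bar p|+|\bar q|)/2}\cdot|\bar p-\bar q|$, integrable in $\bar q$ uniformly since $\lambda C<2\bar c_1$; the Jacobian $\det(\partial\bar q/\partial q)$ is comparable to $1$ by \eqref{2.4-00}. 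For $k_{\mathfrak{c}2}$ I would instead use \eqref{2.21-0}: $k_{\mathfrak{c}2}(p,q)\lesssim |\bar p-\bar q|^{-1}e^{-\bar c_1|\bar p-\bar q|}$, and combine with the ratio bound $\lesssim e^{C\lambda|\bar p-\bar q|}$; since $\lambda<1$ and $C$ can be taken so that $C\lambda<\bar c_1$ (using that $T_0\ge 4\bar c_1$ and the sharp constant in \eqref{2.36-00}), the product is $\lesssim |\bar p-\bar q|^{-1}e^{-\delta|\bar p-\bar q|}$ with $\delta>0$, which is locally integrable in $\mathbb{R}^3$ with uniform bound. Changing variables $q\mapsto\bar q$ and using \eqref{2.4-00} completes the estimate.

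\textbf{Main obstacle.} The delicate point is the \emph{uniformity in $\mathfrak{c}$} of the exponent matching, i.e.\ verifying that the growth exponent of $\mathbf{M}_{\mathfrak{c}}^{-\lambda/2}(p)$ in $|\bar p|$ (equivalently the true rate in \eqref{2.36-00}, which is $|p|/T_0$ up to lower order, hence at most $\tfrac{1}{4\bar c_1}|p|$-type growth) is strictly dominated by the decay rates $2\bar c_1$ (for $k_{\mathfrak{c}1}$) and $\bar c_1$ (for $k_{\mathfrak{c}2}$) appearing in Lemma \ref{lem4.3-00}, for \emph{every} $\lambda<1$. This requires being slightly more careful than in Lemma \ref{lem4.4-00}, where only $0\le\varpi\le\bar c_1/4$ was allowed: here $\lambda$ ranges over all of $[0,1)$, so one cannot simply absorb the weight as in that lemma. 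The fix is that the weight $\mathbf{M}_{\mathfrak{c}}^{-\lambda/2}$ is itself (a power of) the Maxwellian rather than an exponential of fixed rate, so in the ratio one gets a difference $\mathfrak{c}(\bar p^0-\bar q^0)$ which telescopes against the \emph{two} Maxwellian half-powers hidden in $k_{\mathfrak{c}1}$ (via $\sqrt{\mathbf{M}_{\mathfrak{c}}(p)\mathbf{M}_{\mathfrak{c}}(q)}$) and against the exponential $e^{(\mathfrak{c}^2-\sqrt{\boldsymbol\ell^2-\boldsymbol j^2})/T_0}$ in $k_{\mathfrak{c}2}$ with a strict margin because $\lambda<1$; the surviving exponent is $(1-\lambda)$ times a positive rate, hence strictly positive. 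Making this bookkeeping precise — tracking that the $\mathfrak{c}^2$ constants cancel and only the $|p|$-order terms survive with a favorable sign — is the heart of the proof, and it is exactly the kind of uniform-in-$\mathfrak{c}$ exponent cancellation already used in the proofs of Lemmas \ref{lem4.3-00}--\ref{lem4.4-00}, so I would invoke those computations and only indicate the modifications.
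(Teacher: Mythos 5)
Your handling of $\mathbf{K}_{\mathfrak{c}1}$ is essentially sound once phrased correctly: the kernel $k_{\mathfrak{c}1}$ carries the explicit factor $\sqrt{\mathbf{M}_{\mathfrak{c}}(p)\mathbf{M}_{\mathfrak{c}}(q)}$, so $\mathbf{M}_{\mathfrak{c}}^{-\lambda/2}(p)k_{\mathfrak{c}1}(p,q)\lesssim |p-q|\,\mathbf{M}_{\mathfrak{c}}^{(1-\lambda)/2}(p)\mathbf{M}_{\mathfrak{c}}^{1/2}(q)$ by direct cancellation — this is what the paper does, and it is not a ``ratio bound'' in your sense. The genuine gap is in your treatment of $\mathbf{K}_{\mathfrak{c}2}$. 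The bound you propose, $\mathbf{M}_{\mathfrak{c}}^{-\lambda/2}(p)/\mathbf{M}_{\mathfrak{c}}^{-\lambda/2}(q)\lesssim e^{C\lambda|p-q|}$, is false: the ratio is $\exp\{\tfrac{\lambda}{2T_0}\mathfrak{c}(\bar p^0-\bar q^0)\}$, and $\mathfrak{c}(\bar p^0-\bar q^0)=\mathfrak{c}\tfrac{|\bar p|^2-|\bar q|^2}{\bar p^0+\bar q^0}$ can be of order $|\bar p|+|\bar q|$ even when $|\bar p-\bar q|$ is $O(1)$ (take $|\bar p|=R+1$, $|\bar q|=R$ on the same ray), so it is \emph{not} bounded by $e^{C|p-q|}$. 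Pairing the false ratio bound with the coarse estimate \eqref{2.21-0} therefore fails, and \eqref{2.21-0} has in fact already discarded exactly the extra decay $e^{M(\bar p,\bar q)}$ (with $M\le-\alpha_0(|\bar q|^2-|\bar p|^2)^2/|\bar p-\bar q|^2$, see \eqref{2.55}) that one would need to absorb that growth. Your ``telescoping'' remedy — combine the ratio exponent with $e^{(\mathfrak{c}^2-\sqrt{\boldsymbol\ell^2-\boldsymbol j^2})/T_0}$ — does not obviously close either: setting $a=|\bar p-\bar q|$, $b=|\bar p|^2-|\bar q|^2$, the combined exponent is (up to constants) $-a^2-\alpha_0 b^2/a^2+\tfrac{\lambda}{4}b$, whose supremum over $b$ is $(-1+\tfrac{\lambda^2}{64\alpha_0})a^2$; this is only negative if $\lambda$ is small compared to $\alpha_0$, which is not guaranteed for all $\lambda<1$.

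The missing idea — and the decisive step in the paper's proof — is to commute the weight \emph{through the collision integral} using the collision invariance of the local Maxwellian. Starting from the representation \eqref{2.6-10} and using energy-momentum conservation $\mathbf{M}_{\mathfrak{c}}(p)\mathbf{M}_{\mathfrak{c}}(q)=\mathbf{M}_{\mathfrak{c}}(p')\mathbf{M}_{\mathfrak{c}}(q')$, one rewrites $\mathbf{M}_{\mathfrak{c}}^{-\lambda/2}(p)=\mathbf{M}_{\mathfrak{c}}^{\lambda/2}(q)\mathbf{M}_{\mathfrak{c}}^{-\lambda/2}(p')\mathbf{M}_{\mathfrak{c}}^{-\lambda/2}(q')$ on the collision manifold, moves one factor onto $f(p')$, and after exchanging $p'\leftrightarrow q$ obtains a new integral operator with kernel $\xi(p,q)$ that is structurally identical to $k_{\mathfrak{c}2}$ but with $\sqrt{\mathbf{M}_{\mathfrak{c}}(p')\mathbf{M}_{\mathfrak{c}}(q')}$ replaced by $\mathbf{M}_{\mathfrak{c}}^{(1-\lambda)/2}(p')\mathbf{M}_{\mathfrak{c}}^{(1-\lambda)/2}(q')$. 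Running the same Lorentz/Bessel-function computation as in \eqref{2.13-10}--\eqref{1.44-0} for $\xi$ simply replaces $\bar{\boldsymbol\ell},\bar{\boldsymbol j}$ by $(1-\lambda)\bar{\boldsymbol\ell},(1-\lambda)\bar{\boldsymbol j}$, so the entire decay mechanism — including the crucial $e^{M}$ factor — is preserved with rate scaled by $(1-\lambda)>0$, uniformly in $\mathfrak{c}$ and with no constant-matching required. Cauchy--Schwarz with $\int\xi^2\,dq\lesssim 1$ then finishes. The point you should internalize is that $\mathbf{M}_{\mathfrak{c}}^{-\lambda/2}$ is not ``just another weight'' like $w_\ell$; it is algebraically compatible with the collision kernel, and exploiting this compatibility (rather than trying to beat the kernel decay by brute force) is what makes the estimate hold for every $\lambda<1$.
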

\begin{proof}
	It follows from \eqref{1.34-0} and Lemma \ref{lem4.3-00} that
	\begin{align*}
		\mathbf{M}_{\mathfrak{c}}^{-\frac{\lambda}{2}}(p)|\mathbf{K}_{\mathfrak{c}1}f(p)|
%		&\lesssim \mathbf{M}_{\mathfrak{c}}^{-\frac{\lambda}{2}}(p)\int_{\mathbb{R}^3}k_{\mathfrak{c}1}(p,q)|f(q)|dq\nonumber\\
		&\lesssim \int_{\mathbb{R}^3} |p-q|\mathbf{M}_{\mathfrak{c}}^{\frac{1-\lambda}{2}}(p) \mathbf{M}_{\mathfrak{c}}^{\frac{1}{2}}(q)|f(q)|dq\nonumber\\
		&\lesssim \int_{\mathbb{R}^3} |p-q|e^{-(1-\lambda)\bar{c}_1|p|}e^{-\bar{c}_1|q|}|f(q)|dq\nonumber\\
		&\lesssim \int_{\mathbb{R}^3} e^{-\frac{\bar{c}_1}{2}|q|}|f(q)|dq \lesssim \|f\|_{2}.
	\end{align*}
%	The proof of \eqref{2.27-0} is more complicated. Recall that
%	\begin{align*}
%		K_{2} f(p) 
%		&=\frac{\mathfrak{c}}{p^{0}} \int_{\mathbb{R}^{3}} \frac{d q}{q^{0}} \int_{\mathbb{R}^{3}} \frac{d q^{\prime}}{q^{\prime 0}} \int_{\mathbb{R}^{3}} \frac{d p^{\prime}}{p^{\prime 0}} W\left(p, q \mid p^{\prime}, q^{\prime}\right) \sqrt{\mathbf{M}_{\mathfrak{c}}(q)\mathbf{M}_{\mathfrak{c}}(q')} f(p')\nonumber\\
%		&=\frac{\mathfrak{c}}{p^{0}} \int_{\mathbb{R}^{3}} \frac{d q}{q^{0}} \int_{\mathbb{R}^{3}} \frac{d q^{\prime}}{q^{\prime 0}} \int_{\mathbb{R}^{3}} \frac{d p^{\prime}}{p^{\prime 0}}s \delta^{(4)}\left(p^{\mu}+q^{\mu}-p^{\prime \mu}-q^{\prime \mu}\right) \sqrt{\mathbf{M}_{\mathfrak{c}}(q)\mathbf{M}_{\mathfrak{c}}(q')} f(p').
%	\end{align*}
	Using \eqref{2.6-10}, one has
	\begin{align}\label{2.31-0}
		&\mathbf{M}_{\mathfrak{c}}^{-\frac{\lambda}{2}}(p)|\mathbf{K}_{\mathfrak{c}2}f(p)|\nonumber\\
%		&\le \frac{\mathfrak{c}}{p^{0}} \int_{\mathbb{R}^{3}} \frac{d q}{q^{0}} \int_{\mathbb{R}^{3}} \frac{d q^{\prime}}{q^{\prime 0}} \int_{\mathbb{R}^{3}} \frac{d p^{\prime}}{p^{\prime 0}} W\left(p, q \mid p^{\prime}, q^{\prime}\right) \mathbf{M}_{\mathfrak{c}}^{-\frac{\lambda}{2}}(p)\mathbf{M}_{\mathfrak{c}}^{\frac{1}{2}}(q)\mathbf{M}_{\mathfrak{c}}^{\frac{1}{2}}(q') |f(p')|\nonumber\\
		&\le \frac{\mathfrak{c}}{p^{0}} \int_{\mathbb{R}^{3}} \frac{d q}{q^{0}} \int_{\mathbb{R}^{3}} \frac{d q^{\prime}}{q^{\prime 0}} \int_{\mathbb{R}^{3}} \frac{d p^{\prime}}{p^{\prime 0}} W\left(p, q \mid p^{\prime}, q^{\prime}\right) \mathbf{M}_{\mathfrak{c}}^{\frac{1+\lambda}{2}}(q)\mathbf{M}_{\mathfrak{c}}^{\frac{1-\lambda}{2}}(q') |\mathbf{M}_{\mathfrak{c}}^{-\frac{\lambda}{2}}(p')f(p')|\nonumber\\
		&=\frac{\mathfrak{c}}{p^{0}} \int_{\mathbb{R}^{3}} \frac{d q}{q^{0}} \int_{\mathbb{R}^{3}} \frac{d q^{\prime}}{q^{\prime 0}} \int_{\mathbb{R}^{3}} \frac{d p^{\prime}}{p^{\prime 0}} \bar{s}  \delta^{(4)}\left(p^{\mu}+p^{\prime \mu}-q^{\mu}-q^{\prime \mu}\right) \mathbf{M}_{\mathfrak{c}}^{\frac{1+\lambda}{2}}(p')\mathbf{M}_{\mathfrak{c}}^{\frac{1-\lambda}{2}}(q') |\mathbf{M}_{\mathfrak{c}}^{-\frac{\lambda}{2}}(q)f(q)|\nonumber\\
		&\lesssim \int_{\mathbb{R}^{3}} \xi(p,q)|\mathbf{M}_{\mathfrak{c}}^{-\frac{\lambda}{2}}(q)f(q)|d q,
	\end{align}
	where we exchanged $p'$ and $q$ in the last second step with 
	\begin{align*}
		\xi(p,q):=\frac{\mathfrak{c}}{p^{0}q^{0}} \int_{\mathbb{R}^{3}} \frac{d q^{\prime}}{q^{\prime 0}} \int_{\mathbb{R}^{3}} \frac{d p^{\prime}}{p^{\prime 0}} \bar{s}\delta^{(4)}\left(p^{\mu}+p^{\prime \mu}-q^{\mu}-q^{\prime \mu}\right) \mathbf{M}_{\mathfrak{c}}^{\frac{1-\lambda}{2}}(p')\mathbf{M}_{\mathfrak{c}}^{\frac{1-\lambda}{2}}(q')
	\end{align*}
	and 
	\begin{equation*}
		\bar{g}^{2}=g^{2}+\frac{1}{2}(p^{\mu}+q^{\mu}) \cdot\left(p^{\prime \mu}+q^{\prime \mu}-p^{\mu}-q^{\mu}\right),\quad \bar{s}=\bar{g}^{2}+4 \mathfrak{c}^{2}.
	\end{equation*}
	Applying Lorentz transformation for $\xi(p,q)$, one has
	\begin{align*}
		\xi(p,q)=\frac{\mathfrak{c}c_0^{1-\lambda}}{p^{0}q^{0}} \int_{\mathbb{R}^{3}} \frac{d q^{\prime}}{q^{\prime 0}} \int_{\mathbb{R}^{3}} \frac{d p^{\prime}}{p^{\prime 0}} s(\bar{p},p')\delta^{(4)}\left(\bar{p}^{\mu}+p^{\prime \mu}-\bar{q}^{\mu}-q^{\prime \mu}\right) e^{-(1-\lambda)\frac{\mathfrak{c}(p^{\prime 0}+q^{\prime 0})}{2T_0}},
	\end{align*}
	where $s(\bar{p},p')=-(\bar{p}^{\mu}+p^{\prime \mu})(\bar{p}_{\mu}+p'_{\mu})$.
	By similar arguments as in \cite{Strain}, one can show that
	\begin{align}\label{4.154-10}
		\xi(p,q)&=\frac{\mathfrak{c}c_0^{1-\lambda}\pi s^{3 / 2}}{4g p^{0} q^{0}} \int_{0}^{\infty} \frac{y\left(1+\sqrt{y^{2}+1}\right)}{\sqrt{y^{2}+1}}  e^{-\frac{1-\lambda}{2T_0}\mathfrak{c}(\bar{p}^{0}+\bar{q}^{0}) \sqrt{y^{2}+1} }I_{0}\left(\frac{(1-\lambda)\mathfrak{c}|\bar{p} \times \bar{q}|}{gT_0} y\right)d y\nonumber\\
		&=\frac{\mathfrak{c}c_0^{1-\lambda}\pi s^{3 / 2}}{4g p^{0} q^{0}} \int_{0}^{\infty} \frac{y\left(1+\sqrt{y^{2}+1}\right)}{\sqrt{y^{2}+1}}  e^{-\tilde{\boldsymbol{\ell}}\sqrt{y^{2}+1} }I_{0}\left(\tilde{\boldsymbol{j}}y\right)d y,
	\end{align}
	where 
	\begin{align*} 
		c_0=\frac{n_0}{4\pi \mathfrak{c}T_0K_2(\gamma)},\quad \tilde{\boldsymbol{\ell}}=(1-\lambda)\bar{\boldsymbol{\ell}},\quad  \tilde{\boldsymbol{j}}=(1-\lambda)\bar{\boldsymbol{j}},\quad 
		\bar{\boldsymbol{\ell}}=\mathfrak{c}\frac{\bar{p}^0+\bar{q}^0}{2T_0}, \quad
		\bar{\boldsymbol{j}}=\mathfrak{c}\frac{|\bar{p}\times \bar{q}|}{gT_0}.
	\end{align*}
    In view of \eqref{2.13-10}--\eqref{2.12}, we can rewrite \eqref{4.154-10} as 
	\begin{align*}
		\xi(p,q)&=\frac{\mathfrak{c}c_0^{1-\lambda}\pi s^{3 / 2}}{4g p^{0} q^{0}}[J_1(\tilde{\boldsymbol{\ell}},\tilde{\boldsymbol{j}})+J_2(\tilde{\boldsymbol{\ell}},\tilde{\boldsymbol{j}})].
	\end{align*}
%	where
%	\begin{align}\label{2.12}
%		\begin{split}
%			\tilde{J}_{1}(\tilde{\boldsymbol{\ell}}, \tilde{\boldsymbol{j}})&=\frac{\tilde{\boldsymbol{\ell}}}{\tilde{\boldsymbol{\ell}}^{2}-\tilde{\boldsymbol{j}}^{2}}\Big[1+\frac{1}{\sqrt{\tilde{\boldsymbol{\ell}}^{2}-\tilde{\boldsymbol{j}}^{2}}}\Big] e^{-\sqrt{\tilde{\boldsymbol{\ell}}^{2}-\tilde{\boldsymbol{j}}^{2}}}, \\ 
%			\tilde{J}_{2}(\tilde{\boldsymbol{\ell}}, \tilde{\boldsymbol{j}})&=\frac{1}{\sqrt{\tilde{\boldsymbol{\ell}}^{2}-\tilde{\boldsymbol{j}}^{2}}} e^{-\sqrt{\tilde{\boldsymbol{\ell}}^{2}-\tilde{\boldsymbol{j}}^{2}}}.
%		\end{split}
%	\end{align}
%	A direct calculation shows that
%	\begin{align}\label{2.36-00-0}
%		\xi(p,q)\lesssim \Big[\frac{1}{\mathfrak{c}}+\frac{1}{|p-q|}\Big]e^{-\frac{1-\lambda}{2T_0}|\bar{p}-\bar{q}|}.
%	\end{align}
%	It follows from \eqref{3.3-0} that 
%	\begin{align}\label{3.21-0}
%		\bar{p}-\bar{q}=p-q+\big(\frac{u^0}{\mathfrak{c}}-1\big)\frac{u\cdot (p-q)}{|u|^2}u-\frac{p^0-q^0}{\mathfrak{c}}u.
%	\end{align}
%	For $\frac{M}{\mathfrak{c}}\le \frac{1}{4}$, a direct calculation shows that
%	\begin{align}
%		\Big|\big(\frac{u^0}{\mathfrak{c}}-1\big)\frac{u\cdot (p-q)}{|u|^2}u\Big|+\Big|\frac{p^0-q^0}{\mathfrak{c}}u\Big|&\le \frac{|u|^2}{\mathfrak{c}(u^0+\mathfrak{c})}|p-q|+\frac{|u|}{\mathfrak{c}}|p-q|\nonumber\\
%		&\le \frac{1}{2}|p-q|,
%	\end{align}
%	which, together with \eqref{3.21-0}, yields that
%	\begin{align}\label{3.23-0}
%		|\bar{p}-\bar{q}|\ge \frac{1}{2}|p-q|.
%	\end{align}
%	Combining \eqref{2.36-00-0} and \eqref{3.23-0}, one has
    By similar arguments as in Lemma \ref{lem4.3-00}, one can prove
	\begin{align}\label{3.24}
		\xi(p,q)\lesssim  \Big[\frac{1}{\mathfrak{c}}+\frac{1}{|p-q|}\Big]e^{-(1-\lambda)\bar{c}_1|p-q|},
	\end{align}
    which yields that
	\begin{align}\label{2.33-0}
		\int_{\mathbb{R}^3}\xi^2(p,q)dq\lesssim \int_{\mathbb{R}^3}  \Big(\frac{1}{\mathfrak{c}^2}+\frac{1}{|p-q|^2}\Big)e^{-2(1-\lambda)\bar{c}_1|p-q|}dq<C<\infty,
	\end{align}
	where $C$ is a positive constant independent of $\mathfrak{c}$. Hence it follows from \eqref{2.31-0} and \eqref{2.33-0} that 
	\begin{align*}
		\mathbf{M}_{\mathfrak{c}}^{-\frac{\lambda}{2}}(p)|\mathbf{K}_{\mathfrak{c}2}f(p)|&\lesssim \Big(\int_{\mathbb{R}^3}\xi^2(p,q)dq\Big)^{\frac{1}{2}}\cdot \Big(\int_{\mathbb{R}^3}|\mathbf{M}_{\mathfrak{c}}^{-\frac{\lambda}{2}}(q)f(q)|^2dq\Big)^{\frac{1}{2}}\nonumber\\
		&\lesssim \|\mathbf{M}_{\mathfrak{c}}^{-\frac{\lambda}{2}}f\|_{2}.
	\end{align*}
	Therefore the proof of Lemma \ref{lem5.1} is completed.
\end{proof}

\begin{Lemma}\label{lem5.2}
	For any fixed $0\le \lambda<1$, it holds that 
	\begin{align*} 
		\left|\langle \mathbf{M}_{\mathfrak{c}}^{-\frac{\lambda}{2}}\mathbf{K}_{\mathfrak{c}1}f,\mathbf{M}_{\mathfrak{c}}^{-\frac{\lambda}{2}} f\rangle \right|+\left|\langle \mathbf{M}_{\mathfrak{c}}^{-\frac{\lambda}{2}} \mathbf{K}_{\mathfrak{c}2}f, \mathbf{M}_{\mathfrak{c}}^{-\frac{\lambda}{2}}f\rangle \right|\lesssim\|\mathbf{M}_{\mathfrak{c}}^{-\frac{\lambda}{2}} f\|_{2}^2.
	\end{align*}
\end{Lemma}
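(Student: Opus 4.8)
The plan is to turn each of the two inner products into a double integral over $p$ and $q$, move the weight $\mathbf{M}_{\mathfrak{c}}^{-\lambda/2}$ onto the argument so that everything is expressed through $h:=\mathbf{M}_{\mathfrak{c}}^{-\lambda/2}f$, and then close the estimate with the elementary inequality $2|h(p)||h(q)|\le|h(p)|^2+|h(q)|^2$, i.e. a Schur test in disguise. All the pointwise bounds I need have already been produced inside the proof of Lemma \ref{lem5.1}; the only extra input is that the resulting kernels have uniformly (in $\mathfrak{c}$) bounded $L^1$-rows and $L^1$-columns.

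For the $\mathbf{K}_{\mathfrak{c}1}$-term, the proof of Lemma \ref{lem5.1} (via \eqref{2.33}, \eqref{4.14-20} and Lemma \ref{lem4.1-00}) already gives
\[
\mathbf{M}_{\mathfrak{c}}^{-\lambda/2}(p)\,|\mathbf{K}_{\mathfrak{c}1}f(p)|\lesssim\int_{\mathbb{R}^3}|p-q|\,e^{-(1-\lambda)\bar{c}_1|p|}\,e^{-\bar{c}_1|q|}\,|f(q)|\,dq .
\]
Since $\mathbf{M}_{\mathfrak{c}}^{\lambda/2}\lesssim 1$ by \eqref{4.14-20}, one has $|f(q)|\lesssim|h(q)|$, so pairing with $|h(p)|=\mathbf{M}_{\mathfrak{c}}^{-\lambda/2}(p)|f(p)|$ and integrating in $p$ gives
\[
\big|\langle \mathbf{M}_{\mathfrak{c}}^{-\lambda/2}\mathbf{K}_{\mathfrak{c}1}f,\mathbf{M}_{\mathfrak{c}}^{-\lambda/2}f\rangle\big|\lesssim\iint_{\mathbb{R}^3\times\mathbb{R}^3}|p-q|\,e^{-(1-\lambda)\bar{c}_1|p|}\,e^{-\bar{c}_1|q|}\,|h(p)|\,|h(q)|\,dq\,dp .
\]
Applying $2|h(p)||h(q)|\le|h(p)|^2+|h(q)|^2$ and the two marginal bounds $\int_{\mathbb{R}^3}|p-q|e^{-\bar{c}_1|q|}dq\lesssim 1+|p|$ and $\int_{\mathbb{R}^3}|p-q|e^{-(1-\lambda)\bar{c}_1|p|}dp\lesssim 1+|q|$, the right-hand side is controlled by $\int(1+|p|)e^{-(1-\lambda)\bar{c}_1|p|}|h(p)|^2dp+\int(1+|q|)e^{-\bar{c}_1|q|}|h(q)|^2dq\lesssim\|h\|_2^2$, the polynomial factors being absorbed precisely because $\lambda<1$ forces $(1+|p|)e^{-(1-\lambda)\bar{c}_1|p|}\lesssim 1$. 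As $\bar{c}_1$ in \eqref{3.90-10} is $\mathfrak{c}$-independent, the constant is uniform in $\mathfrak{c}$.

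For the $\mathbf{K}_{\mathfrak{c}2}$-term, the proof of Lemma \ref{lem5.1} gives $\mathbf{M}_{\mathfrak{c}}^{-\lambda/2}(p)|\mathbf{K}_{\mathfrak{c}2}f(p)|\lesssim\int_{\mathbb{R}^3}\xi(p,q)|h(q)|\,dq$ where, by \eqref{3.24}, $\xi(p,q)\lesssim\big[\tfrac1{\mathfrak{c}}+\tfrac1{|p-q|}\big]e^{-(1-\lambda)\bar{c}_1|p-q|}$. Pairing with $|h(p)|$, integrating, and noting that this majorant is symmetric under $p\leftrightarrow q$, it suffices by $2|h(p)||h(q)|\le|h(p)|^2+|h(q)|^2$ to verify
\[
\sup_{p\in\mathbb{R}^3}\int_{\mathbb{R}^3}\Big[\tfrac1{\mathfrak{c}}+\tfrac1{|p-q|}\Big]e^{-(1-\lambda)\bar{c}_1|p-q|}\,dq\le C
\]
with $C$ independent of $\mathfrak{c}$: the $\mathfrak{c}^{-1}$-piece contributes $C\mathfrak{c}^{-1}\le C$, while $\int_{\mathbb{R}^3}\frac{1}{|p-q|}e^{-(1-\lambda)\bar{c}_1|p-q|}dq=4\pi\int_0^\infty r\,e^{-(1-\lambda)\bar{c}_1 r}\,dr<\infty$ since $\lambda<1$ renders both the weak singularity at $p=q$ (integrable in $\mathbb{R}^3$) and the tail harmless. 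Adding the two estimates yields the lemma. The argument has no genuinely hard step — it is a weighted Schur test built on bounds already in hand; the only things to watch are the uniformity in $\mathfrak{c}$, which is automatic (the $\mathfrak{c}^{-1}$ term in $\xi$ is even favourable), and the systematic use of the strict inequality $\lambda<1$, both to absorb the factors $1+|p|$, $1+|q|$ generated by integrating $|p-q|$ against a Maxwellian and to keep $\int_0^\infty r\,e^{-(1-\lambda)\bar{c}_1 r}\,dr$ finite.
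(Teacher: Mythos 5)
Your proof is correct and follows essentially the same route as the paper's: both reduce the inner products to double integrals with explicit kernels (using the pointwise kernel bounds already established for $\mathbf{K}_{\mathfrak{c}1}$ and $\xi(p,q)$), and both close via a Schur test — the paper phrases it as a Cauchy--Schwarz split of the kernel into $K^{1/2}\cdot K^{1/2}$, you use the equivalent $2ab\le a^2+b^2$, after which the row/column sums are bounded exactly as you do. The only cosmetic difference is that you drop the extra factor $\mathbf{M}_{\mathfrak{c}}^{\lambda/2}(q)\lesssim 1$ early, leaving $e^{-\bar{c}_1|q|}$ rather than the paper's $e^{-(1+\lambda)\bar{c}_1|q|}$, which costs nothing.
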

\begin{proof}
	It follows from \eqref{1.34-0} and Lemma \ref{lem4.3-00} that
	\begin{align*}
		&\left|\langle \mathbf{M}_{\mathfrak{c}}^{-\frac{\lambda}{2}}\mathbf{K}_{\mathfrak{c}1} f,\mathbf{M}_{\mathfrak{c}}^{-\frac{\lambda}{2}} f\rangle \right|\nonumber\\
%		&\lesssim \iint_{\mathbb{R}^3\times\mathbb{R}^3}\mathbf{M}_{\mathfrak{c}}^{-\lambda}(p)k_{\mathfrak{c}1}(p,q)|f(p)f(q)|dpdq\nonumber\\
%		&\lesssim \iint_{\mathbb{R}^3\times\mathbb{R}^3}|p-q|\mathbf{M}_{\mathfrak{c}}^{-\lambda}(p)\mathbf{M}_{\mathfrak{c}}^{\frac{1}{2}}(p)\mathbf{M}_{\mathfrak{c}}^{\frac{1}{2}}(q)|f(p)f(q)|dpdq\nonumber\\
		&\lesssim \iint_{\mathbb{R}^3\times\mathbb{R}^3}|p-q|\mathbf{M}_{\mathfrak{c}}^{\frac{1-\lambda}{2}}(p)\mathbf{M}_{\mathfrak{c}}^{\frac{1+\lambda}{2}}(q)\cdot|\mathbf{M}_{\mathfrak{c}}^{-\frac{\lambda}{2}}(p)f(p)|\cdot |\mathbf{M}_{\mathfrak{c}}^{-\frac{\lambda}{2}}(q)f(q)|dpdq\nonumber\\
		&\lesssim \iint_{\mathbb{R}^3\times\mathbb{R}^3}|p-q|e^{-(1-\lambda)\bar{c}_1|p|}e^{-(1+\lambda)\bar{c}_1|q|}\cdot|\mathbf{M}_{\mathfrak{c}}^{-\frac{\lambda}{2}}(p)f(p)|\cdot |\mathbf{M}_{\mathfrak{c}}^{-\frac{\lambda}{2}}(q)f(q)|dpdq\nonumber\\
		&\lesssim \Big(\iint_{\mathbb{R}^3\times\mathbb{R}^3}|p-q|e^{-(1-\lambda)\bar{c}_1|p|}e^{-(1+\lambda)\bar{c}_1|q|}\cdot|\mathbf{M}_{\mathfrak{c}}^{-\frac{\lambda}{2}}(p)f(p)|^2dpdq\Big)^{\frac{1}{2}}\nonumber\\
		&\qquad \times \Big(\iint_{\mathbb{R}^3\times\mathbb{R}^3}|p-q|e^{-(1-\lambda)\bar{c}_1|p|}e^{-(1+\lambda)\bar{c}_1|q|}\cdot|\mathbf{M}_{\mathfrak{c}}^{-\frac{\lambda}{2}}(q)f(q)|^2dpdq\Big)^{\frac{1}{2}}\nonumber\\
		&\lesssim\|\mathbf{M}_{\mathfrak{c}}^{-\frac{\lambda}{2}}f\|_{2}^2.
	\end{align*}
	Using \eqref{2.31-0} and \eqref{3.24}, one has
	\begin{align*}
		\left|\langle \mathbf{M}_{\mathfrak{c}}^{-\frac{\lambda}{2}}\mathbf{K}_{\mathfrak{c}2} f,\mathbf{M}_{\mathfrak{c}}^{-\frac{\lambda}{2}} f\rangle \right|
		&\lesssim \iint_{\mathbb{R}^3\times\mathbb{R}^3}\xi(p,q)|\mathbf{M}_{\mathfrak{c}}^{-\frac{\lambda}{2}}(q)f(q)|\cdot |\mathbf{M}_{\mathfrak{c}}^{-\frac{\lambda}{2}}(p)f(p)|dpdq\nonumber\\
		&\lesssim \Big(\iint_{\mathbb{R}^3\times \mathbb{R}^3}\xi(p,q)\cdot|\mathbf{M}_{\mathfrak{c}}^{-\frac{\lambda}{2}}(p)f(p)|^2dpdq\Big)^{\frac{1}{2}}\nonumber\\
		&\qquad \times \Big(\iint_{\mathbb{R}^3\times \mathbb{R}^3}\xi(p,q)\cdot|\mathbf{M}_{\mathfrak{c}}^{-\frac{\lambda}{2}}(q)f(q)|^2dpdq\Big)^{\frac{1}{2}}\nonumber\\
		&\lesssim\|\mathbf{M}_{\mathfrak{c}}^{-\frac{\lambda}{2}}f\|_{2}^2.
	\end{align*}
	Therefore the proof of Lemma \ref{lem5.2} is completed.
\end{proof}

\begin{Lemma}\label{lem5.3}
	For any fixed $0\le \lambda<1$, there exists a positive constant $C$ which is independent of $\mathfrak{c}$, such that
	\begin{align*} 
		\langle \mathbf{M}_{\mathfrak{c}}^{-\frac{\lambda}{2}} \mathbf{L}_{\mathfrak{c}}  f, \mathbf{M}_{\mathfrak{c}}^{-\frac{\lambda}{2}} f\rangle \geq\f12 \|\mathbf{M}_{\mathfrak{c}}^{-\frac{\lambda}{2}}f\|_{\nu_{\mathfrak{c}}}^2-C\|f\|_{\nu_{\mathfrak{c}}}^2.
	\end{align*}
\end{Lemma}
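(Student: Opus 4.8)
The plan is first to peel off the multiplicative part. Since $\mathbf{L}_{\mathfrak{c}}=\nu_{\mathfrak{c}}-\mathbf{K}_{\mathfrak{c}}$ and $\mathbf{M}_{\mathfrak{c}}^{-\lambda/2}$, $\nu_{\mathfrak{c}}$ are both multiplication operators, one has
\[
\langle \mathbf{M}_{\mathfrak{c}}^{-\frac{\lambda}{2}}\mathbf{L}_{\mathfrak{c}}f,\mathbf{M}_{\mathfrak{c}}^{-\frac{\lambda}{2}}f\rangle
=\big\|\mathbf{M}_{\mathfrak{c}}^{-\frac{\lambda}{2}}f\big\|_{\nu_{\mathfrak{c}}}^{2}-\big\langle \mathbf{M}_{\mathfrak{c}}^{-\frac{\lambda}{2}}\mathbf{K}_{\mathfrak{c}}f,\mathbf{M}_{\mathfrak{c}}^{-\frac{\lambda}{2}}f\big\rangle ,
\]
so it suffices to bound the cross term by $\tfrac12\|\mathbf{M}_{\mathfrak{c}}^{-\lambda/2}f\|_{\nu_{\mathfrak{c}}}^{2}+C\|f\|_{\nu_{\mathfrak{c}}}^{2}$ with $C$ independent of $\mathfrak{c}$. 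Writing $h:=\mathbf{M}_{\mathfrak{c}}^{-\lambda/2}f$, the cross term equals $\langle \mathcal{K}_{\mathfrak{c}}^{\lambda}h,h\rangle$, where $\mathcal{K}_{\mathfrak{c}}^{\lambda}:=\mathbf{M}_{\mathfrak{c}}^{-\lambda/2}\mathbf{K}_{\mathfrak{c}}\mathbf{M}_{\mathfrak{c}}^{\lambda/2}$ has kernel $k_{\mathfrak{c}}^{\lambda}(p,q)=\mathbf{M}_{\mathfrak{c}}^{-\lambda/2}(p)k_{\mathfrak{c}}(p,q)\mathbf{M}_{\mathfrak{c}}^{\lambda/2}(q)$. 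From the pointwise estimates obtained inside the proofs of Lemmas \ref{lem5.1}--\ref{lem5.2} (using \eqref{2.33}, \eqref{3.24} and \eqref{4.14-20}) this kernel satisfies, uniformly in $\mathfrak{c}$,
\[
|k_{\mathfrak{c}}^{\lambda}(p,q)|\lesssim \Big(|p-q|+\tfrac{1}{|p-q|}+\tfrac{1}{\mathfrak{c}}\Big)e^{-(1-\lambda)\bar{c}_1|p-q|},
\]
and, by the arguments of Lemma \ref{lem4.4-00} (the $\lambda$-analogue of the integral bound stated just before Lemma \ref{lem2.9}), the weighted integral bound $\int_{\mathbb{R}^3}|k_{\mathfrak{c}}^{\lambda}(p,q)|e^{\frac{(1-\lambda)\bar{c}_1}{4}|p-q|}\,dq\lesssim \nu_{\mathfrak{c}}(p)^{-1}$, and symmetrically with the roles of $p$ and $q$ exchanged.

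Next I would split $\langle \mathcal{K}_{\mathfrak{c}}^{\lambda}h,h\rangle$ into the contributions of $\{|p|>N\}$ and $\{|p|\le N\}$, with $N\gg1$ to be fixed. For the first, Lemma \ref{lem2.9} gives $\nu_{\mathfrak{c}}(p)\cong 1+\min\{|p|,\mathfrak{c}\}$, whence $\sqrt{\nu_{\mathfrak{c}}(q)/\nu_{\mathfrak{c}}(p)}\lesssim \sqrt{1+|p-q|}$; combining this with the weighted integral bound above, and treating in the transposed estimate the region $|q|\le N/2$ by the exponential decay (since then $|p-q|>N/2$) and the region $|q|>N/2$ again by the integral bound, one obtains for $\mathfrak{c}\ge N$ that both Schur quantities $\sup_{|p|>N}\int_{\mathbb{R}^3}|k_{\mathfrak{c}}^{\lambda}(p,q)|\sqrt{\nu_{\mathfrak{c}}(q)/\nu_{\mathfrak{c}}(p)}\,dq$ and $\sup_{q}\int_{\{|p|>N\}}|k_{\mathfrak{c}}^{\lambda}(p,q)|\sqrt{\nu_{\mathfrak{c}}(p)/\nu_{\mathfrak{c}}(q)}\,dp$ are $\lesssim (1+N)^{-1}$; the Schur test then gives $|\langle \mathbf 1_{\{|p|>N\}}\mathcal{K}_{\mathfrak{c}}^{\lambda}h,h\rangle|\le \tfrac{C}{1+N}\|h\|_{\nu_{\mathfrak{c}}}^{2}$ with $C$ absolute, which is $\le\tfrac14\|h\|_{\nu_{\mathfrak{c}}}^{2}$ once $N$ is chosen large.

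For the contribution of $\{|p|\le N\}$, with $N$ now fixed, Cauchy--Schwarz yields $|\langle \mathbf 1_{\{|p|\le N\}}\mathcal{K}_{\mathfrak{c}}^{\lambda}h,h\rangle|\le C_{N}\big(\sup_{p}\|k_{\mathfrak{c}}^{\lambda}(p,\cdot)\|_{2}\big)\,\|\mathbf 1_{\{|p|\le N\}}h\|_{2}\,\|h\|_{2}$, and the pointwise bound above makes $\|k_{\mathfrak{c}}^{\lambda}(p,\cdot)\|_{2}\lesssim 1$ uniformly in $p$ and $\mathfrak{c}$ (the $|p-q|^{-2}$ singularity being integrable over $\mathbb{R}^3$); since $\nu_{\mathfrak{c}}\gtrsim 1$ by Lemma \ref{lem2.9}, we have $\|h\|_2\le\|h\|_{\nu_{\mathfrak{c}}}$, so Young's inequality bounds this term by $\tfrac14\|h\|_{\nu_{\mathfrak{c}}}^{2}+C_{N}\|\mathbf 1_{\{|p|\le N\}}h\|_{2}^{2}$. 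Finally, the lower Maxwellian bound in \eqref{1.53-0} together with \eqref{2.36-00} gives $\mathbf{M}_{\mathfrak{c}}(t,x,p)\gtrsim J_{\mathfrak{c}}(p)\gtrsim e^{-|p|^{2}/(2T_M)}$, hence $\mathbf{M}_{\mathfrak{c}}^{-\lambda}(p)\le C_{N}$ on $\{|p|\le N\}$ uniformly in $\mathfrak{c}$, so that $\|\mathbf 1_{\{|p|\le N\}}h\|_{2}^{2}=\int_{|p|\le N}\mathbf{M}_{\mathfrak{c}}^{-\lambda}f^{2}\le C_{N}\int_{|p|\le N}f^{2}\le C_{N}\|f\|_{\nu_{\mathfrak{c}}}^{2}$, again using $\nu_{\mathfrak{c}}\gtrsim 1$. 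Adding the two contributions gives $|\langle \mathcal{K}_{\mathfrak{c}}^{\lambda}h,h\rangle|\le \tfrac12\|h\|_{\nu_{\mathfrak{c}}}^{2}+C\|f\|_{\nu_{\mathfrak{c}}}^{2}$ with $C$ independent of $\mathfrak{c}$, and the claimed inequality follows.

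The step I expect to be the main obstacle is the tail estimate on $\{|p|>N\}$: its smallness in $N$ must be uniform in $\mathfrak{c}$, and this cannot be extracted from the crude pointwise kernel bound (whose $|p-q|^{-1}$ part decays in neither $|p|$ nor $|q|$) — it genuinely relies on the refined integral estimates of Lemma \ref{lem4.4-00}, on the two-regime asymptotics $\nu_{\mathfrak{c}}(p)\cong 1+\min\{|p|,\mathfrak{c}\}$ of Lemma \ref{lem2.9}, and on the restriction $\mathfrak{c}\ge N$ that keeps $\min\{|p|,\mathfrak{c}\}>N$ throughout the tail; the remaining ingredients are standard Schur/Cauchy--Schwarz manipulations.
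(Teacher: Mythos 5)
Your proposal is correct and reaches the claimed inequality, but it travels a noticeably different route from the paper's. The paper's proof is a two-step shortcut: it first invokes Lemma \ref{lem5.2} as a black box to collapse the whole cross term to $\left|\langle \mathbf{M}_{\mathfrak{c}}^{-\lambda/2}\mathbf{K}_{\mathfrak{c}}f,\mathbf{M}_{\mathfrak{c}}^{-\lambda/2}f\rangle\right|\lesssim \|\mathbf{M}_{\mathfrak{c}}^{-\lambda/2}f\|_2^2$ (the symmetric Cauchy--Schwarz against the majorizing kernel $\xi$ has already been done inside Lemma \ref{lem5.2}), and only then performs the radial split on this single $L^2$ integral, bounding the tail $\{|p|\ge r\}$ via $\nu_{\mathfrak{c}}(p)\gtrsim\min\{1+r,\mathfrak{c}\}$ from Lemma \ref{lem2.9} and the bulk $\{|p|\le r\}$ via the compact-region bound $\mathbf{M}_{\mathfrak{c}}^{-\lambda}\le C_r$ coming from \eqref{1.53-0} and \eqref{2.36-00}. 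You instead split the operator $\mathcal{K}_{\mathfrak{c}}^{\lambda}=\mathbf{M}_{\mathfrak{c}}^{-\lambda/2}\mathbf{K}_{\mathfrak{c}}\mathbf{M}_{\mathfrak{c}}^{\lambda/2}$ itself into a tail part $\{|p|>N\}$ and a bulk part $\{|p|\le N\}$ and treat them separately: a $\sqrt{\nu_{\mathfrak{c}}}$-weighted Schur test on the tail, a Hilbert--Schmidt/Cauchy--Schwarz bound plus the same $\mathbf{M}_{\mathfrak{c}}^{-\lambda}\le C_N$ estimate on the bulk. Both approaches hinge on the identical fundamental inputs (the $\lambda$-scaled integral bound for the majorizing kernel and the two-regime asymptotics of $\nu_{\mathfrak{c}}$), and you have correctly identified that the $\mathfrak{c}$-uniformity of the tail smallness is the delicate point, handling it the same way the paper implicitly does.

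The extra cost of your route, which you should flag, is the need for both ``row'' and ``column'' Schur quantities for the conjugated kernel, which is not symmetric. As written, the assertion ``\emph{and symmetrically with the roles of $p$ and $q$ exchanged}'' glosses over the fact that $\int |k_{\mathfrak{c}}^{\lambda}(p,q)|\,dp$ is, by symmetry of $k_{\mathfrak{c}}$, the $q$-integral of the $(-\lambda)$-conjugated kernel. This is fine --- after the $p'\leftrightarrow q$ relabeling in the proof of Lemma \ref{lem5.1} the majorizing kernel $\xi(p,q)$ is symmetric and its form in \eqref{4.154-10} depends on $\lambda$ only through the scale $(1-\lambda)$, so the $\lambda$-analogue of Lemma \ref{lem4.4-00} holds uniformly for $\lambda\in(-1,1)$ and in both variables --- but it deserves to be said. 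The paper's route avoids this subtlety entirely by front-loading the symmetrization inside Lemma \ref{lem5.2}. One further small notational point: the pointwise bound you write for $k_{\mathfrak{c}}^{\lambda}(p,q)$ is really a bound for the majorizing kernel $\eta(p,q)$ (the $\xi$-plus-$k_{\mathfrak{c}1}$-type kernel appearing after the relabeling), not for the literal conjugated kernel $\mathbf{M}_{\mathfrak{c}}^{-\lambda/2}(p)k_{\mathfrak{c}}(p,q)\mathbf{M}_{\mathfrak{c}}^{\lambda/2}(q)$; the distinction is harmless for the argument but should be kept in mind.
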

\begin{proof}
	For any $r>0$, it follows from Lemmas \ref{lem5.2} and \ref{lem2.9} that
	\begin{align*}%\label{5.19-00}
		\left|\langle \mathbf{M}_{\mathfrak{c}}^{-\frac{\lambda}{2}}\mathbf{K}_{\mathfrak{c}}f, \mathbf{M}_{\mathfrak{c}}^{-\frac{\lambda}{2}}f\rangle \right|&\lesssim\Big\{\int_{|p|\leq r}+\int_{|p|\geq r}\Big\}|\mathbf{M}_{\mathfrak{c}}^{-\frac{\lambda}{2}}(p)f(p)|^2dp\nonumber\\
		&\lesssim \max\Big\{\frac{1}{1+r},\frac{1}{\mathfrak{c}}\Big\}\|\mathbf{M}_{\mathfrak{c}}^{-\frac{\lambda}{2}}f\|_{\nu_{\mathfrak{c}}}^2+C_r\|f\|_{\nu_{\mathfrak{c}}}^2. 
	\end{align*}
	Noting $\mathfrak{c}\gg 1$, taking $r$ suitably large, we have
	\begin{align*}
		\left| \langle \mathbf{M}_{\mathfrak{c}}^{-\frac{\lambda}{2}}\mathbf{K} f, \mathbf{M}_{\mathfrak{c}}^{-\frac{\lambda}{2}} f\rangle \right|\leq\f12\|\mathbf{M}_{\mathfrak{c}}^{-\frac{\lambda}{2}}f\|_{\nu_{\mathfrak{c}}}^2+C\|f\|_{\nu_{\mathfrak{c}}}^2,
	\end{align*}
	which, together with $\mathbf{L}_{\mathfrak{c}}f=\nu_{\mathfrak{c}} f-\mathbf{K}_{\mathfrak{c}}f$, yields that
	\begin{align*}
		\langle \mathbf{M}_{\mathfrak{c}}^{-\frac{\lambda}{2}}\mathbf{L}_{\mathfrak{c}}  f, \mathbf{M}_{\mathfrak{c}}^{-\frac{\lambda}{2}}f\rangle&=\langle \mathbf{M}_{\mathfrak{c}}^{-\frac{\lambda}{2}} \nu_{\mathfrak{c}} f, \mathbf{M}_{\mathfrak{c}}^{-\frac{\lambda}{2}}f\rangle-\langle \mathbf{M}_{\mathfrak{c}}^{-\frac{\lambda}{2}}\mathbf{K}_{\mathfrak{c}}f, \mathbf{M}_{\mathfrak{c}}^{-\frac{\lambda}{2}}f\rangle\nonumber\\ 
		&\geq\f12\|\mathbf{M}_{\mathfrak{c}}^{-\frac{\lambda}{2}}f\|_{\nu_{\mathfrak{c}}}^2-C\|f\|_{\nu_{\mathfrak{c}}}^2.
	\end{align*}
	Therefore the proof of Lemma \ref{lem5.3} is completed.
\end{proof}

\begin{Proposition}\label{prop5.4}
	For any fixed $0\le \lambda<1$, $m>\frac{3}{2}$, suppose $g\in \mathcal{N}_{\mathfrak{c}}^\perp$ and
	\begin{align*}
		\|(1+|p|)^m \mathbf{M}_{\mathfrak{c}}^{-\frac{\lambda}{2}}g\|_{\infty}<\infty,
	\end{align*}
	then it holds
	\begin{align}\label{2.34}
		|\mathbf{L}_{\mathfrak{c}} ^{-1}g(p)|\lesssim\|(1+|p|)^m \mathbf{M}_{\mathfrak{c}}^{-\frac{\lambda}{2}}g\|_{\infty}\cdot \mathbf{M}_{\mathfrak{c}}^{\frac{\lambda}{2}}(p), \quad p\in \mathbb{R}^3,
	\end{align}
     where the constant is independent of $\mathfrak{c}$.
\end{Proposition}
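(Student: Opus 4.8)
The plan is to follow the strategy of \cite{Jiang}, splitting $\mathbf{L}_{\mathfrak{c}}^{-1}$ into the inverse of the "decoupled" multiplication operator $\nu_{\mathfrak{c}}$ plus a perturbation governed by $\mathbf{K}_{\mathfrak{c}}$, and then iterating. Set $f:=\mathbf{L}_{\mathfrak{c}}^{-1}g$, so $\nu_{\mathfrak{c}}f=\mathbf{K}_{\mathfrak{c}}f+g$, i.e.
\begin{align*}
	f(p)=\frac{1}{\nu_{\mathfrak{c}}(p)}\mathbf{K}_{\mathfrak{c}}f(p)+\frac{1}{\nu_{\mathfrak{c}}(p)}g(p).
\end{align*}
First I would record the $L^2$ control: by the coercivity estimate Proposition \ref{prop4.13}, since $g\in\mathcal{N}_{\mathfrak{c}}^{\perp}$ forces $f\in\mathcal{N}_{\mathfrak{c}}^{\perp}$, one gets $\|f\|_{\nu_{\mathfrak{c}}}^2\le \zeta_0^{-1}\langle\mathbf{L}_{\mathfrak{c}}f,f\rangle=\zeta_0^{-1}\langle g,f\rangle\lesssim\|\mathbf{M}_{\mathfrak{c}}^{-\lambda/2}(1+|p|)^m g\|_\infty\|f\|_{\nu_{\mathfrak{c}}}$ after absorbing the weight into $\mathbf{M}_{\mathfrak{c}}^{\lambda/2}$ and using that $\int \mathbf{M}_{\mathfrak{c}}^{\lambda}(1+|p|)^{-2m}\nu_{\mathfrak{c}}^{-1}dp$ is bounded uniformly in $\mathfrak{c}$ (via \eqref{4.14-20} and Lemma \ref{lem2.9}). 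This yields $\|f\|_{\nu_{\mathfrak{c}}}\lesssim\|\mathbf{M}_{\mathfrak{c}}^{-\lambda/2}(1+|p|)^m g\|_\infty$, uniformly in $\mathfrak{c}$. Next I would get the weighted $L^2$ bound $\|\mathbf{M}_{\mathfrak{c}}^{-\lambda/2}f\|_{\nu_{\mathfrak{c}}}\lesssim\|\mathbf{M}_{\mathfrak{c}}^{-\lambda/2}(1+|p|)^m g\|_\infty$: test the equation $\mathbf{L}_{\mathfrak{c}}f=g$ against $\mathbf{M}_{\mathfrak{c}}^{-\lambda}f$ and apply Lemma \ref{lem5.3}, which gives $\tfrac12\|\mathbf{M}_{\mathfrak{c}}^{-\lambda/2}f\|_{\nu_{\mathfrak{c}}}^2\le \langle \mathbf{M}_{\mathfrak{c}}^{-\lambda/2}g,\mathbf{M}_{\mathfrak{c}}^{-\lambda/2}f\rangle+C\|f\|_{\nu_{\mathfrak{c}}}^2$, and then Cauchy--Schwarz plus the already-established $L^2$ estimate closes this.

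With the weighted $L^2$ bound in hand, I would pass to the pointwise bound \eqref{2.34} by a bootstrap on the fixed-point relation above. The first term is handled by Lemma \ref{lem5.1}: $\mathbf{M}_{\mathfrak{c}}^{-\lambda/2}(p)\nu_{\mathfrak{c}}(p)^{-1}|\mathbf{K}_{\mathfrak{c}}f(p)|\lesssim \nu_{\mathfrak{c}}(p)^{-1}\|\mathbf{M}_{\mathfrak{c}}^{-\lambda/2}f\|_2\lesssim \|\mathbf{M}_{\mathfrak{c}}^{-\lambda/2}(1+|p|)^m g\|_\infty$ (note $\nu_{\mathfrak{c}}(p)^{-1}\lesssim 1$ by Lemma \ref{lem2.9}), while the second term is bounded directly by $\nu_{\mathfrak{c}}(p)^{-1}(1+|p|)^{-m}\|\mathbf{M}_{\mathfrak{c}}^{-\lambda/2}(1+|p|)^m g\|_\infty \lesssim \|\mathbf{M}_{\mathfrak{c}}^{-\lambda/2}(1+|p|)^m g\|_\infty$. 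Multiplying through by $\mathbf{M}_{\mathfrak{c}}^{\lambda/2}(p)$ then gives \eqref{2.34}; the key point is that all constants produced along the way are uniform in $\mathfrak{c}$, which is exactly what Lemmas \ref{lem5.1}--\ref{lem5.3} and Proposition \ref{prop4.13} were designed to supply.

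The main obstacle is ensuring the uniformity in $\mathfrak{c}$ is genuinely preserved at the step where one integrates the kernel weight $\mathbf{M}_{\mathfrak{c}}^{\lambda/2}$ against $(1+|p|)^{-m}$ and $\nu_{\mathfrak{c}}^{-1}$: because $\nu_{\mathfrak{c}}(p)\cong\mathfrak{c}$ for $|p|\ge\mathfrak{c}$ and $\mathbf{M}_{\mathfrak{c}}$ itself depends on $\mathfrak{c}$, one must invoke the two-regime estimates \eqref{4.14-20} and Lemma \ref{lem2.9} carefully (as in the proof of Lemma \ref{lem4.4-00}) rather than treating $\nu_{\mathfrak{c}}$ as a fixed function. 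A secondary subtlety is that the $L^2$ coercivity from Proposition \ref{prop4.13} is stated for $\{\mathbf{I}-\mathbf{P}_{\mathfrak{c}}\}g$, so I must first verify that $g\in\mathcal{N}_{\mathfrak{c}}^{\perp}$ implies $\mathbf{L}_{\mathfrak{c}}^{-1}g\in\mathcal{N}_{\mathfrak{c}}^{\perp}$ (immediate from self-adjointness of $\mathbf{L}_{\mathfrak{c}}$ and $\mathbf{L}_{\mathfrak{c}}|_{\mathcal{N}_{\mathfrak{c}}}=0$), so that the projection terms drop out cleanly. Apart from these points the argument is a routine adaptation of \cite{Jiang}.
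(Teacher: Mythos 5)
Your proof proposal is correct and follows essentially the same route as the paper's: write $f=\mathbf{L}_{\mathfrak{c}}^{-1}g$, split $\nu_{\mathfrak{c}}f=\mathbf{K}_{\mathfrak{c}}f+g$, invoke Lemma \ref{lem5.1} for the pointwise bound of $\mathbf{M}_{\mathfrak{c}}^{-\lambda/2}\mathbf{K}_{\mathfrak{c}}f$, and close the weighted $L^2$ norm $\|\mathbf{M}_{\mathfrak{c}}^{-\lambda/2}f\|_{\nu_{\mathfrak{c}}}$ via Lemma \ref{lem5.3} together with the coercivity of Proposition \ref{prop4.13}, using $m>3/2$ and $\nu_{\mathfrak{c}}\gtrsim1$ for the Cauchy--Schwarz step. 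The only cosmetic difference is that you establish the unweighted bound $\|f\|_{\nu_{\mathfrak{c}}}\lesssim\|(1+|p|)^m\mathbf{M}_{\mathfrak{c}}^{-\lambda/2}g\|_\infty$ as a separate intermediate step, whereas the paper keeps $\|f\|_{\nu_{\mathfrak{c}}}^2\lesssim\langle\mathbf{L}_{\mathfrak{c}}f,f\rangle$ inline; the two are equivalent.
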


\begin{proof}
	Let $f=\mathbf{L}_{\mathfrak{c}} ^{-1}g\in \mathcal{N}_{\mathfrak{c}}^\perp$, then we have $g=\mathbf{L}_{\mathfrak{c}}f=\nu_{\mathfrak{c}} f-\mathbf{K}_{\mathfrak{c}}f$. Using Lemma \ref{lem5.1}, we get 
	\begin{align}\label{2.45}
		\mathbf{M}_{\mathfrak{c}}^{-\frac{\lambda}{2}}(p)|f(p)|&\lesssim  \nu_{\mathfrak{c}}^{-1}(p)\mathbf{M}_{\mathfrak{c}}^{-\frac{\lambda}{2}}(p)|g(p)|+\nu_{\mathfrak{c}}^{-1}(p)\mathbf{M}_{\mathfrak{c}}^{-\frac{\lambda}{2}}(p)|\mathbf{K}_{\mathfrak{c}}f(p)|\nonumber\\
		&\lesssim \mathbf{M}_{\mathfrak{c}}^{-\frac{\lambda}{2}}(p)|g(p)|+\mathbf{M}_{\mathfrak{c}}^{-\frac{\lambda}{2}}(p)|\mathbf{K}_{\mathfrak{c}}f(p)|\nonumber\\
		&\lesssim \mathbf{M}_{\mathfrak{c}}^{-\frac{\lambda}{2}}(p)|g(p)|+\|\mathbf{M}_{\mathfrak{c}}^{-\frac{\lambda}{2}} f\|_{\nu_{\mathfrak{c}}}.
	\end{align}
	By Proposition \ref{prop4.13} and Lemma \ref{lem5.3}, we have
	\begin{align*} 
		\|\mathbf{M}_{\mathfrak{c}}^{-\frac{\lambda}{2}}f\|_{\nu_{\mathfrak{c}}}^2&\lesssim\langle \mathbf{M}_{\mathfrak{c}}^{-\frac{\lambda}{2}}\mathbf{L}_{\mathfrak{c}}  f, \mathbf{M}_{\mathfrak{c}}^{-\frac{\lambda}{2}}f\rangle+\|f\|_{\nu_{\mathfrak{c}}}^2\nonumber\\
		&\lesssim \langle \mathbf{M}_{\mathfrak{c}}^{-\frac{\lambda}{2}}\mathbf{L}_{\mathfrak{c}}  f,\mathbf{M}_{\mathfrak{c}}^{-\frac{\lambda}{2}}f\rangle+\langle\mathbf{L}_{\mathfrak{c}}  f, f\rangle\nonumber\\
		&\lesssim\int_{\mathbb{R}^3}(1+|p|)^m\mathbf{M}_{\mathfrak{c}}^{-\frac{\lambda}{2}}(p)|g(p)|\cdot(1+|p|)^{-m}\cdot |\mathbf{M}_{\mathfrak{c}}^{-\frac{\lambda}{2}}(p)f(p)|dp\nonumber\\
		&\lesssim\|\mathbf{M}_{\mathfrak{c}}^{-\frac{\lambda}{2}}f\|_{\nu_{\mathfrak{c}}} \cdot \|(1+|p|)^m \mathbf{M}_{\mathfrak{c}}^{-\frac{\lambda}{2}}g\|_{\infty}\cdot \Big(\int_{\mathbb{R}^3}\f{1}{(1+|p|)^{2m}}dp\Big)^{\f12},
	\end{align*}
	which, together with $m>\frac{3}{2}$, yields that
	\begin{align}\label{2.37}
		\|\mathbf{M}_{\mathfrak{c}}^{-\frac{\lambda}{2}}f\|_{\nu_{\mathfrak{c}}}\lesssim \|(1+|p|)^m \mathbf{M}_{\mathfrak{c}}^{-\frac{\lambda}{2}}g\|_{\infty}<\infty.
	\end{align}
	Combining \eqref{2.37} and \eqref{2.45}, one has
	\begin{align*}
		\mathbf{M}_{\mathfrak{c}}^{-\frac{\lambda}{2}}(p)|f(p)|&\lesssim \|(1+|p|)^m \mathbf{M}_{\mathfrak{c}}^{-\frac{\lambda}{2}}g\|_{\infty},
	\end{align*}
	which concludes \eqref{2.34}. Therefore the proof of Proposition \ref{prop5.4} is completed.
\end{proof}

%%%%%%%%%%%%%%%%%%%%%%%%%%%%%%%%%%%%%%%%%%%%%%%%%%%%%%%%%%%%%%%
\section{Uniform-in-$\mathfrak{c}$ estimates on the linear part of Hilbert expansion}
%%%%%%%%%%%%%%%%%%%%%%%%%%%%%%%%%%%%%%%%%%%%%%%%%%%%%%%%%%%%%%%

%%%%%%%%%%%%%%%%%%%%%%%%%%%%%%%%%%%%%%%%%%%%%%%%%%%%%%%%%%%%%%%
\subsection{Reformulation of $F_{n+1}^{\mathfrak{c}}$}
%%%%%%%%%%%%%%%%%%%%%%%%%%%%%%%%%%%%%%%%%%%%%%%%%%%%%%%%%%%%%%%

For $n=0,1,\cdots, 2k-2$, we decompose $F_{n+1}^{\mathfrak{c}}$ as
\begin{align*}
	\frac{F_{n+1}^{\mathfrak{c}}}{\sqrt{\mathbf{M}_{\mathfrak{c}}}}=\mathbf{P}_{\mathfrak{c}}\Big(\frac{F_{n+1}^{\mathfrak{c}}}{\sqrt{\mathbf{M}_{\mathfrak{c}}}}\Big)+\{\mathbf{I}-\mathbf{P}_{\mathfrak{c}}\}\Big(\frac{F_{n+1}^{\mathfrak{c}}}{\sqrt{\mathbf{M}_{\mathfrak{c}}}}\Big),
\end{align*}
where 
\begin{align} \label{5.1-20}
	\mathbf{P}_{\mathfrak{c}}\Big(\frac{F_{n+1}^{\mathfrak{c}}}{\sqrt{\mathbf{M}_{\mathfrak{c}}}}\Big)=\Big[a_{n+1}+b_{n+1}\cdot p+c_{n+1}\frac{p^0}{\mathfrak{c}}\Big]\sqrt{\mathbf{M}_{\mathfrak{c}}}.
\end{align}
Using \eqref{1.16-01}--\eqref{1.17-01} and Lemma \ref{lem4.11-0}, by tedious calculations, one has 
\begin{align*} 
	\int_{\mathbb{R}^3}F_{n+1}^{\mathfrak{c}}dp
	&=\int_{\mathbb{R}^3}\Big[a_{n+1}+b_{n+1}\cdot p+c_{n+1}\frac{p^0}{\mathfrak{c}}\Big] \mathbf{M}_{\mathfrak{c}}dp\nonumber\\
	&=\frac{n_0u^0}{\mathfrak{c}}a_{n+1}+\frac{e_0+P_0}{\mathfrak{c}^3}u^0(u\cdot b_{n+1})+\frac{e_0(u^0)^2+P_0|u|^2}{\mathfrak{c}^4}c_{n+1},\\
	\int_{\mathbb{R}^3} \frac{p_j p}{p^0} F_{n+1}^{\mathfrak{c}} d p 
	& =\int_{\mathbb{R}^3} \frac{p_j p}{p^0}\left[a_{n+1}+b_{n+1} \cdot p+c_{n+1}\frac{p^0}{\mathfrak{c}}\right] \mathbf{M}_{\mathfrak{c}} d p+\int_{\mathbb{R}^3} \frac{p_j p}{p^0} \sqrt{\mathbf{M}_{\mathfrak{c}}}\{\mathbf{I}-\mathbf{P}_{\mathfrak{c}}\}\left(\frac{F_{n+1}^{\mathfrak{c}}}{\sqrt{\mathbf{M}_{\mathfrak{c}}}}\right) d p \nonumber\\
	& =\frac{e_0+P_0}{\mathfrak{c}^3}u_j u a_{n+1}+\frac{n_0}{\mathfrak{c}\gamma K_2(\gamma)}\left(6 K_3(\gamma)+\gamma K_2(\gamma)\right) u_j u\left[\left(u \cdot b_{n+1}\right)+\frac{u^0}{\mathfrak{c}} c_{n+1}\right] \nonumber\\
	&\qquad  +\mathbf{e}_ja_{n+1}\frac{P_0}{\mathfrak{c}}+\frac{\mathfrak{c}n_0 K_3(\gamma)}{\gamma K_2(\gamma)}\left(u b_{n+1, j}+u_j b_{n+1}\right) \nonumber\\
	&\qquad  +\mathbf{e}_j \frac{\mathfrak{c}n_0 K_3(\gamma)}{\gamma K_2(\gamma)}\left[\left(u \cdot b_{n+1}\right)+\frac{u^0}{\mathfrak{c}} c_{n+1}\right]+\int_{\mathbb{R}^3} \frac{p_j p}{p^0} \sqrt{\mathbf{M}_{\mathfrak{c}}}\{\mathbf{I}-\mathbf{P}_{\mathfrak{c}}\}\left(\frac{F_{n+1}^{\mathfrak{c}}}{\sqrt{\mathbf{M}_{\mathfrak{c}}}}\right) d p,\\
	\int_{\mathbb{R}^3} \hat{p}_j F_{n+1}^{\mathfrak{c}} d p & =\int_{\mathbb{R}^3}\hat{p}_j\Big[a_{n+1}+b_{n+1} \cdot p+c_{n+1} \frac{p^0}{\mathfrak{c}}\Big] \mathbf{M}_{\mathfrak{c}}dp+\int_{\mathbb{R}^3} \hat{p}_j \sqrt{\mathbf{M}_{\mathfrak{c}}}\{\mathbf{I}-\mathbf{P}_{\mathfrak{c}}\}\left(\frac{F_{n+1}^{\mathfrak{c}}}{\sqrt{\mathbf{M}_{\mathfrak{c}}}}\right) d p\nonumber\\
	&=n_0 u_j a_{n+1}+\frac{e_0+P_0}{\mathfrak{c}^2} u_j\left(u \cdot b_{n+1}\right)+P_0 b_{n+1, j} \nonumber\\
	&\qquad  +\frac{e_0+P_0}{\mathfrak{c}^3}u^0 u_j c_{n+1}+\int_{\mathbb{R}^3} \hat{p}_j \sqrt{\mathbf{M}_{\mathfrak{c}}}\{\mathbf{I}-\mathbf{P}_{\mathfrak{c}}\}\left(\frac{F_{n+1}^{\mathfrak{c}}}{\sqrt{\mathbf{M}_{\mathfrak{c}}}}\right) d p,\\
	\int_{\mathbb{R}^3}p_j F_{n+1}^{\mathfrak{c}} d p
	&=\int_{\mathbb{R}^3} p_j\Big[a_{n+1}+b_{n+1} \cdot p+c_{n+1} \frac{p^0}{\mathfrak{c}}\Big] \mathbf{M}_{\mathfrak{c}} d p \nonumber\\
	&=\frac{n_0}{\mathfrak{c}\gamma K_2(\gamma)}\left[\left(6 K_3(\gamma)+\gamma K_2(\gamma)\right) u^0 u_j\left(u \cdot b_{n+1}\right)+\mathfrak{c}^2K_3(\gamma) u^0 b_{n+1, j}\right] \nonumber\\
	&\qquad +\frac{n_0}{\mathfrak{c}^2\gamma K_2(\gamma)}\left[\left(5 K_3(\gamma)+\gamma K_2(\gamma)\right)\left(u^0\right)^2+K_3(\gamma)|u|^2\right] u_j c_{n+1}\nonumber \\
	&\qquad +\frac{e_0+P_0}{\mathfrak{c}^3} u^0 u_j a_{n+1},\\
	\int_{\mathbb{R}^3} p^0 F_{n+1}^{\mathfrak{c}} d p
	& =\int_{\mathbb{R}^3} p^0\left[a_{n+1}+b_{n+1} \cdot p+c_{n+1}\frac{p^0}{\mathfrak{c}}\right] \mathbf{M}_{\mathfrak{c}} d p \nonumber\\
	& =\frac{n_0}{\mathfrak{c}\gamma K_2(\gamma)}\left[\left(5 K_3(\gamma)+\gamma K_2(\gamma)\right)\left(u^0\right)^2+K_3(\gamma)|u|^2\right]\left(u \cdot b_{n+1}\right) \nonumber\\
	&\qquad +\frac{n_0}{\mathfrak{c}^2\gamma K_2(\gamma)}\left[\left(3 K_3(\gamma)+\gamma K_2(\gamma)\right)\left(u^0\right)^2+3 K_3(\gamma)|u|^2\right] u^0 c_{n+1}\nonumber\\
	&\qquad +\frac{e_0\left(u^0\right)^2+P_0|u|^2}{\mathfrak{c}^3}a_{n+1},
\end{align*}
where $\mathbf{e}_j\ (j=1,2,3)$ are the unit base vectors in $\mathbb{R}^3$.

Next, we shall derive the equation for $(a_{n+1},b_{n+1},c_{n+1})$. Notice that
\begin{align}\label{2.26}
	\partial_t F_{n+1}^{\mathfrak{c}}+\hat{p}\cdot \nabla_x F_{n+1}^{\mathfrak{c}}=\sum_{\substack{i+j=n+2\\ i,j\ge 0}}Q_{\mathfrak{c}}(F_i^{\mathfrak{c}},F_i^{\mathfrak{c}})\ (\text{or}\ \sum_{\substack{i+j=n+2\\ i,j\ge 1}}Q_{\mathfrak{c}}(F_i^{\mathfrak{c}},F_i^{\mathfrak{c}})\ \text{when $n=2k-2$}).
\end{align}
Integrating \eqref{2.26} with respect to $p$, we have
\begin{align}\label{2.27}
	& \partial_t\left(\frac{n_0 u^0}{\mathfrak{c}} a_{n+1}+\frac{e_0+P_0}{\mathfrak{c}^3} u^0\left(u \cdot b_{n+1}\right)+\frac{e_0\left(u^0\right)^2+P_0|u|^2}{\mathfrak{c}^4}c_{n+1}\right) \nonumber\\
	&\qquad +\nabla_x \cdot\left(n_0 u a_{n+1}+\frac{e_0+P_0}{\mathfrak{c}^2} u\left(u \cdot b_{n+1}\right)+P_0 b_{n+1}+\frac{e_0+P_0}{\mathfrak{c}^3} u^0 u c_{n+1}\right) \nonumber\\
	&\qquad  +\nabla_x \cdot \int_{\mathbb{R}^3} \hat{p} \sqrt{\mathbf{M}_{\mathfrak{c}}}\{\mathbf{I}-\mathbf{P}_{\mathfrak{c}}\}\left(\frac{F_{n+1}^{\mathfrak{c}}}{\sqrt{\mathbf{M}_{\mathfrak{c}}}}\right) d p=0.
\end{align}	
Multiplying \eqref{2.26} by $p_j$ and integrating over $\mathbb{R}^3$, one gets
\begin{align}\label{2.28}
	& \partial_t\left(\frac{e_0+P_0}{\mathfrak{c}^3} u^0 u_j a_{n+1}+\frac{n_0}{\mathfrak{c}\gamma K_2(\gamma)}\left[\left(6 K_3(\gamma)+\gamma K_2(\gamma)\right) u^0 u_j\left(u \cdot b_{n+1}\right)+\mathfrak{c}^2K_3(\gamma) u^0 b_{n+1, j}\right]\right.\nonumber\\
	&\qquad \left. +\frac{n_0}{\mathfrak{c}^2\gamma K_2(\gamma)}\left[\left(5 K_3(\gamma)+\gamma K_2(\gamma)\right)\left(u^0\right)^2+K_3(\gamma)|u|^2\right] u_j c_{n+1}\right) \nonumber\\
	&\qquad  +\nabla_x \cdot\left(\frac{e_0+P_0}{\mathfrak{c}^2} u_j u a_{n+1}+\frac{n_0}{\gamma K_2(\gamma)}\left(6 K_3(\gamma)+\gamma K_2(\gamma)\right) u_j u\left[\left(u \cdot b_{n+1}\right)+\frac{u^0}{\mathfrak{c}} c_{n+1}\right]\right) \nonumber\\
	&\qquad  +\partial_{x_j}\left(P_0 a_{n+1}\right)+\nabla_x \cdot\left[\frac{\mathfrak{c}^2n_0 K_3(\gamma)}{\gamma K_2(\gamma)}\left(u b_{n+1, j}+u_j b_{n+1}\right)\right] \nonumber\\
	&\qquad +\partial_{x_j}\left(\frac{\mathfrak{c}^2n_0 K_3(\gamma)}{\gamma K_2(\gamma)}\left[\left(u \cdot b_{n+1}\right)+\frac{u^0}{\mathfrak{c}} c_{n+1}\right]\right)+\nabla_x \cdot \int_{\mathbb{R}^3}p_j\hat{p} \sqrt{\mathbf{M}_{\mathfrak{c}}}\{\mathbf{I}-\mathbf{P}_{\mathfrak{c}}\}\left(\frac{F_{n+1}^{\mathfrak{c}}}{\sqrt{\mathbf{M}_{\mathfrak{c}}}}\right) d p=0
\end{align}
for $j=1,2,3$ with $b_{n+1}=\left(b_{n+1,1}, b_{n+1,2}, b_{n+1,3}\right)^t$. 

Multiplying \eqref{2.26} by $\frac{p^0}{\mathfrak{c}}$ and integrating over $\mathbb{R}^3$, one obtains that
\begin{align}\label{2.29-10}
	& \partial_t\left(\frac{e_0\left(u^0\right)^2+P_0|u|^2}{\mathfrak{c}^4} a_{n+1}+\frac{n_0}{\mathfrak{c}^2\gamma K_2(\gamma)}\left[\left(5 K_3(\gamma)+\gamma K_2(\gamma)\right)\left(u^0\right)^2+K_3(\gamma)|u|^2\right]\left(u \cdot b_{n+1}\right)\right. \nonumber\\
	&\qquad \left.+\frac{n_0}{\mathfrak{c}^3\gamma K_2(\gamma)}\left[\left(3 K_3(\gamma)+\gamma K_2(\gamma)\right)\left(u^0\right)^2+3 K_3(\gamma)|u|^2\right] u^0 c_{n+1}\right) \nonumber\\
	&\qquad +\nabla_x \cdot\left(\frac{e_0+P_0}{\mathfrak{c}^3} u^0 u a_{n+1}+\frac{n_0}{\mathfrak{c}\gamma K_2(\gamma)}\left[\left(6 K_3(\gamma)+\gamma K_2(\gamma)\right) u^0 u\left(u \cdot b_{n+1}\right)+\mathfrak{c}^2K_3(\gamma)u^0 b_{n+1}\right]\right. \nonumber\\
	&\qquad\left. +\frac{n_0}{\mathfrak{c}^2\gamma K_2(\gamma)}\left[\left(5 K_3(\gamma)+\gamma K_2(\gamma)\right)\left(u^0\right)^2+K_3(\gamma)|u|^2\right] u c_{n+1}\right)=0.
\end{align}

After a tedious computation, we can rewrite \eqref{2.27}--\eqref{2.29-10} into the following linear symmetric hyperbolic system:
\begin{align}\label{2.19}
\mathbf{A}_0 \partial_t U_{n+1}+\sum_{i=1}^3 \mathbf{A}_i \partial_i U_{n+1}+\mathbf{B} U_{n+1}=\mathbf{S}_{n+1},
\end{align}
where
\begin{equation*} 
	U_{n+1}=\begin{pmatrix}
		a_{n+1} \\
		b_{n+1} \\
		c_{n+1}
	\end{pmatrix},\quad 
\mathbf{S}_{n+1}=\begin{pmatrix}
	-\nabla_x\cdot \int_{\mathbb{R}^3}\hat{p}\sqrt{\mathbf{M}_{\mathfrak{c}}}\{\mathbf{I}-\mathbf{P}_{\mathfrak{c}}\}\Big(\frac{F_{n+1}^{\mathfrak{c}}}{\sqrt{\mathbf{M}_{\mathfrak{c}}}}\Big)dp \\
	-\nabla_x\cdot \int_{\mathbb{R}^3}p\otimes \hat{p}\sqrt{\mathbf{M}_{\mathfrak{c}}}\{\mathbf{I}-\mathbf{P}_{\mathfrak{c}}\}\Big(\frac{F_{n+1}^{\mathfrak{c}}}{\sqrt{\mathbf{M}_{\mathfrak{c}}}}\Big)dp \\
	0
\end{pmatrix}.
\end{equation*}
The matrices $\mathbf{A}_0, \mathbf{A}_i\ (i=1,2,3)$ and $\mathbf{B}$ depend only on the smooth relativistic Euler solution $(n_0,u,T_0)$. To express these matrices, we denote 
%The components of matrix $\mathbf{B}$ can be written explicitly as functions of $n_0$, $u$, $T_0$, and their first order derivatives. For simplicity, denote
$$
h(t,x):=\frac{e_0+P_0}{n_0},\quad h_1(t, x):=\frac{n_0}{\gamma K_2(\gamma)}\left(6 K_3(\gamma)+\gamma K_2(\gamma)\right), \quad h_2(t, x):=\frac{n_0 K_3(\gamma)}{\gamma K_2(\gamma)}.
$$
Then the matrices $\mathbf{A}_0, \mathbf{A}_i,(i=1,2,3)$ in \eqref{2.19} are
\begin{equation*} 
\mathbf{A}_0=\left(\begin{array}{ccc}
	\frac{n_0 u^0}{\mathfrak{c}} & \frac{n_0 u^0 h u^t}{\mathfrak{c}^3} & \frac{e_0\left(u^0\right)^2+P_0|u|^2}{\mathfrak{c}^4}\\
	\frac{n_0 u^0 h u^t}{\mathfrak{c}^3} & \left(\frac{h_1}{\mathfrak{c}} u \otimes u+\mathfrak{c} h_2 \mathbf{I}\right) u^0 & \left(\frac{h_1}{\mathfrak{c}^2}\left(u^0\right)^2-h_2\right) u \\
	\frac{e_0\left(u^0\right)^2+P_0|u|^2}{\mathfrak{c}^4}& \left(\frac{h_1}{\mathfrak{c}^2}\left(u^0\right)^2-h_2\right) u^t & \left(\frac{h_1}{\mathfrak{c}^3}\left(u^0\right)^2-\frac{3h_2}{\mathfrak{c} }\right) u^0
\end{array}\right)
\end{equation*}
and
\begin{equation*}
\mathbf{A}_i=\left(\begin{array}{ccc}
	n_0 u_i & \frac{1}{\mathfrak{c}^2}n_0 h u_i u^t+P_0 \mathbf{e}_i^t & \frac{1}{\mathfrak{c}^3}n_0 h u^0 u_i \\
	\frac{1}{\mathfrak{c}^2}n_0 h u_i u+P_0 \mathbf{e}_i & h_1 u_i u \otimes u+\mathfrak{c}^2h_2\left(u_i \mathbf{I}+\tilde{\mathbf{A}}_i\right) & \left(\frac{h_1}{\mathfrak{c}} u_i u+\mathfrak{c}h_2 \mathbf{e}_i\right) u^0 \\
	\frac{1}{\mathfrak{c}^3}n_0 h u^0 u_i & \left(\frac{h_1}{\mathfrak{c}} u_i u^t+\mathfrak{c}h_2 \mathbf{e}_i^t\right) u^0 & \left(\frac{h_1}{\mathfrak{c}^2}\left(u^0\right)^2-h_2\right) u_i
\end{array}\right),
\end{equation*}
where 
\begin{align*}
	\big(\tilde{\mathbf{A}_i}\big)_{jk}=\delta_{ij}u_k+\delta_{ik}u_j,\quad 1\le j,k\le 3.
\end{align*}
The matrix $\mathbf{B}=(b_{ij})$ has the form
\begin{align*}
	&b_{11}=0,\quad (b_{12},b_{13},b_{14})=\frac{n_0u^0}{\mathfrak{c}}\partial_t\Big(\frac{hu^t}{\mathfrak{c}^2}\Big)+n_0u^t\Big[\nabla_x\Big(\frac{hu}{\mathfrak{c}^2}\Big)\Big]^t+(\nabla_x P_0)^t,\nonumber\\
	&b_{15}=\frac{n_0u^0}{\mathfrak{c}^2}\partial_t\Big(\frac{hu^0}{\mathfrak{c}^2}\Big)+\frac{n_0u}{\mathfrak{c}}\cdot \nabla_x\Big(\frac{hu^0}{\mathfrak{c}^2}\Big)-\partial_t\Big(\frac{P_0}{\mathfrak{c}^2}\Big),\nonumber\\
	&(b_{21},b_{31},b_{41})=\frac{n_0u^0}{\mathfrak{c}}\partial_t\Big(\frac{hu}{\mathfrak{c}^2}\Big)+\nabla_x P_0+\nabla_{x}\Big(\frac{hu}{\mathfrak{c}^2}\Big)n_0u,\nonumber\\
	&(b_{j2},b_{j3},b_{j4})=\frac{n_0u^0}{\mathfrak{c}}\partial_t\Big[\frac{h_1}{n_0}u_ju^t+\frac{\mathfrak{c}^2h_2}{n_0}\mathbf{e}_j^t\Big]+n_0(u\cdot\nabla_x)\Big(\frac{h_1}{n_0}u_ju^t\Big)+n_0u^t \nabla_x\Big(\frac{\mathfrak{c}^2h_2}{n_0}\Big)\mathbf{e}_j^t\nonumber\\
	&\quad \quad \quad \quad +\Big[\nabla_x(\mathfrak{c}^2h_2u_j)\Big]^t+\partial_{x_j}(\mathfrak{c}^2h_2u^t),\nonumber\\
	&b_{j5}=-\partial_t(h_2u_j)+\frac{n_0u^0}{\mathfrak{c}^2}\partial_t\Big(\frac{h_1}{n_0}u_ju^0\Big)+\frac{n_0}{\mathfrak{c}}u^t\nabla_x\Big(\frac{h_1}{n_0}u_ju^0\Big)+\partial_{x_j}(\mathfrak{c}h_2u^0),\nonumber\\
	&b_{51}=\frac{n_0u^0}{\mathfrak{c}^2}\partial_t\Big(\frac{hu^0}{\mathfrak{c}^2}\Big)+\frac{n_0u}{\mathfrak{c}}\cdot \nabla_x\Big(\frac{hu^0}{\mathfrak{c}^2}\Big)-\partial_t\Big(\frac{P_0}{\mathfrak{c}^2}\Big),\nonumber\\
	&(b_{52},b_{53},b_{54})=\frac{n_0u^0}{\mathfrak{c}^2}\partial_t\Big(\frac{h_1}{n_0}u^0u^t\Big)-\partial_t(h_2u^t)+\frac{n_0}{\mathfrak{c}}u^t\Big[\nabla_x\Big(\frac{h_1}{n_0}u^0u\Big)\Big]^t+\Big(\nabla(\mathfrak{c}h_2u^0)\Big)^t,\nonumber\\
	&b_{55}=\frac{n_0u^0}{\mathfrak{c}^3}\partial_t\Big(\frac{h_1}{n_0}(u^0)^2-3\mathfrak{c}^2\frac{h_2}{n_0}|u|^2\Big)+\frac{n_0}{\mathfrak{c}^2}u\cdot \nabla_{x}\Big(\frac{h_1}{n_0}(u^0)^2-3\mathfrak{c}^2\frac{h_2}{n_0}|u|^2\Big)+\nabla_x\cdot(2h_2u).
\end{align*}

Next, we prove the positivity of $\mathbf{A}_0$. Set $\phi(\gamma):=\frac{K_3(\gamma)}{K_2(\gamma)}$. A direct calculation shows that 
\begin{align*}
	\det(\mathbf{A}_0)_{1\times 1}\ge \frac{n_0u^0}{\mathfrak{c}}>0,&\quad \det(\mathbf{A}_0)_{2\times 2}\ge \Big(\frac{n_0u^0}{\mathfrak{c}}\Big)^2\frac{\mathfrak{c}^2\phi}{\gamma}>0,\nonumber\\
	\det(\mathbf{A}_0)_{3\times 3}\ge \Big(\frac{n_0u^0}{\mathfrak{c}}\Big)^3 \Big(\frac{\mathfrak{c}^2\phi}{\gamma}\Big)^2>0,&\quad \det(\mathbf{A}_0)_{4\times 4}\ge \Big(\frac{n_0u^0}{\mathfrak{c}}\Big)^4 \Big(\frac{\mathfrak{c}^2\phi}{\gamma}\Big)^3>0,
\end{align*}
and 
\begin{align}\label{5.19-10}
	\det \mathbf{A}_0=\Big(\frac{n_0u^0}{\mathfrak{c}}\Big)^5\Big(\frac{\mathfrak{c}^2\phi}{\gamma}\Big)^3(u^0)^{-2}\Big\{|u|^2\mathfrak{c}^2(\Psi-\frac{\Psi}{ \gamma \phi}-\frac{\phi}{\gamma})+\mathfrak{c}^4(\Psi-\frac{1}{\gamma^2}-\frac{\phi}{\gamma})\Big\},
\end{align}
where $\Psi:=1+\frac{6}{\gamma}\phi-\phi^2$. To prove the positivity of \eqref{5.19-10}, we use \cite[Proposition 10]{Ruggeri} to get
\begin{align}\label{5.7-20}
	\phi^2-\frac{5}{\gamma}\phi+\frac{1}{\gamma^2}-1<0,
\end{align}
which yields that 
\begin{align}\label{5.21-10}
	\Psi-\frac{1}{\gamma^2}-\frac{\phi}{\gamma}=1+\frac{6}{\gamma}\phi-\phi^2-\frac{1}{\gamma^2}-\frac{\phi}{\gamma}=-(\phi^2-\frac{5}{\gamma}\phi+\frac{1}{\gamma^2}-1)>0.
\end{align}
A direct calculation shows that
\begin{align*}
	\mathfrak{h}:=&\Psi-\frac{\Psi}{\gamma \phi}-\frac{\phi}{\gamma}=\frac{1}{\phi \Psi}(\phi \Psi-\frac{\Psi}{\gamma}-\frac{\phi^2}{\gamma})\\
	=&\frac{1}{\phi \Psi}\Big\{\phi (1+\frac{6}{\gamma}\phi-\phi^2)-\frac{1}{\gamma}(1+\frac{6}{\gamma}\phi-\phi^2)-\frac{\phi^2}{\gamma}\Big\}\\
	=&-\frac{1}{\phi \Psi}\Big\{\phi^3+\frac{1}{\gamma}+\frac{6}{\gamma^2}\phi-\phi-\frac{6}{\gamma}\phi^2\Big\}.
\end{align*}
Observing $\phi=\frac{K_3(\gamma)}{K_2(\gamma)}=\frac{4}{\gamma}+\frac{K_1(\gamma)}{K_2(\gamma)}:=\frac{4}{\gamma}+\phi_1$, one has
\begin{align}\label{5.8-20}
	&\phi^3+\frac{1}{\gamma}+\frac{6}{\gamma^2}\phi-\phi-\frac{6}{\gamma}\phi^2 \nonumber\\
	&=\gamma \phi_1^3+6\phi_1^2+\frac{6}{\gamma}\phi_1-\gamma \phi_1-3-\frac{8}{\gamma^2} \nonumber\\
	&=(\gamma \phi_1^3+4\phi_1^2-\gamma \phi_1-1)+\frac{2}{\gamma^2}(\gamma^2 \phi_1^2+3\gamma \phi_1-\gamma^2-3)-\frac{2}{\gamma^2}.
\end{align}
Applying \cite[Proposion 10]{Ruggeri} again, we know that both of the first two terms on the RHS of \eqref{5.8-20} are negative. Thus one gets
\begin{align*}
	\mathfrak{h}>0,
\end{align*}
which, together with \eqref{5.19-10} and \eqref{5.21-10}, yields that
\begin{align*}
	\det \mathbf{A}_0\ge \Big(\frac{n_0u^0}{\mathfrak{c}}\Big)^5\Big(\frac{\mathfrak{c}^2\phi}{\gamma}\Big)^3\frac{\mathfrak{c}^4}{(u^0)^2}\{-	(\phi^2-\frac{5}{\gamma}\phi+\frac{1}{\gamma^2}-1)\}>0.
\end{align*}
Therefore $\mathbf{A}_0$ is actually a positive definite matrix.

%%%%%%%%%%%%%%%%%%%%%%%%%%%%%%%%%%%%%%%%%%%%%%%%%%%%%%%%%%%%%%%
\subsection{Uniform-in-$\mathfrak{c}$ estimates on $F^{\mathfrak{c}}_n$}
%%%%%%%%%%%%%%%%%%%%%%%%%%%%%%%%%%%%%%%%%%%%%%%%%%%%%%%%%%%%%%%

\begin{Proposition}\label{prop6.3}
	Let the local relativistic Maxwellian $F^{\mathfrak{c}}_0=\mathbf{M}_{\mathfrak{c}}(n_0, u, T_0; p)$ be as in \eqref{1.13-00} formed by $(n_0(t, x), u(t, x), T_0(t,x))$ which is a smooth solution to the relativistic Euler equations \eqref{4.8-0} on a time interval $[0, T] \times \mathbb{R}^3$. Then we can construct the smooth terms $F^{\mathfrak{c}}_1, \ldots, F^{\mathfrak{c}}_{2k-1}$ of the Hilbert expansion in $(t, x) \in[0, T] \times \mathbb{R}^3$ such that, for any $0<\lambda<1$, the following estimates hold
	\begin{align}\label{5.23-10}
		\left|F^{\mathfrak{c}}_n(t, x, p)\right| \leq C(\lambda) \mathbf{M}_{\mathfrak{c}}^{\lambda}(n_0(t, x), u(t, x), T_0(t,x); p), \quad n=1,2, \ldots, 2k-1
	\end{align}
	and 
	\begin{align}\label{5.24-10}
		\left|\partial^{m} F^{\mathfrak{c}}_n(t, x, p)\right|\leq  C(\lambda) \mathbf{M}_{\mathfrak{c}}^{\lambda}(n_0(t, x), u(t, x), T_0(t,x); p), \quad n=1,2, \ldots, 2k-1,\quad m\ge 1,
	\end{align}
	where $\partial^{m}:=\partial^m_{t,x}$. We emphasize that the constants in \eqref{5.23-10} and \eqref{5.24-10} are independent of $\mathfrak{c}$.
\end{Proposition}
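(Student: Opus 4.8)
The plan is to construct $F^{\mathfrak{c}}_n$ inductively for $n=1,\dots,2k-1$, following the classical Hilbert expansion scheme but tracking $\mathfrak{c}$-uniformity at every step by using the estimates of Section 4. First I would set up the induction: suppose $F^{\mathfrak{c}}_1,\dots,F^{\mathfrak{c}}_n$ have been constructed with the bounds \eqref{5.23-10}--\eqref{5.24-10} (with possibly $\lambda$ shrunk by a controlled amount at each stage, which is harmless since $n$ ranges over a finite set). From the system \eqref{1.18-0}, the microscopic part of $F^{\mathfrak{c}}_{n+1}/\sqrt{\mathbf{M}_{\mathfrak{c}}}$ is determined algebraically: applying $\mathbf{L}_{\mathfrak{c}}^{-1}$ to the equation for $F^{\mathfrak{c}}_n$ projected onto $\mathcal{N}_{\mathfrak{c}}^{\perp}$ gives
\begin{align*}
\{\mathbf{I}-\mathbf{P}_{\mathfrak{c}}\}\Big(\frac{F^{\mathfrak{c}}_{n+1}}{\sqrt{\mathbf{M}_{\mathfrak{c}}}}\Big)=\mathbf{L}_{\mathfrak{c}}^{-1}\Big[\text{(known source built from }F^{\mathfrak{c}}_0,\dots,F^{\mathfrak{c}}_n\text{)}\Big].
\end{align*}
Here I would invoke Proposition \ref{prop5.4}: the source, after dividing by $\sqrt{\mathbf{M}_{\mathfrak{c}}}$, is bounded by $C(1+|p|)^{-m}\mathbf{M}_{\mathfrak{c}}^{(\lambda'-1)/2}$ for suitable $m>3/2$ and $\lambda'$ slightly less than the current $\lambda$ — this uses the inductive decay bounds together with the uniform-in-$\mathfrak{c}$ estimates on $\Gamma_{\mathfrak{c}}$, $\nu_{\mathfrak{c}}$ (Lemma \ref{lem2.9}) and the kernel bounds (Lemmas \ref{lem4.3-00}--\ref{lem4.4-00}, Lemma \ref{lem5.1}). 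Proposition \ref{prop5.4} then yields $|\{\mathbf{I}-\mathbf{P}_{\mathfrak{c}}\}(F^{\mathfrak{c}}_{n+1}/\sqrt{\mathbf{M}_{\mathfrak{c}}})|\lesssim \mathbf{M}_{\mathfrak{c}}^{\lambda'/2}$ with constant independent of $\mathfrak{c}$, hence the desired bound on the microscopic part of $F^{\mathfrak{c}}_{n+1}$.

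Next I would determine the macroscopic part $\mathbf{P}_{\mathfrak{c}}(F^{\mathfrak{c}}_{n+1}/\sqrt{\mathbf{M}_{\mathfrak{c}}})=[a_{n+1}+b_{n+1}\cdot p+c_{n+1}p^0/\mathfrak{c}]\sqrt{\mathbf{M}_{\mathfrak{c}}}$ by solving the linear symmetric hyperbolic system \eqref{2.19} for $U_{n+1}=(a_{n+1},b_{n+1},c_{n+1})^t$. The source term $\mathbf{S}_{n+1}$ is built from $x$-derivatives of the already-controlled microscopic part of $F^{\mathfrak{c}}_{n+1}$, so it lies in $C([0,T];H^{N_0-1})$ with norms uniform in $\mathfrak{c}$ (using the moment identities via Lemma \ref{lem4.11-0}, which give $\mathfrak{c}$-uniform coefficients since $K_3(\gamma)/K_2(\gamma)\to1$ and $u^0/\mathfrak{c}\to1$ under Proposition \ref{thm-retoce} and \eqref{3.90-10}). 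The matrix $\mathbf{A}_0$ has been shown positive definite with $\mathfrak{c}$-uniform lower bound (via \eqref{5.19-10}--\eqref{5.21-10} and \cite{Ruggeri}), and $\mathbf{A}_i$, $\mathbf{B}$ have $\mathfrak{c}$-uniformly bounded smooth coefficients, so the standard linear symmetric-hyperbolic energy estimate gives $U_{n+1}\in C([0,T];H^{N_0-1})$ with $\mathfrak{c}$-uniform bounds; Sobolev embedding (with $N_0\ge10$ providing enough derivatives to absorb the successive losses over the finitely many stages $n\le 2k-1$) then turns these into pointwise bounds on $a_{n+1},b_{n+1},c_{n+1}$ and their $t,x$-derivatives. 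Multiplying by $\sqrt{\mathbf{M}_{\mathfrak{c}}}$ and combining with the microscopic estimate, one absorbs the polynomial factors $|p|^j$ into $\mathbf{M}_{\mathfrak{c}}^{(\lambda'-\lambda'')/2}$ at the cost of a further tiny shrinkage of the exponent, giving \eqref{5.23-10}--\eqref{5.24-10} for $F^{\mathfrak{c}}_{n+1}$. The derivative bound \eqref{5.24-10} follows by differentiating the construction and re-running the same argument, noting that $\partial^m\sqrt{\mathbf{M}_{\mathfrak{c}}}$ produces only polynomial-in-$p$ and $\mathfrak{c}$-uniformly-bounded-in-$(t,x)$ prefactors times $\sqrt{\mathbf{M}_{\mathfrak{c}}}$, again absorbable by an exponent shift.

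The main obstacle I anticipate is maintaining genuine \emph{uniformity in $\mathfrak{c}$} through the algebra of the moment system: the matrices in \eqref{2.19} and the moment identities of Lemma \ref{lem4.11-0} contain many factors of $\mathfrak{c}$, $u^0$, $\gamma$, $K_j(\gamma)$ that individually blow up or vanish as $\mathfrak{c}\to\infty$, and one must verify that all such factors organize into ratios ($u^0/\mathfrak{c}$, $\mathfrak{c}^2/\gamma=T_0$, $K_3/K_2$, etc.) that converge to finite nonzero limits — this is exactly what the asymptotic expansions in Lemma \ref{lem2.1-00}, the relations \eqref{3.32-0}--\eqref{3.34-0}, and Proposition \ref{thm-retoce} are for, but the bookkeeping is delicate. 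A secondary technical point is the exponent loss: each application of $\mathbf{L}_{\mathfrak{c}}^{-1}$ (via Proposition \ref{prop5.4}) and each absorption of a polynomial weight costs a small amount of the Maxwellian exponent, so I would fix at the outset a decreasing sequence $\lambda=\lambda_0>\lambda_1>\dots>\lambda_{2k-1}>0$ and prove the $n$-th bound with exponent $\lambda_n$; since $k$ is fixed (equal to $3$) this is a finite descent and the final exponent is still positive, which suffices because the statement only asserts \eqref{5.23-10}--\eqref{5.24-10} for \emph{some} $\lambda\in(0,1)$ — indeed for every $\lambda<1$ by starting the descent arbitrarily close to $1$. The rest is routine bootstrapping and Sobolev embedding, and I would relegate the explicit verification of the moment-matrix limits to the same style of computation already carried out in Lemmas \ref{lem3.3}--\ref{lem3.7} and Lemma \ref{lem2.10-0}.
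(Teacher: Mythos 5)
Your proposal mirrors the paper's proof: decompose $F^{\mathfrak{c}}_{n+1}$ into microscopic and macroscopic parts, obtain the microscopic part by inverting $\mathbf{L}_{\mathfrak{c}}$ with the uniform-in-$\mathfrak{c}$ decay of Proposition~\ref{prop5.4}, solve the linear symmetric hyperbolic system \eqref{2.19} (with $\mathbf{A}_0$ uniformly positive definite) for $(a_{n+1},b_{n+1},c_{n+1})$ via energy estimates and Gr\"{o}nwall, then induct while shrinking the Maxwellian exponent a finite number of times. The only small slip is the exponent you state for the divided source, which should read $\lambda'/2$ (a decaying power), not $(\lambda'-1)/2$; correcting this does not alter the structure of the argument, which is essentially identical to the paper's.
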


\begin{proof}
It is noted that $\mathbf{A}_0$, $\mathbf{A}_i$ and $\mathbf{B}$ in \eqref{2.19} depend only on the smooth functions $n_0(t,x)$, $u(t,x)$ and $T_0(t,x)$. 
%By Lemma \ref{thm-re}, there exists some $N_0\ge 5$ such that
%\begin{align}\label{4.42-0}
%	&\sup_{0\le t\le T}\|\partial_t(n_0(t)-\bar{n},u(t),T_0(t)-\bar{T})\|_{\infty(\mathbb{R}^3)}\lesssim 1,\nonumber\\
%    &\sup_{0\le t\le T}\|(n_0(t)-\bar{n},u(t),T_0(t)-\bar{T})\|_{H^{N_0}(\mathbb{R}^3)}\lesssim 1.
%\end{align}
%Recall 
%\begin{align}
%	\mathbf{P}_{\mathfrak{c}}\Big(\frac{F_{n+1}^{\mathfrak{c}}}{\sqrt{\mathbf{M}_{\mathfrak{c}}}}\Big)=\{a_{n+1}+b_{n+1}\cdot p+c_{n+1}\frac{p^0}{\mathfrak{c}}\}\sqrt{\mathbf{M}_{\mathfrak{c}}}.
%\end{align}
Denote $\psi_1:=\{\mathbf{I}-\mathbf{P}_{\mathfrak{c}}\}\Big(\frac{F_1^{\mathfrak{c}}}{\sqrt{\mathbf{M}_{\mathfrak{c}}}}\Big)$, then one has 
\begin{align*}
	F_1^{\mathfrak{c}}=\Big(a_1+b_1\cdot p+c_1\frac{p^0}{\mathfrak{c}}\Big)\mathbf{M}_{\mathfrak{c}}+\sqrt{\mathbf{M}_{\mathfrak{c}}}\psi_1,
\end{align*}
which yields that
\begin{align}\label{6.46-0}
	\partial F_1^{\mathfrak{c}}=\Big(\partial a_1+\partial b_1\cdot p+\partial c_1\frac{p^0}{\mathfrak{c}}\Big)\mathbf{M}_{\mathfrak{c}}+\Big(a_1+b_1\cdot p+c_1\frac{p^0}{\mathfrak{c}}\Big)\partial \mathbf{M}_{\mathfrak{c}}+\partial \sqrt{\mathbf{M}_{\mathfrak{c}}}\psi_1+\sqrt{\mathbf{M}_{\mathfrak{c}}}\partial \psi_1,
\end{align}
where $\partial=\partial_t$ or $\partial=\partial_{x_j}$ for $j=1,2,3$. A direct calculation shows that 
\begin{align*}
	|\partial \mathbf{M}_{\mathfrak{c}}|\le C \mathbf{M}_{\mathfrak{c}}^{1-},
\end{align*}
where $C$ depends on $\|\nabla_{t,x}(n_0,u,T_0)\|_{\infty}$. We denote
\begin{align*}
	g_1:=Q_{\mathfrak{c}}(\mathbf{M}_{\mathfrak{c}},\sqrt{\mathbf{M}_{\mathfrak{c}}}\psi_1)+Q_{\mathfrak{c}}(\sqrt{\mathbf{M}_{\mathfrak{c}}}\psi_1,\mathbf{M}_{\mathfrak{c}}),
\end{align*}
%It follows that
%\begin{align}
%	-\frac{g_1}{\sqrt{\mathbf{M}_{\mathfrak{c}}}}&=\mathbf{L}_{\mathfrak{c}}\psi_1=\mathbf{L}_{\mathfrak{c}}\Big\{\{\mathbf{I-P_{\mathfrak{c}}}\}\Big(\frac{F_1^{\mathfrak{c}}}{\sqrt{\mathbf{M}_{\mathfrak{c}}}}\Big)\Big\}=\mathbf{L}_{\mathfrak{c}} \Big(\frac{F_1^{\mathfrak{c}}}{\sqrt{\mathbf{M}_{\mathfrak{c}}}}\Big)\nonumber\\
%	&=-\frac{1}{\sqrt{\mathbf{M}_{\mathfrak{c}}}}(\partial_t \mathbf{M}_{\mathfrak{c}}+\hat{p}\cdot \nabla_x \mathbf{M}_{\mathfrak{c}}),
%\end{align}
%which yields that
then it follows from \eqref{1.18-0} that
\begin{align}\label{5.29-10}
	g_1=\partial_t \mathbf{M}_{\mathfrak{c}}+\hat{p}\cdot \nabla_x \mathbf{M}_{\mathfrak{c}},
\end{align}
which implies that $|\partial^m g_1|\lesssim \mathbf{M}_{\mathfrak{c}}^{1-}$ for any $m\ge 0$. 

To estimate $\partial \psi_1$, we apply $\partial$ to \eqref{5.29-10} to obtain 
\begin{align*} 
	\partial  g_1&=Q_{\mathfrak{c}}(\partial  \mathbf{M}_{\mathfrak{c}},\sqrt{\mathbf{M}_{\mathfrak{c}}}\psi_1)+Q_{\mathfrak{c}}(\sqrt{\mathbf{M}_{\mathfrak{c}}}\psi_1,\partial \mathbf{M}_{\mathfrak{c}})+Q_{\mathfrak{c}}(\mathbf{M}_{\mathfrak{c}},\partial\sqrt{\mathbf{M}_{\mathfrak{c}}}\psi_1)+Q_{\mathfrak{c}}(\partial\sqrt{\mathbf{M}_{\mathfrak{c}}}\psi_1,\mathbf{M}_{\mathfrak{c}})\nonumber\\
	&\qquad +Q_{\mathfrak{c}}(\mathbf{M}_{\mathfrak{c}},\sqrt{\mathbf{M}_{\mathfrak{c}}}\partial \psi_1)+Q_{\mathfrak{c}}(\sqrt{\mathbf{M}_{\mathfrak{c}}}\partial \psi_1,\mathbf{M}_{\mathfrak{c}}),
\end{align*}
which yields that
\begin{align}\label{2.54}
	\mathbf{L}_{\mathfrak{c}}(\{\mathbf{I-P_{\mathfrak{c}}}\}\partial \psi_1)=\mathbf{L}_{\mathfrak{c}}\partial \psi_1&=-\frac{1}{\sqrt{\mathbf{M}_{\mathfrak{c}}}}\partial  g_1+\frac{1}{\sqrt{\mathbf{M}_{\mathfrak{c}}}}\Big\{Q_{\mathfrak{c}}(\partial  \mathbf{M}_{\mathfrak{c}},\sqrt{\mathbf{M}_{\mathfrak{c}}}\psi_1)+Q_{\mathfrak{c}}(\sqrt{\mathbf{M}_{\mathfrak{c}}}\psi_1,\partial \mathbf{M}_{\mathfrak{c}})\Big\}\nonumber\\
	&\qquad +\frac{1}{\sqrt{\mathbf{M}_{\mathfrak{c}}}}\Big\{Q_{\mathfrak{c}}(\mathbf{M}_{\mathfrak{c}},\partial\sqrt{\mathbf{M}_{\mathfrak{c}}}\psi_1)+Q_{\mathfrak{c}}(\partial\sqrt{\mathbf{M}_{\mathfrak{c}}}\psi_1,\mathbf{M}_{\mathfrak{c}})\Big\}.
\end{align}
Using the exponential decay of $\mathbf{L}_{\mathfrak{c}}^{-1}$ in Proposition \ref{prop5.4}, we have
\begin{align}\label{6.51-0}
	|\psi_1|=\Big|\{\mathbf{I}-\mathbf{P}_{\mathfrak{c}}\}\Big(\frac{F_1^{\mathfrak{c}}}{\sqrt{\mathbf{M}_{\mathfrak{c}}}}\Big)\Big|=\Big|\mathbf{L}_{\mathfrak{c}}^{-1}\Big(-\frac{1}{\sqrt{\mathbf{M}_{\mathfrak{c}}}}(\partial_t \mathbf{M}_{\mathfrak{c}}+\hat{p}\cdot \nabla_x \mathbf{M}_{\mathfrak{c}})\Big)\Big| \lesssim \mathbf{M}_{\mathfrak{c}}^{\frac{1}{2}-},
\end{align}
which, together with $|\partial g_1|\lesssim \mathbf{M}_{\mathfrak{c}}^{1-}$, yields that the RHS of \eqref{2.54} can be bounded by $\mathbf{M}_{\mathfrak{c}}^{\frac{1}{2}-}$. Using Proposition \ref{prop5.4} again, we obtain
\begin{align}\label{6.51-00}
	|\{\mathbf{I-P_{\mathfrak{c}}}\}\partial \psi_1|\lesssim \mathbf{M}_{\mathfrak{c}}^{\frac{1}{2}-}.
\end{align} 
On the other hand, it is clear that 
\begin{align*}
	|\mathbf{P}_{\mathfrak{c}}\partial \psi_1|\lesssim \mathbf{M}_{\mathfrak{c}}^{\frac{1}{2}-},
\end{align*} 
which, together with \eqref{6.51-00}, implies that
\begin{align*}
	|\partial \psi_1|\lesssim \mathbf{M}_{\mathfrak{c}}^{\frac{1}{2}-}.
\end{align*}
Similarly, one can deduce that
\begin{align}\label{6.56-0}
	|\partial^m \psi_1|\lesssim \mathbf{M}_{\mathfrak{c}}^{\frac{1}{2}-},\quad m\ge 1.
\end{align}

Next we consider the estimate on the macroscopic parts $(a_1,b_1,c_1)$. Using \eqref{6.56-0}, we get 
\begin{align*}
	\|\mathbf{S}_1\|_{H^{N_0-1}} \lesssim 1.
\end{align*}
One obtains from Lemma \ref{thm-re} that
\begin{align*}
\left\|\partial_t \mathbf{A}_0\right\|_{\infty} +\sum_{\alpha=0}^3\left\|\nabla_x\mathbf{A}_{\alpha}\right\|_{H^{N_0-1}}+\left\|\mathbf{B}\right\|_{H^{N_0-1}}\lesssim 1.
\end{align*}
Applying standard energy estimate, one gets
\begin{align*}
	\frac{d}{d t} \left\|\left(a_1, b_1, c_1\right)(t)\right\|_{H^{N_0-3}}^2
	& \lesssim \left\|\left(a_1, b_1, c_1\right)(t)\right\|_{H^{N_0-3}}^2+\left\|\left(a_1, b_1, c_1\right)(t)\right\|_{H^{N_0-3}},
\end{align*}
which, together with Gr\"{o}nwall's inequality, yields that
\begin{align}\label{4.46-0}
	\left\|\left(a_1, b_1, c_1\right)(t)\right\|_{H^{N_0-3}}\lesssim 1.
\end{align}
Hence it follows from \eqref{5.1-20} that
\begin{align*} 
	\Big|\mathbf{P}_{\mathfrak{c}}\Big(\frac{F_1^{\mathfrak{c}}}{\sqrt{\mathbf{M}_{\mathfrak{c}}}}\Big)\Big|\lesssim \mathbf{M}_{\mathfrak{c}}^{\frac{1}{2}-},
\end{align*}
which, together with \eqref{6.51-0}, yields that
\begin{align*}
	|F_1^{\mathfrak{c}}|\lesssim \mathbf{M}_{\mathfrak{c}}^{1-}. 
\end{align*}

For $\partial F_1^{\mathfrak{c}}$, on account of \eqref{6.46-0}, \eqref{6.56-0} and  \eqref{4.46-0}, one obtains
\begin{align*} 
	|\partial F_1^{\mathfrak{c}}|\lesssim \mathbf{M}_{\mathfrak{c}}^{1-}.
\end{align*}
Similar arguments lead to
\begin{align*}
	|\partial^m F_1^{\mathfrak{c}}|\lesssim \mathbf{M}_{\mathfrak{c}}^{1-},\quad m\ge 1.
\end{align*}
By induction, we can prove that
\begin{align*}
	|F_{n+1}^{\mathfrak{c}}|\lesssim \mathbf{M}_{\mathfrak{c}}^{1-},\quad |\partial^m F_{n+1}^{\mathfrak{c}}|\lesssim \mathbf{M}_{\mathfrak{c}}^{1-},\quad n=0,1,\cdots, 2k-2,\quad m\ge 1.
\end{align*}
Therefore the proof is completed.
\end{proof}

%%%%%%%%%%%%%%%%%%%%%%%%%%%%%%%%%%%%%%%%%%%%%%%%%%%%%%%%%%%%%%%
\section{Uniform in $\mathfrak{c}$ and $\varepsilon$ estimates on the remainder $F^{\varepsilon,\mathfrak{c}}_R$}
%%%%%%%%%%%%%%%%%%%%%%%%%%%%%%%%%%%%%%%%%%%%%%%%%%%%%%%%%%%%%%%
In this section, we shall prove our main results, Theorem \ref{thm1.1-0} and Theorem \ref{thm1.3-0}.
As in \cite{Guo2,Speck}, we define 
\begin{align}\label{7.1-0}
	f_R^{\varepsilon,\mathfrak{c}}(t,x,p)=\frac{F^{\varepsilon,\mathfrak{c}}_R(t,x,p)}{\sqrt{\mathbf{M}_{\mathfrak{c}}(t,x,p)}} 
\end{align}
and 
\begin{align}\label{7.2-0}
	h_R^{\varepsilon,\mathfrak{c}}(t,x,p)=\frac{F^{\varepsilon,\mathfrak{c}}_R(t,x,p)}{\sqrt{J_{\mathfrak{c}}(p)}}.
\end{align}
We first present two uniform-in-$\mathfrak{c}$ estimates on the nonlinear operators.

 \begin{Lemma}\label{lem7.3}
	It holds that
	\begin{align*} 
		\Big| \frac{w_{\ell}}{\sqrt{\mathbf{M}_{\mathfrak{c}}}}Q_{\mathfrak{c}}(h_1\sqrt{\mathbf{M}_{\mathfrak{c}}},h_2\sqrt{\mathbf{M}_{\mathfrak{c}}}) \Big|\lesssim \nu_{\mathfrak{c}}(p)\|h_1\|_{\infty,\ell}\|h_2\|_{\infty,\ell},
	\end{align*}	
    where the constant is independent of $\mathfrak{c}$.
\end{Lemma}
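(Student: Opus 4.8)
The plan is to decompose $Q_{\mathfrak{c}}=Q_{\mathfrak{c}}^{+}-Q_{\mathfrak{c}}^{-}$ as in \eqref{1.11} and estimate the loss and gain parts separately, in each case reducing the $\omega,q$–integral to one of the form $\int_{\mathbb{R}^3}\int_{\mathbb{S}^2}v_{\phi}\mathbf{M}_{\mathfrak{c}}^{\alpha}(q)\,d\omega\,dq$ with some $\alpha>0$, which is comparable to $\nu_{\mathfrak{c}}(p)$ uniformly in $\mathfrak{c}$ by \eqref{2.90}. For the loss part, bilinearization gives $Q_{\mathfrak{c}}^{-}(h_1\sqrt{\mathbf{M}_{\mathfrak{c}}},h_2\sqrt{\mathbf{M}_{\mathfrak{c}}})(p)=h_1(p)\sqrt{\mathbf{M}_{\mathfrak{c}}(p)}\int v_{\phi}h_2(q)\sqrt{\mathbf{M}_{\mathfrak{c}}(q)}\,d\omega\,dq$; dividing by $\sqrt{\mathbf{M}_{\mathfrak{c}}(p)}$, multiplying by $w_{\ell}(p)$ and using $|w_{\ell}(p)h_1(p)|\le\|h_1\|_{\infty,\ell}$ together with $|h_2(q)|\le w_{\ell}(q)^{-1}\|h_2\|_{\infty,\ell}\le\|h_2\|_{\infty,\ell}$ leaves $\|h_1\|_{\infty,\ell}\|h_2\|_{\infty,\ell}\int v_{\phi}\mathbf{M}_{\mathfrak{c}}^{1/2}(q)\,d\omega\,dq$, which is $\lesssim\nu_{\mathfrak{c}}(p)\|h_1\|_{\infty,\ell}\|h_2\|_{\infty,\ell}$ by \eqref{2.90} with $\alpha=\tfrac12$.

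For the gain part, the key point is the collisional invariance of $\mathbf{M}_{\mathfrak{c}}$: since $u^{\mu}$ does not depend on momentum and $p^{\mu}+q^{\mu}=p'^{\mu}+q'^{\mu}$ by \eqref{1.8}, the explicit form \eqref{1.13-00} yields $\mathbf{M}_{\mathfrak{c}}(p)\mathbf{M}_{\mathfrak{c}}(q)=\mathbf{M}_{\mathfrak{c}}(p')\mathbf{M}_{\mathfrak{c}}(q')$, so that $\frac{1}{\sqrt{\mathbf{M}_{\mathfrak{c}}(p)}}Q_{\mathfrak{c}}^{+}(h_1\sqrt{\mathbf{M}_{\mathfrak{c}}},h_2\sqrt{\mathbf{M}_{\mathfrak{c}}})(p)=\int v_{\phi}h_1(p')h_2(q')\sqrt{\mathbf{M}_{\mathfrak{c}}(q)}\,d\omega\,dq$, with the awkward $\sqrt{\mathbf{M}_{\mathfrak{c}}(p)}$ in the denominator having cancelled. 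To handle the weight I will use the spatial part of \eqref{1.8}, $p=p'+q'-q$, which gives $1+|p|^2\le 3(1+|p'|^2)(1+|q'|^2)(1+|q|^2)$ and hence $w_{\ell}(p)\lesssim w_{\ell}(p')w_{\ell}(q')w_{\ell}(q)$ with an $\ell$-dependent constant; combined with $|h_1(p')|\le w_{\ell}(p')^{-1}\|h_1\|_{\infty,\ell}$ and $|h_2(q')|\le w_{\ell}(q')^{-1}\|h_2\|_{\infty,\ell}$ this reduces matters to $\|h_1\|_{\infty,\ell}\|h_2\|_{\infty,\ell}\int v_{\phi}w_{\ell}(q)\sqrt{\mathbf{M}_{\mathfrak{c}}(q)}\,d\omega\,dq$. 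Finally, writing $w_{\ell}(q)\sqrt{\mathbf{M}_{\mathfrak{c}}(q)}=\bigl(w_{\ell}(q)\mathbf{M}_{\mathfrak{c}}^{1/4}(q)\bigr)\mathbf{M}_{\mathfrak{c}}^{1/4}(q)$ and invoking the uniform bound $\mathbf{M}_{\mathfrak{c}}(q)\lesssim e^{-2\bar{c}_1|q|}$ from \eqref{4.14-20} (polynomial against uniform exponential decay), I get $w_{\ell}(q)\sqrt{\mathbf{M}_{\mathfrak{c}}(q)}\lesssim\mathbf{M}_{\mathfrak{c}}^{1/4}(q)$ with a constant independent of $\mathfrak{c}$, and \eqref{2.90} with $\alpha=\tfrac14$ closes the estimate.

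The bilinearizations of $Q_{\mathfrak{c}}^{\pm}$ and the elementary weight inequality are routine; there is no genuinely hard step. The only thing requiring attention is bookkeeping on uniformity: the constants in $w_{\ell}(q)\mathbf{M}_{\mathfrak{c}}^{1/4}(q)\lesssim 1$ and in the comparison \eqref{2.90} must be $\mathfrak{c}$–independent, which is precisely what \eqref{4.14-20} and the remark following Lemma \ref{lem2.9} supply, so the resulting estimate is uniform in $\mathfrak{c}$ as claimed.
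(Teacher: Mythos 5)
Your proof is correct, and it achieves the paper's estimate by a somewhat more elementary route. The paper's proof spends most of its effort establishing the sharper momentum inequality $|p|\lesssim |p'|+|q'|$ and $|q|\lesssim |p'|+|q'|$ (the claim \eqref{7.10-0}), which requires conservation of energy $p^0+q^0=p'^0+q'^0$ together with a case split on whether $|p|\le |q|/2$ or $|q|/2\le |p|\le |q|$; from this the paper deduces $w_\ell(p)\lesssim w_\ell(p')w_\ell(q')$ and only then applies collisional invariance and \eqref{2.90} with $\alpha=\tfrac12$. You bypass that entire claim by using the trivial triangle inequality $p=p'+q'-q$ to get the weaker bound $w_\ell(p)\lesssim w_\ell(p')w_\ell(q')w_\ell(q)$, and then you compensate for the extra factor $w_\ell(q)$ by splitting $\sqrt{\mathbf{M}_{\mathfrak{c}}(q)}=\mathbf{M}_{\mathfrak{c}}^{1/4}(q)\cdot\mathbf{M}_{\mathfrak{c}}^{1/4}(q)$ and absorbing $w_\ell(q)\mathbf{M}_{\mathfrak{c}}^{1/4}(q)\lesssim 1$ via the uniform bound \eqref{4.14-20}, closing with \eqref{2.90} at $\alpha=\tfrac14$ instead of $\alpha=\tfrac12$. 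The trade is clean: your weight inequality costs nothing to prove, whereas the paper's sharper one is a little technical; in exchange, the paper never needs to touch the pointwise decay \eqref{4.14-20} in this lemma, while you rely on it to kill the spurious $w_\ell(q)$. Both give constants uniform in $\mathfrak{c}$, since \eqref{4.14-20} and \eqref{2.90} are uniform, so the conclusion is the same. One minor note: the sharper inequality \eqref{7.10-0} is proved in the paper precisely here as a standalone claim, so if it is reused elsewhere your shortcut would not substitute for it; but for the purpose of Lemma \ref{lem7.3} alone your argument is complete.
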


\begin{proof}
	Noting
	\begin{align*}
		p^0+q^0=p^{\prime 0}+q^{\prime 0},\quad p+q=p^{\prime}+q^{\prime },
	\end{align*}
	we claim that
	\begin{align}\label{7.10-0}
		|p|\lesssim |p'|+|q'|,\quad |q|\lesssim |p'|+|q'|.
	\end{align}
	Actually, without loss of generality, we may assume that $|p|\le |q|$. Denote $r:=\max\{|p'|,|q'|\}$, then one has
	\begin{align*}
		2\sqrt{\mathfrak{c}^2+|p|^2}\le \sqrt{\mathfrak{c}^2+|p|^2}+\sqrt{\mathfrak{c}^2+|q|^2}=\sqrt{\mathfrak{c}^2+|p'|^2}+\sqrt{\mathfrak{c}^2+|q'|^2}\le 2\sqrt{\mathfrak{c}^2+r^2},
	\end{align*}
	which yields that
	\begin{align*}
		|p|^2\le r^2\le |p'|^2+|q'|^2,
	\end{align*}
	Thus it holds 
	\begin{align}\label{6.6-10}
		|p|\le |p'|+|q'|.
	\end{align}
	
	If $|p|\le \frac{|q|}{2}$, one has $|p+q|\ge |q|-|p|\ge \frac{|q|}{2}$, which yields that
	\begin{align*}
		\frac{|q|}{2}\le |p+q|=|p'+q'|\le |p'|+|q'|.
	\end{align*}
	
	If $\frac{|q|}{2}\le |p|\le |q|$, it follows from \eqref{6.6-10} that 
	\begin{align*}
		|q|\le 2|p|\le 2(|p'|+|q'|).
	\end{align*}
	Hence the claim \eqref{7.10-0} holds.
	
	Now it follows from \eqref{7.10-0} that
	\begin{align*} 
		w_{\ell}(p)\lesssim w_{\ell}(p')w_{\ell}(q'),
	\end{align*}
	which, together with from \eqref{2.90}, yields that 
	\begin{align*}
		&\Big| \frac{w_{\ell}}{\sqrt{\mathbf{M}_{\mathfrak{c}}}}Q_{\mathfrak{c}}(h_1\sqrt{\mathbf{M}_{\mathfrak{c}}},h_2\sqrt{\mathbf{M}_{\mathfrak{c}}}) \Big|\nonumber\\
		&\le \frac{w_{\ell}(p)}{\sqrt{\mathbf{M}_{\mathfrak{c}}(p)}}\int_{\mathbb{R}^3}\int_{\mathbb{S}^2}v_\phi\Big|h_1(p')h_2(q')\sqrt{\mathbf{M}_{\mathfrak{c}}(p')\mathbf{M}_{\mathfrak{c}}(q')}-h_1(p)h_2(q)\sqrt{\mathbf{M}_{\mathfrak{c}}(p)\mathbf{M}_{\mathfrak{c}}(q)}\Big|d\omega dq\nonumber\\
		&\le \int_{\mathbb{R}^3}\int_{\mathbb{S}^2}v_\phi\Big[|w_{\ell}(p')h_1(p')|\cdot |w_{\ell}(q')h_2(q')|+|w_{\ell}(p)h_1(p)|\cdot |h(q)|\Big]\sqrt{\mathbf{M}_{\mathfrak{c}}(q)}d\omega dq\nonumber\\
		&\lesssim \nu_{\mathfrak{c}}(p)\|h_1\|_{\infty,\ell}\|h_2\|_{\infty,\ell}.
	\end{align*}
	Therefore the proof is completed.
\end{proof}

\begin{Lemma}\label{lem7.4} 
	For any $\ell \geq 9$, it holds that
	\begin{align*} 
		\left|\left\langle\Gamma_{\mathfrak{c}}\left(h_1, h_2\right), h_3\right\rangle \right| \lesssim \left\|h_3\right\|_{\infty, \ell}\left\|h_2\right\|_2\left\|h_1\right\|_2 .
	\end{align*}
	Furthermore, if $\chi(p)$ satisfies $|\chi(p)|\lesssim e^{-\delta_1|p|}$ for some positive constant $\delta_1>0$, then we have
	\begin{align*} 
		\left|\left\langle\Gamma_{\mathfrak{c}}\left(h_1, \chi\right), h_3\right\rangle \right|+\left|\left\langle\Gamma_{\mathfrak{c}}\left(\chi, h_1\right), h_3\right\rangle \right| \lesssim \left\|h_3\right\|_{\nu_{\mathfrak{c}}}\left\|h_1\right\|_{\nu_{\mathfrak{c}}},
	\end{align*}
   where the constants are independent of $\mathfrak{c}$. 
\end{Lemma}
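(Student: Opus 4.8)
\textbf{Proof plan for Lemma \ref{lem7.4}.}

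The plan is to reduce both assertions to the $L^2$--$L^\infty$-type bilinear estimates that are standard for the hard-sphere collision operator, but to carry them out with weights/decay rates that are uniform in $\mathfrak{c}$. The starting point is the representation of $\Gamma_{\mathfrak{c}}(h_1,h_2)$ as an integral against the M\o ller velocity kernel $v_\phi(p,q)$, together with the pointwise bound $v_\phi(p,q)\le \min\{\mathfrak{c},\tfrac12|p-q|\}\le \tfrac12|p-q|$ from Lemma \ref{lem4.1-00}(ii), which is already $\mathfrak{c}$-free on the relevant range, and the uniform Maxwellian bound $\mathbf{M}_{\mathfrak{c}}(t,x,p)\lesssim e^{-2\bar{c}_1|p|}$ in \eqref{4.14-20}. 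Writing out the inner product and moving the $h_3$-weight onto the gain and loss terms, I would use the collisional conservation $p+q=p'+q'$, $p^0+q^0=p'^0+q'^0$ together with the consequence \eqref{7.10-0} (already proved in Lemma \ref{lem7.3}) that $|p|\lesssim|p'|+|q'|$ and $|q|\lesssim|p'|+|q'|$; this lets a weight $w_\ell(p)$ on the $h_3$-slot be distributed as $w_\ell(p')w_\ell(q')$ on the gain term and absorbed into the exponential decay of $\sqrt{\mathbf{M}_{\mathfrak{c}}(p')\mathbf{M}_{\mathfrak{c}}(q')}$, and similarly on the loss term into $\sqrt{\mathbf{M}_{\mathfrak{c}}(p)\mathbf{M}_{\mathfrak{c}}(q)}$. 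After this reduction, $\|h_3\|_{\infty,\ell}$ comes out in front, and one is left with an integral of the form $\iint |p-q|\, e^{-\delta(|p|+|q|)+\cdots}\,|h_1||h_2|$ (on the $p,q,p',q'$ collision manifold), which by Cauchy--Schwarz in the $(p',q')$ and $(p,q)$ variables and the usual symmetry of the collision measure is bounded by $\|h_1\|_2\|h_2\|_2$, with the residual momentum integrals converging because $\ell\ge 9$ makes the $w_\ell^{-1}$ factors — after losing some powers to the weight transfer — still integrable. This step is essentially the relativistic analogue of Lemma 2.3 / Lemma 3 in the classical Guo-type references, and the only new point is bookkeeping the constants' independence of $\mathfrak{c}$, which is guaranteed by \eqref{4.14-20} and Lemma \ref{lem4.1-00}.

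For the second assertion, where one slot is a fixed function $\chi$ with $|\chi(p)|\lesssim e^{-\delta_1|p|}$, the idea is that the extra exponential decay of $\chi$ lets me trade the $L^2_x L^2_p$-type estimate for a $\nu_{\mathfrak{c}}$-weighted one. Concretely, in $\langle \Gamma_{\mathfrak{c}}(h_1,\chi),h_3\rangle$ I would split the integrand using $|p-q|\lesssim (1+|p|)+(1+|q|)\lesssim \nu_{\mathfrak{c}}(p)^{1/?}\cdots$ — more carefully, since $\nu_{\mathfrak{c}}(p)\cong 1+|p|$ for $|p|\le\mathfrak{c}$ and $\cong\mathfrak{c}$ for $|p|\ge\mathfrak{c}$ by Lemma \ref{lem2.9}, and since $v_\phi\le\min\{\mathfrak{c},\tfrac12|p-q|\}$, the velocity factor is dominated by a quantity comparable to $\sqrt{\nu_{\mathfrak{c}}(p)\nu_{\mathfrak{c}}(q)}$ up to the exponential weights. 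Then I put one copy of $\sqrt{\nu_{\mathfrak{c}}(p)}$ with $h_3$, use the decay of $\chi$ to absorb $\sqrt{\nu_{\mathfrak{c}}(q)}$ (or $\sqrt{\nu_{\mathfrak{c}}(p')}$ after change of variables) together with all Maxwellian weights and the $h_3$-weight transfer, and the remaining factor pairs with $h_1$ to give $\|h_1\|_{\nu_{\mathfrak{c}}}$. The gain and loss terms are handled the same way, using \eqref{7.10-0} once more so that in the gain term the decay of $\chi(p')$ or $\chi(q')$ controls the large-momentum behaviour on both the $p$- and $q$-sides. The estimate for $\Gamma_{\mathfrak{c}}(\chi,h_1)$ is symmetric.

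The main obstacle I anticipate is not the algebraic structure but the uniform-in-$\mathfrak{c}$ control of the momentum integrals in the regime $|p|\gtrsim\mathfrak{c}$, where $\nu_{\mathfrak{c}}(p)$ saturates at $\mathfrak{c}$ and the naive bound $v_\phi\le\tfrac12|p-q|$ is far from sharp; one must use the refined $v_\phi\le\min\{\mathfrak{c},\tfrac12|p-q|\}$ and, for the weight-transfer step, be careful that the powers of $(1+|p'|+|q'|)$ lost in bounding $w_\ell(p)$ by $w_\ell(p')w_\ell(q')$ (namely $\ell$ powers) are genuinely reabsorbed by $e^{-2\bar{c}_1(|p'|+|q'|)}$ rather than by the Maxwellian's $\mathfrak{c}$-dependent normalisation constant. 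Because \eqref{4.14-20} gives a normalisation-free exponential bound and Lemma \ref{lem4.1-00} gives $\mathfrak{c}$-free velocity bounds, all constants can be taken independent of $\mathfrak{c}$, and the requirement $\ell\ge 9$ is exactly what makes the leftover polynomial-times-exponential integrals in $\mathbb{R}^3_q$ (or $\mathbb{R}^3_{q'}$) finite after at most two applications of Cauchy--Schwarz; I would double-check the precise threshold but expect $9$ to be comfortably sufficient.
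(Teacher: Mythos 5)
The paper does not give a proof of this lemma: it simply remarks that the $\mathfrak{c}=1$ case is Lemma 3.6 in Speck--Strain \cite{Speck} and that the argument carries over verbatim once one observes that all the constants can be taken independent of $\mathfrak{c}$. So there is no line-by-line comparison to make; what I can say is whether your outline supplies the $\mathfrak{c}$-uniformity that the paper's citation is implicitly asking the reader to check, and it does. You correctly identify the three $\mathfrak{c}$-free inputs on which everything rests: the pointwise M\o ller-velocity bound $v_\phi\le\min\{\mathfrak{c},\tfrac12|p-q|\}$ from Lemma~\ref{lem4.1-00}(ii), the normalisation-free Maxwellian bound $\mathbf{M}_{\mathfrak{c}}\lesssim e^{-2\bar{c}_1|p|}$ from \eqref{4.14-20}, and the conservation estimate \eqref{7.10-0}; with these in hand the Speck--Strain Cauchy--Schwarz argument goes through with constants visibly independent of $\mathfrak{c}$. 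Your plan for the second bound — putting one factor of $\sqrt{\nu_{\mathfrak{c}}}$ on $h_3$ and one on $h_1$ and letting the exponential decay of $\chi$ absorb the rest — is also the right move, and the worry you raise about the $|p|\gtrsim\mathfrak{c}$ regime is precisely why one keeps $v_\phi\le\mathfrak{c}$ rather than only $v_\phi\le\tfrac12|p-q|$.

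One point worth tightening: your phrase \emph{``lets a weight $w_\ell(p)$ on the $h_3$-slot be distributed as $w_\ell(p')w_\ell(q')$\ldots and absorbed into the exponential decay of $\sqrt{\mathbf{M}_{\mathfrak{c}}(p')\mathbf{M}_{\mathfrak{c}}(q')}$''} conflates the weight mechanism of Lemma~\ref{lem7.3} with what happens here. In Lemma~\ref{lem7.3} a factor $w_\ell(p)$ sits in the numerator and must be pushed through to the post-collisional variables; in the present lemma the bound $|h_3(p)|\le\|h_3\|_{\infty,\ell}w_\ell(p)^{-1}$ produces a \emph{decay} factor $w_\ell(p)^{-1}\le 1$. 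Where \eqref{7.10-0} actually enters is in the gain term: after the collision-measure change of variables you are left to show that $\sqrt{\mathbf{M}_{\mathfrak{c}}(q')}\,w_\ell(p')^{-1}$, whose decay lives in the primed variables, still controls the $(p,q)$-integration. There one uses that $|p|+|q|\cong|p'|+|q'|$ to split into the case $|p'|\gtrsim|p|+|q|$, where $w_\ell(p')^{-1}$ gives the decay (this is where $\ell\ge 9$ is comfortably sufficient), and the complementary case $|q'|\gtrsim|p|+|q|$, where the $\mathfrak{c}$-uniform exponential bound on $\sqrt{\mathbf{M}_{\mathfrak{c}}(q')}$ does. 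With that clarification your plan is a faithful and complete expansion of the citation the paper makes.
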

    We point out that Lemma \ref{lem7.4} has been proved in \cite{Speck} when $\mathfrak{c}=1$. For the general case, the proof is very similar to the one in \cite{Speck} and we omit the details here for brevity. 

To establish the uniform in $\mathfrak{c}$ and $\varepsilon$ estimates for the remainder $F^{\varepsilon,\mathfrak{c}}_R$, we shall use the  $L^2-L^{\infty}$ framework from \cite{Guo1}. We first consider the $L^2$ estimate.

\begin{Lemma}[$L^2$ Estimate]\label{lem7.1}
	Let $(n_0(t, x), u(t, x), T_0(t, x))$ be the smooth solution to the relativistic Euler equations \eqref{4.8-0} generated by Lemma \ref{thm-re}. Let $\mathbf{M}_{\mathfrak{c}}(n_0, u, T_0 ; p)$, $f_R^{\varepsilon,\mathfrak{c}}$, $h_R^{\varepsilon,\mathfrak{c}}$ be defined in \eqref{1.13-00}, \eqref{7.1-0}  and \eqref{7.2-0}, respectively, and let $\zeta_0>0$ be the positive constant in Proposition \ref{prop4.13}. Then there exist constants $\varepsilon_0>0$ and $C>0$, such that for all $\varepsilon \in (0, \varepsilon_0]$, it holds
  \begin{align}\label{6.20-10}
    \frac{d}{d t}\left\|f_R^{\varepsilon,\mathfrak{c}}\right\|_2^2(t)+\frac{\zeta_0}{2 \varepsilon}\left\|\{\mathbf{I}-\mathbf{P}_{\mathfrak{c}}\} f_R^{\varepsilon,\mathfrak{c}}\right\|_{\nu_{\mathfrak{c}}}^2(t) \leq C\Big\{\sqrt{\varepsilon}\|\varepsilon^{\frac{3}{2}} h_R^{\varepsilon,\mathfrak{c}}\|_{\infty, \ell}(t)+1\Big\}\left\{\left\|f_R^{\varepsilon,\mathfrak{c}}\right\|_2^2+\left\|f_R^{\varepsilon,\mathfrak{c}}\right\|_2\right\},
  \end{align}
  where the constant $C$ depends upon the $L^2$ norms and the $L^{\infty}$ norms of the terms $\mathbf{M}_{\mathfrak{c}}, F_1^{\mathfrak{c}}, \ldots, F^{\mathfrak{c}}_{2k-1}$ as well as their first derivatives, and $C$ is independent of $\mathfrak{c}$.
\end{Lemma}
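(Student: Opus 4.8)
The plan is to derive the energy identity for $f_R^{\varepsilon,\mathfrak{c}}$ and then estimate each term on the right-hand side using the uniform-in-$\mathfrak{c}$ bounds established earlier. First I would write down the equation satisfied by $f_R^{\varepsilon,\mathfrak{c}}$: starting from \eqref{1.19-0} for $F_R^{\varepsilon,\mathfrak{c}}$, dividing by $\sqrt{\mathbf{M}_{\mathfrak{c}}}$ and using $F_0^{\mathfrak{c}}=\mathbf{M}_{\mathfrak{c}}$, one obtains a transport equation of the schematic form
\begin{align*}
	\partial_t f_R^{\varepsilon,\mathfrak{c}}+\hat p\cdot\nabla_x f_R^{\varepsilon,\mathfrak{c}}+\frac{1}{\varepsilon}\mathbf{L}_{\mathfrak{c}}f_R^{\varepsilon,\mathfrak{c}}
	=\varepsilon^{k-1}\Gamma_{\mathfrak{c}}(f_R^{\varepsilon,\mathfrak{c}},f_R^{\varepsilon,\mathfrak{c}})
	+\sum_{i=1}^{2k-1}\varepsilon^{i-1}\big\{\Gamma_{\mathfrak{c}}(\tfrac{F_i^{\mathfrak{c}}}{\sqrt{\mathbf{M}_{\mathfrak{c}}}},f_R^{\varepsilon,\mathfrak{c}})+\Gamma_{\mathfrak{c}}(f_R^{\varepsilon,\mathfrak{c}},\tfrac{F_i^{\mathfrak{c}}}{\sqrt{\mathbf{M}_{\mathfrak{c}}}})\big\}
	+\varepsilon^k\frac{A}{\sqrt{\mathbf{M}_{\mathfrak{c}}}}
	+\frac{\partial_t\sqrt{\mathbf{M}_{\mathfrak{c}}}+\hat p\cdot\nabla_x\sqrt{\mathbf{M}_{\mathfrak{c}}}}{\sqrt{\mathbf{M}_{\mathfrak{c}}}}f_R^{\varepsilon,\mathfrak{c}},
\end{align*}
where the last term comes from the fact that $\mathbf{M}_{\mathfrak{c}}$ is $(t,x)$-dependent; this coefficient is bounded (and uniformly in $\mathfrak{c}$, by the uniform-in-$\mathfrak{c}$ smoothness of $(n_0,u,T_0)$ in Lemma \ref{thm-re} together with Lemma \ref{lem2.9}). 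Taking the $L^2_{x,p}$ inner product with $f_R^{\varepsilon,\mathfrak{c}}$ and using that $\hat p\cdot\nabla_x$ is skew-adjoint gives
\begin{align*}
	\frac12\frac{d}{dt}\|f_R^{\varepsilon,\mathfrak{c}}\|_2^2+\frac{1}{\varepsilon}\langle\mathbf{L}_{\mathfrak{c}}f_R^{\varepsilon,\mathfrak{c}},f_R^{\varepsilon,\mathfrak{c}}\rangle
	=\text{(RHS terms)}.
\end{align*}

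The coercivity estimate of Proposition \ref{prop4.13} converts $\frac1\varepsilon\langle\mathbf{L}_{\mathfrak{c}}f_R^{\varepsilon,\mathfrak{c}},f_R^{\varepsilon,\mathfrak{c}}\rangle$ into a good term $\frac{\zeta_0}{\varepsilon}\|\{\mathbf{I}-\mathbf{P}_{\mathfrak{c}}\}f_R^{\varepsilon,\mathfrak{c}}\|_{\nu_{\mathfrak{c}}}^2$, half of which will be kept on the left and half used to absorb contributions from the right. The key point is that each right-hand side term must be controlled either by $\|f_R^{\varepsilon,\mathfrak{c}}\|_2^2+\|f_R^{\varepsilon,\mathfrak{c}}\|_2$ (with $\varepsilon$-independent constant) or by a small multiple of the dissipation. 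For the linear terms $\varepsilon^{i-1}\langle\Gamma_{\mathfrak{c}}(F_i^{\mathfrak{c}}/\sqrt{\mathbf{M}_{\mathfrak{c}}},f_R^{\varepsilon,\mathfrak{c}}),f_R^{\varepsilon,\mathfrak{c}}\rangle$ (and the symmetric ones), since $F_i^{\mathfrak{c}}/\sqrt{\mathbf{M}_{\mathfrak{c}}}$ decays like $\mathbf{M}_{\mathfrak{c}}^{\lambda-1/2}\lesssim e^{-\delta_1|p|}$ by Proposition \ref{prop6.3}, the second part of Lemma \ref{lem7.4} gives a bound $\lesssim\|f_R^{\varepsilon,\mathfrak{c}}\|_{\nu_{\mathfrak{c}}}^2$; the $i=1$ term is the dangerous one because it carries no extra power of $\varepsilon$, so one must further split $f_R^{\varepsilon,\mathfrak{c}}=\mathbf{P}_{\mathfrak{c}}f_R^{\varepsilon,\mathfrak{c}}+\{\mathbf{I}-\mathbf{P}_{\mathfrak{c}}\}f_R^{\varepsilon,\mathfrak{c}}$ and use that $\Gamma_{\mathfrak{c}}(g,\cdot)$ and $\Gamma_{\mathfrak{c}}(\cdot,g)$ vanish when both arguments are hydrodynamic (so the pure $\mathbf{P}_{\mathfrak{c}}$–$\mathbf{P}_{\mathfrak{c}}$ interaction is absent), leaving terms that contain at least one factor $\{\mathbf{I}-\mathbf{P}_{\mathfrak{c}}\}f_R^{\varepsilon,\mathfrak{c}}$; these are bounded by $\|\{\mathbf{I}-\mathbf{P}_{\mathfrak{c}}\}f_R^{\varepsilon,\mathfrak{c}}\|_{\nu_{\mathfrak{c}}}\big(\|\{\mathbf{I}-\mathbf{P}_{\mathfrak{c}}\}f_R^{\varepsilon,\mathfrak{c}}\|_{\nu_{\mathfrak{c}}}+\|\mathbf{P}_{\mathfrak{c}}f_R^{\varepsilon,\mathfrak{c}}\|_{\nu_{\mathfrak{c}}}\big)$ and then, using $\|\mathbf{P}_{\mathfrak{c}}f_R^{\varepsilon,\mathfrak{c}}\|_{\nu_{\mathfrak{c}}}\lesssim\|f_R^{\varepsilon,\mathfrak{c}}\|_2$ (the macroscopic part is a finite combination of $\chi_\alpha^{\mathfrak{c}}$ with uniformly bounded $\nu_{\mathfrak{c}}$-weighted norms) and Young's inequality, one part is absorbed into $\frac{\zeta_0}{2\varepsilon}\|\{\mathbf{I}-\mathbf{P}_{\mathfrak{c}}\}f_R^{\varepsilon,\mathfrak{c}}\|_{\nu_{\mathfrak{c}}}^2$ while the remainder is $\lesssim\|f_R^{\varepsilon,\mathfrak{c}}\|_2^2$. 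For the source term $\varepsilon^k\langle A/\sqrt{\mathbf{M}_{\mathfrak{c}}},f_R^{\varepsilon,\mathfrak{c}}\rangle$, the bounds on $F_i^{\mathfrak{c}}$ from Proposition \ref{prop6.3} give $\|A/\sqrt{\mathbf{M}_{\mathfrak{c}}}\|_2\lesssim 1$, hence this term is $\lesssim\|f_R^{\varepsilon,\mathfrak{c}}\|_2$; the coefficient term $\lesssim\|f_R^{\varepsilon,\mathfrak{c}}\|_2^2$ directly.

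The genuinely nonlinear term $\varepsilon^{k-1}\langle\Gamma_{\mathfrak{c}}(f_R^{\varepsilon,\mathfrak{c}},f_R^{\varepsilon,\mathfrak{c}}),f_R^{\varepsilon,\mathfrak{c}}\rangle$ is where the $L^\infty$ quantity enters and is the main obstacle. Here I would apply the first estimate of Lemma \ref{lem7.4} with $h_3=f_R^{\varepsilon,\mathfrak{c}}$: one needs an $L^\infty$ bound on a weighted version of $f_R^{\varepsilon,\mathfrak{c}}$, but $f_R^{\varepsilon,\mathfrak{c}}=\sqrt{J_{\mathfrak{c}}/\mathbf{M}_{\mathfrak{c}}}\,h_R^{\varepsilon,\mathfrak{c}}$ and the hypothesis \eqref{1.53-0} gives $\sqrt{J_{\mathfrak{c}}/\mathbf{M}_{\mathfrak{c}}}\lesssim J_{\mathfrak{c}}^{-\alpha/2}\cdot J_{\mathfrak{c}}^{1/2}=J_{\mathfrak{c}}^{(1-\alpha)/2}$, which is unbounded, so instead one writes $\|f_R^{\varepsilon,\mathfrak{c}}\|_{\infty,\ell'}$ in terms of $\|h_R^{\varepsilon,\mathfrak{c}}\|_{\infty,\ell}$ at the cost of a weight, absorbing the resulting polynomial loss into the exponential weight $w_\ell$ (taking $\ell\ge 9$ as assumed). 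Thus $|\varepsilon^{k-1}\langle\Gamma_{\mathfrak{c}}(f_R^{\varepsilon,\mathfrak{c}},f_R^{\varepsilon,\mathfrak{c}}),f_R^{\varepsilon,\mathfrak{c}}\rangle|\lesssim\varepsilon^{k-1}\|f_R^{\varepsilon,\mathfrak{c}}\|_{\infty,\ell'}\|f_R^{\varepsilon,\mathfrak{c}}\|_2^2$; since $k=3$, we have $\varepsilon^{k-1}=\varepsilon^2=\sqrt{\varepsilon}\cdot\varepsilon^{3/2}$ up to a factor $\sqrt\varepsilon$ — rewriting $\varepsilon^2\|f_R^{\varepsilon,\mathfrak{c}}\|_{\infty,\ell'}$ as $\sqrt\varepsilon\cdot(\varepsilon^{3/2}\|h_R^{\varepsilon,\mathfrak{c}}\|_{\infty,\ell})$ modulo the weight change, this produces precisely the term $\sqrt{\varepsilon}\|\varepsilon^{3/2}h_R^{\varepsilon,\mathfrak{c}}\|_{\infty,\ell}\|f_R^{\varepsilon,\mathfrak{c}}\|_2^2$ in \eqref{6.20-10}. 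Collecting everything, keeping $\frac{\zeta_0}{2\varepsilon}\|\{\mathbf{I}-\mathbf{P}_{\mathfrak{c}}\}f_R^{\varepsilon,\mathfrak{c}}\|_{\nu_{\mathfrak{c}}}^2$ on the left, and choosing $\varepsilon_0$ small enough that all the absorptions are valid, yields \eqref{6.20-10} with a constant depending only on the stated norms of $\mathbf{M}_{\mathfrak{c}},F_1^{\mathfrak{c}},\dots,F_{2k-1}^{\mathfrak{c}}$ and uniform in $\mathfrak{c}$. The main subtlety to watch throughout is that every constant produced — from the coercivity $\zeta_0$, from Lemmas \ref{lem7.3}–\ref{lem7.4}, from the $\mathbf{P}_{\mathfrak{c}}$-projection norms, and from the $\mathbf{M}_{\mathfrak{c}}$-to-$J_{\mathfrak{c}}$ weight comparisons — is genuinely $\mathfrak{c}$-independent, which is exactly what the earlier uniform-in-$\mathfrak{c}$ machinery (Proposition \ref{prop4.13}, Proposition \ref{prop6.3}, Lemma \ref{lem2.9}, \eqref{1.53-0}) was built to guarantee.
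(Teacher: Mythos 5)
Your overall structure (energy estimate, coercivity, term-by-term bounds) matches the paper, but there is a genuine gap in your treatment of the coefficient term $\frac{\{\partial_t+\hat p\cdot\nabla_x\}\sqrt{\mathbf{M}_{\mathfrak{c}}}}{\sqrt{\mathbf{M}_{\mathfrak{c}}}}$: you claim this coefficient is bounded and conclude that its contribution is $\lesssim\|f_R^{\varepsilon,\mathfrak{c}}\|_2^2$ directly. That is false. Because $\mathbf{M}_{\mathfrak{c}}(t,x,p)$ is a local Maxwellian, $\partial_t\log\mathbf{M}_{\mathfrak{c}}$ and $\nabla_x\log\mathbf{M}_{\mathfrak{c}}$ contain factors of $p$ and $p^0$ (see the expansion the paper records in its analogue of \eqref{6.25-10}), and after multiplying by $\hat p$ the coefficient grows like $(1+|p|)^3$; it is uniformly bounded in $\mathfrak{c}$ but not in $p$. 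This growth cannot be swallowed by $\|f_R^{\varepsilon,\mathfrak{c}}\|_2^2$ alone. The paper handles it by splitting the integral over $\{1+|p|\ge\kappa/\sqrt\varepsilon\}$ and $\{1+|p|\le\kappa/\sqrt\varepsilon\}$: on the first region the polynomial factor is absorbed using the extra decay of $h_R^{\varepsilon,\mathfrak{c}}$, producing $C_\kappa\varepsilon^2\|h_R^{\varepsilon,\mathfrak{c}}\|_{\infty,\ell}\|f_R^{\varepsilon,\mathfrak{c}}\|_2$; on the second region one decomposes $f_R^{\varepsilon,\mathfrak{c}}=\mathbf{P}_{\mathfrak{c}}f_R^{\varepsilon,\mathfrak{c}}+\{\mathbf{I}-\mathbf{P}_{\mathfrak{c}}\}f_R^{\varepsilon,\mathfrak{c}}$ and the polynomial weight, cut off at height $\kappa/\sqrt\varepsilon$, is controlled by paying a factor $\kappa^2/\varepsilon$ in front of the microscopic dissipation, which is then absorbed by the coercivity after fixing $\kappa\sim\sqrt{\zeta_0/C}$. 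This is where a crucial piece of the $\sqrt\varepsilon\|\varepsilon^{3/2}h_R^{\varepsilon,\mathfrak{c}}\|_{\infty,\ell}$ factor in \eqref{6.20-10} originates (together with the nonlinear term you did identify); your account omits it and would not close.

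Two smaller points. First, your concern about converting $\|f_R^{\varepsilon,\mathfrak{c}}\|_{\infty,\ell}$ into $\|h_R^{\varepsilon,\mathfrak{c}}\|_{\infty,\ell}$ is misdirected: from the lower bound $J_{\mathfrak{c}}/C\le\mathbf{M}_{\mathfrak{c}}$ in \eqref{1.53-0} one gets $|f_R^{\varepsilon,\mathfrak{c}}|=\sqrt{J_{\mathfrak{c}}/\mathbf{M}_{\mathfrak{c}}}\,|h_R^{\varepsilon,\mathfrak{c}}|\le\sqrt C\,|h_R^{\varepsilon,\mathfrak{c}}|$, so the conversion is $\mathfrak{c}$-uniformly bounded with no loss of weight — you applied the other side of the two-sided bound, which gives a useless lower bound. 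Second, for the linear $\Gamma_{\mathfrak{c}}(F_i^{\mathfrak{c}}/\sqrt{\mathbf{M}_{\mathfrak{c}}},\cdot)$ terms, the decomposition into $\mathbf{P}_{\mathfrak{c}}$ and $\{\mathbf{I}-\mathbf{P}_{\mathfrak{c}}\}$ parts that you propose is unnecessary (and the assertion that $\Gamma_{\mathfrak{c}}$ vanishes on purely hydrodynamic pairs is not what is used): the second bound of Lemma \ref{lem7.4}, valid since $F_i^{\mathfrak{c}}/\sqrt{\mathbf{M}_{\mathfrak{c}}}$ decays exponentially by Proposition \ref{prop6.3}, gives $\lesssim\|f_R^{\varepsilon,\mathfrak{c}}\|_{\nu_{\mathfrak{c}}}^2\lesssim\|f_R^{\varepsilon,\mathfrak{c}}\|_2^2+\|\{\mathbf{I}-\mathbf{P}_{\mathfrak{c}}\}f_R^{\varepsilon,\mathfrak{c}}\|_{\nu_{\mathfrak{c}}}^2$ directly, and the latter is absorbed by the coercivity for $\varepsilon$ small.
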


   \begin{proof}
   	Plugging $F^{\varepsilon,\mathfrak{c}}_R=f_R^{\varepsilon,\mathfrak{c}}\sqrt{\mathbf{M}_{\mathfrak{c}}}$ into \eqref{1.19-0}, one has
   	\begin{align}\label{7.25-0}
   		& \partial_t f_R^{\varepsilon,\mathfrak{c}}+\hat{p} \cdot \nabla_x f_R^{\varepsilon,\mathfrak{c}}+\frac{1}{\varepsilon}\mathbf{L}_{\mathfrak{c}}f_R^{\varepsilon,\mathfrak{c}}=-\frac{\{\partial_t+\hat{p}\cdot \nabla_x\}\sqrt{\mathbf{M}_{\mathfrak{c}}}}{\sqrt{\mathbf{M}_{\mathfrak{c}}}}f_R^{\varepsilon,\mathfrak{c}}+\varepsilon^{k-1}\Gamma_{\mathfrak{c}}(f_R^{\varepsilon,\mathfrak{c}},f_R^{\varepsilon,\mathfrak{c}})\nonumber\\
   		&\quad +\sum_{i=1}^{2k-1} \varepsilon^{i-1}\Big\{\Gamma_{\mathfrak{c}}\Big(\frac{F_i^{\mathfrak{c}}}{\sqrt{\mathbf{M}_{\mathfrak{c}}}},f_R^{\varepsilon,\mathfrak{c}}\Big)+\Gamma_{\mathfrak{c}}\Big(f_R^{\varepsilon,\mathfrak{c}},\frac{F_i^{\mathfrak{c}}}{\sqrt{\mathbf{M}_{\mathfrak{c}}}}\Big)\Big\}+\varepsilon^k \bar{A},
   	\end{align}
   	where
   	\begin{align*}
   		\bar{A}: =\sum_{\substack{i+j\ge 2k+1 \\ 2 \leq i, j \leq 2k-1}} \varepsilon^{i+j-1-2k} \Gamma_{\mathfrak{c}}\Big(\frac{F_i^{\mathfrak{c}}}{\sqrt{\mathbf{M}_{\mathfrak{c}}}},\frac{F_i^{\mathfrak{c}}}{\sqrt{\mathbf{M}_{\mathfrak{c}}}}\Big).
   	\end{align*}
    Multiplying \eqref{7.25-0} by $f_R^{\varepsilon,\mathfrak{c}}$ and integrating over $\mathbb{R}^3\times \mathbb{R}^3$, one has 
    \begin{align*}
    	 &\big\langle\partial_t f_R^{\varepsilon,\mathfrak{c}}+\hat{p} \cdot \nabla_x f_R^{\varepsilon,\mathfrak{c}}+\frac{1}{\varepsilon} \mathbf{L}_{\mathfrak{c}}f_R^{\varepsilon,\mathfrak{c}}, f_R^{\varepsilon,\mathfrak{c}}\big\rangle=-\Big\langle\Big(\frac{\left\{\partial_t+\hat{p} \cdot \nabla_x\right\} \sqrt{\mathbf{M}_{\mathfrak{c}}}}{\sqrt{\mathbf{M}_{\mathfrak{c}}}}\Big) f_R^{\varepsilon,\mathfrak{c}}, f_R^{\varepsilon,\mathfrak{c}}\Big\rangle+\langle \varepsilon^{k-1}\Gamma_{\mathfrak{c}}(f_R^{\varepsilon,\mathfrak{c}},f_R^{\varepsilon,\mathfrak{c}}),f_R^{\varepsilon,\mathfrak{c}}\rangle\nonumber\\
    	 &\qquad \qquad +\Big\langle \sum_{i=1}^{2k-1} \varepsilon^{i-1}\Big\{\Gamma_{\mathfrak{c}}\Big(\frac{F_i^{\mathfrak{c}}}{\sqrt{\mathbf{M}_{\mathfrak{c}}}},f_R^{\varepsilon,\mathfrak{c}}\Big)+\Gamma_{\mathfrak{c}}\Big(f_R^{\varepsilon,\mathfrak{c}},\frac{F_i^{\mathfrak{c}}}{\sqrt{\mathbf{M}_{\mathfrak{c}}}}\Big)\Big\}, f_R^{\varepsilon,\mathfrak{c}}\Big\rangle+\langle \varepsilon^k \bar{A}, f_R^{\varepsilon,\mathfrak{c}}\rangle. 
    \end{align*}
   	It follows from Proposition \ref{prop4.13} that
   	\begin{align*}
   	  \big\langle\partial_t f_R^{\varepsilon,\mathfrak{c}}+\hat{p} \cdot \nabla_x f_R^{\varepsilon,\mathfrak{c}}+\frac{1}{\varepsilon} \mathbf{L}_{\mathfrak{c}}f_R^{\varepsilon,\mathfrak{c}}, f_R^{\varepsilon,\mathfrak{c}}\big\rangle \geq \frac{1}{2} \frac{d}{d t}\left\|f_R^{\varepsilon,\mathfrak{c}}\right\|_2^2+\frac{\zeta_0}{\varepsilon}\left\|\{\mathbf{I}-\mathbf{P}_{\mathfrak{c}}\} f_R^{\varepsilon,\mathfrak{c}}\right\|_{\nu_{\mathfrak{c}}}^2 .
   \end{align*}
   	
   	For $\partial=\partial_t$ or $\partial=\partial_{x_i}$, it holds that
   	\begin{align}\label{6.25-10}
   		&\frac{\partial \mathbf{M}_{\mathfrak{c}}}{\mathbf{M}_{\mathfrak{c}}}=\frac{\partial n_0}{n_0}-3\frac{\partial T_0}{T_0}+\frac{\partial T_0}{T^2_0}\Big(u^0p^0-\mathfrak{c}^2\frac{K_1(\gamma)}{K_2(\gamma)}\Big)-\frac{\partial T_0}{T^2_0}\sum_{i=1}^3u_ip_i +\frac{1}{T_0}\Big(\sum_{i=1}^3p_i\partial u_i-\frac{\partial u\cdot u}{u^0}p^0\Big).
   	\end{align}
   A direct calculation shows that $$\Big|u^0p^0-\mathfrak{c}^2\frac{K_1(\gamma)}{K_2(\gamma)}\Big|\lesssim (1+|p|)^2 C(n_0, u, T_0),$$
   which, together with \eqref{6.25-10}, yields that
   	\begin{align*}
   	\Big|\frac{\left\{\partial_t+\hat{p} \cdot \nabla_x\right\} \sqrt{\mathbf{M}_{\mathfrak{c}}}}{\sqrt{\mathbf{M}_{\mathfrak{c}}}}\Big| \lesssim  (1+|p|)^3 C(n_0, u, T_0).
   \end{align*}
   	For any $0<\sqrt{\varepsilon}\le \kappa$, we obtain
   		\begin{align*}
   		&\Big|\Big\langle\Big(\frac{\left\{\partial_t+\hat{p} \cdot \nabla_x\right\} \sqrt{\mathbf{M}_{\mathfrak{c}}}}{\sqrt{\mathbf{M}_{\mathfrak{c}}}}\Big) f_R^{\varepsilon,\mathfrak{c}}, f_R^{\varepsilon,\mathfrak{c}}\Big\rangle \Big| \nonumber\\
   		&\le \Big|\int_{\{1+|p| \geq \frac{\kappa}{\sqrt{\varepsilon}}\}} dxdp\Big|+\Big|\int_{\{1+|p| \le \frac{\kappa}{\sqrt{\varepsilon}}\}} dxdp \Big| \nonumber\\
   		&\leq C_{\kappa}\varepsilon^2 \|\nabla_x(n_0,u,T_0)\|_{2} \cdot \|h_R^{\varepsilon,\mathfrak{c}}\|_{\infty,\ell} \cdot  \|f_R^{\varepsilon,\mathfrak{c}}\|_2\nonumber\\
   		&\qquad +C \|\nabla_x(n_0,u,T_0)\|_{L^{\infty}}\cdot \|(1+|p|)^{\frac{3}{2}}f_R^{\varepsilon,\mathfrak{c}}\mathbf{1}_{\{1+|p| \le \frac{\kappa}{\sqrt{\varepsilon}}\}}\|_2^2\nonumber\\
   		&\leq C_{\kappa}\varepsilon^2 \|h_R^{\varepsilon,\mathfrak{c}}\|_{\infty,\ell}\cdot \|f_R^{\varepsilon,\mathfrak{c}}\|_2+C\|(1+|p|)^{\frac{3}{2}}\mathbf{P}_{\mathfrak{c}}f_R^{\varepsilon,\mathfrak{c}}\mathbf{1}_{\{1+|p| \le \frac{\kappa}{\sqrt{\varepsilon}}\}}\|_2^2\nonumber\\
   		&\qquad +C\|(1+|p|)^{\frac{3}{2}}\{\mathbf{I-P_{\mathfrak{c}}}\}f_R^{\varepsilon,\mathfrak{c}}\mathbf{1}_{\{1+|p| \le \frac{\kappa}{\sqrt{\varepsilon}}\}}\|_2^2\nonumber\\
   		&\leq C_{\kappa}\varepsilon^2 \|h_R^{\varepsilon,\mathfrak{c}}\|_{\infty,\ell}\cdot \|f_R^{\varepsilon,\mathfrak{c}}\|_2+C\|f_R^{\varepsilon,\mathfrak{c}}\|^2_2+\frac{C\kappa^2}{\varepsilon}\|\{\mathbf{I-P_{\mathfrak{c}}}\}f_R^{\varepsilon,\mathfrak{c}}\|_{\nu_{\mathfrak{c}}}^2.
   	\end{align*}
   
    It follows from Lemma \ref{lem7.4} that
    \begin{align*}
    	|\langle \varepsilon^{k-1}\Gamma_{\mathfrak{c}}(f_R^{\varepsilon,\mathfrak{c}},f_R^{\varepsilon,\mathfrak{c}}),f_R^{\varepsilon,\mathfrak{c}}\rangle|\lesssim \varepsilon^{k-1} \|f_R^{\varepsilon,\mathfrak{c}}\|_{\infty,\ell}\cdot \|f_R^{\varepsilon,\mathfrak{c}}\|^2_2\lesssim \varepsilon^{k-1} \|h_R^{\varepsilon,\mathfrak{c}}\|_{\infty,\ell}\cdot \|f_R^{\varepsilon,\mathfrak{c}}\|^2_2
    \end{align*}
   	and 
   	\begin{align*}
   		&\Big| \Big\langle \sum_{i=1}^{2k-1} \varepsilon^{i-1}\Big\{\Gamma_{\mathfrak{c}}\Big(\frac{F_i^{\mathfrak{c}}}{\sqrt{\mathbf{M}_{\mathfrak{c}}}},f_R^{\varepsilon,\mathfrak{c}}\Big)+\Gamma_{\mathfrak{c}}\Big(f_R^{\varepsilon,\mathfrak{c}},\frac{F_i^{\mathfrak{c}}}{\sqrt{\mathbf{M}_{\mathfrak{c}}}}\Big)\Big\},f_R^{\varepsilon,\mathfrak{c}} \Big \rangle \Big|\nonumber\\
   		&\lesssim \sum_{i=1}^{2k-1} \varepsilon^{i-1}\|f_R^{\varepsilon,\mathfrak{c}}\|^2_{\nu_{\mathfrak{c}}}\lesssim  \|\mathbf{P}_{\mathfrak{c}}f_R^{\varepsilon,\mathfrak{c}}\|^2_{\nu_{\mathfrak{c}}}+ \|\{\mathbf{I-P_{\mathfrak{c}}}\}f_R^{\varepsilon,\mathfrak{c}}\|^2_{\nu_{\mathfrak{c}}}\nonumber\\
   		&\lesssim \|f_R^{\varepsilon,\mathfrak{c}}\|^2_{2}+ \|\{\mathbf{I-P_{\mathfrak{c}}}\}f_R^{\varepsilon,\mathfrak{c}}\|^2_{\nu_{\mathfrak{c}}}.
   	\end{align*}
   
   	 Similarly, for the last term, one has
   	\begin{align*}
   		\Big| \Big\langle\varepsilon^k \bar{A}, f_R^{\varepsilon,\mathfrak{c}} \Big\rangle \Big| &\lesssim \varepsilon^k\sum_{\substack{i+j\ge 2k+1 \\ 2 \leq i, j \leq 2k-1}} \varepsilon^{i+j-1-2k} \Big| \Big\langle \Gamma_{\mathfrak{c}}\Big(\frac{F_i^{\mathfrak{c}}}{\sqrt{\mathbf{M}_{\mathfrak{c}}}},\frac{F_i^{\mathfrak{c}}}{\sqrt{\mathbf{M}_{\mathfrak{c}}}}\Big), f_R^{\varepsilon,\mathfrak{c}} \Big\rangle \Big| \nonumber\\
   		&\lesssim \varepsilon^k\|f_R^{\varepsilon,\mathfrak{c}}\|_2\lesssim \|f_R^{\varepsilon,\mathfrak{c}}\|_2.
   	\end{align*}
   	Collecting all the above estimates, one has
   	\begin{align*}
   		&\frac{1}{2} \frac{d}{d t}\left\|f_R^{\varepsilon,\mathfrak{c}}\right\|_2^2+\frac{\zeta_0}{\varepsilon}\left\|\{\mathbf{I}-\mathbf{P}_{\mathfrak{c}}\} f_R^{\varepsilon,\mathfrak{c}}\right\|_{\nu_{\mathfrak{c}}}^2 \le C_{\kappa}\varepsilon^2 \|h_R^{\varepsilon,\mathfrak{c}}\|_{\infty,\ell}\cdot \|f_R^{\varepsilon,\mathfrak{c}}\|_2+C\|f_R^{\varepsilon,\mathfrak{c}}\|^2_2+C\|f_R^{\varepsilon,\mathfrak{c}}\|_2\nonumber\\
   		&\qquad \qquad \qquad +C\Big(\frac{\kappa^2}{\varepsilon}+1\Big)\|\{\mathbf{I-P_{\mathfrak{c}}}\}f_R^{\varepsilon,\mathfrak{c}}\|_{\nu_{\mathfrak{c}}}^2+C\varepsilon^{k-1} \|h_R^{\varepsilon,\mathfrak{c}}\|_{\infty,\ell}\cdot \|f_R^{\varepsilon,\mathfrak{c}}\|^2_2.
   	\end{align*}
   We choose $\kappa= \sqrt{\frac{\zeta_0}{4C}}$, then we suppose that $0<\varepsilon \le \varepsilon_0\le  \frac{\zeta_0}{4C}$. Thus one gets \eqref{6.20-10}. Therefore the proof is completed.
   \end{proof}

Next we consider the $L^{\infty}$ estimate for $h_R^{\varepsilon,\mathfrak{c}}$. Recall $J_{\mathfrak{c}}(p)$ in \eqref{1.50-0}. We define
\begin{align*}
	\mathcal{L}_{\mathfrak{c}}h:=-J_{\mathfrak{c}}^{-\frac{1}{2}}\{Q_{\mathfrak{c}}(\mathbf{M}_{\mathfrak{c}}, \sqrt{J_{\mathfrak{c}}} h)+Q_{\mathfrak{c}}(\sqrt{J_{\mathfrak{c}}} h, \mathbf{M}_{\mathfrak{c}})\}=\nu_{\mathfrak{c}} h-\mathcal{K}_{\mathfrak{c}}h,
\end{align*}
where $\mathcal{K}_{\mathfrak{c}}=\mathcal{K}_{\mathfrak{c}2}-\mathcal{K}_{\mathfrak{c}1}$. More specifically, $\nu_{\mathfrak{c}}$ is defined in \eqref{1.34-00} and operators $\mathcal{K}_{\mathfrak{c}1}h$ and $\mathcal{K}_{\mathfrak{c}2}h$ are defined as
\begin{align*}
	\mathcal{K}_{\mathfrak{c}1}h &:=J_{\mathfrak{c}}^{-\frac{1}{2}} Q_{\mathfrak{c}}^{-}(\mathbf{M}_{\mathfrak{c}}, \sqrt{J_{\mathfrak{c}}} h) 
	=\int_{\mathbb{R}^3}\int_{\mathbb{S}^2} v_\phi \Big\{\sqrt{J_{\mathfrak{c}}(q)} \frac{\mathbf{M}_{\mathfrak{c}}(p)}{\sqrt{J_{\mathfrak{c}}(p)}} h(q)\Big\}d \omega dq , \\
    \mathcal{K}_{\mathfrak{c}2}h&:=J_{\mathfrak{c}}^{-\frac{1}{2}}\left\{Q_{\mathfrak{c}}^{+}(\mathbf{M}_{\mathfrak{c}}, \sqrt{J_{\mathfrak{c}}} h)+Q_{\mathfrak{c}}^{+}(\sqrt{J_{\mathfrak{c}}} h, \mathbf{M}_{\mathfrak{c}})\right\}\nonumber\\
	&=\int_{\mathbb{R}^3}\int_{\mathbb{S}^2}  v_\phi \Big\{\mathbf{M}_{\mathfrak{c}}(p') \frac{\sqrt{J_{\mathfrak{c}}(q')}}{\sqrt{J_{\mathfrak{c}}(p)}} h(q')\Big\}d \omega dq+\int_{\mathbb{R}^3}\int_{\mathbb{S}^2}  v_\phi \Big\{\mathbf{M}_{\mathfrak{c}}(q') \frac{\sqrt{J_{\mathfrak{c}}(p')}}{\sqrt{J_{\mathfrak{c}}(p)}} h(p')\Big\}d \omega dq.
\end{align*}
    Noting \eqref{1.53-0}, by similar arguments as in \cite{Strain}, one can show that
    \begin{align*}
    	|\mathcal{K}_{\mathfrak{c}i}(h)|\lesssim \int_{\mathbb{R}^3}\hat{k}_i(p,q)|h(q)|dq,\quad i=1,2,
    \end{align*}
   where
   \begin{align*}
   	\hat{k}_1(p,q)=|p-q|e^{-\delta_2|p|}e^{-\delta_2|q|},\quad 
    \hat{k}_2(p,q)=\frac{1}{|p-q|}e^{-\frac{\delta_2}{2}|p-q|}
   \end{align*}
   with $\delta_2:=\alpha-\frac{1}{2}>0$. We denote $\hat{k}(p,q):=\hat{k}_1(p,q)+\hat{k}_2(p,q)$. Then it holds that
   \begin{align*}
   	|\mathcal{K}_{\mathfrak{c}}(h)|\lesssim \int_{\mathbb{R}^3}\hat{k}(p,q)|h(q)|dq,\quad i=1,2.
   \end{align*}
    Denote 
   \begin{align*}
   \hat{k}_w(p,q):=\hat{k}(p,q)\frac{w_{\ell}(p)}{w_{\ell}(q)}.
   \end{align*}
   By similar arguments as in Lemmas \ref{lem4.4-00}-\ref{lem2.8}, one has
   \begin{align}\label{6.37-10}
   	\int_{\mathbb{R}^3}\hat{k}_w(p,q)e^{\frac{\delta_2}{4}|p-q|}dq+\int_{\mathbb{R}^3}\hat{k}^2_w(p,q)dq\lesssim \max{\Big\{\frac{1}{\mathfrak{c}},\frac{1}{1+|p|}\Big\}}.
   \end{align}
   For later use, we introduce
   \begin{align*}
   	   \widehat{\nu}_{\mathfrak{c}}(p):=\int_{\mathbb{R}^3}\int_{\mathbb{S}^2}v_{\phi}J_{\mathfrak{c}}(q)d\omega dq\cong \nu_{\mathfrak{c}}(p).
   \end{align*}
   
   \begin{Lemma}[$L^{\infty}$ Estimate]\label{lem7.2}
   	Under the assumptions of Lemma \ref{lem7.1}, there exist $\varepsilon_0>0$ and a positive constant $C>0$, such that for all $\varepsilon \in (0, \varepsilon_0]$ and for any $\ell \geq 9$, it holds that
   	\begin{align*} 
   		\sup_{0 \leq s \leq T}\|\varepsilon^{\frac{3}{2}}h_R^{\varepsilon,\mathfrak{c}}(s)\|_{\infty,\ell} \leq C\Big(\|\varepsilon^{\frac{3}{2}}h_0\|_{\infty,\ell}+ \sup_{0 \leq s \leq T}\|f_R^{\varepsilon,\mathfrak{c}}(s)\|_2+ \varepsilon^{k+\frac{5}{2}}\Big),
   	\end{align*}
   where $C$ is independent of $\mathfrak{c}$.
   \end{Lemma}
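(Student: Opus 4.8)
The plan is to run the standard $L^\infty$-estimate along characteristics in the $L^2$--$L^\infty$ framework of \cite{Guo1,Guo2,Speck}, but keeping careful track of all constants so that they are uniform in $\mathfrak{c}$. Set $h_R^{\varepsilon,\mathfrak{c}}=F_R^{\varepsilon,\mathfrak{c}}/\sqrt{J_{\mathfrak{c}}}$ and let $g:=w_\ell h_R^{\varepsilon,\mathfrak{c}}$. From the remainder equation \eqref{1.19-0}, dividing by $\sqrt{J_{\mathfrak{c}}}$ and multiplying by $w_\ell$, one obtains a transport equation for $g$ of the form
\begin{align*}
	\partial_t g+\hat p\cdot\nabla_x g+\frac{1}{\varepsilon}\widehat\nu_{\mathfrak{c}}(p)\,g=\frac{1}{\varepsilon}\big(\mathcal{K}_{\mathfrak{c}w}g\big)+\big(\text{lower-order terms}\big),
\end{align*}
where $\mathcal{K}_{\mathfrak{c}w}g(p)=\int_{\mathbb{R}^3}\hat k_w(p,q)g(q)\,dq$ (after passing from $\nu_{\mathfrak{c}}$ to $\widehat\nu_{\mathfrak{c}}$ at the cost of controllable errors using $\widehat\nu_{\mathfrak{c}}\cong\nu_{\mathfrak{c}}\cong\nu$, which requires the factor $J_{\mathfrak{c}}/\mathbf{M}_{\mathfrak{c}}$ to be controlled — this is exactly what \eqref{1.53-0} provides), and the ``lower-order terms'' collect: the Maxwellian-derivative term $\big(\{\partial_t+\hat p\cdot\nabla_x\}\sqrt{\mathbf{M}_{\mathfrak{c}}}/\sqrt{\mathbf{M}_{\mathfrak{c}}}\big)g$, which by the computation in Lemma \ref{lem7.1} is $O((1+|p|)^3)$ times $g$ and hence absorbed by a small fraction of $\frac1\varepsilon\widehat\nu_{\mathfrak{c}}g$ after restricting to $|p|\le\kappa/\sqrt\varepsilon$ and treating $|p|\ge\kappa/\sqrt\varepsilon$ via the decay weight; the linear terms $\varepsilon^{i-1}\Gamma_{\mathfrak{c}}(F_i^{\mathfrak{c}}/\sqrt{\mathbf{M}_{\mathfrak{c}}},\cdot)$ and $\varepsilon^{i-1}\Gamma_{\mathfrak{c}}(\cdot,F_i^{\mathfrak{c}}/\sqrt{\mathbf{M}_{\mathfrak{c}}})$, which by Proposition \ref{prop6.3} have the form $\Gamma_{\mathfrak{c}}(\chi,\cdot)$ with $|\chi|\lesssim e^{-\delta_1|p|}$ and are bounded by Lemma \ref{lem7.3}; the quadratic term $\varepsilon^{k-1}\Gamma_{\mathfrak{c}}(f_R^{\varepsilon,\mathfrak{c}},f_R^{\varepsilon,\mathfrak{c}})$ bounded by Lemma \ref{lem7.3}; and the source $\varepsilon^k\bar A$, again controlled by Proposition \ref{prop6.3}.

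\textbf{Key steps.} First I would write the mild (Duhamel) formulation: integrating along the backward characteristic $X(s;t,x,p)=x-\hat p\,(t-s)$,
\begin{align*}
	g(t,x,p)=e^{-\frac{1}{\varepsilon}\int_0^t\widehat\nu_{\mathfrak{c}}}g_0+\frac{1}{\varepsilon}\int_0^t e^{-\frac1\varepsilon\int_s^t\widehat\nu_{\mathfrak{c}}}\big(\mathcal{K}_{\mathfrak{c}w}g\big)(s,X(s),p)\,ds+\int_0^t e^{-\frac1\varepsilon\int_s^t\widehat\nu_{\mathfrak{c}}}\,\mathcal{R}(s)\,ds,
\end{align*}
where $\mathcal{R}$ collects all lower-order terms. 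The contribution of $\mathcal{R}$ is $O(\varepsilon)$ in the sense that $\int_0^t e^{-\frac1\varepsilon\int_s^t\widehat\nu_{\mathfrak{c}}}\,ds\lesssim\varepsilon/\widehat\nu_{\mathfrak{c}}\lesssim\varepsilon$, times the relevant norms; in particular the terms involving $f_R^{\varepsilon,\mathfrak{c}}$ in $L^2$ are controlled because, after one more iteration of $\mathcal{K}_{\mathfrak{c}w}$, a velocity integral can be converted to an $x$-integral by the standard change of variables along characteristics, producing $\|f_R^{\varepsilon,\mathfrak{c}}\|_2$. The second step is the crucial iteration: substitute the Duhamel formula for $g$ back into the $\mathcal{K}_{\mathfrak{c}w}g$ term, producing a double time integral with a double kernel $\hat k_w(p,q)\hat k_w(q,q')$. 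One then splits the $q,q'$ integrations into regions $\{|q|\ge N\}$, $\{|q-q'|\le \eta\}$ or $|q'|\ge N$, and the bulk region $\{|q|\le N,\ |q'|\le N,\ |q-q'|\ge\eta\}$. On the first two the smallness comes from \eqref{6.37-10} (the factor $\max\{\mathfrak{c}^{-1},(1+|p|)^{-1}\}$ and the $L^2$-in-$q$ bound on $\hat k_w^2$); on the bulk region the kernel is bounded and supported in a compact set, so the change of variables $(s',X(s';s,X(s),q))\mapsto y$ has Jacobian bounded below (uniformly in $\mathfrak c$ because $\hat p$ is bounded) and one gains an $L^2_{x,p}$ norm of $g$, i.e. $\|f_R^{\varepsilon,\mathfrak{c}}\|_2$ up to the weight $J_{\mathfrak{c}}/\mathbf{M}_{\mathfrak{c}}$ which is controlled by \eqref{1.53-0}. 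Third, collecting everything yields
\begin{align*}
	\|\varepsilon^{\frac32}g(t)\|_\infty\le\big(\tfrac12+C\eta+\tfrac{C}{N}+C\varepsilon\big)\sup_{0\le s\le t}\|\varepsilon^{\frac32}g(s)\|_\infty+C\Big(\|\varepsilon^{\frac32}g_0\|_\infty+\sup_{0\le s\le t}\|f_R^{\varepsilon,\mathfrak{c}}(s)\|_2+\varepsilon^{k+\frac52}\Big),
\end{align*}
and choosing $\eta$ small, $N$ large, $\varepsilon_0$ small (all independent of $\mathfrak{c}$, using Lemma \ref{lem2.9} and the uniform bounds of Proposition \ref{prop6.3}) absorbs the first term on the right, giving the claim.

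\textbf{Main obstacle.} The genuinely delicate point, and the place where uniformity in $\mathfrak{c}$ is not automatic, is the passage between the weights $J_{\mathfrak{c}}$, $\mathbf{M}_{\mathfrak{c}}$ and the Newtonian $\mu$: the $L^2$ estimate in Lemma \ref{lem7.1} controls $f_R^{\varepsilon,\mathfrak{c}}=F_R^{\varepsilon,\mathfrak{c}}/\sqrt{\mathbf{M}_{\mathfrak{c}}}$, whereas the $L^\infty$ estimate is phrased for $h_R^{\varepsilon,\mathfrak{c}}=F_R^{\varepsilon,\mathfrak{c}}/\sqrt{J_{\mathfrak{c}}}$, so in the change-of-variables step one must insert the ratio $\sqrt{J_{\mathfrak{c}}/\mathbf{M}_{\mathfrak{c}}}$ and bound it uniformly in $\mathfrak{c}$; this is precisely where the hypothesis $\mathbf{M}_{\mathfrak{c}}\ge J_{\mathfrak{c}}/C$ in \eqref{1.53-0} is used, together with the uniform-in-$\mathfrak{c}$ kernel estimates \eqref{6.37-10} and the collision-frequency asymptotics of Lemma \ref{lem2.9} (which behave differently for $|p|\le\mathfrak{c}$ and $|p|\ge\mathfrak{c}$, so the region $|p|\ge\mathfrak{c}$ must be handled separately, using that there $\widehat\nu_{\mathfrak{c}}\cong\mathfrak{c}$ is large). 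I expect the bookkeeping of these two regimes of $|p|$, and verifying that all the constants $C$, $\eta$, $N$, $\varepsilon_0$ produced in the iteration depend only on the (uniform-in-$\mathfrak{c}$) Euler bounds and on the constants in \eqref{1.53-0}, to be the main work; the structure of the argument is otherwise parallel to \cite[Section 6]{Speck} and \cite{Guo1,Guo2}.
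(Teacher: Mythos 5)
Your proposal follows the paper's own argument almost step for step: the mild (Duhamel) formulation along backward characteristics, substituting the Duhamel formula once into the $\mathcal{K}_{\mathfrak{c}}$-term to produce a double time integral with kernel $\hat k_w(p,q)\hat k_w(q,q')$, the four-way decomposition (large $|p|$; large $|q|$ or $|q'|$; small time gap $s-s'\le\kappa\varepsilon$; the bulk case, where the velocity integral is converted to a spatial one to gain $\|f_R^{\varepsilon,\mathfrak{c}}\|_2$), and the use of the uniform-in-$\mathfrak{c}$ kernel bound \eqref{6.37-10} and collision-frequency estimate of Lemma \ref{lem2.9} are all exactly what the paper does. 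So the approach is essentially the same.

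One misidentification is worth correcting. When \eqref{1.19-0} is divided by $\sqrt{J_{\mathfrak{c}}(p)}$ --- which is the \emph{global} Maxwellian \eqref{1.50-0}, depending on $p$ alone --- the transport operator $\partial_t+\hat p\cdot\nabla_x$ passes through $\sqrt{J_{\mathfrak{c}}}$ without generating any term $\big(\{\partial_t+\hat p\cdot\nabla_x\}\sqrt{\mathbf{M}_{\mathfrak{c}}}/\sqrt{\mathbf{M}_{\mathfrak{c}}}\big)g$. That term arises only in the $f_R^{\varepsilon,\mathfrak{c}}$-equation \eqref{7.25-0} used for the $L^2$ estimate, and eliminating it is precisely the reason the $L^\infty$ estimate is set up for $h_R^{\varepsilon,\mathfrak{c}}=F_R^{\varepsilon,\mathfrak{c}}/\sqrt{J_{\mathfrak{c}}}$ rather than $f_R^{\varepsilon,\mathfrak{c}}$. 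This matters: your proposed absorption of the $(1+|p|)^3$ factor on $|p|\ge\kappa/\sqrt\varepsilon$ ``via the decay weight'' would not go through in an $L^\infty$ estimate, since $h_R^{\varepsilon,\mathfrak{c}}$ has no a priori decay to spare and the gain $\varepsilon/\widehat\nu_{\mathfrak{c}}(p)$ from the Duhamel time integral caps out once $\widehat\nu_{\mathfrak{c}}\cong\mathfrak{c}$. Fortunately the term simply is not present, so the rest of your argument is sound.
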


   \begin{proof} 
   	   	Plugging $F^{\varepsilon,\mathfrak{c}}_R=h_R^{\varepsilon,\mathfrak{c}}\sqrt{J_{\mathfrak{c}}}$ into \eqref{1.19-0}, one has
   	   \begin{align}\label{6.42-10}
   	   	& \partial_t h_R^{\varepsilon,\mathfrak{c}}+\hat{p} \cdot \nabla_x h_R^{\varepsilon,\mathfrak{c}}+\frac{\nu_{\mathfrak{c}}}{\varepsilon}h_R^{\varepsilon,\mathfrak{c}}=\frac{1}{\varepsilon}\mathcal{K}(h_R^{\varepsilon,\mathfrak{c}})+\varepsilon^{k-1}Q_{\mathfrak{c}}(h_R^{\varepsilon,\mathfrak{c}}\sqrt{J_{\mathfrak{c}}},\sqrt{J_{\mathfrak{c}}}h_R^{\varepsilon,\mathfrak{c}})\nonumber\\
   	   	&\quad +\sum_{i=1}^{2k-1} \varepsilon^{i-1}\frac{1}{\sqrt{J_{\mathfrak{c}}}}\Big\{Q_{\mathfrak{c}}(F_i^{\mathfrak{c}},\sqrt{J_{\mathfrak{c}}}h_R^{\varepsilon,\mathfrak{c}})+Q_{\mathfrak{c}}(\sqrt{J_{\mathfrak{c}}}h_R^{\varepsilon,\mathfrak{c}},F_i^{\mathfrak{c}})\Big\}+\varepsilon^k \tilde{A},
   	   \end{align}
   	   where
   	   \begin{align*}
   	   	\tilde{A}: =\sum_{\substack{i+j\ge 2k+1 \\ 2 \leq i, j \leq 2k-1}} \varepsilon^{i+j-1-2k}\frac{1}{\sqrt{J_{\mathfrak{c}}}}Q_{\mathfrak{c}}(F_i^{\mathfrak{c}},F_i^{\mathfrak{c}}).
   	   \end{align*}
%      Given $(t,x,p)$, let $[X(\tau),P(\tau)]$ be the backward bi-characteristics of the relativistic Boltzmann equation, which is determined by
%      \begin{equation}
%      	\left\{
%      	\begin{aligned}
%      		&\frac{dX(\tau)}{d \tau}=\frac{\mathfrak{c}}{\sqrt{\mathfrak{c}^2+|P(\tau)|^2}}P(\tau), \quad \frac{d P(\tau)}{d \tau}=0,\\
%      		&[X(t), P(t)]=[x,p].
%      	\end{aligned}
%      	\right.
%      \end{equation}
%      Then the solution is given by
%      \begin{align*}
%      	[X(\tau ; t, x, p), P(\tau ; t, x, p)]=[x-\hat{p}(t-\tau), p], \quad \hat{p}:=\frac{\mathfrak{c}}{\sqrt{\mathfrak{c}^2+|p|^2}}p.
%      \end{align*}
   	  Denote $y_1:=x-\hat{p}(t-s)$ and 
   	  \begin{align*}
   	  	\tilde{\nu_{\mathfrak{c}}}(t,s):=\int_s^t\nu_{\mathfrak{c}}(\mathbf{M}_{\mathfrak{c}})(\tau,x-\hat{p}(t-\tau),p)d\tau \cong (t-s)\widehat{\nu_{\mathfrak{c}}}.
   	  \end{align*}
   	  Integrating \eqref{6.42-10} along the backward trajectory, one has
   	  \begin{align}\label{7.44-0}
   	  	&h_R^{\varepsilon,\mathfrak{c}}(t,x,p)\nonumber\\
   	  	&=\exp{\Big(-\frac{\tilde{\nu_{\mathfrak{c}}}(t,0)}{\varepsilon}\Big)} h_0(x-\hat{p}t,p)\nonumber\\
   	  	&\qquad +\frac{1}{\varepsilon}\int_{0}^t \exp{\Big(-\frac{\tilde{\nu_{\mathfrak{c}}}(t,s)}{\varepsilon}\Big)}\mathcal{K}_{\mathfrak{c}}h_R^{\varepsilon,\mathfrak{c}}(s,y_1,p)ds\nonumber\\
   	  	&\qquad +\int_{0}^t \exp{\Big(-\frac{\tilde{\nu_{\mathfrak{c}}}(t,s)}{\varepsilon}\Big)}\frac{\varepsilon^{k-1}}{\sqrt{J_{\mathfrak{c}}}}Q_{\mathfrak{c}}(h_R^{\varepsilon,\mathfrak{c}}\sqrt{J_{\mathfrak{c}}},h_R^{\varepsilon,\mathfrak{c}}\sqrt{J_{\mathfrak{c}}})(s,y_1,p)ds\nonumber\\
   	  	&\qquad +\int_{0}^t \exp{\Big(-\frac{\tilde{\nu_{\mathfrak{c}}}(t,s)}{\varepsilon}\Big)}\sum_{i=1}^{2k-1} \varepsilon^{i-1}\frac{1}{\sqrt{J_{\mathfrak{c}}}}\Big\{Q_{\mathfrak{c}}(F_i^{\mathfrak{c}},\sqrt{J_{\mathfrak{c}}}h_R^{\varepsilon,\mathfrak{c}})+Q_{\mathfrak{c}}(\sqrt{J_{\mathfrak{c}}}h_R^{\varepsilon,\mathfrak{c}},F_i^{\mathfrak{c}})\Big\}(s,y_1,p)ds\nonumber\\
   	  	&\qquad +\int_{0}^t \exp{\Big(-\frac{\tilde{\nu_{\mathfrak{c}}}(t,s)}{\varepsilon}\Big)}\varepsilon^k \tilde{A}(s,y_1,p)ds\nonumber\\
   	  	&=\sum_{j=1}^5\mathcal{J}_j.
   	  \end{align}
   	  It is clear that
   	  \begin{align*} 
   	  	|\varepsilon^{\frac{3}{2}}w_{\ell}\mathcal{J}_1|\le \|\varepsilon^{\frac{3}{2}}h_0\|_{\infty,\ell}.
   	  \end{align*}
     
   	  For $\mathcal{J}_3$, it follows from Lemma \ref{lem7.3} that
   	  \begin{align*} 
   	  		|\varepsilon^{\frac{3}{2}}w_{\ell}\mathcal{J}_3|&\lesssim \varepsilon^{k+\frac{1}{2}}\int_{0}^t \exp{\Big(-\frac{\tilde{\nu_{\mathfrak{c}}}(t,s)}{\varepsilon}\Big)}\Big|\frac{w_{\ell}}{\sqrt{J_{\mathfrak{c}}}}Q_{\mathfrak{c}}(h_R^{\varepsilon,\mathfrak{c}}\sqrt{J_{\mathfrak{c}}},h_R^{\varepsilon,\mathfrak{c}}\sqrt{J_{\mathfrak{c}}})(s,y_1,p)\Big|ds\nonumber\\
   	  		&\lesssim \varepsilon^{k-\frac{5}{2}}\int_{0}^t \exp{\Big(-\frac{\tilde{\nu_{\mathfrak{c}}}(t,s)}{\varepsilon}\Big)}\widehat{\nu_{\mathfrak{c}}}(p)ds\cdot \sup_{0 \leq s \leq T}\|\varepsilon^{\frac{3}{2}}h_R^{\varepsilon,\mathfrak{c}}(s)\|^2_{\infty,\ell}\nonumber\\
   	  		&\lesssim \varepsilon^{k-\frac{3}{2}}\sup_{0 \leq s \leq T}\|\varepsilon^{\frac{3}{2}}h_R^{\varepsilon,\mathfrak{c}}(s)\|^2_{\infty,\ell}.
   	  \end{align*}
     
   	   Similarly, we have
   	   \begin{align*} 
   	   	&|\varepsilon^{\frac{3}{2}}w_{\ell}\mathcal{J}_4|\nonumber\\
   	   	&\lesssim \varepsilon^{\frac{3}{2}}\sum_{i=1}^{2k-1} \varepsilon^{i-1}\int_{0}^t \exp{\Big(-\frac{\tilde{\nu_{\mathfrak{c}}}(t,s)}{\varepsilon}\Big)}\Big|\frac{w_{\ell}}{\sqrt{J_{\mathfrak{c}}}}\Big\{Q_{\mathfrak{c}}(F_i^{\mathfrak{c}},\sqrt{J_{\mathfrak{c}}}h_R^{\varepsilon,\mathfrak{c}})+Q_{\mathfrak{c}}(\sqrt{J_{\mathfrak{c}}}h_R^{\varepsilon,\mathfrak{c}},F_i^{\mathfrak{c}})\Big\}(s,y_1,p)\Big|ds\nonumber\\
   	   	&\lesssim \sum_{i=1}^{2k-1} \varepsilon^{i-1}\int_{0}^t \exp{\Big(-\frac{\tilde{\nu_{\mathfrak{c}}}(t,s)}{\varepsilon}\Big)}\widehat{\nu_{\mathfrak{c}}}(p)ds\cdot \sup_{0 \leq s \leq T}\|\varepsilon^{\frac{3}{2}}h_R^{\varepsilon,\mathfrak{c}}(s)\|_{\infty,\ell} \cdot \sup_{0 \leq s \leq T}\Big\|\frac{F_i^{\mathfrak{c}}(s)}{\sqrt{J_{\mathfrak{c}}}}\Big\|_{\infty,\ell}\nonumber\\
   	   	&\lesssim \varepsilon \sup_{0 \leq s \leq T}\|\varepsilon^{\frac{3}{2}}h_R^{\varepsilon,\mathfrak{c}}(s)\|_{\infty,\ell}
   	   \end{align*}
   	   and
   	   \begin{align*} 
   	   	|\varepsilon^{\frac{3}{2}}w_{\ell}\mathcal{J}_5|&\lesssim \varepsilon^{k+\frac{3}{2}}\sum_{\substack{i+j\ge 2k+1 \\ 2 \leq i, j \leq 2k-1}} \varepsilon^{i+j-1-2k}\int_{0}^t \exp{\Big(-\frac{\tilde{\nu_{\mathfrak{c}}}(t,s)}{\varepsilon}\Big)}\Big|\frac{w_{\ell}}{\sqrt{J_{\mathfrak{c}}}}Q_{\mathfrak{c}}(F_i^{\mathfrak{c}},F_i^{\mathfrak{c}})\Big|ds\nonumber\\
   	   	&\lesssim \varepsilon^{k+\frac{3}{2}} \int_{0}^t \exp{\Big(-\frac{\tilde{\nu_{\mathfrak{c}}}(t,s)}{\varepsilon}\Big)}\widehat{\nu_{\mathfrak{c}}}(p)ds\cdot  \sup_{0 \leq s \leq T}\Big\|\frac{F_i^{\mathfrak{c}}(s)}{\sqrt{J_{\mathfrak{c}}}}\Big\|_{\infty,\ell}\cdot \sup_{0 \leq s \leq T}\Big\|\frac{F_i^{\mathfrak{c}}(s)}{\sqrt{J_{\mathfrak{c}}}}\Big\|_{\infty,\ell}\nonumber\\
   	   	&\lesssim \varepsilon^{k+\frac{5}{2}}.
   	   \end{align*}
   	   Collecting the above estimates, we have established
   	   \begin{align}\label{7.50-0}
   	   	\sup_{0 \leq s \leq T}\|\varepsilon^{\frac{3}{2}}h_R^{\varepsilon,\mathfrak{c}}(s)\|_{\infty,\ell} &\leq  C \varepsilon \sup_{0 \leq s \leq T}\|\varepsilon^{\frac{3}{2}}h_R^{\varepsilon,\mathfrak{c}}(s)\|_{\infty,\ell}+C \varepsilon^{k-\frac{3}{2}} \sup_{0 \leq s \leq T}\|\varepsilon^{\frac{3}{2}}h_R^{\varepsilon,\mathfrak{c}}(s)\|_{\infty,\ell}^2 \nonumber\\
   	   	&\qquad +C \varepsilon^{k+\frac{5}{2}}+C\|\varepsilon^{\frac{3}{2}}h_0\|_{\infty,\ell}+C w_{\ell}(p) \varepsilon^{\frac{3}{2}} |\mathcal{J}_2| .
   	   \end{align}
      
   	   To bound the last term $\mathcal{J}_2$, we denote $y_2: =y_1-\hat{q}\left(s-s^{\prime}\right)= x-\hat{p}(t-s)-\hat{q}\left(s-s^{\prime}\right)$ and 
   	    \begin{align*}
   	   	\tilde{\nu_{\mathfrak{c}}}'(s,s'):=\int_{s'}^s\nu_{\mathfrak{c}}(\mathbf{M}_{\mathfrak{c}})(\tau,y_1-\hat{q}(s-\tau),q)d\tau\cong (s-s')\widehat{\nu_{\mathfrak{c}}}.
   	   \end{align*}
   	   We substitute \eqref{7.44-0} into $\mathcal{J}_2$ to obtain
   	    \begin{align*} 
   	   	|\mathcal{J}_2|&\lesssim \frac{1}{\varepsilon}\int_{0}^t \exp{\Big(-\frac{\tilde{\nu_{\mathfrak{c}}}(t,s)}{\varepsilon}\Big)}ds\int_{\mathbb{R}^3}\hat{k}(p,q)|h_R^{\varepsilon,\mathfrak{c}}(s,y_1,q)|dq\nonumber\\
   	   	&\lesssim \frac{1}{\varepsilon}\int_{0}^t \exp{\Big(-\frac{\tilde{\nu_{\mathfrak{c}}}(t,s)}{\varepsilon}\Big)}ds\int_{\mathbb{R}^3}\hat{k}(p,q)\exp{\Big(-\frac{\tilde{\nu_{\mathfrak{c}}}'(s,0)}{\varepsilon}\Big)} |h_0(y_1-\hat{q}s,q)|dq\nonumber\\
   	   	&\quad +\frac{1}{\varepsilon^2}\int_{0}^t \exp{\Big(-\frac{\tilde{\nu_{\mathfrak{c}}}(t,s)}{\varepsilon}\Big)}ds\int_{0}^s \exp{\Big(-\frac{\tilde{\nu_{\mathfrak{c}}}'(s,s')}{\varepsilon}\Big)}ds'\int_{\mathbb{R}^3}\hat{k}(p,q)\big|\mathcal{K}_{\mathfrak{c}}h_R^{\varepsilon,\mathfrak{c}}(s',y_2,q)\big|dq\nonumber\\
   	   	&\quad +\varepsilon^{k-2}\int_{0}^t \exp{\Big(-\frac{\tilde{\nu_{\mathfrak{c}}}(t,s)}{\varepsilon}\Big)}ds\int_{0}^s \exp{\Big(-\frac{\tilde{\nu_{\mathfrak{c}}}'(s,s')}{\varepsilon}\Big)}ds'\nonumber\\
   	   	&\qquad \qquad \times \int_{\mathbb{R}^3}\hat{k}(p,q)\frac{1}{\sqrt{J_{\mathfrak{c}}}}\Big|Q_{\mathfrak{c}}(h_R^{\varepsilon,\mathfrak{c}}\sqrt{J_{\mathfrak{c}}},h_R^{\varepsilon,\mathfrak{c}}\sqrt{J_{\mathfrak{c}}})(s',y_2,q)\Big|dq\nonumber\\
   	   	&\quad +\frac{1}{\varepsilon}\sum_{i=1}^{2k-1} \varepsilon^{i-1}\int_{0}^t \exp{\Big(-\frac{\tilde{\nu_{\mathfrak{c}}}(t,s)}{\varepsilon}\Big)}ds\int_{0}^s \exp{\Big(-\frac{\tilde{\nu_{\mathfrak{c}}}'(s,s')}{\varepsilon}\Big)}ds'\nonumber\\
   	   	&\qquad \qquad \times \int_{\mathbb{R}^3}\hat{k}(p,q)\frac{1}{\sqrt{J_{\mathfrak{c}}}}\Big|\Big\{Q_{\mathfrak{c}}(F_i^{\mathfrak{c}},\sqrt{J_{\mathfrak{c}}}h_R^{\varepsilon,\mathfrak{c}})+Q_{\mathfrak{c}}(\sqrt{J_{\mathfrak{c}}}h_R^{\varepsilon,\mathfrak{c}},F_i^{\mathfrak{c}})\Big\}(s',y_2,q)\Big|dq\nonumber\\
   	   	&\quad + \varepsilon^{k-1}\int_{0}^t \exp{\Big(-\frac{\tilde{\nu_{\mathfrak{c}}}(t,s)}{\varepsilon}\Big)}ds\int_{0}^s \exp{\Big(-\frac{\tilde{\nu_{\mathfrak{c}}}'(s,s')}{\varepsilon}\Big)}ds'\int_{\mathbb{R}^3}\hat{k}(p,q)  |\tilde{A}(s',y_2,q)|dq\nonumber\\
   	   	&=\sum_{j=1}^5\mathcal{J}_{2j}.
   	   \end{align*}
      
   	   By Lemma \ref{lem2.9}, there exists a positive constant $\nu_0$ which is independent of $\mathfrak{c}$, such that 
   	   \begin{align*}
   	   	\nu_{\mathfrak{c}}(p)\ge \nu_0, \quad p\in \mathbb{R}^3.
   	   \end{align*}
   	   For $\mathcal{J}_{21}$, one has from \eqref{6.37-10} that
   	   \begin{align*} 
   	   	|\varepsilon^{\frac{3}{2}}w_{\ell}(p)\mathcal{J}_{21}|
%   	   	&\lesssim \frac{1}{\varepsilon}\int_{0}^t \exp{\Big(-\frac{\nu_0(t-s)}{\varepsilon}\Big)}ds\int_{\mathbb{R}^3}\hat{k}_w(p,q)\exp{\Big(-\frac{\nu_0s}{\varepsilon}\Big)} |\varepsilon^{\frac{3}{2}}w_{\ell}(q)h_0(y_1-\hat{q}s,q)|dq\nonumber\\
   	   	&\lesssim \frac{1}{\varepsilon}\int_{0}^t \exp{\Big(-\frac{\nu_0t}{\varepsilon}\Big)}ds\int_{\mathbb{R}^3}\hat{k}_w(p,q) |\varepsilon^{\frac{3}{2}}w_{\ell}(q)h_0(y_1-\hat{q}s,q)|dq\nonumber\\
   	   	&\lesssim \|\varepsilon^{\frac{3}{2}}h_0\|_{\infty,\ell}.
   	   \end{align*}
   	   Similarly, using Lemma \ref{lem7.3}, we get
   	    \begin{align*} 
   	   	|\varepsilon^{\frac{3}{2}}w_{\ell}(p)\mathcal{J}_{23}|
   	    &\lesssim \varepsilon^{k-\frac{1}{2}}\int_{0}^t \exp{\Big(-\frac{\nu_0(t-s)}{\varepsilon}\Big)}ds\int_{\mathbb{R}^3}\hat{k}_w(p,q)dq\nonumber\\
   	    &\qquad \qquad \times \int_{0}^s \exp{\Big(-\frac{\tilde{\nu_{\mathfrak{c}}}'(s,s')}{\varepsilon}\Big)}\frac{w_{\ell}(q)}{\sqrt{J_{\mathfrak{c}}}}\Big|Q_{\mathfrak{c}}(h_R^{\varepsilon,\mathfrak{c}}\sqrt{J_{\mathfrak{c}}},h_R^{\varepsilon,\mathfrak{c}}\sqrt{J_{\mathfrak{c}}})(s',y_2,q)\Big|ds'\nonumber\\
   	    &\lesssim  \varepsilon^{k-\frac{3}{2}}\sup_{0 \leq s \leq T}\|\varepsilon^{\frac{3}{2}}h_R^{\varepsilon,\mathfrak{c}}(s)\|^2_{\infty,\ell}
       	\end{align*}
   	    and
   	    \begin{align*} 
   	    	|\varepsilon^{\frac{3}{2}}w_{\ell}(p)\mathcal{J}_{24}|
   	        &\lesssim \varepsilon^{\frac{1}{2}}\sum_{i=1}^{2k-1} \varepsilon^{i-1}\int_{0}^t \exp{\Big(-\frac{\nu_0(t-s)}{\varepsilon}\Big)}ds\int_{\mathbb{R}^3}\hat{k}_w(p,q)dq\nonumber\\ 
   	        &\qquad \times \int_{0}^s \exp{\Big(-\frac{\tilde{\nu_{\mathfrak{c}}}'(s,s')}{\varepsilon}\Big)}\frac{w_{\ell}(q)}{\sqrt{J_{\mathfrak{c}}}}\Big|\Big\{Q_{\mathfrak{c}}(F_i^{\mathfrak{c}},\sqrt{J_{\mathfrak{c}}}h_R^{\varepsilon,\mathfrak{c}})+Q_{\mathfrak{c}}(\sqrt{J_{\mathfrak{c}}}h_R^{\varepsilon,\mathfrak{c}},F_i^{\mathfrak{c}})\Big\}(s',y_2,q)\Big|ds'\nonumber\\
   	        &\lesssim  \varepsilon \sup_{0 \leq s \leq T}\|\varepsilon^{\frac{3}{2}}h_R^{\varepsilon,\mathfrak{c}}(s)\|_{\infty,\ell}.
     	\end{align*}
        For $\mathcal{J}_{25}$, one has
        \begin{align*} 
        	|\varepsilon^{\frac{3}{2}}w_{\ell}(p)\mathcal{J}_{25}|
        	&\lesssim \varepsilon^{k+\frac{1}{2}}\sum_{\substack{i+j\ge 2k+1 \\ 2 \leq i, j \leq 2k-1}} \varepsilon^{i+j-1-2k}\int_{0}^t \exp{\Big(-\frac{\nu_0(t-s)}{\varepsilon}\Big)}ds\int_{\mathbb{R}^3}\hat{k}_w(p,q)dq\nonumber\\
        	&\qquad \times \int_{0}^s \exp{\Big(-\frac{\tilde{\nu_{\mathfrak{c}}}'(s,s')}{\varepsilon}\Big)} \frac{w_{\ell}(q)}{\sqrt{J_{\mathfrak{c}}}}|Q_{\mathfrak{c}}(F_i^{\mathfrak{c}},F_i^{\mathfrak{c}})(s',y_2,q)| ds'\nonumber\\
        	&\lesssim  \varepsilon^{k+\frac{5}{2}}.
        \end{align*}
    
       Now we focus on the estimate of $\mathcal{J}_{22}$. It holds that
       \begin{align*}
       		|\varepsilon^{\frac{3}{2}}w_{\ell}(p)\mathcal{J}_{22}|
       		&\lesssim \frac{1}{\varepsilon^2}\int_{0}^t \exp{\Big(-\frac{\tilde{\nu_{\mathfrak{c}}}(t,s)}{\varepsilon}\Big)}ds\int_{0}^s \exp{\Big(-\frac{\tilde{\nu_{\mathfrak{c}}}'(s,s')}{\varepsilon}\Big)}ds'\nonumber\\
       		&\qquad \times \int_{\mathbb{R}^3}\hat{k}_w(p,q)dq \int_{\mathbb{R}^3}\hat{k}_w(q,q')|\varepsilon^{\frac{3}{2}}w_{\ell}(q')h_R^{\varepsilon,\mathfrak{c}}(s',y_2,q')dq'.
       \end{align*}
        We divide the estimate into four cases. \\
        {\it{Case 1}}: $|p|\ge N$. Using \eqref{6.37-10}, one has
        \begin{align*}
        	|\varepsilon^{\frac{3}{2}}w_{\ell}(p)\mathcal{J}_{22}|&\lesssim \max{\Big\{\frac{1}{\mathfrak{c}},\frac{1}{1+|p|}\Big\}}\sup_{0 \leq s \leq T}\|\varepsilon^{\frac{3}{2}}h_R^{\varepsilon,\mathfrak{c}}(s)\|_{\infty,\ell}\nonumber\\
        	&\lesssim \max{\Big\{\frac{1}{\mathfrak{c}},\frac{1}{N}\Big\}}\sup_{0 \leq s \leq T}\|\varepsilon^{\frac{3}{2}}h_R^{\varepsilon,\mathfrak{c}}(s)\|_{\infty,\ell}.
        \end{align*}
        {\it{Case 2}}: $|p|\le N$, $|q|\ge 2N$ or $|q|\le 2N$, $|q'|\ge 3N$. Using \eqref{6.37-10} again, we have
        \begin{align*}
        	&\frac{1}{\varepsilon^2}\int_{0}^t \exp{\Big(-\frac{\tilde{\nu_{\mathfrak{c}}}(t,s)}{\varepsilon}\Big)}ds\int_{0}^s \exp{\Big(-\frac{\tilde{\nu_{\mathfrak{c}}}'(s,s')}{\varepsilon}\Big)}ds'\nonumber\\
        	&\qquad \times \Big\{\iint_{|p|\le N,|q|\ge 2N}+\iint_{|q|\le 2N,|q'|\ge 3N}\Big\}\nonumber\\
        	&\lesssim e^{-\frac{\delta_2}{4}N} \sup_{0 \leq s \leq T}\|\varepsilon^{\frac{3}{2}}h_R^{\varepsilon,\mathfrak{c}}(s)\|_{\infty,\ell}
        	\lesssim \frac{1}{N} \sup_{0 \leq s \leq T}\|\varepsilon^{\frac{3}{2}}h_R^{\varepsilon,\mathfrak{c}}(s)\|_{\infty,\ell}.
        \end{align*}
         {\it{Case 3}}: For $s-s'\le \kappa \varepsilon$ and $|p|\le N$, $|q|\le 2N$, $|q'|\le 3N$, one has
         \begin{align*}
         	&\frac{1}{\varepsilon^2}\int_{0}^t \exp{\Big(-\frac{\tilde{\nu_{\mathfrak{c}}}(t,s)}{\varepsilon}\Big)}ds\int_{s-\kappa \varepsilon}^s \exp{\Big(-\frac{\tilde{\nu_{\mathfrak{c}}}'(s,s')}{\varepsilon}\Big)}ds'\nonumber\\
         	&\qquad \times \int_{|q|\le 2N}\hat{k}_w(p,q)dq \int_{|q'|\le 3N}\hat{k}_2(q,q')|\varepsilon^{\frac{3}{2}}w_{\ell}(q')h_R^{\varepsilon,\mathfrak{c}}(s',y_2,q')|dq'\nonumber\\
         	&\lesssim \kappa \sup_{0 \leq s \leq T}\|\varepsilon^{\frac{3}{2}}h_R^{\varepsilon,\mathfrak{c}}(s)\|_{\infty,\ell}.
         \end{align*}
          {\it{Case 4}}: For $s-s'\ge \kappa \varepsilon$ and $|p|\le N$, $|q|\le 2N$, $|q'|\le 3N$, this is the last remaining case. Using \eqref{6.37-10}, one has
          \begin{align*}
          	&\int_{|q|\le 2N}\int_{|q'|\le 3N}\hat{k}_w(p,q)\hat{k}_w(q,q')|w_{\ell}(q')h_R^{\varepsilon,\mathfrak{c}}(s',y_2,q')|dqdq'\nonumber\\
          	&\le C_N \int_{|q|\le 2N}\int_{|q'|\le 3N}\hat{k}_w(p,q)\hat{k}_w(q,q')|f_R^{\varepsilon,\mathfrak{c}}(s',y_2,q')|dqdq'\nonumber\\
          	&\le C_N\Big(\int_{|q|\le 2N}\int_{|q'|\le 3N}\hat{k}^2_w(p,q)\hat{k}^2_w(q,q') dqdq'\Big)^{\frac{1}{2}}\nonumber\\
          	&\qquad \times \Big(\int_{|q|\le 2N}\int_{|q'|\le 3N}|f_R^{\varepsilon,\mathfrak{c}}(s',y_2,q')|^2 dqdq'\Big)^{\frac{1}{2}}\nonumber\\
          	&\le C_N \Big(\int_{\mathbb{R}^3}\int_{\mathbb{R}^3}|f_R^{\varepsilon,\mathfrak{c}}(s',y_2,q')|^2\cdot \varepsilon^{-3}\kappa^{-3} dy_2dq'\Big)^{\frac{1}{2}}\nonumber\\
          	&\le \frac{C_{N,\kappa}}{\varepsilon^{\frac{3}{2}}} \sup_{0 \leq s \leq T}\|f_R^{\varepsilon,\mathfrak{c}}(s)\|_2,
          \end{align*}
           where we have  made a change of variables $q\mapsto y_2$ with
           \begin{align*}
           	\Big|\frac{dy_2}{dq}\Big|=\frac{\mathfrak{c}^5}{(q^0)^5}(s-s')^3\ge \frac{\kappa^3\varepsilon^3}{3^5}.
           \end{align*}
          Here we take $1\le N\le \mathfrak{c}$. Thus we have
          \begin{align*}
          	&\frac{1}{\varepsilon^2}\int_{0}^t \exp{\Big(-\frac{\tilde{\nu_{\mathfrak{c}}}(t,s)}{\varepsilon}\Big)}ds\int_{0}^{s-\kappa \varepsilon} \exp{\Big(-\frac{\tilde{\nu_{\mathfrak{c}}}'(s,s')}{\varepsilon}\Big)}ds'\nonumber\\
          	&\qquad \times \int_{|q|\le 2N}\hat{k}_w(p,q)dq \int_{|q'|\le 3N}\hat{k}_2(q,q')|\varepsilon^{\frac{3}{2}}w_{\ell}(q')h_R^{\varepsilon,\mathfrak{c}}(s',y_2,q')|dq'\nonumber\\
          	&\le C_{N,\kappa}  \sup_{0 \leq s \leq T}\|f_R^{\varepsilon,\mathfrak{c}}(s)\|_2.
          \end{align*}
      
          Collecting all the four cases, we obtain
          \begin{align}\label{7.63-0}
          	|\varepsilon^{\frac{3}{2}}w_{\ell}(p)\mathcal{J}_{22}|&\le C\Big(\kappa+\frac{1}{N}\Big)\sup_{0 \leq s \leq T}\|\varepsilon^{\frac{3}{2}}h_R^{\varepsilon,\mathfrak{c}}(s)\|_{\infty,\ell}+C_{N,\kappa}  \sup_{0 \leq s \leq T}\|f_R^{\varepsilon,\mathfrak{c}}(s)\|_2.
          \end{align}
         Therefore, combining \eqref{7.50-0} and \eqref{7.63-0}, one obtains
         \begin{align}\label{7.64-0}
         	\sup_{0 \leq s \leq T}\|\varepsilon^{\frac{3}{2}}h_R^{\varepsilon,\mathfrak{c}}(s)\|_{\infty,\ell}
         	&\leq  C\Big(\varepsilon+\kappa+\frac{1}{N}\Big)\sup_{0 \leq s \leq T}\|\varepsilon^{\frac{3}{2}}h_R^{\varepsilon,\mathfrak{c}}(s)\|_{\infty,\ell}+C\|\varepsilon^{\frac{3}{2}}h_0\|_{\infty,\ell}\nonumber\\
         	& \qquad +C \varepsilon^{k-\frac{3}{2}} \sup_{0 \leq s \leq T}\|\varepsilon^{\frac{3}{2}}h_R^{\varepsilon,\mathfrak{c}}(s)\|_{\infty,\ell}^2 +C \varepsilon^{k+\frac{5}{2}}+C_{N,\kappa}  \sup_{0 \leq s \leq T}\|f_R^{\varepsilon,\mathfrak{c}}(s)\|_2.
         \end{align}
         Choosing $N$ suitably large and $\kappa$, $\varepsilon$ suitably small, one gets from \eqref{7.64-0} that 
         \begin{align*}
         	\sup_{0 \leq s \leq T}\|\varepsilon^{\frac{3}{2}}h_R^{\varepsilon,\mathfrak{c}}(s)\|_{\infty,\ell} \leq C\|\varepsilon^{\frac{3}{2}}h_0\|_{\infty,\ell}+C \sup_{0 \leq s \leq T}\|f_R^{\varepsilon,\mathfrak{c}}(s)\|_2+C \varepsilon^{k+\frac{5}{2}}.
         \end{align*}
         Therefore the proof of Lemma \ref{lem7.2} is completed.
   \end{proof}
  
  \begin{proof}[\textbf{Proof of Theorem \ref{thm1.1-0}}]
With Lemmas \ref{lem7.1}--\ref{lem7.2} in hand, the rest proof is the same as \cite{Guo2,Speck}. We omit the details here for brevity. Therefore the proof of Theorem \ref{thm1.1-0} is completed.
  \end{proof} 
    
    Using Theorem \ref{thm1.1-0}, we can  prove Theorem \ref{thm1.3-0} as follows.
    \begin{proof}[\textbf{Proof of Theorem \ref{thm1.3-0}}]
%    	For simplicity, we assume that
%    	\begin{align}
%    		\frac{1}{2}\le \sup_{(t,x)\in [0,T]\times \mathbb{R}^3}T_0(t,x)\le 1,\quad \frac{1}{2}\le \sup_{(t,x)\in [0,T]\times \mathbb{R}^3}\theta(t,x)\le 1,\quad \frac{1}{2}\le T_M\le 1.
%    	\end{align}
        Recall $\bar{c}_1$ and $\bar{c}_2$ in \eqref{3.90-10}. 
    	Using \eqref{1.53-00}, for any $(t,x,p)\in [0,T]\times \mathbb{R}^3\times \mathbb{R}^3$, one has
    	\begin{align}\label{6.68-10}
    		|F^{\varepsilon,\mathfrak{c}}(t,x,p)-\mathbf{M}_{\mathfrak{c}}(t,x,p)|\lesssim \varepsilon\sqrt{J_{\mathfrak{c}}(p)} \lesssim \varepsilon e^{-\frac{|p|}{2T_M}}.
    	\end{align}
%    	which implies that
%    	\begin{align}\label{6.69-10}
%    		\sup_{0\le t\le T}\bigg\|\frac{F^{\varepsilon,\mathfrak{c}}(t)-\mathbf{M}_{\mathfrak{c}}(t)}{\exp{\big(-\frac{|p|}{T_M}\big)}} \bigg\|_{\infty} \lesssim \varepsilon.
%    	\end{align}
       A direct calculation shows that
       \begin{align}\label{6.70-10}
       	&\mu(t,x,p)-\mathbf{M}_{\mathfrak{c}}(t,x,p)\nonumber\\
       	&=\frac{\rho}{(2\pi \theta)^{\frac{3}{2}}}\exp{\Big\{-\frac{|p-\mathfrak{u}|^2}{2\theta}\Big\}}-\frac{n_0\gamma}{4\pi \mathfrak{c}^3K_2(\gamma)}\exp\Big\{\frac{u^{\mu}p_{\mu}}{T_0}\Big\}\nonumber\\
       	&=\frac{\rho}{(2\pi \theta)^{\frac{3}{2}}}\exp{\Big\{-\frac{|p-\mathfrak{u}|^2}{2\theta}\Big\}}-\frac{n_0}{(2\pi T_0)^{\frac{3}{2}}}\exp\Big\{\frac{\mathfrak{c}^2+u^{\mu}p_{\mu}}{T_0}\Big\}(1+O(\gamma^{-1}))\nonumber\\
       	&=O(\gamma^{-1})\frac{n_0}{(2\pi T_0)^{\frac{3}{2}}}\exp\Big\{\frac{\mathfrak{c}^2+u^{\mu}p_{\mu}}{T_0}\Big\}+\Big(\frac{\rho}{(2\pi \theta)^{\frac{3}{2}}}-\frac{n_0}{(2\pi T_0)^{\frac{3}{2}}}\Big)\exp{\Big\{-\frac{|p-\mathfrak{u}|^2}{2\theta}\Big\}}\nonumber\\
       	&\qquad +\frac{n_0}{(2\pi T_0)^{\frac{3}{2}}}\Big(\exp{\Big\{-\frac{|p-\mathfrak{u}|^2}{2\theta}\Big\}}-\exp\Big\{\frac{\mathfrak{c}^2+u^{\mu}p_{\mu}}{T_0}\Big\}\Big)\nonumber\\
       	&:=\mathcal{A}_1+\mathcal{A}_2+\mathcal{A}_3.
       \end{align}
      It follows from Proposition \eqref{thm-retoce} that
      \begin{align*}
      	|\mathcal{A}_1|\lesssim \frac{1}{\mathfrak{c}^2}e^{-2\bar{c}_1 |p|},\quad |\mathcal{A}_2|\lesssim \frac{1}{\mathfrak{c}^2}e^{-\bar{c}_2 |p|}.
      \end{align*}
      For $\mathcal{A}_3$, if $|p|\ge \mathfrak{c}^{\frac{1}{8}}$, one has
      \begin{align*} 
      	|\mathcal{A}_3|&\lesssim \exp{\Big\{-\frac{|p|^2}{4\theta}\Big\}}+\exp{\Big\{-\frac{|p|}{2T_0}\Big\}}\nonumber\\
      	&\lesssim \exp{\Big\{-\frac{\mathfrak{c}^{\frac{1}{4}}}{8\theta}\Big\}}\exp{\Big\{-\frac{|p|^2}{8\theta}\Big\}}+\exp{\Big\{-\frac{\mathfrak{c}^{\frac{1}{8}}}{4T_0}\Big\}}\exp{\Big\{-\frac{|p|}{4T_0}\Big\}}\nonumber\\
      	&\lesssim \frac{1}{\mathfrak{c}^2}\big(e^{-\frac{\bar{c}_2}{2}|p|}+e^{-\bar{c}_1|p|}\big).
      \end{align*}
      If $|p|\le \mathfrak{c}^{\frac{1}{8}}$, it follows from \eqref{4.68-00}-\eqref{4.69-00} that
      \begin{align}\label{6.73-10}
      		|\mathcal{A}_3|&\le\frac{n_0}{(2\pi T_0)^{\frac{3}{2}}}\exp{\Big\{-\frac{|p-\mathfrak{u}|^2}{2\theta}\Big\}}\Big|1-\exp\Big\{\frac{|p-\mathfrak{u}|^2}{2\theta}+\frac{\mathfrak{c}^2+u^{\mu}p_{\mu}}{T_0}\Big\}\Big|\lesssim \mathfrak{c}^{-\frac{3}{2}}e^{-\bar{c}_2|p|}.
      \end{align}
      Combining \eqref{6.70-10}--\eqref{6.73-10}, one has
      \begin{align}\label{6.76-20}
      		|\mu(t,x,p)-\mathbf{M}_{\mathfrak{c}}(t,x,p)|\lesssim \mathfrak{c}^{-\frac{3}{2}}(e^{-\frac{\bar{c}_2}{2}|p|}+e^{-\bar{c}_1|p|}).
      \end{align}
     Using \eqref{6.68-10}, \eqref{6.76-20} and taking 
     \begin{align*} 
     	\delta_0:=\min{\Big(\frac{1}{2T_M},\, \bar{c}_1,\,\frac{\bar{c}_2}{2} \Big)}>0,
     \end{align*}
     one has
     \begin{align*} 
     	|F^{\varepsilon,\mathfrak{c}}(t)-\mu(t)|\lesssim \varepsilon e^{-\frac{|p|}{2T_M}}+ \mathfrak{c}^{-\frac{3}{2}}(e^{-\frac{\bar{c}_2}{2}|p|}+e^{-\bar{c}_1|p|})\lesssim (\varepsilon+\mathfrak{c}^{-\frac{3}{2}}) e^{-\delta_0 |p|},
     \end{align*}
     which implies that
     \begin{align*} 
     	\sup_{0\le t\le T}\Big\|\big(F^{\varepsilon,\mathfrak{c}}-\mu\big)(t)e^{\delta_0 |p|} \Big\|_{\infty} \lesssim \varepsilon+ \mathfrak{c}^{-\frac{3}{2}}.
     \end{align*}
    Therefore the proof of Theorem \ref{thm1.3-0} is completed.
    \end{proof}
%%%%%%%%%%%%%%%%%%%%%%%%%%%%%%%%%%%%%%%%%%%%%%%%%%%%%%%%%%%%%%%
\section{Appendix: Derivation of the orthonormal basis of $\mathcal{N}_{\mathfrak{c}}$}
%%%%%%%%%%%%%%%%%%%%%%%%%%%%%%%%%%%%%%%%%%%%%%%%%%%%%%%%%%%%%%%
In this part, we derive the orthonormal basis of $\mathcal{N}_{\mathfrak{c}}$. One needs to use \eqref{1.16-01}--\eqref{1.17-01} and Lemma \ref{lem4.11-0} frequently. Suppose that 
\begin{align*} 
	\chi^{\mathfrak{c}}_0=\mathfrak{a}_0\mathbf{\sqrt{M_{\mathfrak{c}}}},\quad  \chi^{\mathfrak{c}}_j=\frac{p_j-\mathfrak{a}_j}{\mathfrak{b}_j}\mathbf{\sqrt{M_{\mathfrak{c}}}}\ (j=1,2,3),\quad \chi^{\mathfrak{c}}_4=\frac{p^0/\mathfrak{c}+\sum_{i=1}^3\lambda_ip_i+\mathfrak{e}}{\zeta}\mathbf{\sqrt{M_{\mathfrak{c}}}}
\end{align*}
form an orthonormal basis of $\mathcal{N}_{\mathfrak{c}}$. Using $\langle \chi^{\mathfrak{c}}_0,\chi^{\mathfrak{c}}_0\rangle=1$, one has $\mathfrak{a}_0=\Big(\int_{\mathbb{R}^3}\mathbf{M}_{\mathfrak{c}}dp\Big)^{-\frac{1}{2}}=\frac{1}{\sqrt{I^0}}$. To compute $\mathfrak{a}_j$, since $\langle \chi^{\mathfrak{c}}_0,\chi^{\mathfrak{c}}_j\rangle=0$, we have
\begin{align*}
	0=\int_{\mathbb{R}^3}(p_j-\mathfrak{a}_j)\mathbf{M}_{\mathfrak{c}}dp=T^{0j}-\mathfrak{a}_jI^0,
\end{align*}
which yields that $\mathfrak{a}_j=\frac{T^{0j}}{I^0}$. For $\mathfrak{b}_j$, using $\langle \chi^{\mathfrak{c}}_j,\chi^{\mathfrak{c}}_j\rangle=1$, one has
\begin{align*}
	\mathfrak{b}_j^2&=\int_{\mathbb{R}^3}(p_j-\mathfrak{a}_j)^2\mathbf{M}_{\mathfrak{c}}dp=\int_{\mathbb{R}^3}(p^2_j+\mathfrak{a}_j^2-2\mathfrak{a}_jp_j)\mathbf{M}_{\mathfrak{c}}dp\\
	&=T^{0jj}+\mathfrak{a}_j^2I^0-2\mathfrak{a}_jT^{0j}=T^{0jj}-\frac{(T^{0j})^2}{I^0},
\end{align*}
which yields that $\mathfrak{b}_j=\sqrt{T^{0jj}-\frac{(T^{0j})^2}{I^0}}$, $j=1,2,3$.

To determine the coefficients $\lambda_i$, $i=1,2,3$, due to $\langle \chi^{\mathfrak{c}}_4,\chi^{\mathfrak{c}}_0\rangle=\langle \chi^{\mathfrak{c}}_4,\chi^{\mathfrak{c}}_j\rangle=0$, we have
\begin{align*}
	\int_{\mathbb{R}^3}(p^0/\mathfrak{c}+\sum_{i=1}^3\lambda_ip_i+\mathfrak{e})\mathbf{M}_{\mathfrak{c}}dp&=0,\\
	\int_{\mathbb{R}^3}(p^0/\mathfrak{c}+\sum_{i=1}^3\lambda_ip_i+\mathfrak{e})(p_j-\mathfrak{a}_j)\mathbf{M}_{\mathfrak{c}}dp&=0,\ j=1,2,3.
\end{align*}
That is
\begin{align*}
     \frac{T^{00}}{\mathfrak{c}}+\sum_{i=1}^3\lambda_iT^{0i}+\mathfrak{e} I^0&=0,\\
	 \frac{T^{00j}}{\mathfrak{c}}-\frac{\mathfrak{a}_j}{\mathfrak{c}}T^{00}+\sum_{i=1}^3\lambda_i(T^{0ij}-\mathfrak{a}_jT^{0i})+\mathfrak{e}(T^{0j}-\mathfrak{a}_jI^0)&=0,\ j=1,2,3.
\end{align*}
One can rewrite the above linear system as 
{\small
\begin{equation}\label{5.1-0}
	\left( \setlength\arraycolsep{3.0pt}
	    \begin{array}{cccc}
		\vspace{1.2ex}
		T^{01} & T^{02} & T^{03} & I^0 \\
		\vspace{1.2ex}
		T^{011}-\mathfrak{a}_1T^{01}& T^{021}-\mathfrak{a}_1T^{02} & T^{031}-\mathfrak{a}_1T^{03}& T^{01}-\mathfrak{a}_1I^0\\
		\vspace{1.2ex}
		T^{012}-\mathfrak{a}_2T^{01}& T^{022}-\mathfrak{a}_2T^{02} & T^{032}-\mathfrak{a}_2T^{03}& T^{02}-\mathfrak{a}_2I^0\\
		\vspace{1.2ex}
		T^{013}-\mathfrak{a}_3T^{01}& T^{023}-\mathfrak{a}_3T^{02} & T^{033}-\mathfrak{a}_3T^{03}& T^{03}-\mathfrak{a}_3I^0
	\end{array}\right)
    \begin{pmatrix}
	\vspace{1.2ex}
	\lambda_1\\  \vspace{1.2ex}  \lambda_2\\  \vspace{1.2ex} \lambda_3\\  \vspace{1.2ex}  \mathfrak{e}
     \end{pmatrix}
    =\begin{pmatrix}
	\vspace{1.2ex}
	-\frac{T^{00}}{\mathfrak{c}}\\
	\vspace{1.2ex}
	 \frac{\mathfrak{a}_1T^{00}}{\mathfrak{c}}-\frac{T^{001}}{\mathfrak{c}}\\ 
	 \vspace{1.2ex}
	\frac{\mathfrak{a}_2T^{00}}{\mathfrak{c}}-\frac{T^{002}}{\mathfrak{c}}\\
	\vspace{1.2ex}
	\frac{\mathfrak{a}_3T^{00}}{\mathfrak{c}}-\frac{T^{003}}{\mathfrak{c}}
    \end{pmatrix}.
\end{equation}
}
Denote 
\begin{align*}
	\mathfrak{a}:=\frac{n_0u^0}{\mathfrak{c}}\frac{K_3(\gamma)}{K_2(\gamma)},\quad \mathfrak{b}:=\frac{n_0u^0}{\mathfrak{c}\gamma K_2(\gamma)}(6K_3(\gamma)+\gamma K_2(\gamma)).
\end{align*}
By a tedious calculation, one can transform \eqref{5.1-0} into the following system
{\small
\begin{align}\label{5.2-0}
	&\left( \setlength\arraycolsep{0.5pt}
	\begin{array}{cccc} 
		\vspace{1.2ex}
		0 & 0 & 0 & \mathfrak{a}\frac{K_2(\gamma)}{K_3(\gamma)}-\mathfrak{a}\frac{K_2(\gamma)}{K_3(\gamma)}(\frac{K_3(\gamma)}{K_2(\gamma)}-\frac{\mathfrak{b}}{\mathfrak{a}})\frac{|u|^2}{T_0}\\
		\vspace{1.2ex}
		\mathfrak{a}T_0& 0 & 0& \mathfrak{a}\frac{K_2(\gamma)}{K_3(\gamma)}(\frac{K_3(\gamma)}{K_2(\gamma)}-\frac{\mathfrak{b}}{\mathfrak{a}})u_1\\
		\vspace{1.2ex}
		0& \mathfrak{a}T_0 & 0& \mathfrak{a}\frac{K_2(\gamma)}{K_3(\gamma)}(\frac{K_3(\gamma)}{K_2(\gamma)}-\frac{\mathfrak{b}}{\mathfrak{a}})u_2\\
		\vspace{1.2ex}
		0& 0 & \mathfrak{a}T_0& \mathfrak{a}\frac{K_2(\gamma)}{K_3(\gamma)}(\frac{K_3(\gamma)}{K_2(\gamma)}-\frac{\mathfrak{b}}{\mathfrak{a}})u_3\\
	\end{array}
    \right)
	\begin{pmatrix}
		\vspace{1.5ex}
		\lambda_1\\ \vspace{1.5ex}  \lambda_2\\  \vspace{1.5ex} \lambda_3\\ \vspace{1.5ex}  \mathfrak{e}
	\end{pmatrix}
    =\begin{pmatrix} \vspace{1.2ex}
		\frac{n_0}{\gamma}-\frac{\mathfrak{a}u^0}{\mathfrak{c}}-\frac{n_0}{\gamma}(\frac{K_3(\gamma)}{K_2(\gamma)}-\frac{\mathfrak{b}}{\mathfrak{a}})\frac{|u|^2}{T_0}\\ \vspace{1.2ex} \frac{n_0}{\gamma}(\frac{K_3(\gamma)}{K_2(\gamma)}-\frac{\mathfrak{b}}{\mathfrak{a}})u_1\\ \vspace{1.2ex}
		\frac{n_0}{\gamma}(\frac{K_3(\gamma)}{K_2(\gamma)}-\frac{\mathfrak{b}}{\mathfrak{a}})u_2\\ \vspace{1.2ex}
		\frac{n_0}{\gamma}(\frac{K_3(\gamma)}{K_2(\gamma)}-\frac{\mathfrak{b}}{\mathfrak{a}})u_3
	\end{pmatrix}.
\end{align}
}
   Observing \eqref{5.7-20}, one has $\frac{K_3(\gamma)}{K_2(\gamma)}-\frac{\mathfrak{b}}{\mathfrak{a}}<0$, which implies that \eqref{5.2-0} has a unique solution. More precisely, we can write down it explicitly   
%   =\frac{1}{\mathfrak{a}\frac{K_2(\gamma)}{K_3(\gamma)}-\mathfrak{a}\frac{K_2(\gamma)}{K_3(\gamma)}(\frac{K_3(\gamma)}{K_2(\gamma)}-\frac{\mathfrak{b}}{\mathfrak{a}})\frac{|u|^2}{T_0}}
%   	\left[\begin{gathered}
%   	(\frac{K_3(\gamma)}{K_2(\gamma)}-\frac{\mathfrak{b}}{\mathfrak{a}})\frac{K_2(\gamma)}{K_3(\gamma)}\frac{\mathfrak{a}u^0}{\mathfrak{c}T_0}u_1\\   (\frac{K_3(\gamma)}{K_2(\gamma)}-\frac{\mathfrak{b}}{\mathfrak{a}})\frac{K_2(\gamma)}{K_3(\gamma)}\frac{\mathfrak{a}u^0}{\mathfrak{c}T_0}u_2\\   (\frac{K_3(\gamma)}{K_2(\gamma)}-\frac{\mathfrak{b}}{\mathfrak{a}})\frac{K_2(\gamma)}{K_3(\gamma)}\frac{\mathfrak{a}u^0}{\mathfrak{c}T_0}u_3\\   \frac{n_0}{\gamma}-\frac{\mathfrak{a}u^0}{\mathfrak{c}}-\frac{n_0}{\gamma}(\frac{K_3(\gamma)}{K_2(\gamma)}-\frac{\mathfrak{b}}{\mathfrak{a}})\frac{|u|^2}{T_0}
%   \end{gathered}\right]\nonumber\\
   {\small
   \begin{align*}
   	&\begin{pmatrix}\vspace{1.2ex}
   		\lambda_1\\ \vspace{1.2ex}   \lambda_2\\ \vspace{1.2ex}   \lambda_3\\ \vspace{1.2ex}  \mathfrak{e}
   	\end{pmatrix}
   = \frac{1}{\frac{u^0}{\mathfrak{c}}-\Big(\frac{K_3(\gamma)}{K_2(\gamma)}-\frac{6}{\gamma}-\frac{K_2(\gamma)}{K_3(\gamma)}\Big)\frac{u^0|u|^2}{\mathfrak{c}T_0}}
   \begin{pmatrix}\vspace{1.2ex}
   	\Big(\frac{K_3(\gamma)}{K_2(\gamma)}-\frac{6}{\gamma}-\frac{K_2(\gamma)}{K_3(\gamma)}\Big)\frac{(u^0)^2}{\mathfrak{c}^2 T_0}u_1\\  \vspace{1.2ex}  \Big(\frac{K_3(\gamma)}{K_2(\gamma)}-\frac{6}{\gamma}-\frac{K_2(\gamma)}{K_3(\gamma)}\Big)\frac{(u^0)^2}{\mathfrak{c}^2 T_0}u_2\\  \vspace{1.2ex} \Big(\frac{K_3(\gamma)}{K_2(\gamma)}-\frac{6}{\gamma}-\frac{K_2(\gamma)}{K_3(\gamma)}\Big)\frac{(u^0)^2}{\mathfrak{c}^2 T_0}u_3\\  \vspace{1.2ex} 
   	\frac{1}{\gamma}-\frac{(u^0)^2}{\gamma T_0}\frac{K_3(\gamma)}{K_2(\gamma)}-\Big(\frac{K_3(\gamma)}{K_2(\gamma)}-\frac{6}{\gamma}-\frac{K_2(\gamma)}{K_3(\gamma)}\Big)\frac{|u|^2}{\gamma T_0}
   \end{pmatrix}.
   \end{align*}
    }
   For $\zeta$, it follow from $\langle \chi^{\mathfrak{c}}_4,\chi^{\mathfrak{c}}_4\rangle=1$ that
   \begin{align*}
   	\zeta^2&=\int_{\mathbb{R}^3}(p^0/\mathfrak{c}+\sum_{i=1}^3\lambda_ip_i+\mathfrak{e})^2\mathbf{M}_{\mathfrak{c}}dp\nonumber\\
   	&=\int_{\mathbb{R}^3}(\frac{(p^0)^2}{\mathfrak{c}^2}+\mathfrak{e}^2+\sum_{i,j=1}^3\lambda_i\lambda_jp_ip_j+2\frac{\mathfrak{e}}{\mathfrak{c}}p^0+2\sum_{i=1}^3\lambda_i\mathfrak{e} p_i+2\sum_{i=1}^3\frac{\lambda_i}{\mathfrak{c}}p^0p_i)\mathbf{M}_{\mathfrak{c}}dp\\
   	&=\frac{T^{000}}{\mathfrak{c}^2}+\mathfrak{e}^2I^0+\sum_{i,j=1}^3\lambda_i\lambda_jT^{0ij}+2\frac{\mathfrak{e}}{\mathfrak{c}}T^{00}+2\sum_{i=1}^3\lambda_i\mathfrak{e} T^{0i}+2\sum_{i=1}^3\frac{\lambda_i}{\mathfrak{c}}T^{00i},
   \end{align*}
   which yields that
   \begin{align*}
   	\zeta=\sqrt{\frac{T^{000}}{\mathfrak{c}^2}+\mathfrak{e}^2I^0+\sum_{i,j=1}^3\lambda_i\lambda_jT^{0ij}+2\frac{\mathfrak{e}}{\mathfrak{c}}T^{00}+2\sum_{i=1}^3\lambda_i\mathfrak{e} T^{0i}+2\sum_{i=1}^3\frac{\lambda_i}{\mathfrak{c}}T^{00i}}.
   \end{align*}
    Consequently, we obtain the desired orthonormal basis of $\mathcal{N}_{\mathfrak{c}}$.
    
%    It is noted that this orthonormal basis under local Maxwellian $\mathbf{M}_{\mathfrak{c}}$ reduces to the orthonormal basis under global Maxwellian when $n_0=1$, $u=0$, $T_0=1$. 
    
    \medskip
    
%%%%%%%%%%%%%%%%%%%%%%%%%%%%%%%%%%%%%%%%%%%%%%%%%%%%%%%%%%%%%%%%
%    \section{Appendix B: Derivation of the macroscopic system for $(a_{n+1},b_{n+1},c_{n+1})$}
%%%%%%%%%%%%%%%%%%%%%%%%%%%%%%%%%%%%%%%%%%%%%%%%%%%%%%%%%%%%%%%%

     \medskip
    
\noindent{\bf Acknowledgments.} 
Yong Wang's research is partially supported by National Key R\&D Program of China No. 2021YFA1000800, National Natural Science Foundation of China No. 12022114, 12288201,  CAS Project for Young Scientists in Basic Research, Grant No. YSBR-031, and Youth Innovation Promotion Association of the Chinese Academy of Science No. 2019002. Changguo Xiao's research is partially supported by National Natural Science Foundation of China No. 12361045 and Guangxi Natural Science Foundation (Grant No. 2023GXNSFAA026066). 

	\medskip

\noindent{\bf Conflict of Interest:} The authors declare that they have no conflict of interest.

	\medskip

\end{document}